\documentclass[12pt]{article}

\usepackage[margin=2cm, top=2cm, bottom=2cm]{geometry}

\usepackage[utf8]{inputenc} 
\usepackage[T1]{fontenc}    
\usepackage{url}            
\usepackage{booktabs}       
\usepackage{amsfonts}       
\usepackage{nicefrac}       
\usepackage{microtype}      
\usepackage{mathrsfs}  

\usepackage{amsmath,amssymb,amsthm,xcolor}
\usepackage{array}
\usepackage{float}
\newtheorem{theorem}{Theorem}[section]

\newtheorem{remark}[theorem]{Remark}
\newtheorem{lemma}[theorem]{Lemma}
\newtheorem{corollary}[theorem]{Corollary}

\newtheorem{proposition}[theorem]{Proposition}
\newtheorem{definition}[theorem]{Definition}
\newtheorem{fact}[theorem]{Fact}
\newtheorem{assumption}[theorem]{Assumption}
\numberwithin{equation}{section}
\usepackage{graphicx}
\usepackage{subcaption}
\usepackage{mdframed}
\usepackage{lipsum}
\usepackage{wrapfig}
\usepackage[hidelinks,hypertexnames=false]{hyperref}
\usepackage{tikz}
\usepackage{enumitem}
\allowdisplaybreaks
\usepackage{epstopdf}
\usepackage{algorithmic}
\usepackage{algorithm}
\usepackage{booktabs}
\usepackage{changepage}
\usepackage{enumitem}
\usepackage{indentfirst}
\usepackage{stmaryrd} 
\usepackage{soul}

\newcommand{\co}{\operatorname{co}}

\newcommand{\runinheading}[1]{\par\smallskip\noindent\emph{\underline{#1.}}\space}
\newcommand{\tocgroup}[1]{%
  \addtocontents{toc}{\protect\addvspace{0.8em}%
    \protect\noindent\protect\textbf{#1}\protect\par
    \protect\addvspace{0.25em}}}

\title{Convergence of difference inclusions: a diameter criterion and step-size conditions}
\author{\large  Lexiao Lai\thanks{\url{lai.lexiao@hku.hk}, Department of Mathematics, The University of Hong Kong, Hong Kong.} \and Mingzhi Song\thanks{\url{songmingzhi123@gmail.com}, Department of Mathematics, The University of Hong Kong, Hong Kong.}}
\date{}

\begin{document}

\maketitle
\vspace*{-5mm}
\begin{center}
    \textbf{Abstract}
    \end{center}
    \vspace*{-4mm}
 \begin{adjustwidth}{0.2in}{0.2in}
~~~~We study bounded realizations of discrete difference inclusions with
set-valued increments and additive errors.  Our results have two parts.
First, we give a general convergence criterion in which changes in a
convergent scalar quantity control the diameters of local sequence segments
near an accumulation point.  We also develop a stratified descent framework
for verifying this control.  For convergent realizations, the framework
yields a stationarity condition for the limit from outer limits of the scaled
update map.  When applied to first-order methods for minimizing
locally Lipschitz objectives definable in polynomially bounded o-minimal
structures, the framework yields convergence to critical points for bounded
sequences generated by the inexact subgradient, momentum, and stochastic
subgradient methods with step sizes of order $k^{-1}$, under the corresponding
error and noise conditions.


Second, we study first-order methods with polynomial step sizes
of order $k^{-a}$.  At the square-summability boundary
\(a=1/2\), we construct a locally Lipschitz semialgebraic objective for which
the exact subgradient method generates a bounded, nonconvergent sequence whose
accumulation points satisfy our active-geometry assumptions.  Under these
assumptions, bounded sequences generated by the momentum method converge for
\(1/2<a\leq1\), while bounded sequences generated by the stochastic
subgradient method converge almost surely for \(2/3<a\leq1\).  The momentum
range is sharp for this method class, while the stochastic range matches
recent full-sequence convergence results for smooth nonconvex objectives.

\noindent\textbf{Keywords}: Difference inclusions, nonsmooth optimization, first-order methods, o-minimal structure, stratification theory.

\noindent\textbf{MSC 2020}: 65K10 (Numerical optimization and variational techniques), 39A05 (General theory of difference equations),  03C64 (Model theory of ordered structures; o-minimality)
\end{adjustwidth}

\clearpage
\tableofcontents
\clearpage

\tocgroup{Main text}

\section{Introduction}
We consider discrete dynamics governed by a difference inclusion:
\begin{equation}\label{eq:DI_with_noise}
    x_{k+1} \in x_k + F(x_k,\alpha_k) + e_k,\quad \forall k\in \mathbb{N},
\end{equation}
where $F\colon\mathbb{R}^n\times \mathbb{R}_+
\rightrightarrows\mathbb{R}^n$ is a set-valued update map that collects the
admissible increments at the current iterate $x_k$ and step size $\alpha_k$,
while $e_k$ is an additive error representing stochasticity or inexact
computation.  This template encompasses a wide range of iterative schemes,
including discretizations of differential inclusions
\cite{aubin1984differential,dontchev1989error,dontchev1992difference,clarke1990}
and stochastic approximations
\cite{benaim2005stochastic,borkar2008stochastic}.  In nonsmooth optimization,
such dynamics arise naturally because many algorithms involve discontinuous
or set-valued update maps defined through subdifferentials
\cite{clarke1990,rockafellar1998variational}; Section~\ref{sec:first-order}
details several examples from first-order methods.

Our aim is to provide sufficient conditions under which bounded realizations
$(x_k)$ of \eqref{eq:DI_with_noise} converge.  Most existing approaches rely
on a (quasi-)monotone quantity along the dynamics.  If the update map is a
contraction, then $(x_k)$ converges to the unique fixed point by Banach's fixed
point theorem.  In the same vein, Fejér monotonicity \cite{fejer1922lage} and
its variants \cite{bauschke2001weak,combettes2008fejer} require the existence
of $x^*\in \mathbb{R}^n$ with
$|x_{k+1}-x^*|\leq |x_k-x^*|$ for all $k\in\mathbb{N}$ up to summable
errors; if $x^*$ is also a limit point, then $(x_k)$ must converge to it.
This principle applies, for example, to the subgradient method for convex
functions \cite{alber1998projected}.  Other routes invoke a Lyapunov function
$L$ not directly tied to distance.  Under the usual compactness and invariance
hypotheses, strict decrease of $L$ away from a unique invariant equilibrium
$x^*$ implies $x_k\to x^*$ \cite{liapounoff1907probleme}; with multiple
minimizers, convergence can still be deduced when the decrease in $L$ is
controlled relative to (sub)gradient norms and a
Kurdyka--\L{}ojasiewicz inequality of $L$ holds
\cite{absil2005convergence,attouch2009convergence,attouch2013convergence}.
These arguments bound the total length of the sequence by variations of $L$;
convergence of $(L(x_k))_{k\in\mathbb{N}}$ then yields convergence of
$(x_k)$.  Recent work also establishes convergence under approximately
decreasing (nonmonotone) potentials \cite{li2023convergence}.  Such approaches,
however, rarely apply to the general difference inclusions in
\eqref{eq:DI_with_noise}.

To address this limitation, a substantial literature studies continuous-time
limits of \eqref{eq:DI_with_noise}, whose solutions can sometimes be governed
by potential functions
\cite{benaim2005stochastic,drusvyatskiy2015curves,bolte2020conservative}.
This line of thought dates back to ODE methods for stochastic approximations
\cite{ljung1977analysis,kushner1977general}.  When a potential satisfies
appropriate Sard-type properties, one can show convergence of its values and
that limit points, which may not be unique, are critical relative to $F$
\cite{duchi2018stochastic,davis2020stochastic,bolte2022long,xiao2023stochastic,bolte2025inexact}.
Nevertheless, such arguments do not guarantee convergence of $(x_k)$ itself.
For the subgradient method, $F(x,\alpha)=-\alpha\,\partial f(x)$ and $e_k=0$,
where $\partial f\colon\mathbb{R}^n\rightrightarrows\mathbb{R}^n$ is the
Clarke subdifferential \cite{clarke1975} of a locally Lipschitz objective
$f\colon\mathbb{R}^n\to\mathbb{R}$.  Even when $f$ is monotone along the
continuous-time limits, divergent dynamics have been constructed in
\cite{rios2022examples}.  Under stronger geometric assumptions on $f$
(definability in a polynomially bounded o-minimal structure; see
Section~\ref{sec:first-order}), convergence of sequences generated by the
subgradient method was recently established by the authors
\cite{lai2025diameter} for decreasing step sizes of order $k^{-1}$.  The proof
bounds the sequence diameter by function-value changes plus higher-order
accumulations of step sizes.  Since function values converge by prior results
and the step-size contributions eventually vanish, the sequence must converge.
Two questions remained: whether this diameter argument extends to more general
update rules and perturbations, and whether full-sequence convergence persists
beyond step sizes of order $k^{-1}$.

The second question is delicate even for the vanilla subgradient method.
Beyond the evidently necessary requirements that the step sizes diminish and
are nonsummable, related results often impose additional conditions.
Subsequential convergence of the vanilla subgradient method holds for any
sequence of vanishing, nonsummable step sizes
\cite{bolte2022long}, whereas the stochastic counterpart additionally requires
square-summability \cite{davis2020stochastic}.  For convex objectives,
square-summability, together with nonsummability, is sufficient for convergence
of the subgradient method \cite{alber1998projected}.  However, whether
square-summability is necessary or sufficient for full-sequence convergence in
the locally Lipschitz definable setting remains elusive.

Our contributions are organized in two parts.  The first develops a general
diameter-based framework for proving convergence of the difference inclusion
\eqref{eq:DI_with_noise} and characterizing its limit points.  We then apply
this framework to first-order optimization methods.
Theorem~\ref{thm:criterion} gives a local convergence criterion: a sequence
converges to an accumulation point once there exists a scalar quantity whose variation controls the diameters of local
segments starting near that point.  We then verify this control for
realizations of difference inclusions through a stratified descent framework
(Definition~\ref{def:stratified_descent}), obtaining the global diameter bound
in Theorem~\ref{thm:diameter}.  Crucially, the framework requires an
approximately monotone quantity only along projections of the iterates onto
smooth manifolds, rather than along the iterates themselves.  This viewpoint
synthesizes and generalizes the techniques underlying our recent work
\cite{lai2025diameter}.  Appendix~\ref{sec:technical_comparison} gives a
result-level correspondence, compares the common proof mechanisms, and
identifies the additional arguments needed here.
Propositions~\ref{prop:converge->critical} and~\ref{prop:G-in-coHx}
characterize the limit points of convergent realizations through outer limits
of the scaled update map $F$.  Applying the framework to first-order minimization of locally Lipschitz
objectives definable in polynomially bounded o-minimal structures yields
convergence of inexact subgradient, momentum, and stochastic subgradient
methods with step sizes of order $k^{-1}$; see Corollary~\ref{cor: converge} and
Theorems~\ref{theorem:momentum-converge} and~\ref{theorem:1overk}.  As discussed
in Remark~\ref{remark:discrete}, the arguments in Sections~\ref{sec:criterion}
and~\ref{sec:first-order} are entirely discrete and do not rely on
continuous-time methods.

The second part asks how far full-sequence convergence extends for first-order
methods with polynomial step sizes of order $k^{-a}$.  At the square-summability boundary
\(a=1/2\), we construct a locally Lipschitz semialgebraic objective for which
the exact subgradient method generates a bounded, nonconvergent sequence
(Theorem~\ref{thm:counterexample-half}).  Every accumulation point of this
sequence satisfies the local active-geometry conditions used in our convergence
results.  Under these conditions at a prescribed accumulation point, bounded
sequences generated by the momentum and stochastic subgradient methods
converge.
Table~\ref{tab:convergence-regimes} summarizes the precise step-size ranges.
The momentum range is sharp at its lower endpoint for this method class
(Sections~\ref{sec:square_summability_obstruction} and~\ref{sec:active_momentum}), while the stochastic range matches recent full-sequence
convergence results for smooth nonconvex objectives
\cite[Corollary~5.1]{qiu2024convergence},
\cite[Theorem~4.2 and Remark~4.3]{milzarek2023convergence}, and
\cite[Theorem~1.7]{dereich2024convergence}
(Remark~\ref{remark:smooth_case}).  

\begin{table}[H]
\centering
\small
\caption{Convergence ranges for bounded sequences generated by the momentum
and stochastic subgradient methods for locally Lipschitz definable objectives with
$\alpha_k=\Theta((k+1)^{-a})$.  The active setting assumes
that the prescribed accumulation point satisfies
Assumption~\ref{assumption:active_aiming_verdier};
the conclusion for the stochastic subgradient method holds almost surely.}
\label{tab:convergence-regimes}
\setlength{\tabcolsep}{4pt}
\renewcommand{\arraystretch}{1.15}
\begin{tabular}{@{}
>{\centering\arraybackslash}m{0.27\textwidth}
>{\centering\arraybackslash}m{0.16\textwidth}
>{\centering\arraybackslash}m{0.20\textwidth}
>{\centering\arraybackslash}m{0.29\textwidth}@{}}
\toprule
\textbf{Method} & \textbf{Update} & \textbf{General setting} & \textbf{Active setting} \\
\midrule
\begin{tabular}[c]{@{}c@{}}Momentum method\end{tabular}
& \begin{tabular}[c]{@{}c@{}}\eqref{eq:momentum-inexact}\end{tabular}
& \begin{tabular}[c]{@{}c@{}}$a=1$\\
  {\footnotesize (Theorem~\ref{theorem:momentum-converge})}\end{tabular}
& \begin{tabular}[c]{@{}c@{}}$\tfrac12<a\leq1$\\
  {\footnotesize (Theorem~\ref{theorem:active_momentum})}\end{tabular} \\
\addlinespace[0.35em]
\begin{tabular}[c]{@{}c@{}}Stochastic subgradient\\method\end{tabular}
& \begin{tabular}[c]{@{}c@{}}\eqref{eq:inexact stoc}\end{tabular}
& \begin{tabular}[c]{@{}c@{}}$a=1$\\
  {\footnotesize (Theorem~\ref{theorem:1overk})}\end{tabular}
& \begin{tabular}[c]{@{}c@{}}$\tfrac23<a\leq1$\\
  {\footnotesize (Theorem~\ref{theorem:active_stochastic})}\end{tabular} \\
\bottomrule
\end{tabular}
\end{table}

\subsection{Notation}
We use $\mathbb{N}:=\{0,1,2,\dots\}$ and $\mathbb{R}_+:=[0,\infty)$ throughout.
On $\mathbb{R}^n$, the symbol $|\cdot|$ denotes the Euclidean norm and, for linear maps, its induced operator norm; the associated inner product is $\langle\cdot,\cdot\rangle$.
For $a\in\mathbb{R}^n$ and $r>0$, the sets $B(a,r)$ and $\mathring B(a,r)$ are the closed and open balls centered at $a$ with radius $r$, respectively; we set $B(a,0):=\{a\}$.
For $A\subset\mathbb{R}^n$, we write $\overline{A}$, $\mathring{A}$, and $\co A$ for its closure, interior, and convex hull. For a stratum $M$, its frontier is denoted by $\partial M:=\overline{M}\setminus M$.
We use the Minkowski enlargement $B(A,r):=A+B(0,r)$, with $B(A,0):=A$.
For nonempty sets $A,B\subset\mathbb{R}^n$ and $x\in\mathbb{R}^n$, set $d(x,A):=\inf_{y\in A}|x-y|$ and $d(A,B):=\inf_{a\in A,\,b\in B}|a-b|$.
We adopt the convention $d(x,\emptyset):=1$.
The metric projection is $P_A(x):=\operatorname*{argmin}_{y\in A}|x-y|$; whenever this set is a singleton, we identify it with its unique element.
We also write $|A|:=\sup_{a\in A}|a|$ for the supremal norm of $A$ and $\operatorname{diam}(A):=\sup_{a,b\in A}|a-b|$ for its diameter.
If $m,p\in\mathbb{N}$ and $f\colon A\to\mathbb{R}^m$, with $A$ open, then $f$ is $C^p$ when its $p$th Fréchet derivative exists and is continuous on $A$; we write $Df$ for the Jacobian of $f$.
If $M\subset\mathbb{R}^n$ is a smooth embedded manifold and $x\in M$, then $T_M(x)$ and $N_M(x)$ denote its tangent and normal spaces at $x$. If $f\colon\mathbb{R}^n\to\mathbb{R}$ is $C^2$ on $M$, then $\nabla_M f$ denotes its Riemannian gradient on $M$.
For integers $a\leq b$, set $\llbracket a,b\rrbracket:=\{a,\dots,b\}$; for a sequence $(x_k)$, write $x_{\llbracket a,b\rrbracket}:=\{x_a,\dots,x_b\}$.
Given $g\colon\mathbb{R}^n\to\mathbb{R}$, $v\in\mathbb{R}$, and $\star\in\{\leq,\geq,<,>,=\}$, let $[g\star v]:=\{x\in\mathbb{R}^n:g(x)\star v\}$.
Finally, $\operatorname{sgn}\colon\mathbb{R}\to\{-1,0,1\}$ denotes the sign function, and, for $\theta\in(0,1)$, we write
\(\varphi_\theta(t):=\operatorname{sgn}(t)|t|^{1-\theta}\).

\section{A diameter framework for convergence}\label{sec:criterion}
We begin with the convergence principle used throughout the paper.  It only
requires diameter control in one fixed neighborhood of an accumulation point.

\begin{theorem}[Diameter criterion]
\label{thm:criterion}
Let \((x_k)\subset\mathbb R^n\), let \(x^*\) be an accumulation point, and
let \(L\) be a real-valued function on a set containing
\(\{x_k:k\in\mathbb N\}\).  Suppose that \(L(x_k)\) converges and
\(|x_{k+1}-x_k|\to0\).  If there is an open neighborhood \(U\) of \(x^*\)
such that
\begin{equation}
 \lim_{\delta\downarrow0}\ 
 \limsup_{\substack{k_1,k_2\to\infty,\ k_1<k_2\\
 x_{k_1}\in B(x^*,\delta),\ 
 x_{\llbracket k_1,k_2\rrbracket}\subset U}}
 \left(
 \operatorname{diam}(x_{\llbracket k_1,k_2\rrbracket})
 +L(x_{k_2})-L(x_{k_1})\right)\leq0,
 \label{eq:diam_cri}
\end{equation}
Then \(x_k\to x^*\).
\end{theorem}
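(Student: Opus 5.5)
We argue by contradiction. Suppose \(x_k\not\to x^*\). Fix \(r>0\) with \(B(x^*,2r)\subset U\) such that \(|x_k-x^*|>2r\) for infinitely many \(k\); such an \(r\) exists because the sequence does not converge to \(x^*\) and \(U\) is open. The idea is to produce infinitely many segments that start very close to \(x^*\), stay inside \(U\), and leave the ball \(B(x^*,r)\). Each such segment has diameter at least about \(r\), while the change of \(L\) along it tends to zero because \(L(x_k)\) converges. This will contradict \eqref{eq:diam_cri}.

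\textbf{Construction of the segments.} Let \(\ell:=\lim_k L(x_k)\). Pick \(\delta_0>0\) and \(\eta>0\) with \(\delta_0<r/4\) such that, by \eqref{eq:diam_cri}, the inner \(\limsup\) is at most \(r/4\) for every \(\delta\in(0,\delta_0]\). Fix such a \(\delta\). Since \(x^*\) is an accumulation point, there are infinitely many \(k_1\) with \(x_{k_1}\in B(x^*,\delta)\). For each such \(k_1\), the sequence leaves \(B(x^*,2r)\) at some later time, so we may define \(k_2\) as the last index before the first exit from \(B(x^*,r)\) after \(k_1\):
\[
k_2+1:=\min\{k>k_1:\ |x_k-x^*|>r\}.
\]
Then \(x_{\llbracket k_1,k_2\rrbracket}\subset B(x^*,r)\subset U\), so the segment is admissible in \eqref{eq:diam_cri}. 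Moreover \(k_2>k_1\) as soon as \(|x_{k_1+1}-x_{k_1}|<r-\delta\), which holds for large \(k_1\).

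\textbf{Lower bound on the diameter.} Since \(|x_{k+1}-x_k|\to0\), for large \(k_1\) all steps are smaller than \(r/4\). Hence \(|x_{k_2}-x^*|>r-r/4\), which gives
\[
\operatorname{diam}(x_{\llbracket k_1,k_2\rrbracket})\ \ge\ |x_{k_2}-x_{k_1}|\ \ge\ 3r/4-\delta\ \ge\ r/2 .
\]
As \(k_1\to\infty\) along these indices, \(k_2\to\infty\) as well, and \(L(x_{k_2})-L(x_{k_1})\to\ell-\ell=0\). The quantity inside the \(\limsup\) therefore has \(\liminf\) at least \(r/2\) along these pairs. So the inner \(\limsup\) is at least \(r/2\) for every small \(\delta\), contradicting the choice of \(\delta\) (inner \(\limsup\le r/4\)). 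Hence \(x_k\to x^*\).

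\textbf{Main obstacle.} The delicate step is making sure the chosen segments actually lie in \(U\) and leave a fixed ball by a definite amount. This is exactly where the hypothesis \(|x_{k+1}-x_k|\to0\) enters: stopping just before the first exit from \(B(x^*,r)\) keeps the segment inside \(U\) while still forcing the diameter to be at least about \(r\). The convergence of \(L(x_k)\) is used only to kill the \(L\)-difference term.
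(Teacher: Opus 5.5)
Your proof is correct and follows the paper's argument. You start segments at late visits to \(B(x^*,\delta)\) and end them at the exit from the ball of radius \(r\); the diameter is then bounded below by a fixed multiple of \(r\), and convergence of \(L(x_k)\) kills the endpoint difference. The only difference is cosmetic: you stop just before the exit, so containment in \(U\) is automatic and the vanishing increments give the diameter bound, whereas the paper includes the exit index and uses the vanishing increments to keep the segment inside \(B(x^*,2r)\subset U\).
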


\begin{proof}
Suppose otherwise.  Choose \(r>0\) so that \(B(x^*,2r)\subset U\) and
\((x_k)\) leaves \(\mathring B(x^*,r)\) infinitely often.  Fix
\(\delta\in(0,r)\).  Since \(x^*\) is an accumulation point, there are
arbitrarily late indices \(p_j\) with \(x_{p_j}\in B(x^*,\delta)\).  Let
\(q_j>p_j\) be the first subsequent index outside
\(\mathring B(x^*,r)\).  The increments vanish, so
\(x_{\llbracket p_j,q_j\rrbracket}\subset B(x^*,2r)\subset U\) for all
large \(j\).  Moreover,
\[
 \operatorname{diam}(x_{\llbracket p_j,q_j\rrbracket})
 \geq |x_{q_j}-x_{p_j}|\geq r-\delta.
\]
Convergence of \(L(x_k)\) makes the endpoint difference tend to zero.
Therefore the inner upper limit in \eqref{eq:diam_cri} is at least
\(r-\delta\).  Letting \(\delta\downarrow0\) contradicts
\eqref{eq:diam_cri}.
\end{proof}

For optimization methods, convergence of \(L(x_k)\) is often known before
full-sequence convergence, while diminishing updates give
\(|x_{k+1}-x_k|\to0\).  A Kurdyka--\L{}ojasiewicz estimate then bounds the
diameter in \eqref{eq:diam_cri}, typically with \(L=\psi\circ f\)
\cite{absil2005convergence,attouch2009convergence,attouch2013convergence,
josz2023global,li2023convergence}. To handle cases in which no monotone quantity is available, we develop a framework to produce diameter bounds for sequences generated by the difference inclusion \eqref{eq:DI_with_noise}. In contrast to the approaches outlined above, we only ask for an approximate descent quantity along projected sequences, onto smoothly stratified pieces of the domain. If, in addition, the projected length admits an upper bound expressed via a power function composed with the objective (see \eqref{eq:y_k^i}), then a diameter bound for the original sequence follows (Theorem~\ref{thm:diameter}). This framework extends the diameter estimate for sequences generated by the subgradient method in \cite{lai2025diameter} and applies to common first-order methods for locally Lipschitz objectives definable in polynomially bounded o-minimal structures (see Section~\ref{sec:first-order}). To formalize the assumptions, we recall the definition of stratifications \cite{van1996geometric,trotman2020stratification}.

\begin{definition}[\(C^p\) stratification]\label{def:stratification}
Let $M\subset\mathbb{R}^n$ and $p\in\mathbb{N}$. A \emph{$C^p$ stratification} of $M$ is a finite partition $\mathcal{M}=\{M_i\}_{i\in I}$ of $M$ into connected $C^p$ manifolds $M_i\subset\mathbb{R}^n$ (the strata) that satisfy the frontier condition:
whenever $i\neq j$ and $\overline{M_i}\cap M_j\neq\emptyset$, one has $M_j\subset \partial M_i$ and $\dim M_j<\dim M_i$.
\end{definition}
For definable strata, the dimension inequality follows from
$\dim(\overline A\setminus A)<\dim A$ for every nonempty definable set
$A$ \cite[Result~4.7(1)]{van1996geometric}. Finite definable $C^p$
stratifications satisfying this condition can be chosen compatible with any
prescribed finite family of definable sets
\cite[Result~4.8(1)]{van1996geometric}; see also
\cite{trotman2020stratification}.
We now state a condition under which diameter bounds can be obtained.

\begin{definition}[Stratified descent] \label{def:stratified_descent}
We say that a difference inclusion \eqref{eq:DI_with_noise} exhibits
stratified descent in $X\subset \mathbb{R}^n$ if there exist constants $\hat{\alpha},\iota>0$ and $\tau\in(0,1]$, together with:
\begin{enumerate}
    \item\label{item:f} a Lipschitz continuous function $f\colon\widetilde X\to\mathbb{R}$, for some $\widetilde X \supset X$;
    \item\label{item:psi} a power function \(\psi\colon\mathbb{R}\to\mathbb{R}\)
    defined by \(\psi(t):=c\operatorname{sgn}(t)|t|^{1-\theta}\), where
    \(c>0\) and \(\theta\in(0,1)\);
    \item\label{item:g} a pair of error functions
    \[
        (g_1,g_2)\colon
        (\mathbb{R}_+)^{\mathbb{N}}\times(\mathbb{R}^n)^{\mathbb{N}}
        \to(\mathbb{R}_+\cup \{\infty\})^2;
    \]
    \item\label{item:strata} a $C^3$ stratification $\{M_i\}_{i=1}^{\overline{T}}$ of $\widetilde X$ and constants
    $c_i,\beta_i,\gamma_i>0$ satisfying, for all $i,j\in\llbracket1,\overline{T}\rrbracket$,
    \(0<\beta_i<\gamma_i(1-\theta)<\tau(1-\theta)\), and
    \(M_j\subset\partial M_i \Longrightarrow \beta_i>\gamma_j\).
\end{enumerate}
such that the following holds:
\begin{enumerate}\item For any $i,j\in\llbracket1,\overline{T}\rrbracket$,
    \[ \overline{M_i}\cap M_j=M_i\cap\overline{M_j}=\emptyset\quad \implies\quad
    \bigcup_{\alpha\in(0,\min\{1,\hat\alpha\}]}\mathcal{N}(i,\alpha)
    \cap
    \bigcup_{\alpha\in(0,\min\{1,\hat\alpha\}]}\mathcal{N}(j,\alpha)
    =\emptyset,\]
where \[
\mathcal{N}(i,\alpha)
:=
X\cap
\left(
B(M_i,c_i\alpha^{\beta_i})
\setminus
\bigcup_{\ell:\,M_\ell\subset \partial M_i}
B(M_\ell,c_\ell\alpha^{\gamma_\ell})
\right).
\]
\item For every realization $(x_k)$ of \eqref{eq:DI_with_noise} with
step sizes $(\alpha_k)_{k\in\mathbb{N}}$ satisfying
$\sup_k\alpha_k\leq\hat{\alpha}$, set
\(g_{\ell,k}:=g_\ell((\alpha_m)_{m\geq k},(e_m)_{m\geq k})\) for
\(\ell\in\{1,2\}\) and \(k\in\mathbb{N}\).
Then the following hold:
\begin{enumerate}[label=(\roman*),leftmargin=*]
    \item\label{item:small_step}
    For every \(k\in\mathbb N\),
    \(|g_{1,k+1}-g_{1,k}|\leq\iota\alpha_k^\tau\).  If \(x_k\in X\),
    then \(|x_{k+1}-x_k|\leq\iota\alpha_k^\tau\).

    \item\label{item:stratified_descent}
    For every \(i\in\llbracket1,\overline T\rrbracket\) and
    \(k\in\mathbb N\) satisfying \(x_k\in\mathcal N(i,\alpha_k)\), the
    projection \(y_k^i:=P_{M_i}(x_k)\) is single-valued.  Moreover, for every
    \(i\in\llbracket1,\overline T\rrbracket\) and \(K\geq1\) such that
    \(x_k\in\mathcal N(i,\alpha_k)\) for all
    \(k\in\llbracket0,K\rrbracket\), the projected block satisfies
\begin{equation}
\label{eq:y_k^i}
        \sum_{k=0}^{K-1}|y_k^i-y_{k+1}^i|
        \leq
        \psi(f(y_0^i)+g_{1,0})
        -\psi(f(y_K^i)+g_{1,K})
        +\sum_{k=0}^{K-1}g_{2,k}
        +\iota\max_{k\in\llbracket0,K-1\rrbracket}\alpha_k^\tau .
\end{equation}
\end{enumerate}
\end{enumerate}
\end{definition}
Let us unpack Definition~\ref{def:stratified_descent}. The neighborhoods
\(\mathcal{N}(i,\alpha)\) are chosen so that strata whose closures are unrelated
remain separated uniformly over all admissible scales. This separation is what makes it
possible to identify which stratum controls the motion at a given scale and,
later, to assemble the local estimates across different strata.

The dynamical part of the definition has two components. Item~\ref{item:small_step}
requires small-step behavior: both the iterate increments and the variation of
the auxiliary error quantity \(g_{1,k}\) are controlled by a power of the step
size. Item~\ref{item:stratified_descent} is the main descent requirement. It
does not ask for a descent inequality along the original sequence itself.
Instead, whenever a block of iterates remains in the neighborhood
\(\mathcal{N}(i,\alpha_k)\) of a fixed stratum \(M_i\), it controls the length of
the projected sequence \(P_{M_i}(x_k)\) by the drop of the desingularized potential
\(\psi\circ f\), evaluated with the auxiliary correction \(g_{1,k}\), up to the
accumulated error \(g_{2,k}\) and a small step-size remainder. As in the usual
Kurdyka--\L{}ojasiewicz length estimates
\cite{absil2005convergence,attouch2009convergence,li2023convergence}, the
function \(\psi\) converts decrease in the potential into length control. The
additional geometric conditions on the exponents \(\beta_i,\gamma_i\) ensure
that these projected-length estimates can be patched together across strata,
yielding a diameter bound for the original sequence.

The following theorem shows that, for difference inclusions exhibiting stratified descent, the sequence diameter is controlled by the functions appearing in the condition.

\begin{theorem}[Stratified diameter bound]\label{thm:diameter}
Consider a difference inclusion \eqref{eq:DI_with_noise} that exhibits stratified descent in $X\subset \mathbb{R}^n$. Then for any $c_d\geq 1$, there exist $\bar{\alpha},C>0$ such that the following holds. For any realization $(x_k)$ of \eqref{eq:DI_with_noise} whose step sizes satisfy $(\alpha_k)\subset (0,\bar{\alpha}]$ and $\sup_{k\geq i}\alpha_k/\alpha_i\leq c_d$ for every \(i\in\mathbb N\), and every \(K\in\mathbb N\) satisfying $x_{\llbracket 0,K\rrbracket}\subset X$, we have \begin{align}\label{eq:diam_formula}
        \operatorname{diam}(x_{\llbracket 0,K\rrbracket}) \leq 2(\psi\circ f(x_0)-\psi\circ f(x_K)) + 4(\psi(g_{1,0})+ \psi(g_{1,K})) + 2\sum_{k=0}^{K-1}g_{2,k} + C\alpha_0^{\underline\beta(1-\theta)}.
    \end{align}
    where  $\underline{\beta}=\min_{i\in \llbracket 1,\overline{T}\rrbracket}\beta_i$ and $g_{\ell,k}:= g_{\ell}\left((\alpha_i)_{i\geq k},(e_i)_{i\geq k}\right)$ for $\ell\in \{1,2\}$ and $k\in \mathbb{N}$.
\end{theorem}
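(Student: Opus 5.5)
The plan is to prove the bound by induction on the strata, ordered by dimension, using the separation property (first item of the dynamical part) to split the index range $\llbracket 0,K\rrbracket$ into blocks. On each block all iterates lie in a single neighborhood $\mathcal N(i,\alpha_k)$. Within a block, the projected-length estimate \eqref{eq:y_k^i} bounds the diameter of the projected points $y_k^i$. The distance from $x_k$ to $y_k^i$ is at most $c_i\alpha_k^{\beta_i}\le c_ic_d^{\beta_i}\alpha_0^{\beta_i}$, so the diameter of the original block is at most the projected length plus $2c_ic_d^{\beta_i}\alpha_0^{\beta_i}$. Finally, Lipschitz continuity of $f$ converts $\psi(f(y^i)+g_1)$ into $\psi(f(x)+g_1)$ up to an error. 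Because $\psi$ is $(1-\theta)$-Hölder (subadditive: $|\psi(s)-\psi(t)|\le c|s-t|^{1-\theta}$ and $\psi(s+t)\le\psi(s)+\psi(t)$ up to sign considerations), this error is of order $\alpha_0^{\beta_i(1-\theta)}$. Subadditivity also splits $\psi(f(x)+g_1)$ into $\psi(f(x))\pm\psi(g_1)$. This is where the terms $\psi(g_{1,0})$, $\psi(g_{1,K})$ and the exponent $\underline\beta(1-\theta)$ come from.

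The key steps are as follows. First, covering: choose $\bar\alpha$ small so that for every $x\in X$ and $\alpha\le c_d\bar\alpha$ there is some $i$ with $x\in\mathcal N(i,\alpha)$. Take the lowest-dimensional stratum whose enlargement contains $x$ and use $\gamma_\ell<\tau\le1$ and the frontier condition; near a stratum $M_i$ but away from its boundary strata, the tube $B(M_i,c_i\alpha^{\beta_i})$ covers points close to $M_i$. The relation $\beta_i>\gamma_j$ for $M_j\subset\partial M_i$ guarantees that the excluded shells $B(M_\ell,c_\ell\alpha^{\gamma_\ell})$ are thinner than the tubes of lower strata, so they are caught by those tubes. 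Second, block decomposition: to each $k$ assign a stratum index $i_k$ with $x_k\in\mathcal N(i_k,\alpha_k)$, chosen of maximal dimension. Third, a transition argument: when the sequence passes from the neighborhood of $M_i$ to that of an unrelated $M_j$, it must cross the neighborhood of a common frontier or face stratum, because $|x_{k+1}-x_k|\le\iota\alpha_k^\tau$ and $\tau>\gamma_j(1-\theta)$ make the step smaller than the shells. So transitions only occur between strata related by closure.

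Fourth, induct downward in dimension. Suppose the claim, with constant $C_j$, holds for segments lying in the union of neighborhoods of strata of dimension at most $d$. A segment involving a stratum of dimension $d+1$ then splits into maximal blocks in $\mathcal N(i,\cdot)$ alternating with segments of lower type. I would sum the block estimates; the $\psi\circ f$ differences telescope. The doubling of coefficients (the factors $2$ and $4$) absorbs the Hölder errors at block endpoints, where one has to pass between $\psi(f(y^i)+g_1)$ and $\psi(f(x))$.

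The main obstacle is controlling the number of blocks, since the endpoint errors $\alpha^{\beta(1-\theta)}$ accumulate once per block. I would use the gap $\beta_i<\gamma_i(1-\theta)$. Each excursion away from $\mathcal N(i,\cdot)$ into the shell of a lower stratum $M_\ell$ and back requires travelling distance roughly $c_\ell\alpha^{\gamma_\ell}-c_i\alpha^{\beta_i}$, and this distance must be paid for by the length estimates. If instead the sequence re-enters the neighborhood, I would merge consecutive visits and charge the junction error, of size $\alpha^{\gamma(1-\theta)}$, against the projected length, which is at least of order $\alpha^{\beta}$ and hence dominates. Making this charging rigorous, so that the errors form a geometric series bounded by $C\alpha_0^{\underline\beta(1-\theta)}$ (using $\alpha_k\le c_d\alpha_0$), is the step I expect to be hardest. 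I would first attempt it by a potential argument: track $\psi\circ f$ plus a multiple of the distance to the current stratum.
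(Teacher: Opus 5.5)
Your outline correctly identifies the hard step, but the proposal leaves it unresolved, and the bookkeeping you describe cannot resolve it. The trouble is that $K$ is arbitrary, so the iterates may change strata an unbounded number of times. Every per-block error must therefore be paid for by a guaranteed displacement.

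\emph{Junction errors are too large.} If blocks are maximal runs in $\mathcal N(i,\alpha_k)$, a block endpoint may lie at distance up to $c_i\alpha_k^{\beta_i}$ from $M_i$. The projection error $|x_k-y_k^i|$ is then of the same order as the displacement forced when the iterates leave the tube $B(M_i,c_i\alpha^{\beta_i})$. The conversion error between $\psi(z_k^i)$ and $\psi(z_k)$, of order $\alpha_k^{\beta_i(1-\theta)}$, is larger than that displacement by a factor $\alpha^{-\beta_i\theta}$. Neither can be charged to length. Your later figure $\alpha^{\gamma(1-\theta)}$ for junction errors holds only at junctions where $d(x_k,M_i)=O(\alpha_k^{\gamma_i})$, and your decomposition does not produce such junctions. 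Moreover, the projected length does not bound the displacement across junctions, so it is not a quantity against which errors can be charged.

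\emph{The induction does not close.} The induction on dimension fails for the same reason. Its hypothesis carries an additive error $C\alpha^{\underline\beta(1-\theta)}$ for every lower-type segment, and there may be arbitrarily many such segments.

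\emph{Exits need not cost travel.} Two mechanisms defeat your claim that each excursion costs travel. First, without hysteresis the iterates can cross the boundary of $\mathcal N(i,\cdot)$, or of a shell $B(M_\ell,c_\ell\alpha^{\gamma_\ell})$, back and forth with arbitrarily small steps. Second, the hypothesis bounds only increases of the step sizes (by $c_d$). They may decrease arbitrarily fast, so $\mathcal N(i,\alpha_k)$ can shrink away from a stationary iterate.

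\emph{The factor $2$.} Doubling the coefficient cannot absorb additive endpoint errors, because the potential drop it multiplies may be zero.

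\emph{What the paper supplies.} The paper's proof contains exactly the devices that are missing here.
\begin{enumerate}
\item It marks crossing steps using the enlarged radii $\widehat c_i\alpha_k^{\gamma_i}$, where $\widehat c_i=c_d^{\gamma_i}c_i$. It assigns to each crossing the minimal-dimensional stratum within $2\widehat c_i\alpha_k^{\gamma_i}$; the factor-two gap against the exclusion radius $c_j\alpha^{\gamma_j}$ is the hysteresis.
\item It uses projected increments onto $M_{G(l)}$ only up to $q(l)$, the last index within that radius. Afterwards, Fact~\ref{fact:IC_away} places the iterates in an open stratum, where $y_k=x_k$. Hence every internal junction error is $O(\alpha_l^{p_{G(l)}})$, with $p_i=\gamma_i(1-\theta)\in(\beta_i,\gamma_i)$.
\item The relative length of Definition~\ref{def:RL} includes the bridges $|x_l-y_l^i|$ and $|x_q-y_q^i|$. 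It therefore both dominates the displacement, with only four projection errors surviving, and is dominated by the potential drop.
\item Lemma~\ref{lemma:RM_or_half} gives, on each completed block, either $\operatorname{RL}\geq Q\alpha_l^{p_{G(l)}}$ or $\alpha_{s(l)+1}\leq\alpha_l/(2c_d)$. Combined with Fact~\ref{fact:no_repeat}, the second alternative yields a geometric series per stratum, and there is at most one unfinished block per stratum (Lemma~\ref{lemma:sum_alpha_p}).
\item The resulting term $(u/Q)\operatorname{RL}$ is absorbed by choosing $Q=4\overline C T$. This absorption is where the coefficient $2$ comes from.
\item The coarse conversion error of order $\alpha^{\underline\beta(1-\theta)}$ appears only once, at the terminal endpoint (Lemma~\ref{lemma:RM_terminal_piece}).
\end{enumerate}
The potential you suggest, $\psi\circ f$ plus a multiple of the distance to the current stratum, does not by itself replace these steps.
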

After shifting the initial index, Theorem~\ref{thm:diameter} bounds every
segment contained in \(X\) by the drop of \(2\psi\circ f\) plus the tail error
terms.  When these terms vanish, this yields the diameter estimate
\eqref{eq:diam_cri} in Theorem~\ref{thm:criterion}, with
\(L:=2\psi\circ f\).  The proof of
Theorem~\ref{thm:diameter}
parallels the diameter estimate for subgradient sequences in
\cite[Theorem~1.1]{lai2025diameter} and is given in
Appendix~\ref{sec:proof_diameter}. Theorem~\ref{thm:diameter} allows the
general stratified descent condition
and nonmonotone step sizes.  This flexibility is used in
Section~\ref{sec:first-order}.


Given a convergent realization of \eqref{eq:DI_with_noise}, our next goal is to identify properties of its limit point. Assuming the cumulative noise $(e_k)_{k\in \mathbb{N}}$ is dominated by the summation of step sizes (see \eqref{eq:noise_step}), Proposition~\ref{prop:converge->critical} shows that $0$ belongs to the convex hull of an outer limit of scaled update maps. We recall the relevant notion of outer limits for sets and set-valued maps \cite[Chapter~4]{rockafellar1998variational}. For a sequence $(C_k)$ of subsets of $\mathbb{R}^n$, its outer limit is
\begin{equation*}
    \limsup_{k\to\infty}C_k:=\{x\in\mathbb R^n:\exists k_j\uparrow\infty,\ \exists x_{k_j}\in C_{k_j}\text{ with }x_{k_j}\to x\}.
\end{equation*}
Let $X\subset \mathbb{R}^m$, $G\colon X\rightrightarrows \mathbb{R}^n$, and $\bar{x}\in\overline X$. Then the outer limit of $G$ at $\bar{x}$ relative to \(X\) is given by
\begin{equation*}
    \limsup_{\substack{x\to \bar{x}\\x\in X}}G(x)
    := \bigcup_{\substack{(x_k)\subset X\\x_k \to \bar{x}}}
    \limsup_{k\to \infty} G(x_k).
\end{equation*}
When \(\bar x\in X\), the map $G$ is called outer semicontinuous at $\bar{x}$ if
$\limsup_{x\to\bar{x},\,x\in X}G(x)\subset G(\bar{x})$; we omit the domain
constraint when it is clear. The scaled map
\((y,\alpha)\mapsto F(y,\alpha)/\alpha\) below has domain
\(\mathbb R^n\times(0,\infty)\). All its outer limits and local boundedness
conditions are relative to this domain; when the limiting step size is zero,
we write \(\alpha\downarrow0\).
\begin{proposition}[Limit stationarity]\label{prop:converge->critical}
Let $(x_k)$ be a realization of the difference inclusion \eqref{eq:DI_with_noise} with $\alpha_k>0$, $\alpha_k \to \underline{\alpha}\in \mathbb{R}_+$, $\sum_{k = 0}^\infty\alpha_k = \infty$, and 
\begin{equation}\label{eq:noise_step}
    \lim_{N\to \infty}\frac{\sum_{k = 0}^N e_k}{\sum_{k = 0}^N\alpha_k}  = 0.
\end{equation}
If $x_k\to x^*$ and the set-valued map $(y,\alpha) \mapsto F(y,\alpha)/\alpha$ is locally bounded near $(x^*,\underline{\alpha})$, then
\[
0\in \co\,\limsup_{\substack{(y,\alpha)\to(x^*,\underline{\alpha})\\\alpha>0}}
\frac{1}{\alpha}\,F(y,\alpha).
\]
\end{proposition}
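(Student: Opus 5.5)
We argue by contradiction with a separating hyperplane, writing the increments as weighted averages of selections from the scaled map. Write each step as
\[
x_{k+1}-x_k=\alpha_k v_k+e_k,\qquad v_k\in\tfrac{1}{\alpha_k}F(x_k,\alpha_k).
\]
Summing from $0$ to $N$ and dividing by $S_N:=\sum_{k=0}^N\alpha_k$ gives
\[
\frac{x_{N+1}-x_0}{S_N}=\frac{\sum_{k=0}^N\alpha_k v_k}{S_N}+\frac{\sum_{k=0}^N e_k}{S_N}.
\]
Since $x_k\to x^*$, the numerator on the left is bounded, and $S_N\to\infty$, so the left side tends to $0$. The last term tends to $0$ by \eqref{eq:noise_step}. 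Hence the weighted averages $\bar v_N:=S_N^{-1}\sum_{k\le N}\alpha_k v_k$ converge to $0$.

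Let $G:=\limsup_{(y,\alpha)\to(x^*,\underline\alpha),\,\alpha>0}\alpha^{-1}F(y,\alpha)$. Local boundedness of the scaled map near $(x^*,\underline\alpha)$ gives two facts. First, since $(x_k,\alpha_k)\to(x^*,\underline\alpha)$, the $v_k$ are eventually uniformly bounded. Second, $G$ is compact: it is closed as an outer limit and bounded by local boundedness. We also need $G\neq\emptyset$. This holds because the bounded sequence $(v_k)$ has a cluster point, and that cluster point lies in $G$. Thus $\co G$ is a nonempty compact convex set.

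The key claim is $d(v_k,G)\to0$. Suppose not. Then some subsequence stays at distance at least $\epsilon$ from $G$. By boundedness it has a further convergent subsequence, whose limit lies in $G$ by the definition of the outer limit, since $(x_{k_j},\alpha_{k_j})\to(x^*,\underline\alpha)$ with $\alpha_{k_j}>0$. This is a contradiction.

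Now suppose $0\notin\co G$. Strict separation of a point from a compact convex set gives a unit vector $u$ and $\eta>0$ with $\langle u,g\rangle\ge\eta$ for all $g\in\co G$. Since $d(v_k,G)\to0$, we get $\langle u,v_k\rangle\ge\eta/2$ for all $k\ge k_0$. Therefore
\[
\langle u,\bar v_N\rangle\ge \frac{\eta}{2}\cdot\frac{\sum_{k=k_0}^N\alpha_k}{S_N}-\frac{C\sum_{k<k_0}\alpha_k}{S_N},
\]
where $C$ bounds $|v_k|$ for $k<k_0$; each such $v_k$ is a single vector, so $C$ is finite. As $N\to\infty$ the right side tends to $\eta/2>0$ because $S_N\to\infty$. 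This contradicts $\bar v_N\to0$, so $0\in\co G$.

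The main technical point is the claim that the selections $v_k$ eventually lie near $G$, together with ensuring $\co G$ is compact so that strict separation applies. Both rest on local boundedness of the scaled map; the rest is a Cesàro-type averaging argument. Note also that the $\alpha_k$-weighted average defining $\bar v_N$ is dominated by the tail, because $\sum\alpha_k=\infty$ makes the finitely many early terms negligible.
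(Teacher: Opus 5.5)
Your proof is correct and follows essentially the same route as the paper. Both arguments separate $0$ from the compact set $\co G$, use compactness of the bounded selections $v_k$ to get $\langle u,v_k\rangle\geq\eta/2$ eventually, and contradict the vanishing of the $\alpha_k$-weighted averages forced by $x_k\to x^*$, $\sum_k\alpha_k=\infty$, and \eqref{eq:noise_step}. The differences are cosmetic: you pass through $d(v_k,G)\to0$ and note $G\neq\emptyset$ explicitly, whereas the paper argues directly on $\langle u,v_k\rangle$.
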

\begin{proof}
\runinheading{Step~1: Separate the limiting directions}
Let \(G\) denote the outer limit in the conclusion.  Local boundedness of
\((y,\alpha)\mapsto F(y,\alpha)/\alpha\) makes \(G\), and hence its convex
hull \(\co G\), compact.  If \(0\notin\co G\), the Hahn--Banach separation
theorem gives \(u\in\mathbb R^n\) and \(\eta>0\) such that
\(\langle u,z\rangle\geq\eta\) for every \(z\in\co G\).

Pick $w_k\in F(x_k,\alpha_k)$ with $x_{k+1}=x_k+w_k+e_k$ and set $v_k:=w_k/\alpha_k$ for all $k\in \mathbb{N}$.
Local boundedness and $(x_k,\alpha_k)\to(x^*, \underline{\alpha})$ imply that $(v_k)_{k\in \mathbb{N}}$ is bounded. We next prove that there is $K\in \mathbb{N}$ such that $\langle u,v_k\rangle \geq \eta/2$ for all $k\geq K$. If not, there exists a subsequence $k_j\to\infty$ with $\langle u,v_{k_j}\rangle \leq \eta/2$.
Because $(v_k)_{k\in \mathbb{N}}$ is bounded and $(x_{k_j},\alpha_{k_j})\to(x^*,\underline{\alpha})$, we may pass to a further subsequence (not relabeled) with
$v_{k_j}\to v$. By the definition of the outer limit, $v\in G \subset \co G$.
As the linear map $\langle u,\cdot\rangle$ is continuous, $\langle u,v\rangle \leq \eta/2$, which is a contradiction.

\runinheading{Step~2: Average the updates}
For any $N\geq K$, let \(A_N:=\sum_{k=0}^{N-1}\alpha_k\) and
\(\lambda_k^{(N)}:=\alpha_k/A_N\). Then
\(\lambda_k^{(N)}\geq0\) and
\(\sum_{k=0}^{N-1}\lambda_k^{(N)}=1\).
Telescoping gives the averaging identity
\begin{equation}\label{eq:average}
    \frac{x_N-x_0}{A_N}=\sum_{k=0}^{N-1}\lambda_k^{(N)}\,v_k+\frac{\sum_{k=0}^{N-1}e_k}{A_N}. 
\end{equation}
Taking the inner products of $u$ and the average in \eqref{eq:average} yields
\[
\begin{aligned}
\left\langle u,\frac{x_N-x_0}{A_N}\right\rangle
&=
\sum_{k=0}^{N-1}\lambda_k^{(N)}\langle u,v_k\rangle
+\left\langle u,\frac{\sum_{k=0}^{N-1}e_k}{A_N}\right\rangle \\
&\geq
\sum_{k=0}^{K-1}\lambda_k^{(N)}\langle u,v_k\rangle
+\sum_{k=K}^{N-1}\lambda_k^{(N)}\eta/2
+\left\langle u,\frac{\sum_{k=0}^{N-1}e_k}{A_N}\right\rangle .
\end{aligned}
\]
As $N\to\infty$, the left-hand side converges to $0$ since $x_N \to x^*$ and $A_N\to \infty$. For the right-hand side, since  $\sum_{k=0}^{K-1}\lambda_k^{(N)} = \sum_{k = 0}^{K-1}\alpha_k/A_N\to 0$ and $\sum_{k=0}^{N-1}e_k/A_N \to 0$, the first and the third terms both vanish, while the middle term converges to $\eta/2>0$. This is a contradiction. Thus, $0\in \co G$.
\end{proof}
To interpret Proposition~\ref{prop:converge->critical}, consider update maps of the form $F(x,\alpha):=\alpha H(x)$ for some set-valued map $H\colon\mathbb{R}^n \rightrightarrows \mathbb{R}^n$, as in (stochastic) subgradient methods \cite{shor1962application,robbins1951stochastic}, where one seeks a point $x^*$ with $0\in H(x^*)$. If a realization of \eqref{eq:DI_with_noise} converges to $x^*$ and $H$ is outer semicontinuous and locally bounded at $x^*$, then
\begin{equation*}
    0\in \co\,\limsup_{\substack{(y,\alpha)\to(x^*,\underline{\alpha})\\\alpha>0}}\ \frac{1}{\alpha}\,F(y,\alpha) = \co\,\limsup_{y\to x^*} H(y) \subset \co H(x^*).
\end{equation*}
Hence $x^*$ solves the target inclusion whenever $H$ is convex-valued. The next proposition shows that the same conclusion holds for the slightly enlarged update map $F(x,\alpha):=\alpha\,\co H(B(x,\alpha))$, a form that will be useful in Section~\ref{sec:first-order} for the study of inexact subgradient methods.


\begin{proposition}[Enlargement limits]\label{prop:G-in-coHx}
Let $H\colon\mathbb{R}^n \rightrightarrows \mathbb{R}^n$ be an outer semicontinuous set-valued map that is locally bounded at a point $x\in\mathbb{R}^n$. Then
\begin{equation*}
    \limsup_{\substack{y\to x\\\alpha\downarrow0}} \co H(B(y,\alpha))\subset \co H(x).
\end{equation*}
\end{proposition}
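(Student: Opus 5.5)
Fix $v$ in the outer limit. By definition there are $y_k\to x$, $\alpha_k\downarrow0$ (with $\alpha_k>0$) and $v_k\in\co H(B(y_k,\alpha_k))$ with $v_k\to v$. The plan is to represent each $v_k$ by a bounded number of points of $H$, pass to a common convergent subsequence, and then use outer semicontinuity to place each limiting piece in $H(x)$.

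\textbf{Step 1 (bounded convex combinations).} By Carathéodory's theorem, write
\[
v_k=\sum_{i=1}^{n+1}\lambda_k^i w_k^i,\qquad \lambda_k^i\ge0,\quad \sum_{i=1}^{n+1}\lambda_k^i=1,
\]
with $w_k^i\in H(z_k^i)$ and $z_k^i\in B(y_k,\alpha_k)$. Then $|z_k^i-x|\le|y_k-x|+\alpha_k\to0$, so $z_k^i\to x$ for every $i$.

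\textbf{Step 2 (compactness).} Local boundedness of $H$ at $x$ gives a neighborhood $V$ of $x$ and $R>0$ with $|H(V)|\le R$. For large $k$ all $z_k^i$ lie in $V$, so $(w_k^i)$ is bounded. The weights lie in the compact simplex. Passing to a common subsequence (finitely many indices $i$), we get $\lambda_k^i\to\lambda^i$ and $w_k^i\to w^i$, and the limit weights again satisfy $\lambda^i\ge0$ and $\sum_i\lambda^i=1$.

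\textbf{Step 3 (outer semicontinuity).} Since $z_k^i\to x$ and $w_k^i\in H(z_k^i)$ with $w_k^i\to w^i$, we have $w^i\in\limsup_{z\to x}H(z)\subset H(x)$. Hence
\[
v=\lim_k v_k=\sum_{i=1}^{n+1}\lambda^i w^i\in\co H(x).
\]

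The only point needing care is that $\co$ of a union of sets is not directly stable under limits. Carathéodory's theorem resolves this: it fixes the number of terms at $n+1$, so the compactness argument of Step 2 applies to finitely many sequences at once. Note also that $\co H(x)$ need not be closed, but this is not an issue, since $v$ is exhibited explicitly as a finite convex combination of points of $H(x)$.
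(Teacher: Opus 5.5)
Your proof is correct and follows the paper's argument essentially step for step. Both use Carath\'eodory's theorem with $n+1$ points, then local boundedness of $H$ at $x$ together with compactness of the simplex to extract a common convergent subsequence, and finally outer semicontinuity at $x$ to place each limiting point in $H(x)$, so that $v$ is an explicit convex combination of points of $H(x)$.
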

\begin{proof}
Denote by \(G(x):= \limsup_{y\to x,\,\alpha\downarrow0} \co H(B(y,\alpha))\).
Thus $v\in G(x)$ iff there exist $y_k\to x$ and positive numbers
$\alpha_k\to0$ together with $u_k\in \co H(B(y_k,\alpha_k))$ such that
$u_k\to v$. We will show that any cluster point of such a sequence $(u_k)$ lies in $\co H(x)$.

Fix $v\in G(x)$ and corresponding sequences $(y_k,\alpha_k)$ and $u_k\in \co H(B(y_k,\alpha_k))$ with $u_k\to v$. By Carath\'eodory's theorem, for each $k$ there exist $z_{k,i}\in B(y_k,\alpha_k)$, $h_{k,i}\in H(z_{k,i})$, and $\lambda_{k,i}\geq0$ for $i=0,\dots,n$, with $\sum_{i=0}^{n}\lambda_{k,i}=1$ such that \(u_k=\sum_{i=0}^{n}\lambda_{k,i}\, h_{k,i}\).

For each fixed \(i\), since \(z_{k,i}\in B(y_k,\alpha_k)\), \(y_k\to x\),
and \(\alpha_k\to0\), we have
\(|z_{k,i}-x|\leq |z_{k,i}-y_k|+|y_k-x|
\leq \alpha_k+|y_k-x|\to0\).
Thus all the points \(z_{k,i}\), \(i=0,\ldots,n\), lie in a fixed
neighborhood of \(x\) for all sufficiently large \(k\). By local
boundedness of \(H\) at \(x\), there exists \(M>0\) such that
\(|h_{k,i}|\leq M\) for all large \(k\) and all \(i=0,\ldots,n\).
Moreover, the coefficient vectors
\((\lambda_{k,0},\ldots,\lambda_{k,n})\) belong to the probability simplex,
which is compact.  After passing to a subsequence, we may therefore assume
\(\lambda_{k,i}\to \lambda_i\) for \(i=0,\ldots,n\).
Since the sequences \((h_{k,i})_k\) are bounded for each fixed \(i\), a
diagonal extraction gives \(h_{k,i}\to h_i\) for \(i=0,\ldots,n\).
 Since $H$ is outer semicontinuous at $x$, we have $h_i\in H(x)$ for each $i$. Taking limits in the convex representation of $u_k$ gives
\[
v\ =\ \lim_{k\to\infty}u_k\ =\ \lim_{k\to\infty}\sum_{i=0}^{n}\lambda_{k,i}h_{k,i}\ =\ \sum_{i=0}^{n}\lambda_i\,h_i.
\]
Since $\lambda_i\geq 0$, $\sum_{i=0}^{n}\lambda_i=1$, and $h_i\in H(x)$ for all $i$, we conclude $v\in \co H(x)$. As $v\in G(x)$ was arbitrary, the inclusion $G(x)\subset \co H(x)$ follows.
\end{proof}

\section{Convergence of first-order methods for definable functions}\label{sec:first-order}
This section applies the framework from Section~\ref{sec:criterion} to inexact,
momentum, and stochastic first-order methods for nonsmooth optimization. We
consider the unconstrained problem \(\inf_{x\in \mathbb{R}^n} f(x)\), where
$f\colon\mathbb{R}^n \to \mathbb{R}$ is locally Lipschitz. Our focus is on
first-order methods that access only its Clarke subdifferential
$\partial f\colon\mathbb{R}^n \rightrightarrows \mathbb{R}^n$
\cite{clarke1975,clarke1990}. Recall that $x^*\in \mathbb{R}^n$ is called a
critical point of $f$ if $0\in \partial f(x^*)$. The goal of this section is
to show that bounded sequences generated by several first-order methods
converge to critical points.




Because first-order methods may behave erratically on merely locally Lipschitz objectives \cite{wang2005subdifferentiability,daniilidis2020pathological}, we assume additionally that $f$ is definable in a polynomially bounded o-minimal structure \cite{van1998tame,coste2000introduction}. We next recall the definition of o-minimal structures. A structure on $(\mathbb{R},+,\cdot)$ is a family $\mathcal{D}=(\mathcal{D}_n)_{n \geq 1}$ where, for each $n\geq1$, $\mathcal{D}_n$ is a Boolean algebra of subsets of $\mathbb{R}^n$ satisfying:
\begin{enumerate}
  \item If $A\in\mathcal{D}_n$, then $\mathbb{R}\times A$ and $A\times\mathbb{R}$ lie in $\mathcal{D}_{n+1}$.
  \item For every polynomial $P\in\mathbb{R}[X_1,\dots,X_n]$, the zero set $\{x\in\mathbb{R}^n:P(x)=0\}$ lies in $\mathcal{D}_n$.
  \item If $A\in\mathcal{D}_n$ and $\pi\colon\mathbb{R}^n\to\mathbb{R}^{n-1}$ is the projection onto the first $n-1$ coordinates, then $\pi(A)\in\mathcal{D}_{n-1}$.
\end{enumerate}
The structure is o-minimal (short for order-minimal) if every set in $\mathcal{D}_1$ is a finite union of points and open intervals.
A subset of $\mathbb{R}^n$ that lies in $\mathcal{D}_n$ is called definable, and a function is definable if its graph is definable. The structure is polynomially bounded if for every definable $f\colon\mathbb{R}\to\mathbb{R}$ there exist $a>0$ and $N\in\mathbb{N}$ such that $|f(x)|<x^N$ for all $x>a$ \cite[p.~510]{van1996geometric}.
Examples of polynomially bounded o-minimal structures include semialgebraic sets \cite{tarski1951decision,seidenberg1954new} and globally subanalytic sets \cite{van1996geometric}. From now on, we fix a polynomially bounded o-minimal structure and say a set/function is definable if it is definable in this structure.

Definable sets and functions enjoy tame features such as the \L{}ojasiewicz inequality \cite{lojasiewicz1958,bierstone1988semianalytic} and its consequences \cite{lojasiewicz1963propriete,van1996geometric}. They also admit stratifications with strong geometric control \cite{parusinski1994lipschitz,kurdyka1994wf,fischer2007minimal,nguyen2016lipschitz}, which are well suited to verifying the stratified descent condition (Definition \ref{def:stratified_descent}).
Many objectives arising in machine learning, including neural-network models
built from tame elementary operations, are definable; this viewpoint is used
in the variational analysis of automatic differentiation and stochastic
methods for learning models \cite{bolte2020conservative,davis2020stochastic}.  Semialgebraic and globally
subanalytic models, in particular, fall within polynomially bounded
o-minimal structures \cite{bierstone1988semianalytic,van1996geometric}.
We use standard asymptotic notation: for positive sequences
\((a_k)_{k\in\mathbb N}\) and \((b_k)_{k\in\mathbb N}\), write
\(a_k=O(b_k)\) if \(a_k\leq Cb_k\) for some \(C>0\) and all
sufficiently large \(k\), \(a_k=o(b_k)\) if \(a_k/b_k\to0\), and
\(a_k=\Theta(b_k)\) if both \(a_k=O(b_k)\) and \(b_k=O(a_k)\).
We use \(o(\cdot)\) analogously when another limiting variable is indicated.

\subsection{Inexact subgradient method}\label{sec:inexact}
We first consider an inexact subgradient method with updates given by the difference inclusion:
\begin{align}\label{eq:inexact}
        x_{k+1}\in x_k-\alpha_k\partial f(x_k;c_b\alpha_k^\xi)+e_k,\quad \forall k\in \mathbb{N},
    \end{align}
where
\(\partial f(x;r):=\co\bigcup_{z\in B(x,r)}\partial f(z)\)
is a Goldstein-type enlargement of the Clarke subdifferential
\cite{goldstein1977optimization}. Here $c_b\geq0$ and $\xi>0$ govern the radius of the
enlargement relative to the step size. The rule \eqref{eq:inexact} generalizes
the basic (or exact) subgradient method (the case $c_b=0$ and $e_k=0$) by
allowing directions drawn from the convex hull of nearby subgradients and by
admitting an additive error. This model covers Goldstein’s conceptual descent
method \cite{goldstein1977optimization} and its approximations such as gradient
sampling \cite{burke2005robust}, with properly chosen search radius. Moreover,
we show below that the momentum and stochastic subgradient methods reduce to
\eqref{eq:inexact} through a shadow transformation \cite{deng2021minibatch} and the windowing argument
of \cite{tadic2015convergence,qiu2024convergence}, respectively.

We next derive a finite-horizon diameter estimate for bounded realizations of
\eqref{eq:inexact} that remain near a level set of \(f\).
The estimate has the same form as the abstract bound in
Theorem~\ref{thm:diameter}, but the error tails are now written explicitly in
terms of the step sizes and the additive perturbations.
\begin{theorem}[Inexact diameter bound]\label{thm:inexact_diameter}
    Let $f\colon\mathbb{R}^n\to \mathbb{R}$ be locally Lipschitz and definable. Fix $\xi,\tau,c_b>0$ and $c_d\geq1$. For any bounded $X\subset \mathbb{R}^n$, there exists $\bar{\theta}\in (0,1)$ such that for any $\theta\in(\bar{\theta},1)$, there exist $\hat{\alpha},\beta,\varepsilon,\varsigma_1,\varsigma_2>0$ such that the following holds. For any realization $(x_k)$ of \eqref{eq:inexact} satisfying $\alpha_k\in(0,\hat{\alpha}]$, $\sup_{i>k} \alpha_{i}/\alpha_k\leq c_d$, and $|e_k|\leq \alpha_k^\tau$ for every \(k\in\mathbb N\), and every $K\in \mathbb{N}$ satisfying $x_{\llbracket 0,K \rrbracket} \subset X\cap [|f|\leq\varepsilon]$, we have
        \begin{align*}
        \operatorname{diam}(x_{\llbracket 0,K \rrbracket}) \leq~&\varsigma_1 \left(\varphi_\theta(f(x_0))-\varphi_\theta(f(x_K))\right)\\
        &+\varsigma_2\left(\alpha_0^\beta+ \sum_{k=0}^{K-1}\alpha_k^{1+\beta} +\left(\sum_{k=0}^{K-1}\alpha_k^{1+\beta}\right)^{1-\theta}+  \sum_{k=0}^{K-1} \alpha_k \left(\sum_{j=k}^{K-1} \alpha_j^{1+\beta}\right)^{\theta}\right)\\
        &+\varsigma_2\left(\sum_{k=0}^{K-1}\alpha_k\left(\sum_{j=k}^{K-1}|e_j|\right)^{\theta}+\sum_{k=0}^{K-1}|e_k|+\left(\sum_{k=0}^{K-1}|e_k|\right)^{1-\theta}\right).
    \end{align*}
\end{theorem}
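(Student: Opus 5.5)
The plan is to verify that \eqref{eq:inexact}, restricted to $X\cap[|f|\leq\varepsilon]$, exhibits stratified descent in the sense of Definition~\ref{def:stratified_descent}, and then read off the bound from Theorem~\ref{thm:diameter}.

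\textbf{Stratification and error functions.} Take a definable $C^3$ stratification $\{M_i\}$ of a bounded neighborhood $\widetilde X$ of $\overline X$, compatible with $[f=0]$ and $[f>0],[f<0]$. Refine it so that the Verdier/$(w)$-type condition and the projection formula hold: $f$ is $C^3$ on each stratum, and for $x$ near $M_i$ every Clarke subgradient $v\in\partial f(x)$ satisfies $|P_{T_{M_i}(y)}v-\nabla_{M_i}f(y)|\leq C|x-y|$, where $y=P_{M_i}(x)$. Exponents $\beta_i<\gamma_i(1-\theta)$ with $\beta_i>\gamma_j$ for $M_j\subset\partial M_i$ are then chosen recursively by dimension, from the top strata downward, so that the separation condition (1) holds for small $\hat\alpha$. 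The error functions are chosen to absorb the explicit tail terms:
\[
g_1=\varsigma\Bigl(\sum_{j\geq k}\alpha_j^{1+\beta}+\sum_{j\geq k}|e_j|\Bigr),
\]
and $g_{2,k}=\varsigma\alpha_k\bigl(\sum_{j\geq k}\alpha_j^{1+\beta}+\sum_{j\geq k}|e_j|\bigr)^\theta+\varsigma(\alpha_k^{1+\beta}+|e_k|)$. These are truncated at $K$ by replacing the realization after time $K$ with zero error and step sizes (the formal device of stopping the sequence). Item (i) is then immediate from local Lipschitzness, $|e_k|\leq\alpha_k^\tau$, and $\tau\leq1$ after shrinking $\tau$ if necessary.

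\textbf{Projected descent.} Item (ii) is the main work and the expected obstacle. For $x_k\in\mathcal N(i,\alpha_k)$, the iterate lies within $c_i\alpha_k^{\beta_i}$ of $M_i$ but is not too close to any lower stratum. Hence the projection $y_k=P_{M_i}(x_k)$ is a $C^2$ function on a tubular neighborhood whose reach is at least of order $\alpha_k^{\gamma_\ell}$. Using $DP_{M_i}$ and the projection formula, we get
\[
y_{k+1}=y_k-\alpha_k\nabla_{M_i}f(y_k)+O\bigl(\alpha_k^{1+\beta}+|e_k|\bigr),
\]
where $\beta$ comes from $\alpha_k\cdot\alpha_k^{\beta_i}$, from the curvature term $\alpha_k^2/\mathrm{reach}$, and from the Goldstein radius $c_b\alpha_k^\xi$. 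This is a perturbed Riemannian gradient step on a $C^2$ function over a manifold. A Taylor expansion gives
\[
f(y_{k+1})\leq f(y_k)-c\,\alpha_k|\nabla_{M_i}f(y_k)|^2+O\bigl(\alpha_k^{1+\beta}+|e_k|\bigr).
\]
With the $g_1$ shift this becomes an exact descent of $f(y_k)+g_{1,k}$. Next, apply the Kurdyka--\L{}ojasiewicz inequality for $f|_{M_i}$ near the level $0$, uniformly in the polynomially bounded setting: $|\nabla_{M_i}f(y)|\geq c|f(y)|^{\theta}$ for $\theta>\bar\theta$ on $[|f|\leq\varepsilon]$. The standard KL length argument, splitting according to whether $|\nabla f|$ dominates $(g_{1,k})^\theta$, then yields \eqref{eq:y_k^i} with $\psi=c\varphi_\theta$. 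The $g_{1}^\theta$ cross terms produce exactly the $\alpha_k(\sum\cdot)^\theta$ sums in $g_2$. The delicate points are keeping the constants uniform across strata and handling sign changes of $f+g_1$ through $\varphi_\theta$ being odd.

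\textbf{Conclusion.} Finally, invoke Theorem~\ref{thm:diameter} with $c_d$ given. Bound $\psi(g_{1,0})+\psi(g_{1,K})$ by $(\sum\alpha_k^{1+\beta})^{1-\theta}+(\sum|e_k|)^{1-\theta}$ using subadditivity of $t^{1-\theta}$. Bound $C\alpha_0^{\underline\beta(1-\theta)}$ by $\alpha_0^\beta$ after renaming $\beta$. Comparing $\psi\circ f(x_0)$ and $\psi\circ f(x_K)$ in place of the projected values is already built into Theorem~\ref{thm:diameter}. This yields the stated estimate with suitable $\varsigma_1,\varsigma_2$.
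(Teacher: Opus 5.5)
Your architecture matches the paper's. You verify Definition~\ref{def:stratified_descent} on $X\cap[|f|\leq\varepsilon]$ with $g_{1,k}=Q\sum_{j\geq k}(\alpha_j^{1+\beta}+|e_j|)$ and $g_{2,k}=Q(\alpha_k^{1+\beta}+\alpha_kg_{1,k}^\theta+|e_k|)$ on zero-extended data, use one signed \L{}ojasiewicz exponent for all strata, and finish with Theorem~\ref{thm:diameter} and subadditivity.

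The gap is in how you verify item~(ii) and the separation condition. You assume three things: a single Verdier constant $C$ for all $x$ near $M_i$, a bounded Hessian of $f|_{M_i}$ in your Taylor step, and a reach of order $\alpha_k^{\gamma_\ell}$ on $\mathcal N(i,\alpha_k)$. In effect, you assume the geometry degenerates at most linearly in the distance to lower strata. This fails for general definable stratifications: the constants blow up near $\partial M_i$ polynomially, with exponents that can be arbitrarily large. For example, $f(x,y)=\sqrt{x^{2m}+y^2}$ is semialgebraic, locally Lipschitz, smooth off the origin, and satisfies $\partial_y^2f(x,0)=|x|^{-m}$. Two consequences follow:
\begin{enumerate}
\item On the open stratum, the Lipschitz constant of $\nabla f$ is of order $d^{-m}$ at distance $d$ from the point stratum. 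Because of the Goldstein ball, this is exactly the Verdier constant there.
\item The graph of $f$ is a two-dimensional stratum in $\mathbb R^3$ whose reach at distance $d$ from the origin is only of order $d^{m}$.
\end{enumerate}
Under your constraints $\gamma_j<\beta_i<\gamma_i(1-\theta)$ alone, the remainders $\alpha_k^2\operatorname{Lip}(\nabla_{M_i}f)$, $\alpha_kd_k\operatorname{Lip}(DP_{M_i})$, and $\alpha_k\cdot(\text{Verdier constant})\cdot(d_k+c_b\alpha_k^\xi)$ need not be $O(\alpha_k^{1+\beta})$ for any $\beta>0$. Likewise, nothing in your plan yields the uniform disjointness required by the separation condition for unrelated strata whose closures meet. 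That needs a polynomial lower bound on their separation away from the common frontier.

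The missing ingredient is the quantitative neighborhood result, Proposition~\ref{proposition:neighborhoods of cells} (from \cite[Proposition~3.5]{lai2025diameter}). The stratification comes with exponents $\eta_i\geq1$ such that, whenever $\eta_i\gamma_j\leq\beta_i$ for all $M_j\subset\partial M_i$, the following hold on $\mathcal N_0(i,\alpha)$: $P_{M_i}$ is single-valued, Lipschitz and $C^2$; neighborhoods of unrelated strata are disjoint; and the Lipschitz constants of $\nabla_{M_i}f$ and $DP_{M_i}$, together with the Verdier constant, are $O(\alpha^{-\omega_i})$ with $\omega_i\leq\beta_i/4$. The exponents must therefore be chosen after $\eta_*=\max_i\eta_i$ is known. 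The paper takes $\gamma_i=\delta^{d_*-\dim M_i+1}$ with $\delta<\tfrac12\min\{\tau_0,\xi/(1-\theta),(1-\theta)/\eta_*\}$. The exponents must also satisfy $\beta_i<\xi$, so that the Goldstein ball $B(x_k,c_b\alpha_k^\xi)$ stays inside $\mathcal N_0(i,\alpha_k)$ where the Verdier estimate holds; your plan never relates $\xi$ to the strata exponents. Only then are the remainders $\alpha_k^{1+2\beta_i-2\omega_i}+\alpha_k^{1+2\xi-2\omega_i}+\alpha_k^{1+\beta_i-\omega_i}+\alpha_k^{2-2\omega_i}+|e_k|$ dominated by $\alpha_k^{1+\beta}+|e_k|$, with $\beta=\min_i\{\beta_i-\omega_i,1-2\omega_i,\underline\beta(1-\theta)\}>0$. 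Once you import this proposition and exponent choice, the rest of your plan goes through as in the paper: Lemma~\ref{lemma:inexact_projected_length} with $L_{f,k}=L_{V,k}=L_{P,k}=A\alpha_k^{-\omega_i}$, the KL length argument, then Theorem~\ref{thm:diameter}.
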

The proof of Theorem~\ref{thm:inexact_diameter}, given in Appendix~\ref{sec:proof_inexact}, follows the same scheme
as \cite[Corollary~4.2]{lai2025diameter}.  As in
\cite[Section~4.1]{lai2025diameter}, we choose a signed desingularizing
function uniformly across the finitely many strata, and combine the
scale-dependent neighborhoods of \cite[Proposition~3.5]{lai2025diameter}
with a projected-length estimate modeled on
\cite[Lemma~4.1]{lai2025diameter}.  The Goldstein enlargement and the additive
perturbations require, respectively, Verdier-type tangential control and
explicit tail-error terms.  After choosing compatible scale exponents, these
estimates verify stratified descent (Definition~\ref{def:stratified_descent}),
and Theorem~\ref{thm:diameter} gives the stated bound.

Theorem~\ref{thm:criterion} cannot yet be applied directly: the estimate in
Theorem~\ref{thm:inexact_diameter} holds only inside a fixed level band,
whereas convergence of the potential and vanishing spatial increments are not
yet known.  The next proposition shows that the same localized diameter
estimate first traps the sequence in the band and then yields both missing
properties.  In particular, it does not assume convergence of the potential.

\begin{proposition}[Local diameter trapping]
\label{proposition:local_trapping_diameter}
Let \(f\colon\mathbb{R}^n\to\mathbb{R}\) be locally Lipschitz, let
\((x_k)\) be a bounded sequence, and let \(x^*\) be an
accumulation point of \((x_k)\).  Suppose that
\(|f(x_{k+1})-f(x_k)|\to0\).
Suppose moreover that there exist \(\varepsilon>0\) and a continuous,
strictly increasing function \(\chi\colon\mathbb R\to\mathbb R\) such that
\begin{align}\label{eq:local_diameter_trapping_condition}
    \limsup_{\substack{k_1,k_2\to\infty,\ k_1<k_2\\
x_{\llbracket k_1,k_2\rrbracket}\subset [|f-f(x^*)|\leq 2\varepsilon]}}
\operatorname{diam}(x_{\llbracket k_1,k_2\rrbracket})
+\chi(f(x_{k_2}))-\chi(f(x_{k_1}))
\leq 0 .
\end{align}
Then \(x_k\to x^*\).
\end{proposition}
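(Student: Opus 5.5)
The plan is to first trap \(f(x_k)\) in the level band, then read off monotonicity of \(f(x_k)\) from \eqref{eq:local_diameter_trapping_condition}, and finally obtain a Cauchy property directly. This avoids Theorem~\ref{thm:criterion}, since \(|x_{k+1}-x_k|\to0\) is not assumed.

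Write \(f^*:=f(x^*)\) and \(I:=[|f-f^*|\leq 2\varepsilon]\). Since \((x_k)\) is bounded and \(f\) is continuous, the values \(f(x_k)\) lie in a compact interval \(J\). On \(J\), \(\chi\) is uniformly continuous and strictly increasing. Consequently there is a modulus \(\omega\), with \(\omega(t)\to0\) as \(t\downarrow0\), such that \(\chi(s)-\chi(t)\leq\rho\) for \(s,t\in J\) implies \(s-t\leq\omega(\rho)\). Since \(\operatorname{diam}\geq0\), condition \eqref{eq:local_diameter_trapping_condition} gives the following key consequence:
\[
\limsup_{\substack{k_1,k_2\to\infty,\ k_1<k_2\\ x_{\llbracket k_1,k_2\rrbracket}\subset I}}
\bigl(f(x_{k_2})-f(x_{k_1})\bigr)\leq 0.
\]
In words, along late segments inside the band, \(f\) cannot increase by a fixed amount.

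\textbf{Step 1 (trapping), the main obstacle.} I will show that \(|f(x_k)-f^*|\leq\varepsilon\) for all large \(k\). Since \(x^*\) is an accumulation point and \(f\) is continuous, \(f(x_k)\) comes arbitrarily close to \(f^*\) infinitely often. Suppose instead that \(|f(x_k)-f^*|>\varepsilon\) infinitely often. Two cases arise.

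\emph{Upward exits.} Suppose \(f(x_q)>f^*+\varepsilon\) for arbitrarily late \(q\). Take a late \(p<q\) with \(|f(x_p)-f^*|\leq\varepsilon/4\), and take \(p\) to be the last such index before \(q\). Let \(q'\in\llbracket p,q\rrbracket\) be the first index after \(p\) with \(f(x_{q'})>f^*+\varepsilon\). Every value in \(\llbracket p,q'-1\rrbracket\) exceeds \(f^*-\varepsilon\): otherwise \(f\), which has vanishing increments, would have to pass back through the window \(|f-f^*|\leq\varepsilon/4\), contradicting the maximality of \(p\). The final value satisfies \(f(x_{q'})\leq f^*+\varepsilon+o(1)\), again by the vanishing increments. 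Hence \(x_{\llbracket p,q'\rrbracket}\subset I\) for late indices, and \(f(x_{q'})-f(x_p)\geq 3\varepsilon/4\). This contradicts the key consequence.

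\emph{Downward exits.} Suppose \(f(x_q)<f^*-\varepsilon\) for arbitrarily late \(q\). Since \(f\) returns near \(f^*\) later, there is a first \(p>q\) with \(f(x_p)\geq f^*-\varepsilon/4\). Let \(q''<p\) be the last index before \(p\) with \(f(x_{q''})<f^*-\varepsilon\). The segment \(x_{\llbracket q'',p\rrbracket}\) then stays in \(I\), up to \(o(1)\) at its first index, which is harmless once \(k\) is large. Along it \(f\) rises by at least \(3\varepsilon/4-o(1)\), which again contradicts the key consequence.

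I expect this bookkeeping to be the delicate part. One must choose the segment endpoints as first or last crossing times so that the whole segment lies in \(I\), using only \(|f(x_{k+1})-f(x_k)|\to0\).

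\textbf{Step 2 (convergence of values).} After Step 1, every late segment lies in \(I\). The key consequence then gives \(f(x_{k_2})\leq f(x_{k_1})+o(1)\) uniformly over late \(k_1<k_2\). It follows that \(\limsup_k f(x_k)\leq\liminf_k f(x_k)\), so \(f(x_k)\) converges, and its limit is \(f^*\) along the subsequence converging to \(x^*\). Hence \(\chi(f(x_k))\to\chi(f^*)\) by continuity of \(\chi\).

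\textbf{Step 3 (Cauchy).} For late \(k_1<k_2\), the segment lies in \(I\). Condition \eqref{eq:local_diameter_trapping_condition} then yields
\[
\operatorname{diam}(x_{\llbracket k_1,k_2\rrbracket})\leq \chi(f(x_{k_1}))-\chi(f(x_{k_2}))+o(1)\to0.
\]
So \((x_k)\) is Cauchy and converges. Its limit must be the accumulation point \(x^*\), which proves \(x_k\to x^*\).
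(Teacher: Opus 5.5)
Your proof is correct, and it departs from the paper's argument in two places.

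\textbf{Trapping step.} The paper shows eventual containment in the $2\varepsilon$-band by a first-exit argument that relies on a Lipschitz constant $C_f$ of $f$ on the closure of the orbit. Up to the first exit, the segment has diameter at least $\varepsilon/(2C_f)$. This forces a strict decrease of $\chi\circ f$, so the exit must be downward, and the later return from below is ruled out by the same diameter lower bound. You instead discard the diameter term altogether and read \eqref{eq:local_diameter_trapping_condition} as approximate nonincrease of $f$ along late in-band segments. The modulus you need comes from continuity of $\chi^{-1}$ on the compact set $\chi(J)$, not from uniform continuity of $\chi$ itself. You then exclude upward and downward excursions past $f^*\pm\varepsilon$ separately, by choosing last and first crossing times. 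This uses only continuity of $f$ and gives the sharper band $[|f-f^*|\leq\varepsilon]$.

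\textbf{Final step.} The paper proves convergence of $\chi(f(x_k))$, extracts $|x_{k+1}-x_k|\to0$ from the pairs $k_2=k_1+1$, and then invokes Theorem~\ref{thm:criterion}. You observe that once the band is eventually invariant, every late pair is admissible in the limsup. The hypothesis together with convergence of $\chi(f(x_k))$ then forces the tail diameters to vanish directly, so $(x_k)$ is Cauchy.

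\textbf{Comparison.} Your route is shorter, self-contained, and works for merely continuous $f$. The paper's route keeps the proposition inside its general framework, where Theorem~\ref{thm:criterion} is the tool built for diameter control that is available only near $x^*$. In this proposition that localization is not actually needed.
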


\begin{proof}
Set \(f_*:=f(x^*)\).
The set \(X:=\overline{\{x_k:k\in\mathbb{N}\}}\) is compact.  By local
Lipschitz continuity, the restriction of \(f\) to \(X\) is
\(C_f\)-Lipschitz for some \(C_f>0\).  Since \(x^*\) is an accumulation
point, the sequence visits
\([|f-f_*|<\varepsilon]\) infinitely often.

\runinheading{Step~1: Trap the sequence in the level band}
We prove that the sequence eventually remains in
\([|f-f_*|\leq 2\varepsilon]\).  Suppose not.  Then we may choose \(k\) arbitrarily large
such that \(|f(x_k)-f_*|<\varepsilon\), and such that the sequence leaves
\([|f-f_*|\leq 2\varepsilon]\) later.  Let \(k'+1\) be the first index after
\(k\) such that
\(
    x_{k'+1}\notin [|f-f_*|\leq 2\varepsilon].
\)
Since \(|f(x_{m+1})-f(x_m)|<\varepsilon/2\) for all large \(m\), we have
\(
    3\varepsilon/2
    \leq |f(x_{k'})-f_*|
    \leq 2\varepsilon .
\)
Consequently,
\[
    \operatorname{diam}(x_{\llbracket k,k'\rrbracket})
    \geq |x_{k'}-x_k|
    \geq \frac{|f(x_{k'})-f(x_k)|}{C_f}
    \geq \frac{\varepsilon}{2C_f}.
\]
Applying \eqref{eq:local_diameter_trapping_condition} to the interval
\(\llbracket k,k'\rrbracket\), for \(k\) sufficiently large, yields
\(
    \chi(f(x_{k'}))<\chi(f(x_k)).
\)
Since \(\chi\) is strictly increasing, we get
\(
    f(x_{k'})<f(x_k)<f_*+\varepsilon .
\)
Together with \(|f(x_{k'})-f_*|\geq 3\varepsilon/2\), this gives
\(
    f(x_{k'})\leq f_*-3\varepsilon/2.
\)
The upper exit \(f(x_{k'+1})>f_*+2\varepsilon\) is then impossible, because
\(
    f(x_{k'+1})
    <
    f(x_{k'})+\varepsilon/2
    \leq
    f_*-\varepsilon .
\)
Thus the first exit is through the lower side:
\(
    f(x_{k'+1})<f_*-2\varepsilon .
\)

Since \(x^*\) is an accumulation point, the sequence later returns to
\([|f-f_*|\leq\varepsilon]\).  Let \(k^*>k'+1\) be the first return index.
Before this return, the sequence cannot jump from the lower side to the
upper side, because the value increments are smaller than \(\varepsilon/2\).
Hence \(f(x_m)<f_*-\varepsilon\) for all
\(m\in\llbracket k'+1,k^*-1\rrbracket\).
Let \(r\) be the last index in \(\llbracket k'+1,k^*-1\rrbracket\) such
that \(|f(x_r)-f_*|>2\varepsilon\), and set \(k^\dagger:=r+1\).  Then
\(
    x_{\llbracket k^\dagger,k^*\rrbracket}
    \subset [|f-f_*|\leq 2\varepsilon],
\)
and the small-increment condition gives
\(
    f(x_{k^\dagger})<f_*-3\varepsilon/2.
\)
Since \(x_{k^*}\in[|f-f_*|\leq\varepsilon]\), we have
\(
    f(x_{k^*})-f(x_{k^\dagger})
    \geq \varepsilon/2.
\)
Therefore,
\[
    \operatorname{diam}(x_{\llbracket k^\dagger,k^*\rrbracket})
    +
    \chi(f(x_{k^*}))-\chi(f(x_{k^\dagger})) \geq
    \operatorname{diam}(x_{\llbracket k^\dagger,k^*\rrbracket})
    \geq
    |x_{k^*}-x_{k^\dagger}|
    \geq
    \frac{\varepsilon}{2C_f},
\]
where we used again the monotonicity of \(\chi\).  This contradicts
\eqref{eq:local_diameter_trapping_condition}, because the pair
\(k^\dagger<k^*\) can be chosen arbitrarily far along the sequence.
Thus the sequence is eventually contained in \([|f-f_*|\leq 2\varepsilon]\).

\runinheading{Step~2: Prove potential convergence and small increments}
On this eventual tail, \eqref{eq:local_diameter_trapping_condition} applies to
every late interval.  We show that \(\chi(f(x_k))\) converges.  Let
\(\ell:=\liminf_k\chi(f(x_k))\), which is finite because \((x_k)\) is bounded
and \(\chi\circ f\) is continuous.  For any \(\eta>0\), choose an arbitrarily late
\(k_1\) with \(\chi(f(x_{k_1}))\leq\ell+\eta\).  Since the diameter is
nonnegative, \eqref{eq:local_diameter_trapping_condition} gives
\(\chi(f(x_{k_2}))\leq\ell+2\eta\) for every sufficiently late \(k_2>k_1\).
Thus \(\limsup_k\chi(f(x_k))\leq\ell\), proving convergence.  Applying the same
estimate with \(k_1=k\) and \(k_2=k+1\) now gives
\(|x_{k+1}-x_k|\to0\).

\runinheading{Step~3: Apply the diameter criterion}
The set \(U:=\{x:|f(x)-f_*|<2\varepsilon\}\) is an open neighborhood of
\(x^*\), and \eqref{eq:local_diameter_trapping_condition} implies
\eqref{eq:diam_cri} with \(L=\chi\circ f\) for blocks contained in \(U\).
Hence Theorem~\ref{thm:criterion} gives \(x_k\to x^*\).
\end{proof}

Combining Theorem~\ref{thm:inexact_diameter} with
Proposition~\ref{proposition:local_trapping_diameter} now gives a sufficient
condition for convergence of \eqref{eq:inexact}.  When the step sizes are of
order $1/k$, the error tails in the theorem vanish.
\begin{corollary}[Inexact subgradient convergence]\label{cor: converge}
    Let $f\colon\mathbb{R}^n\to \mathbb{R}$ be locally Lipschitz and definable. If $(x_k)$ is a bounded realization of \eqref{eq:inexact} with $c_b,\xi>0$, step sizes $\alpha_k=\Theta((k+1)^{-1})$, errors satisfying $|e_k|\leq \alpha_k^\tau$ for some $\tau>0$, and
    \[
        \sum_{k=0}^{\infty}\alpha_k\!\left(\sum_{j=k}^\infty |e_j|\right)^{\theta}<\infty
    \]
    for some $\theta\in (0,1)$, then $(x_k)$ converges to a critical point of $f$.
\end{corollary}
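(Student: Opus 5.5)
Let \(x^*\) be any accumulation point of the bounded sequence \((x_k)\). We will first apply Proposition~\ref{proposition:local_trapping_diameter} to show \(x_k\to x^*\), and then use Propositions~\ref{prop:converge->critical} and~\ref{prop:G-in-coHx} to show \(0\in\partial f(x^*)\).

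\emph{Reduction.} Replace \(f\) by \(f-f(x^*)\), which is still definable and locally Lipschitz; the realization and the Clarke subdifferential are unchanged. Let \(X\) be a bounded open set containing the closure of \(\{x_k\}\), and let \(C_f\) be a Lipschitz constant of \(f\) near \(X\). Apply Theorem~\ref{thm:inexact_diameter} with this \(X\), with the given \(\xi,\tau,c_b\), and with \(c_d\) equal to the ratio bound implied by \(\alpha_k=\Theta((k+1)^{-1})\). This yields \(\bar\theta\). The summability hypothesis involves some \(\theta_0\in(0,1)\). Since \(\sum_{j\ge k}|e_j|\to0\), the quantity \((\sum_{j\ge k}|e_j|)^{\theta}\) decreases as \(\theta\) increases, so the hypothesis also holds for every \(\theta\in[\theta_0,1)\). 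We therefore fix \(\theta\in(\max\{\bar\theta,\theta_0\},1)\) and obtain \(\hat\alpha,\beta,\varepsilon,\varsigma_1,\varsigma_2\). Shifting the index to some late \(k_1\), we may assume \(\alpha_k\le\hat\alpha\). The bounds \(|e_k|\le\alpha_k^\tau\to0\) and \(|\partial f(x;r)|\le C_f\) give \(|x_{k+1}-x_k|\le C_f\alpha_k+|e_k|\to0\), and hence \(|f(x_{k+1})-f(x_k)|\to0\). If necessary, shrink \(\varepsilon\) by a factor \(2\) so that the band \([|f|\le2\varepsilon]\) lies inside the band where the theorem applies.

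\emph{Error tails vanish.} This step is the main check. For a block \(\llbracket k_1,k_2\rrbracket\) in the band, the theorem applied from \(k_1\) bounds the diameter by \(\varsigma_1(\varphi_\theta(f(x_{k_1}))-\varphi_\theta(f(x_{k_2})))\) plus tail terms, each dominated by its value with sums extended to infinity from \(k_1\). We show each tail tends to \(0\) as \(k_1\to\infty\):
\begin{itemize}
\item Since \(\alpha_k\asymp1/k\), the quantities \(\alpha_{k_1}^\beta\), \(\sum_{k\ge k_1}\alpha_k^{1+\beta}\asymp k_1^{-\beta}\), and its \((1-\theta)\)-power all vanish.
\item The key term is \(\sum_{k\ge k_1}\alpha_k(\sum_{j\ge k}\alpha_j^{1+\beta})^{\theta}\asymp\sum_{k\ge k_1}k^{-1-\beta\theta}\), which is summable and therefore tends to \(0\). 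This is precisely where the order \(k^{-1}\) is used: for slower step sizes this sum would diverge.
\item The term \(\sum_{k\ge k_1}\alpha_k(\sum_{j\ge k}|e_j|)^\theta\) is the tail of a convergent series by hypothesis.
\item Finiteness of that series forces \(\sum_j|e_j|<\infty\). Indeed, otherwise every inner sum is infinite and the series diverges. Hence \(\sum_{k\ge k_1}|e_k|\) and its \((1-\theta)\)-power vanish as well.
\end{itemize}
It follows that condition \eqref{eq:local_diameter_trapping_condition} holds with \(\chi:=\varsigma_1\varphi_\theta\), which is continuous and strictly increasing. Proposition~\ref{proposition:local_trapping_diameter} then gives \(x_k\to x^*\).

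\emph{Criticality.} Write the iteration as \eqref{eq:DI_with_noise} with \(F(x,\alpha):=-\alpha\,\partial f(x;c_b\alpha^\xi)\). The scaled map \(F/\alpha\) is bounded by \(C_f\) near \(x^*\). We have \(\sum\alpha_k=\infty\) and \(\alpha_k\to0\), and \(\sum e_k\) converges, so \eqref{eq:noise_step} holds. Proposition~\ref{prop:converge->critical} gives \(0\in\co\limsup\,(-\partial f(y;c_b\alpha^\xi))\) as \((y,\alpha)\to(x^*,0)\). Given \(\rho>0\), for small \(\alpha\) we have \(c_b\alpha^\xi\le\rho\), so this outer limit is contained in \(-\limsup_{y\to x^*,\rho\downarrow0}\co\partial f(B(y,\rho))\). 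By Proposition~\ref{prop:G-in-coHx}, applied to \(H=\partial f\), which is outer semicontinuous and locally bounded, this set is contained in \(-\partial f(x^*)\), a convex set. Hence \(0\in\partial f(x^*)\).
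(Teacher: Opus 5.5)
Your proposal is correct and follows the paper's proof essentially step for step: you apply Theorem~\ref{thm:inexact_diameter} to $f-f(x^*)$ with an exponent above both thresholds, check that every tail term vanishes when $\alpha_k=\Theta((k+1)^{-1})$, invoke Proposition~\ref{proposition:local_trapping_diameter} with $\chi=\varsigma_1\varphi_\theta$, and conclude criticality from Propositions~\ref{prop:converge->critical} and~\ref{prop:G-in-coHx}. You also make explicit two points the paper uses silently: the error-summability hypothesis persists for larger exponents, and it forces $\sum_j|e_j|<\infty$. Your aside that slower step sizes would make the key sum diverge is not accurate as stated, since for $\alpha_k\asymp k^{-a}$ with $a<1$ close enough to $1$ (depending on $\beta$ and $\theta$) that sum still converges, but nothing in your argument rests on it.
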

\begin{proof}
Let \((x_k)\) be a bounded realization satisfying the stated assumptions,
and let \(X\) be a compact set containing the sequence.  Let \(x^*\) be an
accumulation point and set \(f_*:=f(x^*)\).

\runinheading{Step~1: Establish the local diameter condition}
We first verify the hypotheses of
Proposition~\ref{proposition:local_trapping_diameter}.  Apply
Theorem~\ref{thm:inexact_diameter} to the shifted function \(f-f_*\).  Choose
\(\theta_0\in(\theta,1)\) sufficiently close to \(1\), and larger than the
threshold in Theorem~\ref{thm:inexact_diameter}.  Since
\(\alpha_k=\Theta((k+1)^{-1})\) and
\(\sum_{k=0}^\infty\alpha_k(\sum_{j=k}^\infty |e_j|)^\theta<\infty\), all
error tails in Theorem~\ref{thm:inexact_diameter} vanish on intervals whose
left endpoint tends to infinity.  Let \(\varsigma_1>0\) denote the coefficient
of the signed-power difference supplied by the theorem, and set
\(\chi(s):=\varsigma_1\varphi_{\theta_0}(s-f_*)\).  This function is strictly
increasing.  Taking the localization radius below to be half the one supplied
by the theorem, we obtain some \(\varepsilon>0\) such that
\[
    \limsup_{\substack{k_1,k_2\to\infty,\ k_1<k_2\\
    x_{\llbracket k_1,k_2\rrbracket}\subset[|f-f_*|\leq2\varepsilon]}}
    \operatorname{diam}(x_{\llbracket k_1,k_2\rrbracket})
    +\chi(f(x_{k_2}))-\chi(f(x_{k_1}))\leq 0.
\]

\runinheading{Step~2: Verify small increments and convergence}
It remains to check the small-increment condition.  Since the sequence is
bounded and \(f\) is locally Lipschitz, there exists \(L>0\) such that
\(|\partial f(x_k;c_b\alpha_k^\xi)|\leq L\) for all large \(k\).  Hence
\(|x_{k+1}-x_k|\leq L\alpha_k+|e_k|\to0\),
and therefore \(|f(x_{k+1})-f(x_k)|\to0\).  Proposition
\ref{proposition:local_trapping_diameter} now gives \(x_k\to x^*\).

\runinheading{Step~3: Establish criticality}
Let \(\bar x=x^*\).  Since \(E_0=\sum_k |e_k|<\infty\), and since
\(\sum_k\alpha_k=\infty\), we have
\(\frac{\sum_{k=0}^{N}e_k}{\sum_{k=0}^{N}\alpha_k}\to0\).
Applying Proposition~\ref{prop:converge->critical} to
\(F(x,\alpha):=-\alpha\,\partial f(x;c_b\alpha^\xi)\), and then using
Proposition~\ref{prop:G-in-coHx} with \(r=c_b\alpha^\xi\downarrow0\) and
\(H=-\partial f\), gives
\(
    0\in \partial f(\bar x).
\)
Thus the sequence converges to a critical point of \(f\).
\end{proof}
\begin{remark}[Discrete argument]\label{remark:discrete}
An alternative proof of Corollary~\ref{cor: converge} can be obtained by
combining Theorem~\ref{thm:inexact_diameter} with existing results on function
value convergence and subsequential convergence for stochastic subgradient
methods \cite{davis2020stochastic,bolte2022long}, which are proved through the
analysis of continuous-time limits \cite{benaim2005stochastic}. Indeed, one may
verify \cite[Assumptions~A \& B]{davis2020stochastic} for the inexact
subgradient method \eqref{eq:inexact}; see
Corollary~\ref{lemma:perturbed_subgradient_value_convergence}. The proof above
does not rely on the differential inclusion, or ODE, method
\cite{ljung1977analysis,kushner1977general}. Thus it gives an entirely discrete
argument in the present setting. We view this perspective as complementary to
the ODE method, and it may lead to finer analyses. For example, a convergence
rate for the subgradient method with constant step size was recently established
in \cite{chzhen2026convergence}.
\end{remark}

\subsection{Momentum method}\label{sec:mom}
Having established convergence for the inexact subgradient method, we now
turn to the subgradient method with momentum.  Fix \(\kappa\in[0,1)\) and
\(\iota\in\mathbb{R}\), set \(x_{-1}=x_0\), and compute
\begin{equation}\label{eq:momentum-inexact}
\begin{aligned}
    \vartheta_k
    &:=x_k+\iota(x_k-x_{k-1}),
    \qquad v_k\in\partial f(\vartheta_k),\\
    x_{k+1}
    &:=x_k+\kappa(x_k-x_{k-1})-\alpha_kv_k,
    \qquad \forall k\in\mathbb{N},
\end{aligned}
\end{equation}
where \(\alpha_k>0\).  When \(\kappa=\iota=0\), this is the vanilla
subgradient method.  The choice \(\iota=0\)
reduces to Polyak's heavy-ball method
\cite{polyak1964some,zavriev1993heavy}, whereas \(\iota=\kappa\) gives a
fixed-parameter Nesterov-type look-ahead update
\cite{nesterov1983method}.
\begin{remark}[Momentum regimes]
We keep \(\kappa\) and \(\iota\) fixed while allowing the step sizes
\((\alpha_k)\) to diminish.  For nonsmooth, nonconvex stochastic
objectives, existing momentum analyses establish stationarity guarantees,
criticality of accumulation points, or convergence of objective values,
but not convergence of the full sequence to a single critical
point \cite{mai2020convergence,le2024nonsmooth}.  Earlier full-sequence
results for composite nonsmooth objectives are available for
constant-inertia proximal methods such as iPiano, where the nonsmooth term
is handled by a proximal step \cite{ochs2014ipiano,Ochs2016}.  For the
direct recurrence \eqref{eq:momentum-inexact}, the corresponding results
discussed here require smoothness: \cite{josz2023convergence} treat the deterministic
case, while \cite{qiu2024convergence} treat the stochastic setting.

Time-dependent momentum parameters arise in both practice and theory.
Momentum may be increased to a prescribed cap in practice or allowed to
approach one in accelerated and stochastic heavy-ball methods
\cite{sutskever2013importance,nesterov1983method,beck2009fast,
sebbouh2021almost}.  Our reduction instead relies on the fixed bound
\(\kappa<1\), which gives uniform geometric decay of past subgradients;
the regime \(\kappa_k\to1\) is left for future work.
\end{remark}

We obtain the following convergence result for
step sizes of order \(k^{-1}\).
\begin{theorem}[Momentum convergence]\label{theorem:momentum-converge}
Let \(f\colon\mathbb{R}^n\to\mathbb{R}\) be locally Lipschitz and definable.
For any fixed \(\kappa\in[0,1)\) and \(\iota\in\mathbb R\), every bounded
realization \((x_k)\) of \eqref{eq:momentum-inexact} with
\(\alpha_k=\Theta((k+1)^{-1})\) converges to a critical point of \(f\).
\end{theorem}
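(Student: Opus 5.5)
The plan is to reduce \eqref{eq:momentum-inexact} to the inexact subgradient method \eqref{eq:inexact} through a shadow sequence, as the paper announces. We set
\[
z_k:=x_k+\frac{\kappa}{1-\kappa}(x_k-x_{k-1}),
\]
so that $z_{k+1}-z_k=\frac{1}{1-\kappa}(x_{k+1}-x_k)-\frac{\kappa}{1-\kappa}(x_k-x_{k-1})=-\frac{\alpha_k}{1-\kappa}v_k$. The shadow $(z_k)$ is therefore an exact subgradient-type recursion with step $\tilde\alpha_k:=\alpha_k/(1-\kappa)=\Theta((k+1)^{-1})$. The only defect is that $v_k\in\partial f(\vartheta_k)$ is evaluated at $\vartheta_k$ rather than at $z_k$.

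First, boundedness of $(x_k)$ and local Lipschitzness give $|v_k|\leq L$. Unrolling $x_{k+1}-x_k=\kappa(x_k-x_{k-1})-\alpha_kv_k$ yields $|x_{k+1}-x_k|\leq L\sum_{j\leq k}\kappa^{k-j}\alpha_j\leq C\alpha_k$. This uses $\kappa<1$ together with the fact that $\alpha_j/\alpha_k$ is bounded by a geometric-friendly factor for $\alpha_k=\Theta(1/k)$; the old terms with $j\ll k$ are killed by $\kappa^{k-j}$. Consequently $|z_k-\vartheta_k|\leq(\frac{\kappa}{1-\kappa}+|\iota|)|x_k-x_{k-1}|\leq C'\alpha_k\leq c_b\tilde\alpha_k^{\xi}$ with $\xi=1$ and $c_b$ large. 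Hence $v_k\in\partial f(z_k;c_b\tilde\alpha_k)$, and $(z_k)$ is a bounded realization of \eqref{eq:inexact} with $e_k=0$ and step sizes $\tilde\alpha_k$. Finitely many initial indices can be discarded by shifting.

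With $e_k=0$, the summability hypothesis of Corollary~\ref{cor: converge} holds trivially, and $\alpha_k=\Theta((k+1)^{-1})$ is exactly the step regime it requires. The corollary then gives $z_k\to z^*$ with $0\in\partial f(z^*)$. Since $|x_k-z_k|\leq\frac{\kappa}{1-\kappa}|x_k-x_{k-1}|\leq C\alpha_{k-1}\to0$, we get $x_k\to z^*$, which is a critical point.

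The main obstacle is the uniform increment bound $|x_k-x_{k-1}|=O(\alpha_k)$. It is needed so that the discrepancy $|z_k-\vartheta_k|$ fits inside the Goldstein radius $c_b\tilde\alpha_k^\xi$ with a fixed $c_b$. It relies on the geometric decay from $\kappa<1$ dominating the polynomial ratio $\alpha_j/\alpha_k\leq C(k+1)/(j+1)$. Concretely, I would bound $\sum_j\kappa^{k-j}(k+1)/(j+1)$ by splitting at $j=k/2$. If needed, I would instead take $\xi<1$ to gain slack. Everything else is a direct appeal to Corollary~\ref{cor: converge}.
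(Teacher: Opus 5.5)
Your proposal is correct and matches the paper's proof step for step. The paper uses the same shadow $\widehat x_k=x_k+\frac{\kappa}{1-\kappa}(x_k-x_{k-1})$ with step $\alpha_k/(1-\kappa)$ (Lemma~\ref{lemma:momentum_shadow}), bounds $|x_k-x_{k-1}|=O(\widehat\alpha_k)$ by splitting the geometric convolution at about $k/2$, places $\vartheta_k$ in the Goldstein ball $B(\widehat x_k,c_b\widehat\alpha_k)$ with $\xi=1$, applies Corollary~\ref{cor: converge} with zero error, and transfers the limit back via $|\widehat x_k-x_k|\to0$.
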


The proof introduces a nearby sequence whose recurrence has the inexact form
\eqref{eq:inexact}.  We record this relation for polynomial step sizes for
later use.

\begin{lemma}[Momentum shadow]
\label{lemma:momentum_shadow}
Let \(f\colon\mathbb R^n\to\mathbb R\) be locally Lipschitz, fix
\(\kappa\in[0,1)\) and \(\iota\in\mathbb R\), let
\(\alpha_k=\Theta((k+1)^{-a})\) for some \(a>0\), and suppose that a sequence
generated by \eqref{eq:momentum-inexact}, with \(x_{-1}=x_0\), is bounded.
Define
\[
 c_\kappa:=\frac{\kappa}{1-\kappa},\qquad
 \widehat x_k:=x_k+c_\kappa(x_k-x_{k-1}),\qquad
 \widehat\alpha_k:=\frac{\alpha_k}{1-\kappa}.
\]
Then \(\widehat\alpha_k=\Theta((k+1)^{-a})\),
\((\widehat x_k)\) is bounded,
\(|\widehat x_k-x_k|=O(\widehat\alpha_k)\), and there is a constant
\(c_b>0\) such that
\begin{equation}
 \widehat x_{k+1}=\widehat x_k-\widehat\alpha_kv_k,
 \qquad
 v_k\in\partial f(\widehat x_k;c_b\widehat\alpha_k).
\label{eq:momentum_matched_shadow}
\end{equation}
In particular, \((\widehat x_k)\) and \((x_k)\) have the same accumulation
points.
\end{lemma}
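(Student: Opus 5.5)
The plan is to verify the recurrence \eqref{eq:momentum_matched_shadow} as an exact algebraic identity, and then to obtain all the quantitative claims from one geometric-series estimate on the increments \(d_k:=x_k-x_{k-1}\), where \(d_0=0\) because \(x_{-1}=x_0\). The claim \(\widehat\alpha_k=\Theta((k+1)^{-a})\) is immediate, since \(1-\kappa>0\) is a fixed constant.

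\emph{Step 1 (exact shadow recurrence).} The update in \eqref{eq:momentum-inexact} reads \(d_{k+1}=\kappa d_k-\alpha_kv_k\). By definition, \(\widehat x_{k+1}-\widehat x_k=d_{k+1}+c_\kappa(d_{k+1}-d_k)\). Since \(1+c_\kappa=1/(1-\kappa)\), this equals
\[
\frac{d_{k+1}-\kappa d_k}{1-\kappa}=-\frac{\alpha_k}{1-\kappa}v_k=-\widehat\alpha_kv_k .
\]
So the identity holds with no error term. The only remaining issue is the membership \(v_k\in\partial f(\widehat x_k;c_b\widehat\alpha_k)\).

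\emph{Step 2 (increments are \(O(\alpha_k)\)).} Because \((x_k)\) is bounded, \(|d_k|\leq 2\sup_j|x_j|\). Hence \(\vartheta_k=x_k+\iota d_k\) stays in a fixed compact set, and local Lipschitz continuity gives \(|v_k|\leq L\) for some \(L>0\). Unrolling the recursion with \(d_0=0\) yields
\[
|d_k|\leq L\sum_{j=0}^{k-1}\kappa^{k-1-j}\alpha_j .
\]
Next we bound this sum by a constant times \(\alpha_k\). Split the sum at \(j=\lfloor k/2\rfloor\).
\begin{itemize}
\item For \(j\geq k/2\): we have \(\alpha_j\leq C\alpha_k\) because \((k+1)/(j+1)\leq 2\) and \(\alpha_j=\Theta((j+1)^{-a})\). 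The geometric sum of these terms is then at most \(C\alpha_k/(1-\kappa)\).
\item For \(j<k/2\): we have \(\alpha_j\leq \sup_i\alpha_i\) and \(\kappa^{k-1-j}\leq\kappa^{k/2-1}\). This part is \(O(k\kappa^{k/2})=o((k+1)^{-a})=o(\alpha_k)\).
\end{itemize}
If \(\kappa=0\), the estimate is trivial. This splitting is the only step needing care, because the polynomial decay of \(\alpha_k\) must be dominated by the geometric decay. Together the two parts give \(|d_k|\leq C'\alpha_k\) for all \(k\), after enlarging the constant to cover finitely many initial indices.

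\emph{Step 3 (conclusions).} From Step 2, \(|\widehat x_k-x_k|=c_\kappa|d_k|=O(\widehat\alpha_k)\). Since \(\widehat\alpha_k\) is bounded, \((\widehat x_k)\) is bounded. Moreover,
\[
|\vartheta_k-\widehat x_k|=|\iota-c_\kappa|\,|d_k|\leq(|\iota|+c_\kappa)C'(1-\kappa)\widehat\alpha_k=:c_b\widehat\alpha_k ,
\]
taking \(c_b>0\), enlarging it if needed. Therefore \(\vartheta_k\in B(\widehat x_k,c_b\widehat\alpha_k)\), and hence
\[
v_k\in\partial f(\vartheta_k)\subset\partial f(\widehat x_k;c_b\widehat\alpha_k),
\]
which establishes \eqref{eq:momentum_matched_shadow}. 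Finally, \(|\widehat x_k-x_k|\to0\) because \(\widehat\alpha_k\to0\), so along any subsequence one sequence converges if and only if the other does, to the same limit. Thus \((\widehat x_k)\) and \((x_k)\) have the same accumulation points.
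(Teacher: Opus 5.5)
Your proof is correct and follows essentially the same route as the paper. Both arguments use the exact shadow identity via $1+c_\kappa=(1-\kappa)^{-1}$, and both unroll the convolution for $x_k-x_{k-1}$ and split it at half the index so that geometric decay dominates the polynomial decay of $\alpha_k$. Both then conclude with the containment $\vartheta_k\in B(\widehat x_k,c_b\widehat\alpha_k)$, where $c_b$ is proportional to $|\iota-c_\kappa|$. The differences are cosmetic: you order the steps differently, and you bound the early terms by a count times $\kappa^{k/2-1}$ rather than by a geometric tail.
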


\begin{proof}
The proof has three parts.  We first control the inertial displacement at the
scale of the current step size, then use the shadow transformation to cancel
the inertial term, and finally compare the two sequences.

\runinheading{Step~1: Bound the inertial displacement}
Set \(\delta_k:=x_k-x_{k-1}\).  Since \((x_k)\) is bounded, so is
\((\vartheta_k)\).  The closure of \(\{\vartheta_k:k\in\mathbb N\}\) is
compact, and local boundedness of the Clarke subdifferential therefore gives a
constant \(L>0\) such that \(|v_k|\leq L\) for every \(k\)
\cite[Propositions~2.1.2 and~2.1.5]{clarke1990}.

Since \(x_{-1}=x_0\), one has \(\delta_0=0\), and
\eqref{eq:momentum-inexact} gives
\[
 \delta_{k+1}=\kappa\delta_k-\alpha_kv_k.
\]
Unrolling this recursion shows that, for every \(k\geq1\),
\begin{equation}
 \delta_k
 =-\sum_{j=0}^{k-1}\kappa^{k-1-j}\alpha_jv_j
 =-\sum_{r=0}^{k-1}\kappa^r\alpha_{k-1-r}v_{k-1-r}.
 \label{eq:momentum_velocity_convolution}
\end{equation}
Here \(\kappa^0:=1\), including when \(\kappa=0\).
Choose constants \(0<c_-\leq c_+\) such that
\[
 c_-(j+1)^{-a}\leq\alpha_j\leq c_+(j+1)^{-a}
 \qquad\text{for every }j\in\mathbb N,
\]
and set \(m_k:=\lfloor(k-1)/2\rfloor\).  If \(0\leq r\leq m_k\), then
\(k-r\geq(k+1)/2\), and hence
\(\alpha_{k-1-r}\leq2^ac_+(k+1)^{-a}\).  On the remaining indices we use
only \(\alpha_{k-1-r}\leq c_+\).  Splitting
\eqref{eq:momentum_velocity_convolution} at \(m_k\) gives
\begin{align}
 |\delta_k|
 &\leq L\sum_{r=0}^{m_k}\kappa^r\alpha_{k-1-r}
      +L\sum_{r=m_k+1}^{k-1}\kappa^r\alpha_{k-1-r} \notag\\
 &\leq \frac{Lc_+}{1-\kappa}
      \left(2^a(k+1)^{-a}+\kappa^{m_k+1}\right).
 \label{eq:momentum_split_convolution}
\end{align}
Here \(m_k+1=\lceil k/2\rceil\).  If \(\kappa=0\), the second term in
parentheses vanishes.  If \(\kappa\in(0,1)\), set
\(\lambda:=-\tfrac12\log\kappa>0\).  Then
\(\kappa^{m_k+1}\leq e^{-\lambda k}\), and
\(\sup_{t\geq1}(t+1)^ae^{-\lambda t}<\infty\).  Thus in either case
\(\kappa^{m_k+1}=O((k+1)^{-a})\).  Using also the lower bound on
\(\alpha_k\) and the identity \(\alpha_k=(1-\kappa)\widehat\alpha_k\), we
obtain a constant \(C_\delta>0\) such that
\begin{equation}
 |\delta_k|\leq C_\delta\widehat\alpha_k
 \qquad\text{for every }k\in\mathbb N.
 \label{eq:momentum_velocity_step_scale}
\end{equation}
The case \(k=0\) follows from \(\delta_0=0\).

\runinheading{Step~2: Derive the shadow recursion}
The auxiliary point is the constant-momentum transform used in
\cite[Eq.~(17)]{deng2021minibatch}.  Since
\(1+c_\kappa=(1-\kappa)^{-1}\) and
\((1+c_\kappa)\kappa=c_\kappa\), substitution in the momentum update gives
\[
 \widehat x_{k+1}=x_k+c_\kappa\delta_k-\widehat\alpha_kv_k
 =\widehat x_k-\widehat\alpha_kv_k.
\]
Moreover, \eqref{eq:momentum_velocity_step_scale} gives
\[
 |\vartheta_k-\widehat x_k|
 =|\iota-c_\kappa|\,|\delta_k|
 \leq |\iota-c_\kappa|C_\delta\widehat\alpha_k.
\]
Set \(c_b:=\max\{1,|\iota-c_\kappa|C_\delta\}\).  Then
\(\vartheta_k\in B(\widehat x_k,c_b\widehat\alpha_k)\), and the definition
of the Goldstein enlargement yields
\[
 v_k\in\partial f(\vartheta_k)
 \subset\bigcup_{z\in B(\widehat x_k,c_b\widehat\alpha_k)}\partial f(z)
 \subset\partial f(\widehat x_k;c_b\widehat\alpha_k).
\]
This proves \eqref{eq:momentum_matched_shadow}.

\runinheading{Step~3: Compare the two sequences}
Multiplication by the positive constant \((1-\kappa)^{-1}\) gives
\(\widehat\alpha_k=\Theta((k+1)^{-a})\), and hence
\(\widehat\alpha_k\to0\).  By
\eqref{eq:momentum_velocity_step_scale},
\[
 |\widehat x_k-x_k|
 =c_\kappa|\delta_k|
 \leq c_\kappa C_\delta\widehat\alpha_k\longrightarrow0.
\]
It follows that \((\widehat x_k)\) is bounded.  If
\(x_{k_j}\to\bar x\), then \(\widehat x_{k_j}\to\bar x\); conversely, if
\(\widehat x_{k_j}\to\bar x\), then \(x_{k_j}\to\bar x\).  Therefore the
two sequences have exactly the same accumulation points.
\end{proof}
With the shadow recurrence established, we can now invoke convergence of the
inexact subgradient method.

\begin{proof}[Proof of Theorem~\ref{theorem:momentum-converge}]
Lemma~\ref{lemma:momentum_shadow} shows that the bounded shadow sequence is a
realization of \eqref{eq:inexact} with step sizes
\(\widehat\alpha_k=\Theta((k+1)^{-1})\), enlargement exponent \(\xi=1\), and
zero additive error.  Corollary~\ref{cor: converge} therefore gives
\(\widehat x_k\to x^*\) for some critical point \(x^*\) of \(f\).  Since
\(|\widehat x_k-x_k|\to0\), we also have \(x_k\to x^*\).
\end{proof}

\subsection{Stochastic subgradient method}\label{sec:stochastic}

We now consider the stochastic subgradient method \cite{robbins1951stochastic}
\begin{equation}\label{eq:inexact stoc}
    x_{k+1}=x_k-\alpha_k(v_k+\varepsilon_k),
    \qquad
    v_k\in\partial f(x_k),
    \qquad \forall k\in\mathbb{N}.
\end{equation}
Here \(\alpha_k>0\).  We use the following stochastic setup.

\begin{assumption}[Stochastic noise]
\label{assumption:stochastic_noise}
The \(\mathbb R^n\)-valued stochastic process
\((x_k,v_k,\varepsilon_k)_{k\in\mathbb N}\) is defined on a filtered
probability space
\((\Omega,\mathcal F,(\mathcal F_k)_{k\in\mathbb N},\mathbb P)\).  For every
\(k\in\mathbb N\), the random variables \(x_k\) and \(v_k\) are
\(\mathcal F_k\)-measurable, whereas \(\varepsilon_k\) is
\(\mathcal F_{k+1}\)-measurable.  Moreover, there exists a deterministic
constant \(\sigma>0\) such that
\begin{equation}\label{eq:stochastic_noise_moments}
    \mathbb{E}[\varepsilon_k\mid\mathcal{F}_k]=0,
    \qquad
    \mathbb{E}[|\varepsilon_k|^2\mid\mathcal{F}_k]\leq \sigma^2
    \quad\text{a.s. for every }k\in\mathbb{N}.
\end{equation}
\end{assumption}

\begin{remark}[Noise models]\label{remark:sto_assump}
The martingale-difference and uniform conditional second-moment conditions in
Assumption~\ref{assumption:stochastic_noise} are standard in
stochastic approximation and stochastic optimization; see, e.g.,
\cite{robbins1951stochastic,borkar2008stochastic,davis2020stochastic,qiu2024convergence}.
More general analyses replace the uniform bound with state-dependent growth
conditions, expected smoothness, or expected residual; see
\cite{BottouCurtisNocedal2018,GowerLoizouQianEtAl2019,GowerSebbouhLoizou2021}.
Although globally weaker than a uniform variance bound, these conditions
reduce to the second-moment bound in \eqref{eq:stochastic_noise_moments} after
localization to bounded iterates.  We impose this bound directly to keep the
presentation simple.  Heavy-tailed noise is an important exception: in
practice, stochastic gradients may fail to have bounded variance \cite{simsekli2019tail,garg2021proximal}, motivating
genuinely weaker moment assumptions.  The models studied in
\cite[Assumption~1]{zhang2020adaptive} and
\cite[Assumption~3]{hubler2024gradient} assume only a bounded \(p\)th central
moment for some \(p\in(1,2]\).  When \(p<2\), these models permit noise
with an infinite second moment and hence are not covered by
Assumption~\ref{assumption:stochastic_noise}, even when the iterates
remain bounded.  Extending the present analysis to such heavy-tailed noise
models is left for future work.
\end{remark}

We obtain convergence for step sizes of order \(1/k\).
\begin{theorem}[Stochastic convergence]\label{theorem:1overk}
Let \(f\colon\mathbb{R}^n\to\mathbb{R}\) be locally Lipschitz and definable.
Consider \eqref{eq:inexact stoc} under
Assumption~\ref{assumption:stochastic_noise}, with deterministic step
sizes \(\alpha_k=\Theta((k+1)^{-1})\).  Then, almost surely, every bounded
realization \((x_k)\) converges to a critical point of \(f\).
\end{theorem}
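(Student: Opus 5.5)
The plan is to reduce the stochastic recursion \eqref{eq:inexact stoc} to a realization of the inexact subgradient method \eqref{eq:inexact} and then invoke Corollary~\ref{cor: converge}. We use the windowing argument of \cite{tadic2015convergence,qiu2024convergence}. Fix a realization and write the noise as a martingale term $M_k:=\sum_{j<k}\alpha_j\varepsilon_j$. Since $\sum_k\alpha_k^2<\infty$ for $\alpha_k=\Theta((k+1)^{-1})$ and $\mathbb E[|\varepsilon_k|^2\mid\mathcal F_k]\leq\sigma^2$, the martingale $(M_k)$ is $L^2$-bounded. Hence it converges almost surely, and its tails $\sup_{m\geq k}|M_m-M_k|$ tend to zero. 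We also need a quantitative tail rate. A Doob/Burkholder maximal inequality on dyadic blocks $[2^p,2^{p+1})$, combined with Borel--Cantelli, gives almost surely
\[
\sup_{m\geq k}\Bigl|\sum_{j=k}^{m-1}\alpha_j\varepsilon_j\Bigr|=O\bigl((k+1)^{-1/2+\eta}\bigr)\quad\text{for every }\eta>0.
\]
We fix an $\omega$ in this full-measure event on which $(x_k)$ is bounded. All remaining steps are deterministic.

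\textbf{Step 1: absorb the noise.} The simplest way to put the noise into the additive error $e_k$ is the naive choice $e_k:=-\alpha_k\varepsilon_k$. This fails, because $|\alpha_k\varepsilon_k|\leq\alpha_k^\tau$ need not hold. Instead we use a shadow sequence $z_k:=x_k-R_k$, where $R_k:=-\sum_{j\geq k}\alpha_j\varepsilon_j$ is the (a.s. convergent) tail. Then $z_{k+1}=z_k-\alpha_kv_k$ with $v_k\in\partial f(x_k)$ and $|x_k-z_k|=|R_k|=O(k^{-1/2+\eta})$, so this is an exact recursion at a perturbed base point. To fit the Goldstein form $\partial f(z_k;c_b\alpha_k^\xi)$, we need $|R_k|\leq c_b\alpha_k^\xi$. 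This holds with $\xi:=1/2-2\eta$ once $k$ is large, since $\alpha_k^\xi\asymp k^{-1/2+2\eta}$. Thus, after an index shift and a finite adjustment of $c_b$, $(z_k)$ is a bounded realization of \eqref{eq:inexact} with $e_k=0$, step sizes $\Theta((k+1)^{-1})$, $c_b,\xi>0$, and trivially summable error conditions.

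\textbf{Step 2: conclude.} Corollary~\ref{cor: converge} gives $z_k\to x^*$ with $0\in\partial f(x^*)$. Because $|x_k-z_k|\to0$, we also get $x_k\to x^*$. This holds for every $\omega$ in the full-measure event with $(x_k(\omega))$ bounded, which proves the almost-sure statement.

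The main obstacle is the almost-sure tail rate for $R_k$, because it must beat a positive power of $\alpha_k$ to match the Goldstein radius $c_b\alpha_k^\xi$. The estimate itself is a standard maximal-inequality computation: $\mathbb E\sup_{m\geq k}|R_k-R_m|^2\lesssim\sum_{j\geq k}\alpha_j^2\asymp k^{-1}$, followed by Chebyshev and Borel--Cantelli along $k=2^p$ and interpolation within blocks. The delicate point is measurability, since the event must not depend on boundedness. The Borel--Cantelli argument is applied unconditionally, so this is fine. A subtlety in the reduction is that $v_k\in\partial f(x_k)$ with $x_k\in B(z_k,c_b\alpha_k^\xi)$, which does place $v_k$ in the Goldstein enlargement at $z_k$.
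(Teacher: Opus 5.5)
Your proof is correct, and it takes a different route from the paper's.

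\textbf{The paper's route.} The paper reduces the stochastic method by windowing. It groups steps into shrinking windows $s_t\leq k<s_{t+1}$ of mass about $\alpha_{s_t}^{\zeta}$ (Fact~\ref{fact:st}). On each window the averaged subgradient lies in $\partial f(x_{s_t};c_ba_t)$. The window noise is at most $\alpha_{s_t}^{\gamma}=O(a_t^{\gamma/\zeta})$ with $\gamma/\zeta>1$; Lemma~\ref{lemma:stoc_error} obtains this from a weighted maximal martingale inequality. The paper then applies Corollary~\ref{cor: converge} to the window endpoints, with $\xi=1$ and nonzero but summable additive errors. Finally, it shows that window diameters vanish, which passes convergence from the endpoints to the full sequence.

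\textbf{Your route.} You move all future noise into a non-adapted shift of the base point. This gives an exact recursion with $e_k=0$, and the noise survives only through the Goldstein radius $c_b\alpha_k^{\xi}$ with $\xi<1/2$. Corollary~\ref{cor: converge} is a deterministic statement and allows any $\xi>0$, so fixing $\omega$ is legitimate. Then $|x_k-z_k|\to0$ transfers both the limit and its criticality.

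\textbf{What each buys.} Your argument is the stochastic analogue of the momentum shadow in Lemma~\ref{lemma:momentum_shadow}. It is shorter: it needs neither Fact~\ref{fact:st}, nor the error-tail verification, nor the endpoint-to-full-sequence step. The paper's route keeps the enlargement at the scale of the aggregated step and works only with actual, adapted iterates. Both rely on the same kind of almost-sure polynomial rate for martingale tails. Your dyadic Doob--Chebyshev--Borel--Cantelli computation does give the needed $O(k^{-1/2+\eta})$ bound.

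\textbf{Two small corrections.} First, with $R_k:=-\sum_{j\geq k}\alpha_j\varepsilon_j$, the shadow satisfying $z_{k+1}=z_k-\alpha_kv_k$ is $z_k=x_k+R_k$. Your $x_k-R_k$ picks up an extra term $-2\alpha_k\varepsilon_k$. Second, you never actually use the windowing of Tadi\'c and Qiu et al., despite your opening sentence. Relatedly, the naive choice $e_k=-\alpha_k\varepsilon_k$ fails mainly through the tail condition of Corollary~\ref{cor: converge}, since $\sum_j\alpha_j|\varepsilon_j|$ is typically infinite. It does not fail through the pointwise bound $|e_k|\leq\alpha_k^\tau$, which holds eventually almost surely for $\tau<1/2$.
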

The proof of Theorem~\ref{theorem:1overk} is given at the end of the section. It relies on time-window constructions
similar to those of \cite{tadic2015convergence,qiu2024convergence}.  Fix
\(\zeta\in(0,1)\), and define the window indices by
\begin{equation}\label{eq:def_st}
    s_0:=0,
    \qquad
    s_{t+1}:=
    \inf\left\{k>s_t:\ \sum_{i=s_t}^{k}\alpha_i>\alpha_{s_t}^{\zeta}\right\}
    \qquad \forall t\in\mathbb N.
\end{equation}
Set
\begin{equation}\label{eq:def_at}
        a_t:=\sum_{k=s_t}^{s_{t+1}-1}\alpha_k,
    \qquad
    u_t:=\frac1{a_t}\sum_{k=s_t}^{s_{t+1}-1}\alpha_kv_k,
    \qquad
    e_t:=-\sum_{k=s_t}^{s_{t+1}-1}\alpha_k\varepsilon_k .
\end{equation}
Then \(x_{s_{t+1}}=x_{s_t}-a_tu_t+e_t\).

The construction is a shrinking-window variant of the fixed-mass time scales
in \cite[Section~2]{tadic2015convergence},
\cite[Definition~3.1 and Lemma~3.2(b)]{milzarek2023convergence}, and
\cite[Lemma~3.1]{qiu2024convergence}.  Since those results do not cover the
shrinking threshold \(\alpha_{s_t}^{\zeta}\), we record the required endpoint
estimates.
\begin{fact}[Window scales]\label{fact:st}
Assume \(\alpha_k=\Theta((k+1)^{-a})\) with \(a\in(0,1]\), and let
\(\zeta\in(0,1)\). Define
\(\lambda:=1-a(1-\zeta)>0\) and \(b:=a\zeta/\lambda\). Then the
windows defined by \eqref{eq:def_st} satisfy
\(s_t=\Theta((t+1)^{1/\lambda})\) and
\(a_t=\Theta(\alpha_{s_t}^{\zeta})=\Theta((t+1)^{-b})\).
In particular, when \(a=1\), one has \(a_t=\Theta((t+1)^{-1})\).
\end{fact}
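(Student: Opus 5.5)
The plan is to compare the discrete partial sums with integrals of the step-size profile \(t\mapsto t^{-a}\), and to use the minimality in the definition \eqref{eq:def_st} to pin each window mass \(a_t\) between \(\alpha_{s_t}^{\zeta}\) and \(\alpha_{s_t}^{\zeta}\) plus one extra step.

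First, minimality of \(s_{t+1}\) gives \(\sum_{i=s_t}^{s_{t+1}-1}\alpha_i\leq\alpha_{s_t}^\zeta\). Adding one more step exceeds the threshold. Hence
\[
\alpha_{s_t}^\zeta<a_t+\alpha_{s_{t+1}}\leq \alpha_{s_t}^\zeta+\alpha_{s_{t+1}},
\]
so \(\alpha_{s_t}^\zeta-\alpha_{s_{t+1}}<a_t\leq\alpha_{s_t}^\zeta\). The windows are well defined and finite because \(\sum\alpha_k=\infty\) for \(a\leq1\).

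Since \(\zeta<1\) and \(\alpha_k\to0\), we have \(\alpha_{s_{t+1}}\leq c_d\alpha_{s_t}=o(\alpha_{s_t}^\zeta)\). Here \(c_d\) comes from \(\Theta((k+1)^{-a})\), which gives \(\alpha_j\leq C\alpha_i\) for \(j\geq i\). It follows that \(a_t=\Theta(\alpha_{s_t}^\zeta)\) for all large \(t\), and this extends to all \(t\) after adjusting constants over finitely many indices, since all \(a_t>0\).

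Second, I would estimate the window length \(s_{t+1}-s_t\). Because \(s_{t+1}-s_t\geq1\), a large window has \(s_{t+1}\leq 2s_t+O(1)\). On such a window the steps satisfy \(\alpha_i=\Theta((s_t+1)^{-a})\), and this comparability must be justified. Suppose \(s_{t+1}\geq 2s_t+2\). The sum over \(\llbracket s_t,2s_t+1\rrbracket\) is then at least \(c(s_t+1)^{1-a}\). This exceeds \(C(s_t+1)^{-a\zeta}\) for large \(s_t\), because \(1-a>-a\zeta\) when \(a\leq 1\) and \(\zeta>0\). That contradicts the window mass bound.

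So within a window \(\alpha_i\asymp(s_t+1)^{-a}\), and
\[
s_{t+1}-s_t=\Theta\bigl(a_t(s_t+1)^{a}\bigr)=\Theta\bigl((s_t+1)^{a(1-\zeta)}\bigr).
\]
This holds at least as an upper bound; the lower bound also holds once this quantity is \(\geq1\), which is true since \(a(1-\zeta)\geq0\). If \(a(1-\zeta)\) is small, a bound of the form \(\max\{1,\cdot\}\) is harmless.

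Third, I solve the increment recursion \(s_{t+1}-s_t\asymp (s_t+1)^{1-\lambda}\) with \(1-\lambda=a(1-\zeta)\in[0,1)\). Compare with the ODE \(\dot s=s^{1-\lambda}\). Equivalently, consider \((s_{t+1}+1)^\lambda-(s_t+1)^\lambda\). By the mean value theorem and \(s_{t+1}+1=\Theta(s_t+1)\), this equals \(\Theta((s_t+1)^{\lambda-1}(s_{t+1}-s_t))=\Theta(1)\). Summing over \(t\) gives \((s_t+1)^\lambda=\Theta(t+1)\), so \(s_t=\Theta((t+1)^{1/\lambda})\). Then
\[
a_t=\Theta(\alpha_{s_t}^\zeta)=\Theta\bigl((s_t+1)^{-a\zeta}\bigr)=\Theta\bigl((t+1)^{-a\zeta/\lambda}\bigr)=\Theta\bigl((t+1)^{-b}\bigr).
\]
When \(a=1\) we get \(\lambda=\zeta\) and \(b=1\).

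The main obstacle is the second step. It requires uniform two-sided comparability of the window length with \((s_t+1)^{a(1-\zeta)}\), valid for all \(t\) including early ones. The doubling argument above handles the upper bound. For the lower bound, the discrete rounding of a single step must not dominate; this holds because \(\alpha_{s_{t+1}}=o(a_t)\) even when the windows have bounded length, for instance in the limit \(a(1-\zeta)\to0\).
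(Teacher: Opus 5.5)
Your proposal is correct and follows essentially the same route as the paper. Both arguments use a doubling contradiction to show $s_{t+1}\leq 2s_t+1$ eventually, two-sided comparability of the window length with $(s_t+1)^{a(1-\zeta)}$, the mean value theorem for $r\mapsto r^\lambda$ followed by telescoping, and the sandwich $\alpha_{s_t}^\zeta-\alpha_{s_{t+1}}<a_t\leq\alpha_{s_t}^\zeta$. The differences are cosmetic: you establish $a_t=\Theta(\alpha_{s_t}^\zeta)$ first, using quasi-monotonicity of the steps rather than $s_{t+1}/s_t\to1$, and then read off the window length from it. Your hedging about small $a(1-\zeta)$ is unnecessary, since $a(1-\zeta)>0$ already forces the window lengths to diverge.
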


The proof of Fact~\ref{fact:st} is given in
Appendix~\ref{sec:proof_fact_st}. We now show that the
windowed stochastic sequence eventually satisfies the inexact subgradient
update \eqref{eq:inexact}.
\begin{lemma}[Stochastic window reduction]
\label{lemma:stoc_error}
Suppose that Assumption~\ref{assumption:stochastic_noise} holds and
that the step sizes are deterministic with
\(\alpha_k=\Theta((k+1)^{-a})\) for some
\(a\in(1/2,1]\), and fix \(0<\zeta<\gamma<1-1/(2a)\).  Then, almost surely,
if the realization \((x_k)\) is bounded, there exists \(c_b>0\) such that, for all sufficiently large \(t\),
\[
    x_{s_{t+1}}=x_{s_t}-a_tu_t+e_t,
    \qquad
    u_t\in\partial f(x_{s_t};c_ba_t),
\]
and
\[
    |e_t|
    \leq
    \max_{s_t\leq \ell\leq s_{t+1}}
    \left|
        \sum_{k=s_t}^{\ell-1}\alpha_k\varepsilon_k
    \right|
    \leq \alpha_{s_t}^{\gamma}
    =O\bigl(a_t^{\gamma/\zeta}\bigr).
\]
\end{lemma}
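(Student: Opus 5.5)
The proof splits into a deterministic part (the inclusion for \(u_t\)) and a probabilistic part (the noise bound). The probabilistic part is the harder one.

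\emph{Inclusion for \(u_t\).} On a bounded realization, let \(L\) bound \(\partial f\) on a neighborhood of the closure of the iterates, and let \(M_t:=\max_{s_t\leq\ell\leq s_{t+1}}|\sum_{k=s_t}^{\ell-1}\alpha_k\varepsilon_k|\). For \(s_t\leq\ell\leq s_{t+1}\) we have
\[
|x_\ell-x_{s_t}|\leq L\sum_{k=s_t}^{\ell-1}\alpha_k+M_t\leq La_t+M_t.
\]
Once \(M_t\leq\alpha_{s_t}^\gamma\) is established, we get \(M_t=O(a_t^{\gamma/\zeta})=o(a_t)\), because \(a_t=\Theta(\alpha_{s_t}^\zeta)\) by Fact~\ref{fact:st} and \(\gamma>\zeta\). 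Hence every \(x_\ell\) in the window lies in \(B(x_{s_t},c_ba_t)\) for, say, \(c_b:=L+1\). Since \(v_k\in\partial f(x_k)\), the vector \(u_t\) is a convex combination (weights \(\alpha_k/a_t\)) of such subgradients, so \(u_t\in\partial f(x_{s_t};c_ba_t)\). The identity \(x_{s_{t+1}}=x_{s_t}-a_tu_t+e_t\) is already recorded after \eqref{eq:def_at}, and \(|e_t|\leq M_t\) is trivial (take \(\ell=s_{t+1}\)). So everything reduces to proving that almost surely \(M_t\leq\alpha_{s_t}^\gamma\) for all large \(t\).

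\emph{Maximal inequality in each window.} The windows \(s_t\) are deterministic because the step sizes are. The partial sums \(S_\ell:=\sum_{k=s_t}^{\ell-1}\alpha_k\varepsilon_k\) form an \((\mathcal F_\ell)\)-martingale in \(\ell\), with \(\mathbb E|S_{s_{t+1}}|^2\leq\sigma^2\sum_{k=s_t}^{s_{t+1}-1}\alpha_k^2\) by orthogonality of martingale increments. Doob's \(L^2\) maximal inequality combined with Markov's inequality gives
\[
\mathbb P(M_t>\alpha_{s_t}^\gamma)\leq 4\sigma^2\alpha_{s_t}^{-2\gamma}\sum_{k=s_t}^{s_{t+1}-1}\alpha_k^2\leq 4\sigma^2\alpha_{s_t}^{1-2\gamma}a_t\lesssim\alpha_{s_t}^{1+\zeta-2\gamma}.
\]
The middle inequality uses \(\alpha_k\lesssim\alpha_{s_t}\) within the window (the steps are \(\Theta\) of a decreasing power), and the last uses \(a_t=\Theta(\alpha_{s_t}^\zeta)\).

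\emph{Borel--Cantelli.} By Fact~\ref{fact:st}, \(s_t=\Theta((t+1)^{1/\lambda})\), so \(\alpha_{s_t}=\Theta((t+1)^{-a/\lambda})\) with \(\lambda=1-a(1-\zeta)\). The probabilities are therefore summable in \(t\) when
\[
\frac{a(1+\zeta-2\gamma)}{1-a+a\zeta}>1,
\quad\text{equivalently}\quad a(2-2\gamma)>1,
\quad\text{that is,}\quad \gamma<1-\frac{1}{2a}.
\]
This is exactly the hypothesis on \(\gamma\), and it is the step I expect to be the crux: the exponent bookkeeping must produce precisely this threshold. If the crude bound \(\sum\alpha_k^2\leq\alpha_{s_t}a_t\) turned out to be too lossy, I would refine it with a higher-moment (Burkholder) inequality, but the computation above suggests the second moment already suffices. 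Borel--Cantelli then shows that almost surely \(M_t\leq\alpha_{s_t}^\gamma\) for all large \(t\). This holds independently of boundedness, so intersecting with the event that the realization is bounded completes the argument.
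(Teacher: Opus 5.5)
Your proof is correct, and your exponent bookkeeping gives exactly the threshold $\gamma<1-1/(2a)$. The deterministic part (the Goldstein inclusion with $c_b=L+1$, using $M_t=o(a_t)$) matches the paper's Step~2. The difference is in how the noise bound is obtained.

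The paper does not use Borel--Cantelli. It invokes the weighted conclusion of \cite[Lemma~2.2]{milzarek2023convergence} with $\beta_k=(k+1)^{a\gamma}$. This yields, almost surely, $\sum_t\beta_{s_t}^2M_t^2<\infty$, and hence the slightly stronger $M_t=o(\alpha_{s_t}^{\gamma})$.

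A weighted bound of this kind amounts to summing your per-window second-moment maximal estimate in expectation over the disjoint windows. Since $\beta$ is nondecreasing, the only condition needed is $\sum_k\alpha_k^2\beta_k^2<\infty$, i.e.\ $2a(1-\gamma)>1$. No window-count asymptotics are required.

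Your Doob--Chebyshev--Borel--Cantelli route is self-contained and needs no external lemma. The price is that you use Fact~\ref{fact:st}, both $s_t=\Theta((t+1)^{1/\lambda})$ and $a_t=\Theta(\alpha_{s_t}^{\zeta})$, to turn the Borel--Cantelli series into a $p$-series.

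You could drop that dependence. Since $\alpha_k\leq C\alpha_{s_t}$ on the window, $\alpha_{s_t}^{-2\gamma}\sum_{k=s_t}^{s_{t+1}-1}\alpha_k^2$ is bounded by a constant times $\sum_{k=s_t}^{s_{t+1}-1}\alpha_k^{2-2\gamma}$. Summed over $t$, this is $\sum_k\alpha_k^{2-2\gamma}$, which is finite under the same condition.
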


\begin{proof}
\runinheading{Step~1: Maximal noise on the windows}
Set \(\beta_k:=(k+1)^{a\gamma}\).  This sequence is nondecreasing, and
\[
 \sum_{k=0}^{\infty}\alpha_k^2\beta_k^2\sigma^2<\infty
 \qquad\text{because}\qquad 2a(1-\gamma)>1.
\]
The weighted conclusion of \cite[Lemma~2.2]{milzarek2023convergence},
applied to the deterministic endpoints \(s_t\), therefore gives, almost
surely,
\[
 \sum_{t=0}^{\infty}\beta_{s_t}^2
 \max_{s_t\leq\ell\leq s_{t+1}}
 \left|\sum_{k=s_t}^{\ell-1}\alpha_k\varepsilon_k\right|^2<\infty.
\]
Thus the maximum in this display is
\(o(\beta_{s_t}^{-1})=o(\alpha_{s_t}^{\gamma})\), and hence is at most
\(\alpha_{s_t}^{\gamma}\) for all sufficiently large \(t\).  Let
\(\Omega_0\) be a probability-one event on which this holds.  Since
\(e_t=-\sum_{k=s_t}^{s_{t+1}-1}\alpha_k\varepsilon_k\), it also bounds
\(|e_t|\), while Fact~\ref{fact:st} gives
\(\alpha_{s_t}^{\gamma}=O(a_t^{\gamma/\zeta})\).

\runinheading{Step~2: Goldstein enlargement on a bounded realization}
Fix \(\omega\in\Omega_0\) for which \((x_k(\omega))\) is bounded, and suppress
\(\omega\) from the notation.  The set
\(K:=\overline{\{x_k:k\in\mathbb{N}\}}\) is compact.
Thus, there exists a constant \(L=L(\omega)>0\) such that
\(|v_k|\leq L\) for every \(k\).  For all sufficiently large \(t\) and every
\(s_t\leq k<s_{t+1}\), the recursion and the maximal-noise estimate give
\[
\begin{aligned}
    |x_k-x_{s_t}|
    &\leq
    \sum_{i=s_t}^{k-1}\alpha_i|v_i|
    +
    \left|
        \sum_{i=s_t}^{k-1}\alpha_i\varepsilon_i
    \right| \leq
    La_t+\alpha_{s_t}^{\gamma}.
\end{aligned}
\]
By Fact~\ref{fact:st} and \(\gamma>\zeta\),
\[
    \frac{\alpha_{s_t}^{\gamma}}{a_t}
    =
    O(\alpha_{s_t}^{\gamma-\zeta})
    \longrightarrow0.
\]
After increasing the path-dependent starting index, we therefore have
\(|x_k-x_{s_t}|\leq(L+1)a_t\) throughout every remaining window.  Set
\(c_b:=L+1\).  Then every \(v_k\) entering \(u_t\) belongs to
\(\bigcup_{z\in B(x_{s_t},c_ba_t)}\partial f(z)\).  Since the coefficients
\(\alpha_k/a_t\) are nonnegative and sum to one,
\[
    u_t
    =
    \sum_{k=s_t}^{s_{t+1}-1}\frac{\alpha_k}{a_t}v_k
    \in
    \co\bigcup_{z\in B(x_{s_t},c_ba_t)}\partial f(z)
    =
    \partial f(x_{s_t};c_ba_t).
\]
Summing \eqref{eq:inexact stoc} over the window and using
\eqref{eq:def_at} gives
\(x_{s_{t+1}}=x_{s_t}-a_tu_t+e_t\), as claimed.
\end{proof}

We now prove Theorem~\ref{theorem:1overk} by combining
Lemma~\ref{lemma:stoc_error} with Corollary~\ref{cor: converge}.
\begin{proof}[Proof of Theorem~\ref{theorem:1overk}]
Fix \(0<\zeta<\gamma<1/2\), and work on the probability-one event supplied
by Lemma~\ref{lemma:stoc_error}.  Let \((x_k)\) be a bounded realization in
this event.

\runinheading{Step~1: Convergence of the windowed sequence}
By Lemma~\ref{lemma:stoc_error}, after discarding finitely many windows,
\(x_{s_{t+1}}=x_{s_t}-a_tu_t+e_t\) and
\(u_t\in\partial f(x_{s_t};c_ba_t)\), where, by Fact~\ref{fact:st},
\(a_t=\Theta((t+1)^{-1})\).  Set \(\nu:=\gamma/\zeta>1\).  The same lemma
gives \(|e_t|=O(a_t^\nu)=O((t+1)^{-\nu})\).  Choose
\(\tau_e\in(1,\nu)\); since \(a_t\to0\), after discarding finitely many more
windows we have \(|e_t|\leq a_t^{\tau_e}\).  Moreover, for every
\(\theta\in(0,1)\), the integral estimate for a \(p\)-series gives
\[
    \sum_{j=t}^{\infty}|e_j|
    =
    O((t+1)^{1-\nu}),
    \qquad
    a_t\left(\sum_{j=t}^{\infty}|e_j|\right)^\theta
    =
    O((t+1)^{-1-\theta(\nu-1)}).
\]
The latter bound is summable in \(t\).  The windowed sequence is bounded as a
subsequence of \((x_k)\).  Hence Corollary~\ref{cor: converge} applies to its
tail, with step sizes \(a_t\), enlargement exponent
\(\xi=1\), and error exponent \(\tau_e\).  It follows that
\(x_{s_t}\to x^*\) for some critical point \(x^*\) of \(f\).

\runinheading{Step~2: Pass to the full sequence}
The closure of the bounded realization is compact.  By local boundedness of
the Clarke subdifferential, there exists \(L>0\) such that
\(|v_k|\leq L\) for every \(k\).  For
\(s_t\leq r<q\leq s_{t+1}\), we have
\(x_q-x_r=-\sum_{k=r}^{q-1}\alpha_kv_k
-\sum_{k=r}^{q-1}\alpha_k\varepsilon_k\).
For all sufficiently large \(t\), the maximal-noise estimate in
Lemma~\ref{lemma:stoc_error} implies
\[
    \left|\sum_{k=r}^{q-1}\alpha_k\varepsilon_k\right|
    \leq
    \left|\sum_{k=s_t}^{q-1}\alpha_k\varepsilon_k\right|
    +
    \left|\sum_{k=s_t}^{r-1}\alpha_k\varepsilon_k\right| \leq 2\alpha_{s_t}^{\gamma}.
\]
Consequently,
\[
    \operatorname{diam}
    (x_{\llbracket s_t,s_{t+1}\rrbracket})
    \leq La_t+2\alpha_{s_t}^{\gamma}
    \longrightarrow0.
\]
For \(s_t\leq k\leq s_{t+1}\), it follows that
\[
    |x_k-x^*|
    \leq
    \operatorname{diam}
    (x_{\llbracket s_t,s_{t+1}\rrbracket})
    +|x_{s_t}-x^*|
    \longrightarrow0.
\]
Thus the full sequence converges to \(x^*\).
\end{proof}

\section{Convergence of first-order methods under local active geometry}
\label{sec:active}

Section~\ref{sec:first-order} establishes convergence of first-order methods
for locally Lipschitz definable objectives with step sizes
\(\alpha_k=\Theta((k+1)^{-1})\).  This choice has three distinct properties:
\[
 \alpha_k\to0,\qquad
 \sum_{k=0}^\infty\alpha_k=\infty,\qquad
 \sum_{k=0}^\infty\alpha_k^2<\infty.
\]
The first says that the individual step sizes vanish, while the second says
that their cumulative sum is infinite.
The role of square summability is less transparent, although it appears as a
standard assumption in several convergence analyses of subgradient methods;
see, for example, \cite{alber1998projected,davis2020stochastic}.  This leads to
two questions: is square summability necessary for convergence, and under what
additional conditions is it sufficient?

We investigate these questions within the polynomial family
\(\alpha_k=\Theta((k+1)^{-a})\).  In the nontrivial nonsummable range
\(0<a\leq1\), the step sizes vanish for every \(a>0\), whereas
\(\sum_k\alpha_k^2<\infty\) holds exactly when \(a>1/2\).  At the boundary
\(a=1/2\), Section~\ref{sec:square_summability_obstruction} constructs a
semialgebraic objective admitting a bounded, nonconvergent sequence of the
subgradient method.  Remarkably, the accumulation points of this
counterexample satisfy the local active-manifold hypotheses introduced in
Section~\ref{sec:active_geometry}.  That section derives local estimates for
an inexact subgradient step: it controls the distance of the new iterate to
the active manifold and shows that the projected step behaves
like an inexact Riemannian gradient step.  Using these estimates, we widen the
admissible ranges to \(1/2<a\leq1\) for the momentum method and
\(2/3<a\leq1\) for the stochastic subgradient method in
Sections~\ref{sec:active_momentum} and~\ref{sec:active_stochastic},
respectively.

\subsection{The square-summability obstruction}
\label{sec:square_summability_obstruction}

For each \(K\geq1\), the step sizes \(\alpha_k=(k+K)^{-1/2}\) vanish and
are nonsummable, whereas their squares are not summable.  We next construct a locally Lipschitz semialgebraic objective for which the subgradient
method admits a bounded, nonconvergent sequence under every such step-size
sequence.

Write points of \(\mathbb R^6\) as
\((p,z)\in\mathbb R^2\times\mathbb R^4\), where \(p=(p_1,p_2)\) and
\(z=(z_1,\ldots,z_4)\).  Define \(q=(q_1,q_2)\colon\mathbb R^2\to\mathbb R^2\)
by
\begin{equation}
 q(p):=
 \begin{cases}
  2p, & |p|\leq1/2,\\
  p/|p|, & |p|>1/2,
 \end{cases}.
\label{eq:counterexample_q_definition}
\end{equation}
Set \(a_1=2+q_1\), \(a_2=2-q_1\), \(a_3=2+q_2\), and
\(a_4=2-q_2\), and define
\begin{equation}
 f(p,z):=\sum_{i=1}^4a_i(p)|z_i|.
\label{eq:counterexample_objective_definition}
\end{equation}

\begin{theorem}[Boundary counterexample]
\label{thm:counterexample-half}
    The function \(f\) in \eqref{eq:counterexample_objective_definition} is
    locally Lipschitz and semialgebraic.  For every real \(K\geq1\), there
    exist a bounded, nonconvergent realization \((x_k)\) of
    \(x_{k+1}\in x_k-\alpha_k\partial f(x_k)\), with
    \(\alpha_k=(k+K)^{-1/2}\), and a radius \(r_\infty\geq1\) such that
    \(f(x_k)\to0\) and the set of accumulation points is
    \(\{(p,0)\in\mathbb R^2\times\mathbb R^4:|p|=r_\infty\}\).
\end{theorem}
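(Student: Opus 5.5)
The plan is to make the $p$-component rotate around a circle of almost constant radius, driven by second-order effects, while the $z$-component is kept at amplitude $O(\alpha_k)$ by alternating sign flips. First I dispose of the regularity claims. The map $q$ is continuous and piecewise semialgebraic, since on $|p|>1/2$ we have $p/|p|=2p$ at $|p|=1/2$. It is Lipschitz because it is the radial retraction onto the unit disk composed with $p\mapsto 2p$. Hence each $a_i=2\pm q_j$ is Lipschitz with values in $[1,3]$, and $f$ is locally Lipschitz and semialgebraic as a finite sum of products of such functions. On the region $|p|>1/2$ we have $Dq(p)=(I-qq^\top)/|p|$. Consequently, at points where all $z_i\neq0$, or where the zero coordinates contribute nothing to the $p$-derivative, the $p$-part of a subgradient is
\[
Dq(p)^\top w,\qquad w:=(|z_1|-|z_2|,\ |z_3|-|z_4|).
\]
This vector is tangent to the circle $|p|=\mathrm{const}$. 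The $z_i$-part is $a_i(p)\operatorname{sgn}(z_i)$ when $z_i\neq0$, and when $z_i=0$ it can be any element of $[-a_i(p),a_i(p)]$.

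Next I will build the $z$-dynamics from two-step cycles that exploit this freedom at zero. When $z_{i,k}=0$, I choose the $z_i$-subgradient $\rho_{i,k}\in[-a_i(p_k),a_i(p_k)]$ so that $z_{i,k+1}=-\alpha_k\rho_{i,k}$. At the next step the move of $z_i$ is forced to be $-\alpha_{k+1}a_i(p_{k+1})\operatorname{sgn}(z_{i,k+1})$. This returns $z_i$ exactly to $0$ provided $\alpha_k|\rho_{i,k}|=\alpha_{k+1}a_i(p_{k+1})$, which is admissible because $\alpha_{k+1}<\alpha_k$ and $|a_i(p_{k+1})-a_i(p_k)|=O(\alpha_k^2)$. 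In this way each coordinate $z_i$ either rests at $0$ or is "excited" to a value of absolute size $\approx\alpha_{k+1}a_i$ on a chosen step. By deciding which coordinates are excited on which steps, I can make $w_k$ equal, up to lower order, to $\alpha_{k+1}$ times one of the vectors $\pm a_1 e_1$, $\pm a_3 e_2$, and so on. Given $q(p_k)$, at least one of $\pm e_1,\pm e_2$ has component at least $1/\sqrt2$ along $-q^\perp$. Choosing that one on every step yields
\[
p_{k+1}=p_k-\alpha_k Dq(p_k)^\top w_k,
\]
which is a tangential step of length $\Theta(\alpha_k^2)=\Theta((k+K)^{-1})$ with a consistent rotation direction. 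To make this choice I will use a small phase bookkeeping: excite on even steps and let coordinates relax on odd steps, so that $|p|>1/2$ is maintained throughout.

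The step I expect to be the main obstacle is controlling the radius and the angle. For a purely tangential increment $d_k$ one has $|p_{k+1}|^2=|p_k|^2+|d_k|^2$. Since $|d_k|=O(\alpha_k^2)$ and $\sum\alpha_k^4=\sum(k+K)^{-2}<\infty$, the radius $|p_k|$ is nondecreasing and converges to some $r_\infty$. Starting from $|p_0|=1$ gives $r_\infty\geq1$, and the iterates stay in $|p|>1/2$, where the formula for $Dq$ is valid. The angular increments are of order $\alpha_k^2/|p_k|$ and carry a fixed sign. Because $\sum\alpha_k^2=\sum(k+K)^{-1}=\infty$, the total angle diverges, while each increment tends to $0$. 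Therefore the angles are dense modulo $2\pi$ along the sequence, and the set of accumulation points of $(p_k)$ is the whole circle of radius $r_\infty$. Meanwhile $|z_k|=O(\alpha_k)\to0$, so $f(x_k)\le 3\sum_i|z_{i,k}|\to0$, and the accumulation set of $(x_k)$ is $\{(p,0):|p|=r_\infty\}$. Boundedness is then immediate, and nonconvergence follows from the fact that the accumulation set is not a single point.

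The remaining work is the careful verification that every chosen direction is a genuine Clarke subgradient at each step. In particular, at coordinates where $z_i=0$ the $p$-gradient contribution of $a_i|z_i|$ is exactly zero, and the $O(\alpha_k^3)$ errors from $a_i(p_{k+1})\neq a_i(p_k)$ must not destroy the sign of the rotation.
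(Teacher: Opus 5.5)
Your construction is essentially the paper's proof. It uses two-step cycles from $z=0$: one coordinate $z_i$ is excited through the freedom in $\partial|\cdot|(0)$ and then returned exactly to $0$, with $i$ chosen so that the rotation has a consistent direction. The radius then grows by a square-summable amount, while the angle diverges because $\sum_k\alpha_k^2=\infty$. One correction: admissibility of $\rho$ does not follow from $|a_i(p_{k+1})-a_i(p_k)|=O(\alpha_k^2)$, and that argument alone would not suffice, since $\alpha_k-\alpha_{k+1}=\Theta(\alpha_k^3)$ is of the same order as $\alpha_{k+1}|a_i(p_{k+1})-a_i(p_k)|$. What makes it work is that in your even/odd scheme all $z_j$ vanish at the excitation step, so $p$ does not move there; one can then take $|\rho|=(\alpha_{k+1}/\alpha_k)a_i(p_k)\leq a_i(p_k)$ exactly, and there are no $O(\alpha_k^3)$ errors at all. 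The product form of $\partial f$ at zero coordinates still needs justification, which the paper supplies via Clarke regularity of each $a_j(p)|z_j|$ and the equality case of the sum rule.
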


\begin{proof}
\runinheading{Step~1: Verify the regularity of the objective}
We first prove the asserted regularity and then derive the subdifferential
formula used below to choose the even- and odd-indexed subgradients.

\eqref{eq:counterexample_q_definition} is exactly
\(q(p)=P_{B(0,1)}(2p)\).  Since metric projection onto a closed convex set
is nonexpansive \cite[Proposition~1.1.9]{bertsekas2009convex}, \(q\) is
globally \(2\)-Lipschitz.  Its piecewise formula
also shows that it is semialgebraic.
Consequently, each $a_i$ is globally Lipschitz and semialgebraic, with
$1\leq a_i\leq3$.

To verify local Lipschitz continuity of \(f\), fix
\((\bar p,\bar z)\in\mathbb R^6\).  If \((p,z)\) and \((p',z')\) lie within
unit distance of \((\bar p,\bar z)\), then \(|z_i'|\leq|\bar z|+1\), and
\[
 \bigl|a_i(p)|z_i|-a_i(p')|z_i'|\bigr|
 \leq3|z_i-z_i'|
 +2(|\bar z|+1)|p-p'|.
\]
Indeed, add and subtract $a_i(p)|z_i'|$, use $|a_i(p)|\leq3$, and then
use the $2$-Lipschitz bound for $a_i$.  Summing over $i$ and using
\(\sum_{i=1}^4|z_i-z_i'|\leq2|z-z'|\) shows that \(f\) is Lipschitz on
this neighborhood.  Since \((\bar p,\bar z)\) was arbitrary, \(f\) is locally
Lipschitz.  Its semialgebraicity follows from the closure of semialgebraic
functions under absolute values, products, and finite sums
\cite[Section~2.2]{bochnak2013real}.

On the region \(|p|>1/2\), every $a_j$ is positive and $C^1$.  Set
$f_j(p,z):=a_j(p)|z_j|$.  If $z_j\neq0$, then $f_j$ is $C^1$.  If
$z_j=0$, its directional derivative in the direction $(u,w)$ is
$a_j(p)|w_j|$.  To compute the Clarke directional derivative, let
$(p',z')\to(p,z)$, $t\downarrow0$, and $(u',w')\to(u,w)$.  Decompose the
corresponding difference quotient as
\[
\begin{aligned}
 \frac{a_j(p'+tu')|z_j'+tw_j'|-a_j(p')|z_j'|}{t}
 &=\frac{a_j(p'+tu')-a_j(p')}{t}|z_j'+tw_j'|\\
 &\quad+a_j(p')\frac{|z_j'+tw_j'|-|z_j'|}{t}.
\end{aligned}
\]
The first term tends to zero: the first factor is bounded by the Lipschitz
constant of $a_j$ times $|u'|$, while $|z_j'+tw_j'|\to0$.  The limsup of
the second term is at most $a_j(p)|w_j|$ by the reverse triangle
inequality, and equality is obtained by taking $z'=0$.  Thus the Clarke
directional derivative is also $a_j(p)|w_j|$, so in both cases $f_j$ is
Clarke regular in the sense of
\cite[Definition~2.3.4, p.~39]{clarke1990}.  For
\(j\in\{1,2,3,4\}\), write \(\mathbf e_j\) for the
\(j\)th standard basis vector of \(\mathbb R^4\).  Then
\[
 \partial f_j(p,z)=
 \left\{
 \left(|z_j|\nabla a_j(p),a_j(p)\sigma_j\mathbf e_j\right):
 \sigma_j\in\partial|\cdot|(z_j)
 \right\}.
\]
The equality case of Clarke's finite-sum rule
\cite[Corollary~3 to Proposition~2.3.3]{clarke1990} therefore gives, on
\(|p|>1/2\),
\begin{equation}
\partial f(p,z)=
\left\{
\left(
 \sum_{j=1}^4|z_j|\nabla a_j(p),
 (a_j(p)\sigma_j)_{j=1}^4
\right):
\sigma_j\in\partial|\cdot|(z_j),\ 1\leq j\leq4
\right\}.
\label{eq:counterexample_subdifferential}
\end{equation}

\runinheading{Step~2: Construct the two-step cycles}
Fix an arbitrary real \(K\geq1\), and set
\(\alpha_k:=(k+K)^{-1/2}\).  On \(|p|>1/2\), each \(a_i(p)\) depends only
on the direction of \(p\).  Define
\(\bar a_i(\theta):=a_i(\cos\theta,\sin\theta)\), and let
\(\bar a_i'(\theta)\) denote its derivative.  The four profiles are
\[
\begin{array}{llll}
    \bar a_1(\theta)=2+\cos\theta, & \bar a_1'(\theta)=-\sin\theta,
    & \bar a_2(\theta)=2-\cos\theta, & \bar a_2'(\theta)=\sin\theta,\\[2pt]
    \bar a_3(\theta)=2+\sin\theta, & \bar a_3'(\theta)=\cos\theta,
    & \bar a_4(\theta)=2-\sin\theta, & \bar a_4'(\theta)=-\cos\theta .
\end{array}
\]
Set \(p_0=(1,0)\), \(r_0=1\), \(\theta_0=0\), and
\(x_0=(p_0,0)\).  Suppose inductively that
\(x_{2m}=(p_m,0)\), where
\(p_m=r_m(\cos\theta_m,\sin\theta_m)\) and \(r_m\geq1\).
Since $\max\{|\sin\theta|,|\cos\theta|\}\geq1/\sqrt2$, one of the four
displayed derivatives is at most \(-1/\sqrt2\).  Choose \(i_m\) so that
\(\bar a_{i_m}'(\theta_m)\leq-1/\sqrt2\).  Since
\(\alpha_{2m+1}\leq\alpha_{2m}\),
\[
    g_{2m}:=
    \left(
      0,\frac{\alpha_{2m+1}}{\alpha_{2m}}a_{i_m}(p_m)\mathbf e_{i_m}
    \right)
    \in\partial f(x_{2m}).
\]
Indeed, this follows from \eqref{eq:counterexample_subdifferential} by taking
\(\sigma_{i_m}=\alpha_{2m+1}/\alpha_{2m}\) and \(\sigma_j=0\) for
\(j\ne i_m\).  The choice is admissible because
\(\alpha_{2m+1}/\alpha_{2m}
=\sqrt{(2m+K)/(2m+1+K)}\in(0,1]
\subset[-1,1]=\partial|\cdot|(0)\).
Thus
\[
    x_{2m+1}
    =
    x_{2m}-\alpha_{2m}g_{2m}
    =
    (p_m,-\alpha_{2m+1}a_{i_m}(p_m)\mathbf e_{i_m}).
\]
At this odd iterate, take \(\sigma_{i_m}=-1\) and \(\sigma_j=0\) for every
\(j\ne i_m\) in \eqref{eq:counterexample_subdifferential}.  This gives
\[
    g_{2m+1}:=
    \left(
      \alpha_{2m+1}a_{i_m}(p_m)\nabla a_{i_m}(p_m),
      -a_{i_m}(p_m)\mathbf e_{i_m}
    \right)
    \in\partial f(x_{2m+1}).
\]
Indeed, \(z_{i_m}=-\alpha_{2m+1}a_{i_m}(p_m)<0\), so
\(\partial|\cdot|(z_{i_m})=\{-1\}\); for \(j\ne i_m\), one has \(z_j=0\)
and \(0\in\partial|\cdot|(0)=[-1,1]\).
Therefore
\(x_{2m+2}=x_{2m+1}-\alpha_{2m+1}g_{2m+1}=(p_{m+1},0)\), where
\(p_{m+1}=p_m-\alpha_{2m+1}^2a_{i_m}(p_m)
\nabla a_{i_m}(p_m)\).

Define
\begin{equation}
 \gamma_m:=
 \frac{\alpha_{2m+1}^2a_{i_m}(p_m)
 (-\bar a_{i_m}'(\theta_m))}{r_m^2}>0.
\label{eq:gamma_m_exp}
\end{equation}

Since \(a_i(r(\cos\theta,\sin\theta))=\bar a_i(\theta)\) for \(r>1/2\),
differentiating at \(p_m\) gives
\[
 \nabla a_{i_m}(p_m)
 =r_m^{-1}\bar a_{i_m}'(\theta_m)
   (-\sin\theta_m,\cos\theta_m).
\]
Hence
\begin{align*}
 p_{m+1}
 &=r_m\left(
      \cos\theta_m-\gamma_m\sin\theta_m,
      \sin\theta_m+\gamma_m\cos\theta_m
    \right)\\
 &=r_m\sqrt{1+\gamma_m^2}\left(
      \cos(\theta_m+\arctan\gamma_m),
      \sin(\theta_m+\arctan\gamma_m)
    \right).
\end{align*}
Consequently,
\begin{equation}
 r_{m+1}=r_m\sqrt{1+\gamma_m^2},
 \qquad
 \theta_{m+1}=\theta_m+\arctan\gamma_m.
\label{eq:counterexample_polar_recursion}
\end{equation}
In particular, \(r_{m+1}\geq r_m\geq1\), so the hypothesis \(r_m\geq1\)
propagates from \(m\) to \(m+1\).  Thus the preceding two updates define
\(x_{2m+1}\) and \(x_{2m+2}\) for every \(m\geq0\).

\runinheading{Step~3: Bound the radial motion}
Since \(a_{i_m}(p_m)\leq3\),
\(|\bar a_{i_m}'(\theta_m)|\leq1\), and \(r_m\geq1\),
equation~\eqref{eq:gamma_m_exp} gives
\(0<\gamma_m\leq3\alpha_{2m+1}^2\).  Moreover,
\[
 \sum_{m=0}^{\infty}\alpha_{2m+1}^4
 =\sum_{m=0}^{\infty}(2m+1+K)^{-2}<\infty.
\]
Hence $\sum_{m=0}^{\infty}\gamma_m^2<\infty$.  Iterating the radial
identity in \eqref{eq:counterexample_polar_recursion} now gives
\[
\begin{aligned}
    r_m
    &=r_0\prod_{j=0}^{m-1}\sqrt{1+\gamma_j^2} \\
    &\leq r_0\exp\left(\frac12\sum_{j=0}^{m-1}\gamma_j^2\right)
     \leq r_0\exp\left(\frac92\sum_{j=0}^{\infty}
                              \alpha_{2j+1}^4\right)=:R.
\end{aligned}
\]
The sequence \((r_m)\) is increasing and bounded, so
\(r_m\to r_\infty\) for some finite \(r_\infty\geq1\).

\runinheading{Step~4: Identify the angular motion}
We next show that the angles wind around the circle indefinitely, with
vanishing increments.  Since \(a_{i_m}(p_m)\geq1\) and
\(-\bar a_{i_m}'(\theta_m)\geq1/\sqrt2\), equation
\eqref{eq:gamma_m_exp} gives
\(
    \gamma_m\geq\alpha_{2m+1}^2/(\sqrt2\,R^2).
\)
The squared step sizes at odd indices form a divergent shifted harmonic
series:
\[
 \sum_{m=0}^{\infty}\alpha_{2m+1}^2
 =\sum_{m=0}^{\infty}(2m+1+K)^{-1}=+\infty.
\]
It follows that $\sum_m\gamma_m=+\infty$.  Since $\gamma_m\to0$, eventually
$\arctan\gamma_m\geq\gamma_m/2$.  Hence
\[
 \sum_{m=0}^\infty\arctan\gamma_m=+\infty,
 \qquad
 \theta_{m+1}-\theta_m=\arctan\gamma_m\longrightarrow0.
\]
Thus \(\theta_m\) is unbounded and has vanishing increments.  For any
$\phi\in[0,2\pi)$ and each integer $\ell\geq1$, let $m_\ell$ be the first
index satisfying \(\theta_{m_\ell}\geq2\pi\ell+\phi\).
Then \(m_\ell\to\infty\), and the overshoot satisfies
\[
 0\leq\theta_{m_\ell}-(2\pi\ell+\phi)
 <\theta_{m_\ell}-\theta_{m_\ell-1}\longrightarrow0.
\]
Therefore
\begin{equation}
 p_{m_\ell}\longrightarrow
 r_\infty(\cos\phi,\sin\phi).
\label{eq:counterexample_circle_subsequence}
\end{equation}
Taking \(\phi=0\) and \(\phi=\pi\) gives two distinct
subsequential limits.  Hence \((p_m)\), and therefore \((x_{2m})\), is
not convergent.

\runinheading{Step~5: Pass from the even subsequence to the full sequence}
The odd and even iterates satisfy
\[
 x_{2m+1}
 =\bigl(p_m,-\alpha_{2m+1}a_{i_m}(p_m)\mathbf e_{i_m}\bigr),
 \qquad x_{2m}=(p_m,0).
\]
Since \(|p_m|\leq R\), \(a_i\leq3\), and \(\alpha_k\leq1\), the full
sequence is bounded and, since its even subsequence is nonconvergent, is
itself nonconvergent.  Moreover, \(f(x_{2m})=0\) and
\(f(x_{2m+1})=\alpha_{2m+1}a_{i_m}(p_m)^2\to0\).

Since $r_m\to r_\infty$, every accumulation point of the even subsequence
lies on the circle of radius $r_\infty$, and
\eqref{eq:counterexample_circle_subsequence} supplies a subsequence converging
to each point of that circle.  Also,
\[
 |x_{2m+1}-x_{2m}|
 =\alpha_{2m+1}a_{i_m}(p_m)\leq3\alpha_{2m+1}\to0,
\]
so the odd and even subsequences have exactly the same accumulation points.
Thus every accumulation point has the form
\(\bigl(r_\infty(\cos\phi,\sin\phi),0\bigr)\), where
\(\phi\in[0,2\pi)\).
\end{proof}

\subsection{Active-manifold geometry and inexact update estimates}
\label{sec:active_geometry}
The counterexample in Theorem~\ref{thm:counterexample-half} isolates a
geometric mechanism caused by the failure of square summability.  Step~4 of
its proof shows that the cumulative angular motion
of the constructed sequence along its limiting circle grows on the scale of
\(\sum_k\alpha_k^2\).  Because this series diverges, the sequence winds
indefinitely around the circle and does not converge.  This mechanism suggests
that square summability is the natural condition for eliminating the persistent
tangential drift.

Motivated by this observation, we impose the following local geometric
assumption at the critical point.  It separates attraction toward an active
manifold from motion tangent to it and imposes a Verdier estimate along that
manifold.  The assumption builds on the partial-smoothness
framework of Lewis \cite{lewis2002active} and is closely related to geometric
conditions appearing in recent analyses of saddle avoidance for stochastic
subgradient methods \cite{bianchi2023stochastic,davis2025active}.

To formulate the assumption, recall from
\cite[Definition~8.3(a), p.~301]{rockafellar1998variational} that the
Fr\'echet, or regular, subdifferential of a locally Lipschitz function
\(f\colon\mathbb R^n\to\mathbb R\) is the set-valued mapping
\(\widehat\partial f:\mathbb R^n\rightrightarrows\mathbb R^n\) characterized
as follows: a vector \(v\in\mathbb R^n\) belongs to
\(\widehat\partial f(x)\) precisely when
\[
 f(z)\geq f(x)+\langle v,z-x\rangle+o(|z-x|)
 \qquad(z\to x).
\]

\begin{assumption}[Active geometry]
\label{assumption:active_aiming_verdier}
Let \(f\colon\mathbb R^n\to\mathbb R\) be locally Lipschitz and let
\(x^*\in\mathbb R^n\).  Suppose that
\(0\in\widehat\partial f(x^*)\) and that there exist a definable \(C^3\) manifold \(A\subset\mathbb R^n\) containing \(x^*\), an open neighborhood
\(U_0\) of \(x^*\), and constants \(\sigma_{\mathrm A},C_{\mathrm V}>0\) such that:
\begin{enumerate}[label=(\roman*),leftmargin=*]
    \item \emph{Activity.}  The restriction \(f|_A\) is \(C^2\) on
    \(A\cap U_0\), and $|v|\geq\sigma_{\mathrm A}$ for all $z\in U_0\setminus A,\ v\in\partial f(z)$.
    \item \emph{Verdier compatibility.}  For every \(z\in U_0\),
    \(y\in A\cap U_0\), and \(v\in\partial f(z)\),
    \begin{equation}
       \left|P_{T_A(y)}v-\nabla_A f(y)\right|
       \leq C_{\mathrm V}|z-y|.
    \label{eq:active_verdier}
    \end{equation}
\end{enumerate}
Any such \(A\) is called an \emph{active manifold} at \(x^*\).
\end{assumption}

At suitable critical points, the conditions in
Assumption~\ref{assumption:active_aiming_verdier} hold in many
structured nonsmooth models.  For \(\ell_1\)- and
group-sparsity regularization
\cite{tibshirani1996regression,yuan2006model,meier2008group}, the active
manifold records the nonzero coordinates or groups; for nuclear-norm
regularization \cite{candes2010power,recht2011simpler}, it is the fixed-rank
manifold.  Detailed active-manifold descriptions for these and related
regularizers appear in \cite[Examples~10--15]{vaiter2017degrees}.  The same
geometry arises in exact \(\ell_1\)-penalty formulations of definable smooth
constrained problems \cite{han1979exact,burke1991exact} and in
maximum-eigenvalue optimization \cite[Example~3.6]{lewis2002active}.  General
criteria for verifying Assumption~\ref{assumption:active_aiming_verdier} in
these and related settings are given in
\cite[Section~3.5]{davis2025active}.

We next verify Assumption~\ref{assumption:active_aiming_verdier} for the
counterexample constructed in
Section~\ref{sec:square_summability_obstruction}.

\begin{proposition}[Counterexample geometry]
\label{proposition:counterexample_active_geometry}
For the objective \(f\) in
\eqref{eq:counterexample_objective_definition}, every point
\(x^*=(p^*,0)\) with \(|p^*|>1/2\) satisfies
Assumption~\ref{assumption:active_aiming_verdier} with active manifold
\(A=\mathbb R^2\times\{0\}\).
\end{proposition}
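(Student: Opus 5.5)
Fix $x^*=(p^*,0)$ with $|p^*|>1/2$. Choose $U_0:=\{(p,z):|p|>\rho,\ |z|<\delta\}$ with $\rho\in(1/2,|p^*|)$ and $\delta\in(0,1)$ small. On this set each $a_j$ is $C^\infty$, depends only on $p/|p|$, and satisfies $1\leq a_j\leq3$. We can therefore use the explicit subdifferential formula \eqref{eq:counterexample_subdifferential} from Step~1 of the proof of Theorem~\ref{thm:counterexample-half}. With $A=\mathbb R^2\times\{0\}$, a linear (hence definable, $C^3$) manifold, we have $f|_A\equiv0$. So $f|_A$ is $C^2$ and $\nabla_Af\equiv0$, with $T_A(y)=\mathbb R^2\times\{0\}$ for all $y\in A$.

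\emph{Fréchet stationarity.} We check $0\in\widehat\partial f(x^*)$ directly: $f\geq0=f(x^*)$ everywhere, since $a_j\geq1>0$ near $x^*$ and $a_j\geq1$ globally. Thus $x^*$ is a global minimizer, and the defining inequality holds with $v=0$.

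\emph{Activity.} Take $z=(p,w)\in U_0\setminus A$, so some $w_j\neq0$. By \eqref{eq:counterexample_subdifferential}, every $v\in\partial f(z)$ has $z$-component $(a_i(p)\sigma_i)_{i}$ with $\sigma_j=\operatorname{sgn}(w_j)$. Hence $|v|\geq a_j(p)\geq1$, so we may take $\sigma_{\mathrm A}=1$.

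\emph{Verdier compatibility.} Take $y=(p',0)\in A\cap U_0$, $z=(p,w)\in U_0$, and $v\in\partial f(z)$. Then $P_{T_A(y)}v$ is the $p$-component $\sum_j|w_j|\nabla a_j(p)$, and $\nabla_Af(y)=0$. On $|p|>\rho$ we have $|\nabla a_j(p)|\leq 1/|p|\leq1/\rho$, since $q=p/|p|$ has derivative of norm $1/|p|$. Therefore
\[
 |P_{T_A(y)}v-\nabla_Af(y)|\leq \rho^{-1}\sum_j|w_j|\leq 2\rho^{-1}|w|\leq 2\rho^{-1}|z-y|,
\]
using $|w|=|w-0|\leq|z-y|$. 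So $C_{\mathrm V}=2/\rho$ works.

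All steps are routine. The only point needing care is the validity of the subdifferential formula on the whole of $U_0$, including at points with some $w_j=0$. The proof of Theorem~\ref{thm:counterexample-half} establishes that formula on all of $\{|p|>1/2\}$, which contains $U_0$ by the choice $\rho>1/2$.
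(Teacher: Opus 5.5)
Your proposal is correct and follows essentially the same route as the paper. It obtains Fr\'echet stationarity from $f\geq0=f(x^*)$, activity from the $\mathbb R^4$-block entry $a_j(p)\operatorname{sgn}(w_j)$ in \eqref{eq:counterexample_subdifferential}, and Verdier compatibility from $|\nabla a_j(p)|\leq|p|^{-1}$ together with $\nabla_A f\equiv0$. The differences are only cosmetic: you use the product neighborhood $\{|p|>\rho,\ |w|<\delta\}$ in place of the paper's small ball around $x^*$, which gives $C_{\mathrm V}=2/\rho$ instead of $C_{\mathrm V}=4$, and you reuse the letter $z$ both for the point and for its $\mathbb R^4$-block, which you should rename.
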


\begin{proof}
\runinheading{Step~1: Verify the manifold and activity}
Fix \(x^*=(p^*,0)\) with \(|p^*|>1/2\), choose
\(\delta\in(0,|p^*|-1/2)\), and set
\(U_0:=\mathring B(x^*,\delta)\).  Every
\(x=(p,w)\in U_0\) then satisfies \(|p|>1/2\).  The affine space
\(A=\mathbb R^2\times\{0\}\) is a definable \(C^\infty\) manifold and
\(f|_A\equiv0\).  Moreover, \(a_i\geq1\) for every \(i\), so
\(f\geq0=f(x^*)\) and hence \(0\in\widehat\partial f(x^*)\).

To verify activity, let \(x=(p,w)\in U_0\setminus A\), and choose
\(i\) such that \(w_i\neq0\).  By
\eqref{eq:counterexample_subdifferential}, the \(i\)th component of the
\(\mathbb R^4\)-block of every \(v\in\partial f(x)\) equals
\(a_i(p)\operatorname{sgn}(w_i)\).  Consequently,
\(|v|\geq a_i(p)\geq1\), so the activity condition holds with \(\sigma_{\mathrm A}=1\).

\runinheading{Step~2: Verify Verdier compatibility}
On \(|p|>1/2\), we have
\(q(p)=p/|p|\) and
\(Dq(p)=|p|^{-1}(I-pp^\top/|p|^2)\).  Therefore
\(|\nabla a_i(p)|\leq |p|^{-1}<2\) for every \(i\).  Let
\(x=(p,w)\in U_0\), \(y=(\bar p,0)\in A\cap U_0\), and
\(v\in\partial f(x)\).  Since
\(T_A(y)=\mathbb R^2\times\{0\}\), \(\nabla_A f(y)=0\), and
\eqref{eq:counterexample_subdifferential} holds, we obtain
\[
 \left|P_{T_A(y)}v-\nabla_A f(y)\right|
 =\left|\sum_{i=1}^4|w_i|\nabla a_i(p)\right|
 \leq2\|w\|_1\leq4|w|\leq4|x-y|.
\]
Thus \eqref{eq:active_verdier} holds with \(C_{\mathrm V}=4\).
\end{proof}

Together, Theorem~\ref{thm:counterexample-half} and
Proposition~\ref{proposition:counterexample_active_geometry} show that the
bounded-sequence convergence conclusion of the subgradient method cannot be
extended to \(a=1/2\), even when every accumulation point satisfies
Assumption~\ref{assumption:active_aiming_verdier}.  We next collect the local
consequences of Assumption~\ref{assumption:active_aiming_verdier} that will be
used in the later convergence analyses.

\begin{lemma}[Local active estimates]
\label{lemma:active_local_consequences}
Let \(f\colon\mathbb R^n\to\mathbb R\) be locally Lipschitz and definable,
and suppose that \(x^*\) satisfies
Assumption~\ref{assumption:active_aiming_verdier} with active manifold \(A\).
For every \(\underline\theta\in(0,1)\), there exist
\(\varrho>0\), \(\theta\in(\underline\theta,1)\), \(L\geq1\), and
\(\mu>0\) with the following properties.
\begin{enumerate}[label=(\roman*),leftmargin=*]
    \item \emph{Projection geometry.}  The projection \(P_A\) is
    single-valued and \(C^2\) on \(B(x^*,8\varrho)\), and
    \[
       P_A(B(x^*,4\varrho))
       \subset A\cap B(x^*,8\varrho).
    \]
    Its derivative is bounded and Lipschitz there, and
    \begin{equation}
       DP_A(y)=P_{T_A(y)},
       \qquad\forall y\in A\cap B(x^*,8\varrho).
    \label{eq:active_projection_derivative}
    \end{equation}

    \item \emph{Uniform local bounds.}  For the indicated points,
    \begin{equation}
    \begin{aligned}
       |f(z)-f(z')|&\leq L|z-z'|
       &&\forall z,z'\in B(x^*,8\varrho),\\
       |v|&\leq L
       &&\forall z\in B(x^*,8\varrho),\ v\in\partial f(z),\\
       |DP_A(z)|&\leq L,\qquad
       |DP_A(z)-DP_A(z')|\leq L|z-z'|
       &&\forall z,z'\in B(x^*,8\varrho),\\
       |\nabla_A f(y)|&\leq L,\qquad
       |\nabla_A f(y)-\nabla_A f(y')|\leq L|y-y'|
       &&\forall y,y'\in A\cap B(x^*,8\varrho).
    \end{aligned}
    \label{eq:active_local_uniform_constants}
    \end{equation}

    \item\label{item:active_goldstein_enlargement}
    \emph{Active geometry and its Goldstein enlargement.}  One has
    \(\nabla_A f(x^*)=0\).  Let
    \(x\in\mathring B(x^*,2\varrho)\) and set \(y=P_A(x)\).  Suppose that \(r\geq0\) and
    \(B(x,r)\subset\mathring B(x^*,4\varrho)\).  Then every
    \(u\in\partial f(x;r)\) satisfies \(|u|\leq L\) and
    \begin{equation}
    \begin{aligned}
       |P_{T_A(y)}u-\nabla_A f(y)|
       &\leq L\bigl(d(x,A)+r\bigr),\\
       \langle u,x-y\rangle&\geq\mu d(x,A)-Lr.
    \end{aligned}
    \label{eq:active_aiming}
    \end{equation}

    \item \emph{Riemannian KL inequality.}  For
    \(y\in A\cap B(x^*,8\varrho)\),
    \begin{equation}
       |f(y)-f(x^*)|^\theta
       \leq L|\nabla_A f(y)|.
    \label{eq:local_active_KL}
    \end{equation}
\end{enumerate}
\end{lemma}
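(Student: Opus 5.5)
We would prove the four items in order, shrinking a single radius \(\varrho\) finitely many times. For (i), since \(A\) is a \(C^3\) embedded manifold containing \(x^*\), the tubular neighborhood theorem gives a neighborhood of \(x^*\) on which \(P_A\) is single-valued and \(C^2\). On \(A\) one has \(DP_A(y)=P_{T_A(y)}\), which follows by differentiating the identity \(P_A(y+\nu)=y\) for small normal \(\nu\) and using \(P_A|_A=\mathrm{id}\). We take \(8\varrho\) inside this neighborhood and inside \(U_0\). The inclusion \(P_A(B(x^*,4\varrho))\subset B(x^*,8\varrho)\) follows from \(|P_A(x)-x^*|\leq|P_A(x)-x|+|x-x^*|\leq 2|x-x^*|\), using \(x^*\in A\).

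For (ii), each bound holds after possibly shrinking \(\varrho\), on the compact ball \(B(x^*,8\varrho)\) (resp. on \(A\cap B(x^*,8\varrho)\), which we keep inside a compact piece of \(A\cap U_0\)). The ingredients are: local Lipschitzness of \(f\) and local boundedness of \(\partial f\); \(C^2\) regularity of \(P_A\), which makes \(DP_A\) bounded and Lipschitz; and \(C^2\) regularity of \(f|_A\), which makes \(\nabla_A f\) bounded and Lipschitz, the latter via a \(C^1\) extension of \(\nabla_A f\) in local charts. We take \(L\geq1\) to be the maximum of all these constants.

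For (iii), we first show \(\nabla_A f(x^*)=0\). From \(0\in\widehat\partial f(x^*)\), \(f|_A\) has a local minimum-type first-order condition at \(x^*\) along curves in \(A\), so its Riemannian gradient vanishes. Next fix \(u\in\partial f(x;r)\). By Carath\'eodory, \(u=\sum\lambda_i v_i\) with \(v_i\in\partial f(z_i)\) and \(z_i\in B(x,r)\). The first inequality in \eqref{eq:active_aiming} follows by applying \eqref{eq:active_verdier} to each \(v_i\) with the same \(y\), using \(|z_i-y|\leq d(x,A)+r\), and averaging. The aiming inequality is the \textbf{main obstacle}. We plan to prove \(\langle v,z-P_A(z)\rangle\geq 2\mu\, d(z,A)\) for all \(z\) near \(x^*\) and \(v\in\partial f(z)\), and then transfer it to \(x\) by replacing \(z_i-P_A(z_i)\) with \(x-y\) at cost \(\leq L'r\), using the Lipschitz bound on \(P_A\) and \(|v_i|\leq L\); the constant is then absorbed into \(L\).

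To get the key inequality, we would argue by contradiction with the curve selection lemma, using definability. Suppose \(\langle v_k,z_k-P_A(z_k)\rangle\leq o(d(z_k,A))\) along \(z_k\to x^*\) with \(z_k\notin A\). Write the unit normal \(\nu_k=(z_k-P_A z_k)/d(z_k,A)\). By activity, \(|v_k|\geq\sigma_A\), and by Verdier compatibility the tangential part of \(v_k\) is \(O(|z_k-x^*|)\), since \(\nabla_A f(P_A z_k)\to0\). Hence the normal part of \(v_k\) has norm at least \(\sigma_A/2\). We must then show the normal component is positively aligned with \(\nu\). For this we use a definable curve \(\gamma(t)\) in \(U_0\setminus A\) approaching \(x^*\), along which \(\langle v,\nu\rangle\leq 0\), combined with the chain rule for the definable function along the curve and the lower bound \(f(\gamma(t))\geq f(P_A\gamma(t))-o(t)\) supplied by \(0\in\widehat\partial f(x^*)\) together with Verdier compatibility. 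We would use the curve \(s\mapsto P_A z+s\nu\) for \(s\in[0,d(z,A)]\) to write \(f(z)-f(P_Az)=\int\langle v(s),\nu\rangle ds\) with Clarke subgradients \(v(s)\). The growth \(f(z)-f(P_Az)\geq c\,d(z,A)\) (sharpness, derived from activity plus a Łojasiewicz-type argument on normal slices) then forces positivity. If this route stalls, we would instead invoke the general criteria of \cite[Section~3.5]{davis2025active}, where this aiming inequality is derived from the same hypotheses.

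For (iv), \(f|_A\) is a definable \(C^2\) function on a \(C^3\) definable manifold in a polynomially bounded structure. The Kurdyka--Łojasiewicz inequality with a power desingularizer (the Łojasiewicz gradient inequality for definable functions on manifolds) gives, near the critical point \(x^*\), \(|f(y)-f(x^*)|^{\theta_0}\leq c|\nabla_A f(y)|\) for some \(\theta_0\in(0,1)\). Since \(|f(y)-f(x^*)|\leq1\) near \(x^*\), raising \(\theta\) to any value in \((\max\{\theta_0,\underline\theta\},1)\) preserves the inequality, and \(L\) is enlarged to dominate \(c\). Finally, \(\varrho\) is shrunk once more so that all four items hold simultaneously.
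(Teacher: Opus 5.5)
Your items (i), (ii) and (iv), the identity \(\nabla_Af(x^*)=0\), the Verdier bound in (iii), and the transfer from \(\partial f(z_i)\) to the Goldstein enlargement all follow the paper. In (i), note that \(A\) need not be closed, so the projection must be localized to a piece of \(A\) that is closed in a neighborhood of \(x^*\).

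The gap is the base aiming inequality \(\langle v,z-P_A(z)\rangle\geq\mu\,d(z,A)\) for \emph{every} \(v\in\partial f(z)\). You flag it as the crux but do not establish it.
\begin{itemize}
\item Activity and Verdier compatibility only give \(|P_{N_A(P_Az)}v|\geq\sigma_{\mathrm A}/2\). Nothing you say controls the direction of this normal part relative to \(\nu\): its sign in codimension one, its angle in higher codimension.
\item Your concluding step uses the normal segment, \(f(z)-f(P_Az)=\int_0^{d(z,A)}\langle v(s),\nu\rangle\,ds\). Combined with sharpness, this only shows that \(\langle v(s),\nu\rangle\) is large on average, for some Clarke selections along the segment. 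It says nothing about a given \(v\in\partial f(z)\) at the endpoint.
\item The bound \(f(\gamma(t))\geq f(P_A\gamma(t))-o(t)\) you also invoke is too weak to contradict \(\langle v,\nu\rangle\leq0\).
\end{itemize}

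The missing ingredient is a (b)-type estimate bounding \(\langle v,z-P_A(z)\rangle\) from below by \(f(z)-f(P_A(z))-o(d(z,A))\). In the paper this is \((b_\leq)\)-regularity, obtained from \cite{davis2025active} by a chain of results:
\begin{itemize}
\item Verdier compatibility gives strong (a)-regularity (Theorem~3.6);
\item this gives \((b_=)\)-regularity of \(\operatorname{epi}f\) along \(\operatorname{gph}(f|_A)\) (Theorem~3.11), hence \((b_\leq)\);
\item Corollary~3.5, with activity and \(0\in\widehat\partial f(x^*)\), then yields \(\mu\).
\end{itemize}
Your fallback pointer to Section~3.5 of that work does not identify this chain; the present paper cites that section only for verifying the assumption in examples.

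Sharpness, \(f(z)-f(P_Az)\geq c\,d(z,A)\), is also only asserted. It needs its own argument, for example following a subgradient curve from \(z\) until it hits \(A\), using activity, \(0\in\widehat\partial f(x^*)\), and smallness of \(\nabla_Af\) near \(x^*\).

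Your curve-selection idea can close the gap if you work along the selected curve rather than along the normal segment. Suppose the definable set of \(z\notin A\) admitting some \(v\in\partial f(z)\) with \(\langle v,\nu(z)\rangle\leq c/4\) accumulates at \(x^*\). Choose a definable \(\gamma(t)\to x^*\) in this set with a definable selection \(v(t)\), and write \(\gamma=\eta+\delta\nu\), where \(\eta=P_A\circ\gamma\) and \(\delta=d(\gamma,A)\). The chain rule for definable functions gives, for a.e.\ \(t\),
\[
 (f\circ\gamma)'-(f\circ\eta)'
 =\langle P_{T_A(\eta)}v-\nabla_Af(\eta),\eta'\rangle+\delta'\langle v,\nu\rangle+\delta\langle v,\nu'\rangle
 \leq C_{\mathrm V}\delta|\eta'|+\tfrac c4\,\delta'+L\delta|\nu'|.
\]
The inequality uses Verdier compatibility, \(\delta'\geq0\) near \(0\), and \(|v|\leq L\). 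Since \(\eta\) and \(\nu\) are definable with limits at \(t=0\), one has \(\int_0^t(|\eta'|+|\nu'|)\to0\), and \(\delta\) is increasing. Integrating therefore gives \(f(\gamma(t))-f(\eta(t))\leq(c/4+o(1))\,\delta(t)\), which contradicts sharpness.

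With this argument, or with the cited chain of results, item (iii) goes through; as written, it does not.
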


\begin{proof}
\runinheading{Step~1: Projection geometry}
Choose bounded open neighborhoods \(W,V\) of \(x^*\) with
\(\overline W\subset V\) such that \(A\cap V\) is closed in \(V\).  The set \(A\setminus V\) is separated from
\(\overline W\).  Hence, after shrinking \(W\), the distance and nearest
points to \(A\) agree on \(W\) with those to \(A\cap V\).  The
local existence and regularity results for the orthogonal projection
\cite[(3.6) and Theorems~(3.8) and~(4.1)]{dudek1994nonlinear}, applied to
$A\cap V$, give a neighborhood of \(x^*\) on which \(P_A\) is
single-valued and \(C^2\).  Choose \(\varrho>0\) sufficiently small that
\(B(x^*,8\varrho)\) lies in this neighborhood.  Since \(P_A(z)\) is a
nearest point to \(z\) in \(A\) and \(x^*\in A\),
\begin{equation}
 |z-P_A(z)|=d(z,A)\leq|z-x^*|,
 \qquad |P_A(z)-x^*|\leq2|z-x^*|.
\label{eq:active_projection_distance_bound}
\end{equation}
The stated containment follows from the second estimate.  The identity
\eqref{eq:active_projection_derivative} follows from
\cite[Theorem~(4.1)(i)]{dudek1994nonlinear}.
The \(C^2\) regularity of \(P_A\) makes its derivative bounded and Lipschitz
on \(B(x^*,8\varrho)\).

\runinheading{Step~2: Uniform local bounds}
Shrink \(\varrho\), if necessary, so that the closed balls in the statement
lie both in the projection neighborhood obtained in Step~1 and in \(U_0\)
from Assumption~\ref{assumption:active_aiming_verdier}.  Local Lipschitz
continuity of \(f\) and local boundedness of its Clarke subdifferential
\cite[Propositions~2.1.2 and~2.1.5]{clarke1990} give the first two bounds in
\eqref{eq:active_local_uniform_constants}.  Step~1 and
Assumption~\ref{assumption:active_aiming_verdier}(i) give the projection and
Riemannian-gradient bounds, respectively.  By compactness, all these bounds
hold with a common constant \(L_0\geq1\).

\runinheading{Step~3: Active geometry and its Goldstein enlargement}
The defining inequality for \(0\in\widehat\partial f(x^*)\), restricted to
smooth curves in \(A\) through \(x^*\), shows that every directional
derivative of \(f|_A\) at \(x^*\) vanishes.  Hence \(\nabla_A f(x^*)=0\).
Increase \(L_0\), if necessary, so that it also dominates the Verdier
constant in Assumption~\ref{assumption:active_aiming_verdier}.
The Verdier estimate \eqref{eq:active_verdier} gives strong
\((a)\)-regularity along \(A\) by
\cite[Definitions~3.2--3.3 and Theorem~3.6]{davis2025active}.  Applied to
\(\operatorname{epi}f\) and \(\operatorname{gph}(f|_A)\),
\cite[Definitions~3.1 and~3.3 and Theorem~3.11]{davis2025active} then gives
\((b_=)\)-regularity, hence \((b_{\leq})\)-regularity; the required local
closedness and smoothness follow from the continuity of \(f\) and the
\(C^2\)-smoothness of \(f|_A\).  Activity,
\(0\in\widehat\partial f(x^*)\), and
\cite[Corollary~3.5]{davis2025active} therefore yield \(\mu>0\) such that,
after shrinking \(\varrho\),
\begin{equation}
 \langle v,z-P_A(z)\rangle\geq\mu d(z,A)
\label{eq:active_base_aiming}
\end{equation}
for \(z\in B(x^*,8\varrho)\) and \(v\in\partial f(z)\).  This is also the angle
condition of \cite[Definition~5(iv)]{bianchi2023stochastic}.

It remains to pass these estimates to the Goldstein enlargement.  By the
definition of \(\partial f(x;r)\) in Section~\ref{sec:inexact}, \(u\) is a
finite convex combination
\(u=\sum_{j=1}^N\omega_jv_j\), where \(N\geq1\),
\(z_j\in B(x,r)\), \(v_j\in\partial f(z_j)\), \(\omega_j\geq0\), and
\(\sum_{j=1}^N\omega_j=1\).  Equations
\eqref{eq:active_local_uniform_constants} and \eqref{eq:active_verdier} give
\[
 |v_j|\leq L_0,\qquad
 |P_{T_A(y)}v_j-\nabla_A f(y)|
 \leq L_0|z_j-y|\leq L_0(d(x,A)+r).
\]
Moreover, the triangle inequality gives
\(d(z_j,A)\geq d(x,A)-|z_j-x|\geq d(x,A)-r\), while
\eqref{eq:active_local_uniform_constants} makes \(P_A\) \(L_0\)-Lipschitz.
Consequently, \eqref{eq:active_base_aiming} gives
\[
\begin{aligned}
 \langle v_j,x-y\rangle
 &=\langle v_j,z_j-P_A(z_j)\rangle
   +\left\langle v_j,
       (x-z_j)+(P_A(z_j)-P_A(x))\right\rangle\\
 &\geq\mu d(x,A)-\bigl(\mu+L_0(1+L_0)\bigr)r.
\end{aligned}
\]
Taking the corresponding convex combinations and setting
\(L:=\max\{L_0,\mu+L_0(1+L_0)\}\) proves the bound \(|u|\leq L\) and the two
estimates in \eqref{eq:active_aiming}, while retaining the bounds established
in Steps~1--2.

\runinheading{Step~4: Riemannian KL inequality}
Set \(g:=f|_A-f(x^*)\) on a sufficiently small bounded definable open patch
of \(A\) around \(x^*\). The definable-manifold Kurdyka--\L{}ojasiewicz
inequality \cite[Corollary~12 and Remark~7]{bolte2007clarke}, together with
the growth dichotomy in the fixed polynomially bounded structure
\cite[Growth Dichotomy~4.12]{van1996geometric}, gives
\(\theta_0\in(0,1)\) and \(C_{\rm KL}>0\) such that
\(|g(y)|^{\theta_0}\leq C_{\rm KL}|\nabla_A f(y)|\) near \(x^*\).
Shrink the radius so that \(|g(y)|\leq1\), choose
\(\theta\in(\max\{\underline\theta,\theta_0\},1)\), and increase \(L\) so
that \(L\geq C_{\rm KL}\).  Then \(|g(y)|^\theta\leq|g(y)|^{\theta_0}\),
which proves \eqref{eq:local_active_KL}.
\end{proof}

Section~\ref{sec:first-order} placed the momentum and stochastic subgradient
methods in the inexact-subgradient framework through different reductions.
Lemma~\ref{lemma:momentum_shadow} gives the momentum shadow update
\eqref{eq:momentum_matched_shadow}, whereas Lemma~\ref{lemma:stoc_error}
passes to a window-endpoint subsequence of the stochastic iterates.  Here we
reuse \eqref{eq:momentum_matched_shadow} for momentum but apply the
active-geometry estimates directly to the original stochastic update
\eqref{eq:inexact stoc}.  Both updates have the local form
\begin{equation}
 x_{k+1}=x_k-\alpha_k(u_k+\varepsilon_k),
 \qquad u_k\in\partial f(x_k;r_k),
 \qquad r_k\geq0.
\label{eq:active_inexact_meta_update}
\end{equation}
For the momentum shadow sequence, \eqref{eq:momentum_matched_shadow} gives,
after relabeling, \(r_k=c_b\alpha_k\) for some \(c_b>0\) and
\(\varepsilon_k=0\); for the stochastic subgradient sequence
\eqref{eq:inexact stoc}, \(u_k=v_k\), \(r_k=0\),
and Assumption~\ref{assumption:stochastic_noise} supplies the
martingale-difference and conditional second-moment properties of
\(\varepsilon_k\).  For the inexact update
\eqref{eq:active_inexact_meta_update}, the next proposition
isolates the distance-to-manifold and projected estimates supplied by the active geometry;
the later method-specific analyses control the enlargement and noise terms.
Related distance-to-manifold and projected recursions for exact stochastic subgradient steps
appear in \cite[Propositions~5.1--5.2]{davis2025active} and
\cite[Lemma~3 and Proposition~4]{bianchi2023stochastic}.  The proposition
below retains the Goldstein-enlargement and additive-error terms and adds the
cubic distance-to-manifold and projected-descent estimates needed here.

\begin{proposition}[Active-step estimates]
\label{proposition:active_inexact_local_estimates}
Let \(f\colon\mathbb R^n\to\mathbb R\) be locally Lipschitz and definable,
and suppose that \(x^*\) satisfies
Assumption~\ref{assumption:active_aiming_verdier} with active manifold \(A\).
Fix constants \(\varrho,\theta,L,\mu\) and the projection \(P_A\) supplied
by Lemma~\ref{lemma:active_local_consequences}.  There exist
\(\bar\alpha\in(0,1]\), \(c_{\rm N}>0\), and \(C\geq1\), depending only
on these fixed local data, with the following property.

Let \(p<q\), and suppose that \eqref{eq:active_inexact_meta_update} holds for
\(i\in\llbracket p,q-1\rrbracket\), where
\(0<\alpha_i\leq\bar\alpha\),
\(x_{\llbracket p,q\rrbracket}\subset\mathring B(x^*,\varrho)\), and
\(B(x_i,r_i)\subset\mathring B(x^*,4\varrho)\).
For \(i\in\llbracket p,q\rrbracket\), set
\(y_i:=P_A(x_i)\), \(n_i:=x_i-y_i\), and \(d_i:=|n_i|\).
Then, for every \(i\in\llbracket p,q-1\rrbracket\), the following estimates
hold.
\begin{enumerate}[label=\emph{(\roman*)},leftmargin=*]
\item \emph{Distance to the active manifold.}  The distance
satisfies
\begin{equation}
 d_{i+1}^2+\mu\alpha_i d_i
 \leq d_i^2-2\alpha_i\langle\varepsilon_i,n_i\rangle
 +C\bigl(\alpha_i r_i
       +\alpha_i^2(1+|\varepsilon_i|^2)\bigr).
 \label{eq:active_meta_normal}
\end{equation}
If \(\varepsilon_i=0\), then the sharper cubic estimate
\begin{equation}
 d_{i+1}^3+c_{\rm N}\alpha_i d_i^2
 \leq d_i^3+C(r_i+\alpha_i)^3
\label{eq:active_meta_cubic_normal}
\end{equation}
also holds.

\item \emph{Projected motion.}  The projected iterates satisfy
\begin{align}
 |y_{i+1}-y_i+\alpha_i\nabla_A f(y_i)
       +\alpha_iDP_A(x_i)\varepsilon_i|
 &\leq C\bigl(\alpha_i(d_i+r_i)
       +\alpha_i^2(1+|\varepsilon_i|^2)\bigr),
 \label{eq:active_meta_projected_recursion}\\
 |y_{i+1}-y_i|
 &\leq C\alpha_i
       (|\nabla_A f(y_i)|+d_i+r_i+\alpha_i+|\varepsilon_i|).
 \label{eq:active_meta_projected_move}
\end{align}

\needspace{13\baselineskip}
\item \emph{Projected descent and gradient variation.}  One has
\begin{align}
 f(y_{i+1})-f(y_i)
 &\leq-\frac12\alpha_i|\nabla_A f(y_i)|^2 -\alpha_i\left\langle\varepsilon_i,
       DP_A(x_i)^*\nabla_A f(y_i)\right\rangle \notag\\
 &\quad+C\left(
       \alpha_i(d_i+r_i)^2+\alpha_i^3
       +\alpha_i^2(|\varepsilon_i|+|\varepsilon_i|^2)
       \right),
 \label{eq:active_meta_projected_descent}\\
 |\nabla_A f(y_{i+1})|
 &\leq |\nabla_A f(y_i)|+C\alpha_i
       (|\nabla_A f(y_i)|+d_i+r_i+\alpha_i+|\varepsilon_i|) \notag\\
 &\leq |\nabla_A f(y_i)|+C\alpha_i(1+|\varepsilon_i|).
 \label{eq:active_meta_gradient_variation}
\end{align}
\end{enumerate}
In particular, for every \(c_r\geq0\), there is a constant \(C'\geq1\),
depending only on \(c_r\) and the fixed local data, such that whenever
\(r_i\leq c_r\alpha_i\) and \(\varepsilon_i=0\), the remainders in
\eqref{eq:active_meta_projected_descent} and
\eqref{eq:active_meta_cubic_normal} are bounded by
\(C'(\alpha_i d_i^2+\alpha_i^3)\) and \(C'\alpha_i^3\), respectively.
\end{proposition}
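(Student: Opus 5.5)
The plan is to write the step as \(x_{i+1}=x_i-\alpha_i w_i\) with \(w_i:=u_i+\varepsilon_i\), and to Taylor-expand the \(C^2\) projection \(P_A\) around \(x_i\) using Lemma~\ref{lemma:active_local_consequences}. First I would choose \(\bar\alpha\) small enough that \(x_{i+1}\) and the segment \([x_i,x_{i+1}]\) stay in \(B(x^*,8\varrho)\). This holds when \(|\varepsilon_i|\) is moderate. When \(\alpha_i|\varepsilon_i|\) is large, the bounds are trivial, because every quantity is bounded by a constant times \(\alpha_i^2|\varepsilon_i|^2\) with \(\alpha_i|\varepsilon_i|\gtrsim 1\). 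I would therefore split into the cases \(\alpha_i|\varepsilon_i|\leq\varrho\) and \(\alpha_i|\varepsilon_i|>\varrho\). In the second case the left-hand sides are bounded by the diameter of \(B(x^*,8\varrho)\) and by \(L\), so they are absorbed into \(C\alpha_i^2|\varepsilon_i|^2\).

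\textbf{Projected motion.} In the main case, Taylor's formula with \(DP_A\) Lipschitz gives
\[
 y_{i+1}=y_i-\alpha_iDP_A(x_i)w_i+O(\alpha_i^2|w_i|^2).
\]
I then replace \(DP_A(x_i)\) by \(DP_A(y_i)=P_{T_A(y_i)}\) at cost \(L d_i\alpha_i|u_i|\) (keeping the noise term as \(DP_A(x_i)\varepsilon_i\)). Next, \eqref{eq:active_aiming} gives \(P_{T_A(y_i)}u_i=\nabla_Af(y_i)+O(d_i+r_i)\). Together these yield \eqref{eq:active_meta_projected_recursion}, and \eqref{eq:active_meta_projected_move} follows by the triangle inequality. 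The gradient variation \eqref{eq:active_meta_gradient_variation} follows from the Lipschitz bound on \(\nabla_Af\) combined with \eqref{eq:active_meta_projected_move}, with the second line using \(|\nabla_Af|,d_i,r_i,\alpha_i\) bounded.

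\textbf{Projected descent.} Since \(f|_A\) is \(C^2\), the descent lemma on \(A\) gives
\[
 f(y_{i+1})\le f(y_i)+\langle\nabla_Af(y_i),y_{i+1}-y_i\rangle+C|y_{i+1}-y_i|^2 .
\]
I would then insert the recursion from the previous step. The error term \(\alpha_i(d_i+r_i)|\nabla_Af|\) is handled by Young's inequality, \(\le\tfrac14\alpha_i|\nabla_Af|^2+C\alpha_i(d_i+r_i)^2\). The quadratic term is bounded via \eqref{eq:active_meta_projected_move}, giving \(\alpha_i^2(|\nabla_Af|^2+\dots)\), and for \(\bar\alpha\) small its \(|\nabla_A f|^2\) part is absorbed into the remaining \(\tfrac14\alpha_i|\nabla_Af|^2\). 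The term \(\alpha_i^2(d_i+r_i)^2\) is dominated by \(\alpha_i(d_i+r_i)^2\), and cross terms \(\alpha_i^2|\varepsilon_i|\) remain as stated. This leaves \(-\tfrac12\alpha_i|\nabla_A f|^2\). One point needs care: the bound must carry \(\alpha_i^3\) rather than \(\alpha_i^2\), which requires that the \(O(\alpha_i^2)\) term multiplying \(\nabla_Af\) has coefficient \(\alpha_i^2\cdot|\nabla_Af|\). Young's inequality then bounds it by \(\alpha_i|\nabla_Af|^2/8+C\alpha_i^3\).

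\textbf{Distance to the manifold.} Write \(d_{i+1}^2=|x_{i+1}-y_{i+1}|^2\le|x_{i+1}-y_i|^2\), since \(y_{i+1}\) is the nearest point. Expanding,
\[
 |x_{i+1}-y_i|^2=d_i^2-2\alpha_i\langle u_i+\varepsilon_i,n_i\rangle+\alpha_i^2|w_i|^2 .
\]
The aiming inequality \(\langle u_i,n_i\rangle\ge\mu d_i-Lr_i\) then gives \eqref{eq:active_meta_normal} after adjusting \(\mu\); I may take \(2\mu\) versus \(\mu\) to absorb constants.

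The cubic estimate \eqref{eq:active_meta_cubic_normal} with \(\varepsilon_i=0\) is where I expect the main obstacle. Set \(\delta:=\alpha_i(r_i+\alpha_i)\). The previous computation gives \(d_{i+1}^2\le d_i^2-2\mu\alpha_i d_i+C\delta\).
\begin{itemize}
\item If \(d_i\ge K(r_i+\alpha_i)\) for a large \(K\), then \(d_{i+1}^2\le d_i^2-\mu\alpha_id_i\), so \(d_{i+1}\le d_i(1-\mu\alpha_i/(2d_i))\). Raising to the cubic power gives \(d_{i+1}^3\le d_i^3-c\alpha_i d_i^2\).
\item Otherwise \(d_i\le K(r_i+\alpha_i)\), and \(d_{i+1}\le d_i+L\alpha_i\le C(r_i+\alpha_i)\). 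Both \(d_{i+1}^3\) and \(c\alpha_id_i^2\) are then \(O((r_i+\alpha_i)^3)\).
\end{itemize}
The final ``in particular'' statement is immediate by substituting \(r_i\le c_r\alpha_i\).
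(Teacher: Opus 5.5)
Your proposal is correct and follows essentially the paper's proof. The common steps are the nearest-point comparison with the aiming inequality for \eqref{eq:active_meta_normal}, the threshold case split for the cubic bound, the Taylor expansion of \(P_A\) with \(DP_A(y_i)=P_{T_A(y_i)}\) and the Verdier estimate for the projected recursion, and a second-order expansion along \(A\) between projected points, followed by Young's inequality, for the descent. The differences are minor. The paper routes the descent through the noiseless point \(x_i-\alpha_iu_i\); your direct expansion of the noisy projected step also works and even avoids the \(\alpha_i^2|\varepsilon_i|\) term. Your case split on \(\alpha_i|\varepsilon_i|\) is unnecessary, because the hypothesis \(x_{\llbracket p,q\rrbracket}\subset\mathring B(x^*,\varrho)\) already places \(x_{i+1}\) and the segment \([x_i,x_{i+1}]\) in the ball and gives \(\alpha_i|\varepsilon_i|\leq 2\varrho+L\alpha_i\).
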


\begin{proof}
\runinheading{Step~1: Local setup and geometric inputs}
Decrease \(\bar\alpha\), if necessary, so that
\(\bar\alpha L\leq\varrho/2\).  The nearest-point bounds
\eqref{eq:active_projection_distance_bound} give
\[
 |x-P_A(x)|=d(x,A)\leq|x-x^*|,
 \qquad |P_A(x)-x^*|\leq2|x-x^*|.
\]
Consequently, all projected points used below lie in
\(A\cap B(x^*,8\varrho)\).  Moreover,
\(h:=(f|_A)\circ P_A\) is \(C^2\) on a neighborhood of
\(B(x^*,4\varrho)\), with uniformly bounded first and second derivatives.
Indeed, Lemma~\ref{lemma:active_local_consequences}(i) makes \(P_A\) \(C^2\)
there and maps this ball into the \(C^2\) patch of \(f|_A\) supplied by
Assumption~\ref{assumption:active_aiming_verdier}(i).  Thus, after increasing
\(C\),
\begin{equation}
 \sup_{z\in B(x^*,4\varrho)}
 \bigl(|Dh(z)|+|D^2h(z)|\bigr)\leq C.
\label{eq:active_meta_h_derivative_bounds}
\end{equation}

For brevity, fix \(i\in\llbracket p,q-1\rrbracket\), suppress the index
\(i\) on \(x,y,n,d,u,r,\alpha,\varepsilon\), and set
\(g:=\nabla_A f(y)\) and \(s:=|g|\).
Item~\ref{item:active_goldstein_enlargement} of
Lemma~\ref{lemma:active_local_consequences} gives
\begin{equation}
 |u|\leq L,\qquad
 \langle u,n\rangle\geq\mu d-Lr,
 \qquad
 |P_{T_A(y)}u-g|\leq L(d+r).
\label{eq:active_meta_geometric_input}
\end{equation}
By \eqref{eq:active_projection_derivative},
\eqref{eq:active_local_uniform_constants}, and
\eqref{eq:active_meta_geometric_input}, and since \(|x-y|=d\), we have
\begin{equation}
 |DP_A(x)u-g|
 \leq |(DP_A(x)-DP_A(y))u|
      +|P_{T_A(y)}u-g|
 \leq C(d+r).
\label{eq:active_meta_tangent_input}
\end{equation}

\runinheading{Step~2: Distance estimates}
Since \(y\in A\), the update \eqref{eq:active_inexact_meta_update} and
\eqref{eq:active_meta_geometric_input} yield
\begin{align*}
 d_{i+1}^2
 &\leq|x_{i+1}-y|^2\\
 &=d^2-2\alpha\langle u,n\rangle
       -2\alpha\langle\varepsilon,n\rangle
       +\alpha^2|u+\varepsilon|^2\\
 &\leq d^2-2\mu\alpha d
       -2\alpha\langle\varepsilon,n\rangle
       +C\bigl(\alpha r+\alpha^2(1+|\varepsilon|^2)\bigr).
\end{align*}
Moving one copy of \(\mu\alpha d\) to the left and dropping the other
proves \eqref{eq:active_meta_normal}.

Suppose now that \(\varepsilon=0\), and set \(\eta:=r+\alpha\).  The preceding
estimate gives
\begin{equation}
 d_{i+1}^2\leq d^2-2\mu\alpha d+C\alpha\eta.
\label{eq:active_meta_cubic_start}
\end{equation}
Choose a fixed threshold \(\Lambda_{\rm N}\geq1\) sufficiently large.  If
\(d\geq \Lambda_{\rm N}\eta\), then
\eqref{eq:active_meta_cubic_start} gives
\(d_{i+1}^2\leq d^2-\mu\alpha d\leq d^2\).  Hence
\[
 d^3-d_{i+1}^3
 =(d^2-d_{i+1}^2)
   \frac{d^2+dd_{i+1}+d_{i+1}^2}{d+d_{i+1}}
 \geq\frac{\mu}{2}\alpha d^2.
\]
If \(d<\Lambda_{\rm N}\eta\), then
\eqref{eq:active_meta_cubic_start}, after dropping its negative term, gives
\[
 d_{i+1}^2\leq d^2+C\alpha\eta
 \leq(\Lambda_{\rm N}^2+C)\eta^2,
 \qquad
 \alpha d^2\leq\Lambda_{\rm N}^2\eta^3.
\]
Taking \(c_{\rm N}:=\mu/4\) and increasing \(C\) now proves
\eqref{eq:active_meta_cubic_normal} in both cases.

\runinheading{Step~3: Projected and tangential estimates}
Since \(x,x_{i+1}\in B(x^*,\varrho)\), the segment
\([x,x_{i+1}]\) lies in \(B(x^*,4\varrho)\).  Taylor expansion of \(P_A\)
along this segment, using the Lipschitz bound
for \(DP_A\) in \eqref{eq:active_local_uniform_constants} and then
\eqref{eq:active_meta_tangent_input}, gives
\begin{align*}
 y_{i+1}-y
 &=-\alpha DP_A(x)(u+\varepsilon)+R,\\
 |R|&\leq C\alpha^2|u+\varepsilon|^2
       \leq C\alpha^2(1+|\varepsilon|^2),
\end{align*}
and therefore
\[
 |y_{i+1}-y+\alpha g+\alpha DP_A(x)\varepsilon|
 \leq C\bigl(\alpha(d+r)+\alpha^2(1+|\varepsilon|^2)\bigr).
\]
This proves \eqref{eq:active_meta_projected_recursion}.

To retain a cubic deterministic remainder in the descent estimate, introduce
the noiseless intermediate point
\[
 \bar x_{i+1}:=x-\alpha u,
 \qquad \bar y_{i+1}:=P_A(\bar x_{i+1}).
\]
The choice of \(\bar\alpha\) ensures
\(\bar x_{i+1}\in B(x^*,3\varrho/2)\).  Hence
\([x,\bar x_{i+1}]\subset B(x^*,2\varrho)\).  Taylor expansion of \(P_A\)
along this segment, using
\eqref{eq:active_local_uniform_constants} and
\eqref{eq:active_meta_tangent_input}, gives
\begin{equation}
 \bar y_{i+1}-y=-\alpha g+r_y,
 \qquad |r_y|\leq C\alpha(d+r+\alpha).
\label{eq:active_meta_noiseless_projected}
\end{equation}
Since \(x_{i+1},\bar x_{i+1}\in B(x^*,2\varrho)\), the Lipschitz bound for
\(P_A\) in \eqref{eq:active_local_uniform_constants} gives
\[
 |y_{i+1}-\bar y_{i+1}|
 \leq C|x_{i+1}-\bar x_{i+1}|
 =C\alpha|\varepsilon|.
\]
Together with \eqref{eq:active_meta_noiseless_projected}, this proves
\eqref{eq:active_meta_projected_move}.

For every \(v\in\mathbb R^n\),
\eqref{eq:active_projection_derivative} and \(g\in T_A(y)\) give
\[
 Dh(y)[v]
 =\langle g,DP_A(y)v\rangle
 =\langle g,v\rangle.
\]
The projection bound \eqref{eq:active_projection_distance_bound} gives
\(y,\bar y_{i+1}\in B(x^*,3\varrho)\), so their joining segment lies in
\(B(x^*,4\varrho)\).  Taylor expansion of \(h\) along this segment, using
\eqref{eq:active_meta_h_derivative_bounds} and then
\eqref{eq:active_meta_noiseless_projected}, yields
\[
 f(\bar y_{i+1})-f(y)
 \leq-\alpha s^2+\langle g,r_y\rangle
       +C|\bar y_{i+1}-y|^2.
\]
The two remainder terms satisfy
\[
 |\langle g,r_y\rangle|
 \leq C\alpha s(d+r)+C\alpha^2s
 \leq\frac14\alpha s^2
      +C\bigl(\alpha(d+r)^2+\alpha^3\bigr),
\]
and
\[
 |\bar y_{i+1}-y|^2
 \leq C\alpha^2s^2
      +C\alpha^2(d+r)^2+C\alpha^4.
\]
After decreasing \(\bar\alpha\), the first term in the last display is
absorbed into another quarter of \(\alpha s^2\), while the remaining
terms are bounded by \(C(\alpha(d+r)^2+\alpha^3)\).
Therefore,
\[
 f(\bar y_{i+1})-f(y)
 \leq-\frac12\alpha s^2
      +C\bigl(\alpha(d+r)^2+\alpha^3\bigr).
\]

It remains to compare the true step with its noiseless intermediate point.
Since \(\bar x_{i+1},x_{i+1}\in B(x^*,2\varrho)\), their joining segment
lies in the same ball.  Taylor expansion of \(h\) along this segment, using
\eqref{eq:active_meta_h_derivative_bounds}, gives
\begin{align*}
 f(y_{i+1})-f(\bar y_{i+1})
 =h(x_{i+1})-h(\bar x_{i+1})\leq-\alpha\langle\nabla h(\bar x_{i+1}),\varepsilon\rangle
       +C\alpha^2|\varepsilon|^2.
\end{align*}
The derivative bound \eqref{eq:active_meta_h_derivative_bounds}, the identity
\(\nabla h(x)=DP_A(x)^*g\), and \(|u|\leq L\) imply
\[
 |\nabla h(\bar x_{i+1})-DP_A(x)^*g|
 \leq C|\bar x_{i+1}-x|\leq C\alpha.
\]
Consequently,
\[
 f(y_{i+1})-f(\bar y_{i+1})
 \leq-\alpha\langle\varepsilon,DP_A(x)^*g\rangle
       +C\alpha^2(|\varepsilon|+|\varepsilon|^2).
\]
Adding the last estimate to the noiseless descent estimate proves
\eqref{eq:active_meta_projected_descent}.  Finally, the Lipschitz bound for
\(\nabla_A f\) in \eqref{eq:active_local_uniform_constants}, together with
\eqref{eq:active_meta_projected_move}, gives the first inequality in
\eqref{eq:active_meta_gradient_variation}.  The second follows from the
uniform bounds in \eqref{eq:active_local_uniform_constants}, since
\(d_i<\varrho\), \(r_i<4\varrho\), and
\(\alpha_i\leq\bar\alpha\).
\end{proof}

We next record a function-value convergence result for the common update
\eqref{eq:active_inexact_meta_update}.

\begin{corollary}[Inexact value convergence]
\label{lemma:perturbed_subgradient_value_convergence}
Let \(f\colon\mathbb R^n\to\mathbb R\) be locally Lipschitz and definable,
and let \((x_k)\) be a bounded sequence satisfying
\eqref{eq:active_inexact_meta_update} for every \(k\in\mathbb N\), with
\(\alpha_k>0\).  Suppose that
\(\sum_k\alpha_k=\infty\), \(\sum_k\alpha_k^2<\infty\),
\(r_k\to0\), and the series \(\sum_k\alpha_k\varepsilon_k\) converges.
Then \((f(x_k))\) converges.  If \(x^*\) is an accumulation point of
\((x_k)\), then its limit is
\(f(x^*)\).
\end{corollary}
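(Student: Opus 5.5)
We plan to obtain this from the stochastic-approximation framework of \cite{davis2020stochastic}, as announced in Remark~\ref{remark:discrete}: verify \cite[Assumptions~A \& B]{davis2020stochastic} for the update \eqref{eq:active_inexact_meta_update}. The limit \(f(x^*)\) is then identified by continuity of \(f\) along a subsequence converging to \(x^*\).

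\textbf{Step 1: recast the update.} Write \eqref{eq:active_inexact_meta_update} as \(x_{k+1}=x_k-\alpha_k(u_k+\varepsilon_k)\) with \(u_k\in\partial f(x_k;r_k)\). Since \((x_k)\) is bounded and \(\partial f\) is locally bounded, \((u_k)\) is bounded. The step-size conditions \(\sum\alpha_k=\infty\) and \(\sum\alpha_k^2<\infty\) are assumed. The noise condition of \cite{davis2020stochastic} is the convergence of \(\sum\alpha_k\varepsilon_k\), which is exactly our hypothesis.

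\textbf{Step 2: asymptotic inclusion.} We need every limit of \((x_{k_j},u_{k_j})\) with \(x_{k_j}\to\bar x\) to satisfy \(\lim u_{k_j}\in\partial f(\bar x)\), uniformly in the sense of \cite[Assumption~A]{davis2020stochastic}:
\[
\lim_{k\to\infty}\operatorname{dist}\Bigl(\frac1{n}\sum_{j=k}^{k+n}u_j,\ \partial f(x)\Bigr)=0
\]
along converging subsequences. This follows from Proposition~\ref{prop:G-in-coHx} with \(H=\partial f\), which is outer semicontinuous and locally bounded, together with \(r_k\to0\) and convexity of \(\partial f(\bar x)\).

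\textbf{Step 3: Lyapunov and Sard conditions.} \cite[Assumption~B]{davis2020stochastic} requires that \(f\) be a Lyapunov function for the differential inclusion \(\dot x\in-\partial f(x)\) and that the critical values have empty interior. Both hold for locally Lipschitz definable \(f\) by the chain rule along absolutely continuous curves and the definable Sard theorem \cite{davis2020stochastic,bolte2007clarke}. The main theorem of \cite{davis2020stochastic} then gives convergence of \(f(x_k)\), and every accumulation point is critical. Since \(f\) is continuous, the limit equals \(f(x^*)\).

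\textbf{Main obstacle.} The hardest point is checking that the framework tolerates the Goldstein enlargement \(r_k\) rather than exact subgradients; this is where Step~2 does the work. If the citation route is unavailable, a self-contained fallback is the interpolation argument of \cite{benaim2005stochastic}. It shows that the piecewise-linear interpolation of the iterates is an asymptotic pseudotrajectory of \(\dot x\in-\partial f(x)\), using vanishing \(r_k\) and the convergent noise series. One then applies the Lyapunov/Sard argument directly to the limit set.
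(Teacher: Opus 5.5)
Your proposal is correct and follows essentially the paper's route. You verify the hypotheses of \cite{davis2020stochastic}: boundedness of $(u_k)$ from local boundedness of $\partial f$ and $r_k\to0$, the step-size and noise conditions taken directly from the assumptions, and graph consistency via $d(u_{k_j},\partial f(\bar x))\to0$ plus convexity of $\partial f(\bar x)$. The Lyapunov/Sard part then comes from definability, which is what \cite[Corollary~5.9]{davis2020stochastic} packages through a definable Whitney stratification. One correction: the averaging condition should read $\lim_{N\to\infty}d\bigl(\frac1N\sum_{j=1}^N u_{k_j},\partial f(\bar x)\bigr)=0$ along every subsequence with $x_{k_j}\to\bar x$, not an average over consecutive indices; your argument already proves exactly this version.
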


\begin{proof}
\runinheading{Step~1: Verify the boundedness assumptions}
We apply \cite[Corollary~5.9]{davis2020stochastic} with
\(X=\mathbb R^n\), \(G=-\partial f\), \(y_k=-u_k\), and
\(\zeta_k=-\varepsilon_k\), where
\(u_k\in\partial f(x_k;r_k)\).  Items~1, 3, and 4 of
\cite[Assumption~A]{davis2020stochastic} follow immediately from the
hypotheses.  For Item~2, let
\(K:=\overline{\{x_k:k\in\mathbb N\}}\).  Since \(K\) is compact and
\(r_k\to0\), local boundedness of the Clarke subdifferential
\cite[Propositions~2.1.2 and~2.1.5]{clarke1990} uniformly bounds
\(\partial f(x_k;r_k)\), and hence \((y_k)\), on an eventual tail; the
remaining terms are finite in number.

\runinheading{Step~2: Verify graph consistency}
It remains to verify Item~5.  If \(x_{k_j}\to\bar x\), then
\(r_{k_j}\to0\), and upper semicontinuity of the Clarke subdifferential
\cite[Proposition~2.1.5]{clarke1990} gives
\begin{equation}
 d\bigl(u_{k_j},\partial f(\bar x)\bigr)\longrightarrow0.
\label{eq:active_graph_consistency}
\end{equation}
Indeed, every point in \(B(x_{k_j},r_{k_j})\) tends uniformly to \(\bar x\),
and \(\partial f(\bar x)+\epsilon B(0,1)\) is convex, so it contains the
convex hull defining \(\partial f(x_{k_j};r_{k_j})\) for all sufficiently
large \(j\).  Since \(\partial f(\bar x)\) is nonempty, compact, and convex
\cite[Proposition~2.1.2]{clarke1990}, convexity and Cesàro averaging yield
\[
 d\left(\frac1N\sum_{j=1}^Ny_{k_j},G(\bar x)\right)
 \leq\frac1N\sum_{j=1}^N
 d\bigl(u_{k_j},\partial f(\bar x)\bigr)\longrightarrow0,
\]
which is Item~5.

\runinheading{Step~3: Apply the value-convergence result}
Definability gives a finite definable \(C^p\) Whitney
stratification of \(\operatorname{gph}f\)
\cite[Lemma~8]{bolte2007clarke}; see also
\cite[Result~4.8(1)]{van1996geometric}.  Thus
\cite[Corollary~5.9]{davis2020stochastic} gives convergence of \(f(x_k)\).
Along any subsequence \(x_{k_j}\to x^*\), local Lipschitz continuity gives
\(f(x_{k_j})\to f(x^*)\), which identifies the limit.
\end{proof}

To convert the projected descent estimate
\eqref{eq:active_meta_projected_descent}, the Riemannian KL inequality
\eqref{eq:local_active_KL}, and the gradient-variation estimate
\eqref{eq:active_meta_gradient_variation} into a bound for the cumulative
projected motion in \eqref{eq:active_meta_projected_move}, we use the following
signed-power estimate with correction terms.

\begin{lemma}[Perturbed KL estimate]
\label{lemma:corrected_active_KL}
Let \(p<q\) be integers and let \(\theta\in(0,1)\).  Let
\(F_i\in\mathbb R\) and \(s_i,c_i\geq0\) for \(p\leq i\leq q\), with
\(c_q=0\), and let \(\alpha_i>0\) and \(\delta_i\geq0\) for
\(p\leq i<q\).  Suppose that, for some \(L>0\),
\(|F_i|^\theta\leq Ls_i\) for \(p\leq i\leq q\), and
\[
 s_{i+1}\leq s_i+\delta_i,
 \qquad
 (F_i+c_i)-(F_{i+1}+c_{i+1})
 \geq\frac12\alpha_i s_i^2
 \qquad\forall i\in\llbracket p,q-1\rrbracket.
\]
Then there are constants \(C_{\rm KL},C>0\), depending only on
\(L\) and \(\theta\), such that
\[
 \sum_{i=p}^{q-1}\alpha_i s_i\leq C_{\rm KL}\bigl(\varphi_\theta(F_p)-\varphi_\theta(F_q)\bigr)+Cc_p^{1-\theta}+C\sum_{i=p}^{q-1}\alpha_i(\delta_i+c_i^\theta+c_{i+1}^\theta).
\]
\end{lemma}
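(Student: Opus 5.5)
The plan is the standard KL length argument, applied to the corrected potential $G_i:=F_i+c_i$, with the correction terms handled by subadditivity of $t\mapsto t^{1-\theta}$ and $t\mapsto t^\theta$. Write $\varphi=\varphi_\theta$. This function is concave on $[0,\infty)$, increasing on $\mathbb R$, and satisfies $\varphi'(t)=(1-\theta)|t|^{-\theta}$ for $t\neq0$. It is also $(1-\theta)$-Hölder with constant $2^{\theta}$: for all $a,b$ we have $|\varphi(a)-\varphi(b)|\leq 2^\theta|a-b|^{1-\theta}$.

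\textbf{Per-step inequality.} Fix $i$ and consider two cases.

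First case: $|F_i|^\theta\leq 2L(s_i+c_i^\theta)$ fails to be useful, i.e.\ the gradient term is small. More precisely, suppose $s_i\leq \delta_i+c_i^\theta+c_{i+1}^\theta$ or $s_i=0$. Then
\[
\alpha_is_i\leq\alpha_i(\delta_i+c_i^\theta+c_{i+1}^\theta),
\]
which is directly absorbed into the last sum.

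Second case: $s_i$ dominates these quantities. Since $G_i\geq G_{i+1}$ and $\varphi$ is increasing, we will show
\[
\varphi(G_i)-\varphi(G_{i+1})\gtrsim \alpha_i s_i.
\]
By the mean value theorem, $\varphi(G_i)-\varphi(G_{i+1})\geq (1-\theta)\,|\xi|^{-\theta}(G_i-G_{i+1})$ for some $\xi\in[G_{i+1},G_i]$. Hence we need $|\xi|^\theta\lesssim s_i$. We have
\[
|\xi|\leq\max\{|G_i|,|G_{i+1}|\}\leq \max\{|F_i|+c_i,\;|F_{i+1}|+c_{i+1}\},
\]
so
\[
|\xi|^\theta\leq |F_i|^\theta+|F_{i+1}|^\theta+c_i^\theta+c_{i+1}^\theta\leq L s_i+L(s_i+\delta_i)+c_i^\theta+c_{i+1}^\theta\lesssim s_i.
\]
This gives $\varphi(G_i)-\varphi(G_{i+1})\geq \tfrac{1-\theta}{2}\alpha_is_i^2/(C s_i)$, which is what we want. (If $\xi=0$, the derivative is infinite, but the secant bound still holds by concavity/Hölder; I would handle this by a sign-split of the interval at $0$, using concavity on each side.)

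\textbf{Summation.} Summing over $i$, the $\varphi(G_{i})$ terms telescope; in the small-$s_i$ case we still have $\varphi(G_i)-\varphi(G_{i+1})\geq 0$. This yields
\[
\sum\alpha_is_i\leq C_{\rm KL}\bigl(\varphi(G_p)-\varphi(G_q)\bigr)+C\sum\alpha_i(\delta_i+c_i^\theta+c_{i+1}^\theta).
\]
Finally, $G_q=F_q$ because $c_q=0$, and $\varphi(G_p)\leq\varphi(F_p)+2^\theta c_p^{1-\theta}$ by the Hölder bound. This gives the claim.

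\textbf{Main obstacle.} The main obstacle is the mean-value step when the interval $[G_{i+1},G_i]$ straddles $0$ or hits it, where $\varphi'$ blows up or changes form. I expect that using concavity of $|t|^{1-\theta}$ on each half-line, together with $\varphi(a)-\varphi(b)\geq (1-\theta)\max(|a|,|b|)^{-\theta}(a-b)$ for all $a>b$ (valid since $\varphi'(t)\geq(1-\theta)M^{-\theta}$ whenever $|t|\leq M$), resolves this cleanly.
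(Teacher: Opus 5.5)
Your proposal is correct and follows essentially the paper's argument: the same corrected potential $F_i+c_i$, the same integral-form secant bound for $\varphi_\theta$ across zero (the paper puts $|x|^\theta+|y|^\theta$ in the denominator where you use $\max\{|a|,|b|\}^\theta$), telescoping, and the H\"older estimate to pass from $\varphi_\theta(F_p+c_p)$ to $\varphi_\theta(F_p)$. The only difference is cosmetic: you split into the cases $s_i\le\delta_i+c_i^\theta+c_{i+1}^\theta$ and its complement, whereas the paper avoids the split with the elementary inequality $u\le u^2/(u+v)+v$.
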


\begin{proof}
\runinheading{Step~1: Establish descent and secant bounds for the augmented values}
Set \(z_i:=F_i+c_i\) and
\(\rho_i:=\delta_i+c_i^\theta+c_{i+1}^\theta\) for \(p\leq i<q\).
Set \(M:=\max\{2,2L\}\). Since \(M\geq2\), the augmented descent
hypothesis gives the first inequality below.  Subadditivity of
\(t\mapsto t^\theta\), the KL bound, and
\(s_{i+1}\leq s_i+\delta_i\) give the second:
\[
\begin{aligned}
 z_i-z_{i+1}&\geq M^{-1}\alpha_i s_i^2,\\
 |z_i|^\theta+|z_{i+1}|^\theta
 &\leq2Ls_i+L\delta_i+c_i^\theta+c_{i+1}^\theta
 \leq M(s_i+\rho_i).
\end{aligned}
\]
In particular, \(z_i\geq z_{i+1}\).

Recall that
\(\varphi_\theta(t)=\operatorname{sgn}(t)|t|^{1-\theta}\).  We use the
following secant estimate:
\begin{equation}
 \varphi_\theta(x)-\varphi_\theta(y)
 \geq(1-\theta)\frac{x-y}{|x|^\theta+|y|^\theta}
 \qquad\forall x\geq y,\quad (x,y)\neq(0,0).
\label{eq:signed_power_secant}
\end{equation}
Indeed,
\(\varphi_\theta(x)-\varphi_\theta(y)
=(1-\theta)\int_y^x|t|^{-\theta}\,dt\), with the improper integral
interpreted across zero.  Since
\(|t|^\theta\leq |x|^\theta+|y|^\theta\) for \(t\in[y,x]\), this identity
implies \eqref{eq:signed_power_secant}.

\runinheading{Step~2: Derive the one-step length estimate}
Fix \(i\in\llbracket p,q-1\rrbracket\).  If \(z_i=z_{i+1}=0\), then
\(z_i-z_{i+1}\geq M^{-1}\alpha_i s_i^2\) and \(\alpha_i>0\) imply
\(s_i=0\).  Otherwise,
\(|z_i|^\theta+|z_{i+1}|^\theta\leq M(s_i+\rho_i)\) implies
\(s_i+\rho_i>0\), and
\eqref{eq:signed_power_secant} yields
\[
 \varphi_\theta(z_i)-\varphi_\theta(z_{i+1})
 \geq\frac{1-\theta}{M^2}
       \frac{\alpha_i s_i^2}{s_i+\rho_i}.
\]
For nonnegative \(u,v\) with \(u+v>0\), one has
\(u\leq u^2/(u+v)+v\), since the difference between the right- and
left-hand sides is \(v^2/(u+v)\).  Applying this inequality with
\(u=s_i\) and \(v=\rho_i\), with the zero case interpreted trivially, gives
\[
 \alpha_i s_i
 \leq\frac{M^2}{1-\theta}
       \bigl(\varphi_\theta(z_i)-\varphi_\theta(z_{i+1})\bigr)
       +\alpha_i\rho_i.
\]
\runinheading{Step~3: Telescope and remove the endpoint correction}
Summing and using \(z_p=F_p+c_p\) and \(z_q=F_q\) gives
\[
 \sum_{i=p}^{q-1}\alpha_i s_i
 \leq\frac{M^2}{1-\theta}
       \bigl(\varphi_\theta(F_p+c_p)-\varphi_\theta(F_q)\bigr)
       +\sum_{i=p}^{q-1}\alpha_i\rho_i.
\]
The signed-power H\"older estimate
\cite[Fact~4.12]{lai2025diameter} gives
\(\varphi_\theta(F_p+c_p)-\varphi_\theta(F_p)\leq2c_p^{1-\theta}\).
Taking \(C_{\rm KL}:=M^2/(1-\theta)\) and
\(C:=\max\{2C_{\rm KL},1\}\) proves the claim.
\end{proof}

\subsection{Momentum method}\label{sec:active_momentum}
Lemma~\ref{lemma:momentum_shadow} associates with the momentum iterates a
sequence \((\widehat x_k)\) satisfying \eqref{eq:momentum_matched_shadow} and
\(|\widehat x_k-x_k|\to0\).  We first prove convergence for bounded sequences
satisfying this enlarged-subgradient recurrence, using
Proposition~\ref{proposition:active_inexact_local_estimates} and explicit
bounds on the tails \(\sum_{i=k}^\infty\alpha_i^2\) and
\(\sum_{i=k}^\infty\alpha_i^3\).  The vanishing difference then transfers
convergence back to the original momentum iterates.

\begin{proposition}[Enlarged-subgradient convergence]
\label{proposition:active_goldstein}
Let \(f\colon\mathbb{R}^n\to\mathbb{R}\) be locally Lipschitz and definable,
and suppose that \(x^*\in\mathbb{R}^n\) satisfies
Assumption~\ref{assumption:active_aiming_verdier}.  Let \(c_b\geq0\),
and let \(\alpha_k=\Theta((k+1)^{-a})\), where
\(a\in(1/2,1]\).  Every bounded
sequence \((x_k)\) satisfying
\begin{equation}
 x_{k+1}=x_k-\alpha_ku_k,
 \qquad u_k\in\partial f(x_k;c_b\alpha_k),
\label{eq:active_goldstein_update}
\end{equation}
and having \(x^*\) as an accumulation point converges to \(x^*\).
\end{proposition}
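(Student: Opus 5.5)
Our plan is to verify the local diameter condition of Proposition~\ref{proposition:local_trapping_diameter} (or directly Theorem~\ref{thm:criterion}) near \(x^*\), using the active-step estimates of Proposition~\ref{proposition:active_inexact_local_estimates} with \(\varepsilon_i=0\) and \(r_i=c_b\alpha_i\). Fix \(\varrho,\theta,L,\mu\) from Lemma~\ref{lemma:active_local_consequences} and \(\bar\alpha,c_{\rm N},C'\) from Proposition~\ref{proposition:active_inexact_local_estimates}. Since \(a>1/2\), we have \(\sum\alpha_k^2<\infty\) and \(r_k\to0\), so Corollary~\ref{lemma:perturbed_subgradient_value_convergence} gives \(f(x_k)\to f(x^*)\). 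In addition, \(|x_{k+1}-x_k|\leq L\alpha_k\to0\). By Theorem~\ref{thm:criterion}, it therefore suffices to bound \(\operatorname{diam}(x_{\llbracket p,q\rrbracket})\) for late blocks contained in \(\mathring B(x^*,\varrho)\) and starting near \(x^*\) by a quantity that vanishes as \(p\to\infty\) and \(x_p\to x^*\); we take \(L\equiv0\) in the criterion, which is allowed because \(f(x_k)\) already converges.

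On such a block we write \(x_i=y_i+n_i\), so \(\operatorname{diam}\leq 2\max_i d_i+\sum_i|y_{i+1}-y_i|\). \emph{Normal part.} The cubic recursion \eqref{eq:active_meta_cubic_normal} with remainder \(C'\alpha_i^3\) gives \(d_{i+1}^3+c_{\rm N}\alpha_id_i^2\leq d_i^3+C'\alpha_i^3\). Summing yields \(\max_i d_i^3\leq d_p^3+C'\sum_{i\geq p}\alpha_i^3\) and \(\sum_i\alpha_id_i^2\leq c_{\rm N}^{-1}(d_p^3+C'\sum_{i\geq p}\alpha_i^3)\). Both quantities are small when \(x_p\) is near \(x^*\) (so \(d_p\leq|x_p-x^*|\)) and \(p\) is large. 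We also need \(\sum_i\alpha_id_i\), which enters \eqref{eq:active_meta_projected_move}. For this we use the quadratic estimate \eqref{eq:active_meta_normal}, whose telescoped form gives \(\mu\sum\alpha_id_i\leq d_p^2+C\sum_{i\geq p}\alpha_i^2\). This bound vanishes precisely because \(a>1/2\).

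\emph{Tangential part.} Set \(s_i:=|\nabla_Af(y_i)|\) and \(F_i:=f(y_i)-f(x^*)\). The descent estimate \eqref{eq:active_meta_projected_descent} gives \(F_i-F_{i+1}\geq\frac12\alpha_is_i^2-C'(\alpha_id_i^2+\alpha_i^3)\). To apply Lemma~\ref{lemma:corrected_active_KL}, we absorb the error into a tail correction \(c_i:=C'\sum_{j=i}^{q-1}(\alpha_jd_j^2+\alpha_j^3)\), with \(c_q=0\); this makes \(F_i+c_i\) satisfy the required descent. The gradient variation \eqref{eq:active_meta_gradient_variation} supplies \(\delta_i=C\alpha_i(s_i+d_i+\alpha_i)\), and the KL hypothesis comes from \eqref{eq:local_active_KL}. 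The term \(\sum\alpha_i\delta_i\) contains \(C\sum\alpha_i^2s_i\), which is absorbed into the left-hand side since \(\alpha_i\leq\bar\alpha\) is small. The remaining terms \(\sum\alpha_i^2d_i\) and \(\sum\alpha_i^3\) are tail-small. Lemma~\ref{lemma:corrected_active_KL} then bounds \(\sum\alpha_is_i\) by \(C_{\rm KL}(\varphi_\theta(F_p)-\varphi_\theta(F_q))+Cc_p^{1-\theta}+C\sum_i\alpha_ic_i^\theta\). Combining this with \eqref{eq:active_meta_projected_move}, the projected length is at most this quantity plus \(C\sum\alpha_i(d_i+\alpha_i)\). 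Because \(F_p,F_q\to0\), the signed-power difference vanishes.

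\emph{Main obstacle.} The crucial term is \(\sum_{i}\alpha_ic_i^\theta\), where \(c_i\lesssim d_i^3\)-type tails plus \(\sum_{j\geq i}\alpha_j^3\sim i^{1-3a}\). We will bound \(c_i\leq C(\max_{j\geq i}d_j^3+\sum_{j\geq i}\alpha_j^3)\). The cubic estimate started at index \(i\) shows that \(\max_{j\geq i}d_j^3\) is controlled by \(d_i^3\) plus the \(\alpha^3\)-tail, so \(\alpha_ic_i^\theta\lesssim\alpha_id_i^{3\theta}+\alpha_i\,i^{(1-3a)\theta}\). The second piece is summable iff \(a+(3a-1)\theta>1\), which holds for \(\theta\) close to 1 when \(a>1/2\); we choose \(\underline\theta\) in Lemma~\ref{lemma:active_local_consequences} accordingly. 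For the first piece, \(3\theta\geq2\) and \(d_i<1\) give \(\alpha_id_i^{3\theta}\leq\alpha_id_i^2\), whose sum is already controlled. Hence all tails vanish as \(p\to\infty\) and \(x_p\to x^*\), the criterion \eqref{eq:diam_cri} holds, and \(x_k\to x^*\). The delicate bookkeeping is keeping every bound in tail form, so that it vanishes uniformly over \(q\).
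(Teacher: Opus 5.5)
Your proposal is correct and follows essentially the same route as the paper's proof. Both arguments use value convergence from Corollary~\ref{lemma:perturbed_subgradient_value_convergence}, the quadratic and cubic distance recursions, a tail correction bounded by \(C\bigl(d_i^3+\sum_{j\geq i}\alpha_j^3\bigr)\) via the cubic estimate started at \(i\), Lemma~\ref{lemma:corrected_active_KL} with \(\theta>\max\{2/3,(1-a)/(3a-1)\}\), and finally Theorem~\ref{thm:criterion}. The differences are cosmetic: you take \(L\equiv0\) in the criterion instead of the paper's potential \(C_0\varphi_\theta(f(x)-f(x^*))\), which is equivalent once \(f(x_k)\to f(x^*)\) is known, and you absorb \(\sum_i\alpha_i^2 s_i\) where the paper simply takes \(\delta_i=C\alpha_i\).
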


\begin{proof}
\runinheading{Step~1: Function values, local setup, and polynomial tails}
Let \((x_k)\) satisfy the hypotheses of
Proposition~\ref{proposition:active_goldstein}.  Since \(1/2<a\leq1\), one
has \(\sum_k\alpha_k=\infty\), \(\sum_k\alpha_k^2<\infty\), and
\(c_b\alpha_k\to0\).  Moreover,
\(\sum_k\alpha_k\varepsilon_k=0\) when \(\varepsilon_k=0\).  Hence
Corollary~\ref{lemma:perturbed_subgradient_value_convergence}, applied with
\(r_k=c_b\alpha_k\) and \(\varepsilon_k=0\), gives
\begin{equation}
                         f(x_k)\to f(x^*).
\label{eq:active_goldstein_value_convergence}
\end{equation}

Let \(A\) be an active manifold at \(x^*\), and set
\(\underline\theta:=\max\{2/3,\allowbreak(1-a)/(3a-1)\}\).  Since
\(\underline\theta<1\), Lemma~\ref{lemma:active_local_consequences}, applied
with this value of \(\underline\theta\), fixes \(P_A,\mu,\varrho,L\), and an
exponent \(\theta\in(\underline\theta,1)\).
For \(k\in\mathbb N\), define the quadratic and cubic tails
\(E_k:=\sum_{i=k}^{\infty}\alpha_i^2\) and
\(H_k:=\sum_{i=k}^{\infty}\alpha_i^3\).
Since \(\alpha_i=\Theta((i+1)^{-a})\), comparison with the corresponding
\(p\)-series gives
\begin{equation}
\begin{aligned}
 &E_k=O((k+1)^{1-2a})\longrightarrow0,\\
 &H_k=O((k+1)^{1-3a})\longrightarrow0,\\
 &\sum_{i=k}^{\infty}\alpha_iH_i^\theta\longrightarrow0.
\end{aligned}
\label{eq:active_goldstein_tails}
\end{equation}
Indeed,
\(\alpha_iH_i^\theta=O((i+1)^{-a-\theta(3a-1)})\), and the choice of
\(\theta\) gives \(a+\theta(3a-1)>1\).

\runinheading{Step~2: Distance estimates}
Choose \(k_0\in\mathbb N\) so large that
\(B(B(x^*,\varrho),c_b\alpha_i)\subset
  \mathring B(x^*,4\varrho)\),
\(\alpha_i\leq\bar\alpha\), and \(\alpha_i\leq1\)
for all \(i\geq k_0\).  Fix \(q>p\geq k_0\) such that
\(x_{\llbracket p,q\rrbracket}\subset
  \mathring B(x^*,\varrho)\).  For \(p\leq i\leq q\), set
\(y_i:=P_A(x_i)\) and \(d_i:=|x_i-y_i|\).
Here and below, every unsubscripted \(C>0\) depends only on
\(a,c_b,\varrho,\theta,L,\mu,\bar\alpha,c_{\rm N}\) and the constants implicit
in \(\alpha_k=\Theta((k+1)^{-a})\), and is independent of \(i,p,q\); its value
may increase from line to line.
Apply Proposition~\ref{proposition:active_inexact_local_estimates} with
\(r_i=c_b\alpha_i\) and \(\varepsilon_i=0\).  Its distance estimate
\eqref{eq:active_meta_normal} gives, after increasing a fixed constant
\(C_{\rm N}>0\),
\begin{equation}
 d_{i+1}^2+\mu \alpha_id_i\leq d_i^2+C_{\rm N}\alpha_i^2.
\label{eq:active_goldstein_quadratic_normal}
\end{equation}
For any \(j\in\llbracket p,q\rrbracket\), summing
\eqref{eq:active_goldstein_quadratic_normal} from \(i=p\) to \(j-1\)
telescopes the squared-distance terms and gives
\[
 d_j^2+\mu\sum_{i=p}^{j-1}\alpha_id_i
 \leq d_p^2+C_{\rm N}\sum_{i=p}^{j-1}\alpha_i^2
 \leq d_p^2+C_{\rm N}E_p.
\]
Dropping the nonnegative sum, taking square roots, and using
\(\sqrt{a+b}\leq\sqrt a+\sqrt b\) gives the first estimate below.
Taking \(j=q\) and dropping \(d_q^2\) gives the second.  Thus,
\begin{equation}
\sup_{p\leq j\leq q}d_j
 \leq C(d_p+E_p^{1/2})\quad\text{and}\quad
 \sum_{i=p}^{q-1}\alpha_id_i
 \leq C(d_p^2+E_p).
 \label{eq:active_goldstein_quadratic_confinement}
\end{equation}
The cubic estimate
\eqref{eq:active_meta_cubic_normal} in
Proposition~\ref{proposition:active_inexact_local_estimates}, with the same
specialization, gives fixed constants \(c_{\rm N}>0\) and \(C>0\) such that
\begin{equation}
 d_{i+1}^3+c_{\rm N}\alpha_id_i^2\leq d_i^3+C\alpha_i^3.
\label{eq:active_goldstein_cubic_normal}
\end{equation}
Summing \eqref{eq:active_goldstein_cubic_normal} yields
\begin{equation}
 \sum_{i=p}^{q-1}\alpha_id_i^2
 \leq C(d_p^3+H_p).
\label{eq:active_goldstein_cubic_confinement}
\end{equation}

\runinheading{Step~3: Projected estimates}
The projected move estimate
\eqref{eq:active_meta_projected_move} in
Proposition~\ref{proposition:active_inexact_local_estimates}, again with
\(r_i=c_b\alpha_i\) and \(\varepsilon_i=0\), gives
\begin{equation}
 |y_{i+1}-y_i|
 \leq C\alpha_i(|\nabla_A f(y_i)|+d_i+\alpha_i).
\label{eq:active_goldstein_projected_move}
\end{equation}
The descent and gradient-variation estimates
\eqref{eq:active_meta_projected_descent} and
\eqref{eq:active_meta_gradient_variation} give, after enlarging a fixed
\(C_{\mathrm T}>0\),
\begin{align}
 f(y_{i+1})-f(y_i)
 &\leq-\frac12\alpha_i|\nabla_A f(y_i)|^2
       +C_{\mathrm T}(\alpha_id_i^2+\alpha_i^3).
\label{eq:active_goldstein_projected_descent}
\end{align}
\begin{equation}
 |\nabla_A f(y_{i+1})|\leq|\nabla_A f(y_i)|+C\alpha_i.
\label{eq:active_goldstein_gradient_variation}
\end{equation}

\runinheading{Step~4: Signed KL estimate and diameter bound}
Recall that
\(\varphi_\theta(t)=\operatorname{sgn}(t)|t|^{1-\theta}\).
For \(p\leq i\leq q\), set
\(G_i:=C_{\mathrm T}\sum_{\ell=i}^{q-1}
(\alpha_\ell d_\ell^2+\alpha_\ell^3)\).
Then \(G_q=0\), and
\eqref{eq:active_goldstein_projected_descent} and
\eqref{eq:active_goldstein_cubic_confinement} applied with left endpoint
\(i\) give
the following two estimates for every
\(i\in\llbracket p,q-1\rrbracket\):
\begin{align*}
 \bigl(f(y_i)-f(x^*)+G_i\bigr)
 -\bigl(f(y_{i+1})-f(x^*)+G_{i+1}\bigr)
 &\geq\frac12\alpha_i|\nabla_A f(y_i)|^2,\\
 0\leq G_{i+1}\leq G_i&\leq C(d_i^3+H_i).
\end{align*}
Since \(\theta>2/3\) and \(d_i<2\varrho\), it follows that
\[
 G_i^\theta+G_{i+1}^\theta
 \leq C(d_i^{3\theta}+H_i^\theta)
 \leq C(d_i^2+H_i^\theta).
\]
Apply Lemma~\ref{lemma:corrected_active_KL} with
\(F_i:=f(y_i)-f(x^*)\),
\(s_i:=|\nabla_A f(y_i)|\), \(c_i:=G_i\), and
\(\delta_i:=C\alpha_i\). The augmented descent inequality obtained from
\eqref{eq:active_goldstein_projected_descent}
verifies the descent hypothesis of Lemma~\ref{lemma:corrected_active_KL};
\eqref{eq:local_active_KL} verifies \(|F_i|^\theta\leq Ls_i\); and
\eqref{eq:active_goldstein_gradient_variation} verifies
\(s_{i+1}\leq s_i+\delta_i\).  The lemma gives
\[
\begin{aligned}
 \sum_{i=p}^{q-1}\alpha_i|\nabla_A f(y_i)|
 &\leq C_{\rm KL}
       \bigl(\varphi_\theta(f(y_p)-f(x^*))
             -\varphi_\theta(f(y_q)-f(x^*))\bigr)\\
 &\quad+CG_p^{1-\theta}
       +C\sum_{i=p}^{q-1}\alpha_i
          \bigl(\alpha_i+G_i^\theta+G_{i+1}^\theta\bigr).
\end{aligned}
\]
The inequalities \(G_p\leq C(d_p^3+H_p)\) and
\(G_i^\theta+G_{i+1}^\theta\leq C(d_i^2+H_i^\theta)\), together with
\eqref{eq:active_goldstein_cubic_confinement} and
\(\sum_{i=p}^{q-1}\alpha_i^2\leq E_p\), bound the last two terms on the
right-hand side by
\[
 C\left((d_p^3+H_p)^{1-\theta}
       +d_p^3+H_p+E_p
       +\sum_{i=p}^{\infty}\alpha_iH_i^\theta\right).
\]

Set
\[
 \mathfrak r_p^{\rm G}:=
 (d_p+E_p^{1/2})^{1-\theta}
 +\sum_{i=p}^{\infty}\alpha_iH_i^\theta.
\]
Since
\(H_p\leq(\sup_{i\geq p}\alpha_i)E_p\leq E_p^{3/2}\), one has
\(d_p^3+H_p\leq(d_p+E_p^{1/2})^3\).  Since
\(x_p\in\mathring B(x^*,\varrho)\) and \(x^*\in A\), one has
\(d_p\leq|x_p-x^*|<\varrho\); moreover, \(E_p\leq E_{k_0}\).  Thus
\(d_p+E_p^{1/2}\) is bounded independently of \(p\) and \(q\).  Because
\(0<1-\theta<1\),
\[
 (d_p^3+H_p)^{1-\theta}+d_p^3+H_p+d_p^2+E_p
 +d_p+E_p^{1/2}
 \leq C(d_p+E_p^{1/2})^{1-\theta}.
\]
Combining the estimate supplied by
Lemma~\ref{lemma:corrected_active_KL} with these bounds gives
\[
 \sum_{i=p}^{q-1}\alpha_i|\nabla_A f(y_i)|
 \leq C_{\rm KL}
      \bigl(\varphi_\theta(f(y_p)-f(x^*))
            -\varphi_\theta(f(y_q)-f(x^*))\bigr)
      +C\mathfrak r_p^{\rm G}.
\]

Choose a fixed \(C_{\rm P}\geq1\) for which
\eqref{eq:active_goldstein_projected_move} holds.  Summing that estimate and
then using \eqref{eq:active_goldstein_quadratic_confinement} and
\(\sum_{i=p}^{q-1}\alpha_i^2\leq E_p\) gives
\[
\begin{aligned}
 \sum_{i=p}^{q-1}|y_{i+1}-y_i|
 &\leq C_{\rm P}\sum_{i=p}^{q-1}
       \alpha_i|\nabla_A f(y_i)|
      +C_{\rm P}\sum_{i=p}^{q-1}\alpha_id_i+C_{\rm P}E_p\\
 &\leq C_{\rm P}C_{\rm KL}
       \bigl(\varphi_\theta(f(y_p)-f(x^*))
             -\varphi_\theta(f(y_q)-f(x^*))\bigr)
      +C\mathfrak r_p^{\rm G}.
\end{aligned}
\]
Set \(C_0:=C_{\rm P}C_{\rm KL}\).  Since
\[
 |x_m-x_\ell|
 \leq d_\ell+\sum_{i=\ell}^{m-1}|y_{i+1}-y_i|+d_m
 \qquad\forall p\leq\ell\leq m\leq q,
\]
the first bound in \eqref{eq:active_goldstein_quadratic_confinement} gives
\[
 d_\ell+d_m
 \leq2\sup_{p\leq j\leq q}d_j
 \leq C(d_p+E_p^{1/2})
 \leq C\mathfrak r_p^{\rm G}.
\]
Also,
\(\sum_{i=\ell}^{m-1}|y_{i+1}-y_i|
 \leq\sum_{i=p}^{q-1}|y_{i+1}-y_i|\).  Combining these estimates and
taking the supremum over \(p\leq\ell\leq m\leq q\) yields
\[
 \operatorname{diam}(x_{\llbracket p,q\rrbracket})
 \leq C_0\bigl(\varphi_\theta(f(y_p)-f(x^*))
                  -\varphi_\theta(f(y_q)-f(x^*))\bigr)
      +C\mathfrak r_p^{\rm G}.
\]
Finally, for \(i\in\{p,q\}\),
\eqref{eq:active_local_uniform_constants},
the signed-power H\"older estimate
\cite[Fact~4.12]{lai2025diameter}, and
\eqref{eq:active_goldstein_quadratic_confinement} give
\[
 |\varphi_\theta(f(x_i)-f(x^*))-\varphi_\theta(f(y_i)-f(x^*))|
 \leq Cd_i^{1-\theta}
 \leq C(d_p+E_p^{1/2})^{1-\theta}
 \leq C\mathfrak r_p^{\rm G}.
\]
Consequently,
\begin{equation}
 \operatorname{diam}(x_{\llbracket p,q\rrbracket})
 +C_0\bigl(\varphi_\theta(f(x_q)-f(x^*))
            -\varphi_\theta(f(x_p)-f(x^*))\bigr)
 \leq C\mathfrak r_p^{\rm G}.
\label{eq:active_goldstein_diameter}
\end{equation}

\runinheading{Step~5: Apply the diameter criterion}
Define the continuous potential
\(L_{\rm G}(x):=C_0\varphi_\theta(f(x)-f(x^*))\).
Equation~\eqref{eq:active_goldstein_value_convergence} gives
\(L_{\rm G}(x_k)\to0\).
Let \(K:=\overline{\{x_k:k\in\mathbb N\}}\), which is compact.  Since
\(c_b\alpha_k\to0\), one has
\(B(x_k,c_b\alpha_k)\subset B(K,1)\) for all sufficiently large \(k\).
Local boundedness of the Clarke subdifferential
\cite[Propositions~2.1.2 and~2.1.5]{clarke1990} gives a uniform bound on
\(\partial f(z)\) for \(z\in B(K,1)\).  The definition of
\(\partial f(x_k;c_b\alpha_k)\) therefore gives a uniform bound on \((u_k)\).
By \eqref{eq:active_goldstein_update},
\(|x_{k+1}-x_k|=\alpha_k|u_k|\to0\).

Take \(U:=\mathring B(x^*,\varrho)\).  If
\(x_{k_1}\in B(x^*,\delta)\) and
\(x_{\llbracket k_1,k_2\rrbracket}\subset U\), then
\(d_{k_1}\leq\delta\), and \eqref{eq:active_goldstein_diameter} gives
\[
 \operatorname{diam}(x_{\llbracket k_1,k_2\rrbracket})
 +L_{\rm G}(x_{k_2})-L_{\rm G}(x_{k_1})
 \leq C\left((\delta+E_{k_1}^{1/2})^{1-\theta}
       +\sum_{i=k_1}^{\infty}\alpha_iH_i^\theta\right).
\]
First let \(k_1,k_2\to\infty\), and then let \(\delta\downarrow0\).
The right-hand side tends to zero by \eqref{eq:active_goldstein_tails}, so
\eqref{eq:diam_cri} holds.  Theorem~\ref{thm:criterion} gives \(x_k\to x^*\).
\end{proof}

\begin{theorem}[Active momentum convergence]
\label{theorem:active_momentum}
Let \(f\colon\mathbb{R}^n\to\mathbb{R}\) be locally Lipschitz and definable,
and suppose that \(x^*\in\mathbb R^n\) satisfies
Assumption~\ref{assumption:active_aiming_verdier}.  Fix
\(\kappa\in[0,1)\) and \(\iota\in\mathbb R\), and consider
\eqref{eq:momentum-inexact} with \(x_{-1}=x_0\) and
\(\alpha_k=\Theta((k+1)^{-a})\), where \(a\in(1/2,1]\).
Every bounded sequence generated by \eqref{eq:momentum-inexact} and having
\(x^*\) as an accumulation point converges to \(x^*\).
\end{theorem}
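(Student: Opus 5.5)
The plan is to reduce Theorem~\ref{theorem:active_momentum} to Proposition~\ref{proposition:active_goldstein} through the momentum shadow of Lemma~\ref{lemma:momentum_shadow}, mirroring the proof of Theorem~\ref{theorem:momentum-converge}. Let \((x_k)\) be a bounded sequence generated by \eqref{eq:momentum-inexact} with \(x_{-1}=x_0\), \(\alpha_k=\Theta((k+1)^{-a})\), \(a\in(1/2,1]\), and with \(x^*\) as an accumulation point.

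First, apply Lemma~\ref{lemma:momentum_shadow} with this exponent \(a\). It yields the shadow sequence \(\widehat x_k=x_k+c_\kappa(x_k-x_{k-1})\), rescaled step sizes \(\widehat\alpha_k=\alpha_k/(1-\kappa)=\Theta((k+1)^{-a})\), and a constant \(c_b>0\). With these, \(\widehat x_{k+1}=\widehat x_k-\widehat\alpha_kv_k\) and \(v_k\in\partial f(\widehat x_k;c_b\widehat\alpha_k)\), which is exactly the update \eqref{eq:active_goldstein_update} with \(u_k=v_k\). The lemma also says that \((\widehat x_k)\) is bounded, that \(|\widehat x_k-x_k|=O(\widehat\alpha_k)\to0\), and that the two sequences have the same accumulation points. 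Hence \(x^*\) is an accumulation point of \((\widehat x_k)\).

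Next, the hypotheses of Proposition~\ref{proposition:active_goldstein} hold for \((\widehat x_k)\). The objective is locally Lipschitz and definable, \(x^*\) satisfies Assumption~\ref{assumption:active_aiming_verdier}, the enlargement constant satisfies \(c_b\geq0\), the step sizes are \(\Theta((k+1)^{-a})\) with \(a\in(1/2,1]\), and the sequence is bounded with \(x^*\) as an accumulation point. The proposition therefore gives \(\widehat x_k\to x^*\). Finally, \(|x_k-x^*|\leq|x_k-\widehat x_k|+|\widehat x_k-x^*|\to0\), so \(x_k\to x^*\).

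I do not expect a real obstacle here, since the substantive work is already in Lemma~\ref{lemma:momentum_shadow} and Proposition~\ref{proposition:active_goldstein}. The step that needs care is confirming that the shadow lemma is stated for general polynomial exponents \(a>0\), not only for \(a=1\); this is what keeps the enlargement radius proportional to \(\widehat\alpha_k\) (i.e. \(\xi=1\)), as required by \eqref{eq:active_goldstein_update}. The lemma does cover general \(a\): its Step~1 controls the geometric tail \(\kappa^{\lceil k/2\rceil}\) by \((k+1)^{-a}\) for every \(a>0\), so the constant \(c_b\) is uniform in \(k\).
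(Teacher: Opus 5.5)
Your proposal is correct and follows the paper's proof: it applies Lemma~\ref{lemma:momentum_shadow} to obtain a bounded shadow sequence satisfying \eqref{eq:active_goldstein_update} with step sizes \(\widehat\alpha_k=\Theta((k+1)^{-a})\), invokes Proposition~\ref{proposition:active_goldstein} to get \(\widehat x_k\to x^*\), and transfers the limit back using \(|\widehat x_k-x_k|\to0\). You are also right that the lemma is stated for every \(a>0\), so the enlargement radius \(c_b\widehat\alpha_k\) matches the form required by that proposition.
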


\begin{proof}
Let \((x_k)\) satisfy the hypotheses.  Lemma~\ref{lemma:momentum_shadow}
gives a bounded nearby sequence \((\widehat x_k)\), with the same
accumulation points as \((x_k)\), such that
\[
 \widehat x_{k+1}=\widehat x_k-\widehat\alpha_kv_k,\qquad
 v_k\in\partial f(\widehat x_k;c_b\widehat\alpha_k),\qquad
 \widehat\alpha_k=\Theta((k+1)^{-a}).
\]
Proposition~\ref{proposition:active_goldstein} gives
\(\widehat x_k\to x^*\).  Since
\(|\widehat x_k-x_k|\to0\) by Lemma~\ref{lemma:momentum_shadow}, one also
has \(x_k\to x^*\).
\end{proof}

\subsection{Stochastic subgradient method}\label{sec:active_stochastic}

Theorem~\ref{theorem:1overk} shows the convergence of every bounded
realization of the stochastic subgradient method \eqref{eq:inexact stoc} for
step sizes of order \(1/k\), without prescribing an accumulation point.  When the
prescribed accumulation point satisfies
Assumption~\ref{assumption:active_aiming_verdier}, the next theorem covers
the polynomial step sizes of order $k^{-a}$ with \(2/3<a\leq1\).

\begin{theorem}[Active stochastic convergence]
\label{theorem:active_stochastic}
Let \(f\colon\mathbb{R}^n\to\mathbb{R}\) be locally Lipschitz and definable,
and suppose that \(x^*\in\mathbb{R}^n\) satisfies
Assumption~\ref{assumption:active_aiming_verdier}.  Consider the stochastic
subgradient method \eqref{eq:inexact stoc} under
Assumption~\ref{assumption:stochastic_noise}, with deterministic step sizes
\(\alpha_k=\Theta((k+1)^{-a})\), where \(a\in(2/3,1]\).  Then, almost surely,
every bounded realization \((x_k)\) having \(x^*\) as an accumulation point
converges to \(x^*\).
\end{theorem}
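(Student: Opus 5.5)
We follow the template of Proposition~\ref{proposition:active_goldstein}, now applied directly to \eqref{eq:inexact stoc} viewed as \eqref{eq:active_inexact_meta_update} with \(u_k=v_k\) and \(r_k=0\). The goal is a diameter estimate of the form \eqref{eq:active_goldstein_diameter} whose right-hand side vanishes as the left endpoint tends to infinity; Theorem~\ref{thm:criterion} then gives \(x_k\to x^*\).

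\textbf{Almost-sure preliminaries.} Since \(a>1/2\), we have \(\sum_k\alpha_k^2\sigma^2<\infty\). A martingale argument therefore shows that \(\sum_k\alpha_k\varepsilon_k\) converges almost surely, and, by the conditional second-moment bound, \(\sum_k\alpha_k^2|\varepsilon_k|^2<\infty\) almost surely. We also need the weighted series \(\sum_k\alpha_k\langle\varepsilon_k,n_k\rangle\) and \(\sum_k\alpha_k\langle\varepsilon_k,DP_A(x_k)^*\nabla_A f(y_k)\rangle\), with the predictable bounded weights truncated to the local ball. These converge almost surely, and we need their tails quantitatively: for \(\gamma<a-1/2\), the weighted maximal inequality \cite[Lemma~2.2]{milzarek2023convergence} gives
\[
\sup_{m\geq k}\Bigl|\sum_{i=k}^{m}\alpha_i\varepsilon_i\Bigr|=o\bigl(\alpha_k^{\gamma}\bigr),
\]
and similarly for the weighted sums. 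Corollary~\ref{lemma:perturbed_subgradient_value_convergence} then gives \(f(x_k)\to f(x^*)\), and \(|x_{k+1}-x_k|\leq\alpha_k(L+|\varepsilon_k|)\to0\), since \(\alpha_k|\varepsilon_k|\to0\) by square summability. We fix one realization in this probability-one event.

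\textbf{Local estimates on a block \(x_{\llbracket p,q\rrbracket}\subset\mathring B(x^*,\varrho)\).}

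\emph{Distance to \(A\).} Summing \eqref{eq:active_meta_normal} gives \(\sup_j d_j^2+\mu\sum\alpha_id_i\leq d_p^2+C E_p+2\sup_j|\sum\alpha_i\langle\varepsilon_i,n_i\rangle|+C\sum\alpha_i^2|\varepsilon_i|^2\). Every term on the right is a vanishing tail. The cubic estimate \eqref{eq:active_meta_cubic_normal} is unavailable with noise. We therefore need a substitute for \(\sum\alpha_id_i^2\): multiply the squared-distance recursion by \(d_i\), or bound \(\sum\alpha_id_i^2\leq\sup_j d_j\sum\alpha_id_i\).

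\emph{Projected motion and KL.} From \eqref{eq:active_meta_projected_move} the projected length is bounded by \(\sum\alpha_i|\nabla_Af(y_i)|+\sum\alpha_id_i+\sum\alpha_i^2\) plus the noise term \(\sum\alpha_i|\varepsilon_i|\). This last term is not small, so we instead use \eqref{eq:active_meta_projected_recursion} and bound the displacement \(|y_m-y_\ell|\) directly. That bound consists of \(\sum\alpha_i|\nabla_Af(y_i)|\), the martingale increment \(\sum\alpha_iDP_A(x_i)\varepsilon_i\) (a small tail), and \(C\sum(\alpha_id_i+\alpha_i^2(1+|\varepsilon_i|^2))\). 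The KL sum \(\sum\alpha_i|\nabla_Af(y_i)|\) is handled by Lemma~\ref{lemma:corrected_active_KL}. Its correction \(c_i\) is the tail of the remainder in \eqref{eq:active_meta_projected_descent} plus the tail of the martingale \(\sum_{\ell\geq i}\alpha_\ell\langle\varepsilon_\ell,DP_A^*\nabla_Af\rangle\). Since this martingale tail is signed, we would absorb its sign by using \(c_i\) equal to the remainder tail plus the supremum of the absolute partial martingale tails, which keeps the augmented descent inequality valid. The gradient variation \eqref{eq:active_meta_gradient_variation} supplies \(\delta_i=C\alpha_i(1+|\varepsilon_i|)\), and \(\sum\alpha_i\delta_i\) is a summable tail.

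\textbf{Main obstacle.} The hard step is showing \(\sum_i\alpha_ic_i^\theta\to0\), and this is where \(a>2/3\) enters. The remainder tails behave like \(\alpha_i(d_i+r_i)^2+\alpha_i^3+\alpha_i^2|\varepsilon_i|^2\). Without the cubic estimate we only get \(c_i\lesssim E_i+\text{noise tail}\sim i^{1-2a}\), up to the martingale term of order \(i^{1/2-a}\). Both \(\sum\alpha_i c_i^\theta\lesssim\sum i^{-a-\theta(2a-1)}\) and the martingale analogue \(\sum i^{-a-\theta(a-1/2)}\) must converge with \(\theta\) near \(1\). This requires \(a+(a-1/2)>1\), i.e.\ \(a>3/4\), which is too restrictive. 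To reach \(2/3\), we would bound the martingale tail through its conditional variance \((\sum_{\ell\geq i}\alpha_\ell^2|\nabla_Af(y_\ell)|^2)^{1/2}\). This quantity is controlled by the descent itself, and should give effective order \(i^{1-2a}\) together with \(d\)-dependent factors, so that summability holds precisely when \(a+\theta(2a-1)>1\) and \(\sum \alpha_i E_i^{\theta}\ldots\) matches the threshold \(3a-2>0\) for \(\theta\to1\). Making this precise pathwise on random blocks is the crux. Once it is done, the diameter bound has a right-hand side of the form \((\delta+\text{tails})^{1-\theta}\), and the argument of Step~5 of Proposition~\ref{proposition:active_goldstein} concludes.
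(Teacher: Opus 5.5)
There is a genuine gap, and it sits where you place the crux. You never show that the corrections fed into Lemma~\ref{lemma:corrected_active_KL} satisfy $\sum_i\alpha_i c_i^\theta\to0$ when $2/3<a\le 3/4$, and the heuristic you give for this is not correct. Controlling the martingale tail by its conditional variance does not yield an ``effective order $i^{1-2a}$''. The estimate that works is Lemma~\ref{lemma:martingale_tail_summability}(1), a localized maximal inequality:
\begin{itemize}
\item Take an adapted bounded weight $w_k$ and stop at $\tau_m:=\inf\{i:\sum_{j\le i}\alpha_j|w_j|^2>m\}$.
\item Burkholder's inequality and $\alpha_i\le C\alpha_k$ for $i\ge k$ give $\mathbb E[R_{k,m}^2]\le C\sigma^2\,\mathbb E\sum_{i\ge k}\alpha_i^2|w_i^{(m)}|^2\le Cm\alpha_k$.
\item Jensen and Tonelli then give $\mathbb E\sum_k\alpha_kR_{k,m}^\theta\le C\sum_k\alpha_k^{1+\theta/2}$.
\end{itemize}
So on $\{\sum_k\alpha_k|w_k|^2<\infty\}$ the tail is of order $\alpha_k^{1/2}$, and summability needs $a(1+\theta/2)>1$. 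The quadratic-noise tail $D_k=\sum_{i\ge k}\alpha_i^2(1+|\varepsilon_i|^2)$ separately needs $a+\theta(2a-1)>1$. These two conditions together give $a>2/3$ as $\theta\uparrow1$. A smaller point: the martingale correction must be the one-sided running maximum $\max_{i\le s\le q}\{-\sum_{k=i}^{s-1}\alpha_k\langle\varepsilon_k,t_k\rangle\}$, where $t_k:=DP_A(x_k)^*\nabla_Af(y_k)$. A supremum of absolute partial tails does not satisfy the required one-step inequality $c_i-c_{i+1}\ge-\alpha_i\langle\varepsilon_i,t_i\rangle$.

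The second missing piece is the pathwise input this estimate needs: $\sum_k\alpha_k|n_k|^2<\infty$ and $\sum_k\alpha_k|t_k|^2<\infty$ for \emph{adapted} weights. Your global setup gives no way to obtain it. With weights truncated to a fixed ball and Theorem~\ref{thm:criterion} applied to all blocks in $U$, nothing makes these sums finite a priori if the iterates leave and re-enter $U$ infinitely often, which is exactly the scenario to be excluded.

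The paper obtains this input as follows.
\begin{itemize}
\item It supports the weights on stopping-time exit intervals $\llbracket p_j,q_j\rrbracket$. Each starts in $B(x^*,2^{-j-1}\rho)$ after a deterministic sparse time $K_j$.
\item The $K_j$ are chosen through expectations so that, summed over $j$, the square roots of the martingale and quadratic-noise tails at $K_j$, and $\sup_{k\ge K_j}|f(x_k)-f(x^*)|$, converge (Lemma~\ref{lemma:active_stochastic_sparse}).
\item Summing the distance recursion then gives $\sum_j\sup_{k\in\llbracket p_j,q_j\rrbracket}d_k<\infty$ and $\sum_j\sum_{k=p_j}^{q_j-1}\alpha_kd_k<\infty$.
\item Summing the projected descent over these intervals, with endpoint values controlled by the function-value tail, gives $\sum\alpha_k|\nabla_Af(y_k)|^2<\infty$ there.
\item The correction tail $\sum_{k\ge i}\alpha_kd_k$ is bounded by $CQ_i$, where $Q_i=d_i^2+D_i+\mathcal T_i$ and $\mathcal T_i$ is the block oscillation of the two martingales after $i$. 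The $\theta$-powers of $Q_i$ are summable because $2\theta>1$.
\end{itemize}
All of this holds only along these particular intervals. The conclusion therefore comes by contradiction with $|x_{q_j}-x_{p_j}|\ge\rho-2^{-j-1}\rho$, not from a uniform diameter bound over arbitrary blocks as in Proposition~\ref{proposition:active_goldstein}.
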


\begin{remark}[Smooth case]\label{remark:smooth_case}
If \(x^*\) is critical, \(f\) is definable and \(C^2\) near \(x^*\), then
\(x^*\) satisfies Assumption~\ref{assumption:active_aiming_verdier} with
\(A=\mathbb R^n\), and the update is standard stochastic gradient descent.
Indeed, for \(A=\mathbb R^n\), the restriction \(f|_A=f\) is \(C^2\) near
\(x^*\) and \(U_0\setminus A=\varnothing\), so
Assumption~\ref{assumption:active_aiming_verdier}(i) holds.  Moreover,
\(P_{T_A(y)}=I\), and \(P_{T_A(y)}\nabla f(z)-\nabla_A f(y)
=\nabla f(z)-\nabla f(y)\) satisfies
\eqref{eq:active_verdier} by the local Lipschitz continuity of \(\nabla f\).
Thus Theorem~\ref{theorem:active_stochastic} gives almost-sure convergence to
\(x^*\) on every bounded realization having \(x^*\) as an accumulation point when
\(2/3<a\leq1\).  Under their respective hypotheses, the same polynomial range
appears in \cite[Corollary~5.1]{qiu2024convergence},
\cite[Theorem~4.2 and Remark~4.3]{milzarek2023convergence}, and
\cite[Theorem~1.7]{dereich2024convergence}.
\end{remark}

\begin{remark}[Saddle nonaccumulation]\label{remark:saddle_nonaccumulation}
Recall that \(x^*\) is an \emph{active strict saddle} relative to an active
manifold \(A\) if \(0\in\partial f(x^*)\) and the Riemannian Hessian
\(\operatorname{Hess}(f|_A)(x^*)\) has a negative eigenvalue
\cite[Definition~2.7]{davis2019active}.  Let \(E^-\subset T_A(x^*)\) denote
the sum of its negative eigenspaces.

Recent works \cite{bianchi2023stochastic,davis2025active} show that the
stochastic subgradient iterates of \eqref{eq:inexact stoc} converge to an
active strict saddle with probability zero.  Their avoidance theorems impose additional smoothness
and regularity and require the noise to excite \(E^-\).  More precisely,
\cite[Assumption~3(iii)]{bianchi2023stochastic} requires
\[
 \liminf_{k\to\infty}
 \mathbb E[|P_{E^-}\varepsilon_k|\mid\mathcal F_k]>0
 \quad\text{a.s. on }\{x_k\to x^*\},
\]
together with the asymptotically bounded conditional fourth moment in
\cite[Assumption~3(ii)]{bianchi2023stochastic}.  Under their respective
regularity and noise hypotheses, both works prove
\(\mathbb P(x_k\to x^*)=0\); see
\cite[Definitions~4--6, Assumptions~1--3, and Theorem~3]{bianchi2023stochastic}
and
\cite[Definition~2.3, Assumptions~A, E, F, and Theorem~5.2]{davis2025active}.
These avoidance statements alone do not rule out \(x^*\) as an accumulation
point.

Suppose now that the fixed point \(x^*\) and the update satisfy the hypotheses
of Theorem~\ref{theorem:active_stochastic} and of either avoidance theorem.
Theorem~\ref{theorem:active_stochastic} shows that boundedness and accumulation
at \(x^*\) imply convergence to \(x^*\) almost surely.  Combining the two
results gives
\[
 \mathbb P\bigl((x_k)\text{ is bounded and }
 x^*\text{ is an accumulation point of }(x_k)\bigr)=0,
\]
for the common step-size regime \(\alpha_k=\Theta((k+1)^{-a})\) with \(2/3<a\leq1\); see \cite[Theorem~3]{bianchi2023stochastic} and \cite[Theorem~5.2]{davis2025active}.  The noise assumptions
are compatible: i.i.d. noise uniformly distributed on a centered Euclidean
ball, as in \cite[Assumption~F]{davis2025active}, satisfies both
Assumption~\ref{assumption:stochastic_noise} and the moment and excitation
conditions in \cite[Assumption~3]{bianchi2023stochastic}.
\end{remark}

We prove Theorem~\ref{theorem:active_stochastic} in the remainder of this
section.  The momentum analysis in Section~\ref{sec:active_momentum} reduces
the method to the enlarged-subgradient recurrence studied in
Proposition~\ref{proposition:active_goldstein}.  The stochastic proof uses the
same ingredients near the active manifold: the distance-to-manifold and projected estimates
from Proposition~\ref{proposition:active_inexact_local_estimates},
function-value convergence from
Corollary~\ref{lemma:perturbed_subgradient_value_convergence}, and the KL
estimate with correction terms from Lemma~\ref{lemma:corrected_active_KL}. It
remains to control the martingale and quadratic-noise errors on intervals
during which the iterates leave a neighborhood of \(x^*\).  The supporting
lemmas in Section~\ref{sec:active_stochastic_exit_estimates} provide this
control; the general martingale estimate used in their proofs is collected in
Appendix~\ref{sec:martingale_tail_estimates}.  We first record the geometric
and probabilistic setup, which remains fixed throughout the rest of this
section.

\runinheading{Fixed setup}
Since \(a>2/3\), one has
\(\max\{1/2,2(1-a)/a\}<1\).  Apply
Lemma~\ref{lemma:active_local_consequences} with
\(\underline\theta:=\max\{1/2,2(1-a)/a\}\), and fix the resulting active
manifold \(A\), metric projection \(P_A\), constants
\(\mu,\varrho,L>0\), and exponent
\(\theta\in(\underline\theta,1)\).  All conclusions of that lemma hold, and
\begin{equation}
 2\theta>1,\qquad
 a\left(1+\frac\theta2\right)>1,\qquad
 a+\theta(2a-1)>1.
\label{eq:stoch_theta_choice}
\end{equation}
Indeed, the first two inequalities follow from the choice of \(\theta\),
while the third follows from
\((1-a)/(2a-1)\leq2(1-a)/a<\theta\). The polynomial step-size bounds give
\begin{equation}
 \alpha_k\to0,\qquad
 \sum_{k=0}^{\infty}\alpha_k=\infty,\qquad
 \sum_{k=0}^{\infty}\alpha_k^2<\infty.
\label{eq:stoch_step_consequences}
\end{equation}
For each coordinate, the martingale increments
\(\alpha_k\varepsilon_k\) have summable conditional variances by
\eqref{eq:stoch_step_consequences} and
Assumption~\ref{assumption:stochastic_noise}.  Hence
\cite[Theorem~2.15]{hall1980martingale} gives, almost surely,
\begin{equation}
 \sum_{k=0}^{\infty}\alpha_k\varepsilon_k
 \quad\text{converges}
\label{eq:stoch_vector_martingale_convergence}
\end{equation}
coordinatewise.  Moreover, the tower property and Tonelli's theorem
\cite[Theorem~2.37]{folland1999real} give
\(\mathbb E[\sum_k\alpha_k^2(1+|\varepsilon_k|^2)]
\leq(1+\sigma^2)\sum_k\alpha_k^2<\infty\).  Consequently,
\begin{equation}
 \sum_{k=0}^{\infty}\alpha_k^2(1+|\varepsilon_k|^2)<\infty.
\label{eq:stoch_quadratic_noise_summability}
\end{equation}
Let \(\Omega_{\mathrm{act}}\) be a probability-one event on which
\eqref{eq:stoch_vector_martingale_convergence} and
\eqref{eq:stoch_quadratic_noise_summability} hold.  On this event,
convergence of the series in
\eqref{eq:stoch_vector_martingale_convergence} gives
\(\alpha_k\varepsilon_k\to0\).  Corollary~\ref{lemma:perturbed_subgradient_value_convergence},
applied with \(r_k=0\), further shows that every bounded realization having
\(x^*\) as an accumulation point satisfies
\begin{equation}
 f(x_k)\longrightarrow f(x^*).
\label{eq:stoch_value_convergence}
\end{equation}
Henceforth, these objects and the event \(\Omega_{\mathrm{act}}\) are fixed.

\subsubsection{Exit intervals and stochastic estimates}
\label{sec:active_stochastic_exit_estimates}

We follow the iterates from a close return to \(x^*\) until their next exit
from a fixed neighborhood.  On the resulting intervals, the proof controls
motion toward the active manifold, motion of the projected iterates along the
manifold, and the accumulated stochastic errors.  We first select the
intervals so that the relevant errors are summable, and then derive the
distance-to-manifold and projected estimates.

\runinheading{Entrance and exit indices}
Let \(\rho\in(0,\varrho/2]\) be arbitrary.  For the moment, let
\((K_j)_{j\geq1}\) be any deterministic strictly increasing sequence in
\(\mathbb N\), and set
\(q_0:=-1\).  The entrance index \(p_j\) is the first visit, after both
\(K_j\) and the preceding exit, to a ball shrinking geometrically toward
\(x^*\); the index \(q_j\) is the next exit from
\(\mathring B(x^*,\rho)\).  Thus, we define
\(p_j,q_j\colon\Omega\to\mathbb N\cup\{\infty\}\) by
\begin{equation}
 \begin{aligned}
 p_j&:=\inf\{k\geq K_j:k>q_{j-1},\
                    x_k\in B(x^*,2^{-j-1}\rho)\},\\
 q_j&:=\inf\{k>p_j:x_k\notin\mathring B(x^*,\rho)\},
 \end{aligned}
 \qquad\forall j\geq1.
 \label{eq:stoch_excursions}
\end{equation}

The sequence \((K_j)\) will be chosen deterministically so that the
stochastic and function-value tails associated with the resulting intervals
are summable.

Define the exit-interval indicator by
\begin{equation}
 \eta_k:=
 \mathbf1\{\exists j\geq1:\ p_j\leq k<q_j\},
 \qquad k\in\mathbb N.
\label{eq:stoch_excursion_indicator}
\end{equation}
If \(\eta_k=1\), then
\(x_k\in\mathring B(x^*,\rho)\), so \(P_A(x_k)\) is defined.  Set
\begin{equation}
 n_k:=\eta_k\bigl(x_k-P_A(x_k)\bigr).
\label{eq:stoch_normal_coefficient}
\end{equation}
Here and below, an indicator-weighted local expression is evaluated only on
\(\{\eta_k=1\}\) and is defined to be zero on \(\{\eta_k=0\}\).
In every sum \(\sum_{k=r}^{s-1}\) below, \(r\leq s\) are integer indices,
\(s\) is the exclusive upper endpoint, and the sum is zero when \(r=s\).
Since \((x_k)\) is adapted, a direct induction shows that \(p_j,q_j\) are
stopping times for every deterministic strictly increasing sequence
\((K_j)\).  Consequently, \(\eta_k\) and \(n_k\) are
\(\mathcal F_k\)-measurable, with \(|n_k|\leq\rho\).

The three tails in the next lemma serve two purposes.  Summing the distance
estimate \eqref{eq:active_meta_normal} produces the scalar martingale blocks
\(\sum_k\alpha_k\langle\varepsilon_k,n_k\rangle\) and the quadratic-noise
remainder \(\sum_k\alpha_k^2(1+|\varepsilon_k|^2)\).  Since that estimate
controls squared distance to \(A\), the square roots of these two errors must
be summable to control the distances to \(A\) entering the interval-diameter
estimate.
Summing the projected descent estimate
\eqref{eq:active_meta_projected_descent} produces endpoint function values,
which are controlled by the tail of \(|f(x_k)-f(x^*)|\), together with the
distances to \(A\).  The next lemma selects one sequence \((K_j)\) for which
all three contributions are summable.

\begin{lemma}[Summable exit intervals]
\label{lemma:active_stochastic_sparse}
There is a deterministic strictly increasing sequence \((K_j)\) such that
the following holds for every fixed \(\rho\in(0,\varrho/2]\): almost surely,
every bounded realization having \(x^*\) as an accumulation point satisfies
\begin{equation}
 \sum_{j=1}^{\infty}\Bigg[
 \left(\sup_{s\geq r\geq K_j}\left|
     \sum_{k=r}^{s-1}
       \alpha_k\langle\varepsilon_k,n_k\rangle
   \right|\right)^{1/2}+\left(
   \sum_{k=K_j}^{\infty}\alpha_k^2(1+|\varepsilon_k|^2)
   \right)^{1/2}
   +\sup_{k\geq K_j}|f(x_k)-f(x^*)|\Bigg]<\infty.
\label{eq:stoch_sparse_tails}
\end{equation}
\end{lemma}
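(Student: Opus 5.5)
The plan is to choose \((K_j)\) by a Borel--Cantelli / expectation argument. Each of the three summands in \eqref{eq:stoch_sparse_tails} will be bounded by a quantity that is monotone in \(K_j\), so we may take \(K_j\) to be the maximum of several deterministic thresholds. We then add \(j\) to the maximum to keep the sequence strictly increasing. Enlarging \(K_j\) only shrinks the quadratic-noise and function-value tails. The martingale term may change, because \(n_k\) depends on \((K_j)\) through the stopping times \(p_j,q_j\). However, its estimate will use only two properties of \(n_k\): it is \(\mathcal F_k\)-measurable and \(|n_k|\leq\rho\leq\varrho/2\). Both hold for every deterministic strictly increasing \((K_j)\) and every \(\rho\in(0,\varrho/2]\). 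This removes the apparent circularity: the thresholds are chosen from deterministic data only, and then the bounds are verified for the resulting \(n_k\).

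\emph{Martingale term.} Set \(T_K:=\sum_{k\geq K}\alpha_k^2\), which tends to zero by \eqref{eq:stoch_step_consequences}. For fixed \(K\), let \(M_s:=\sum_{k=K}^{s-1}\alpha_k\langle\varepsilon_k,n_k\rangle\). This is a scalar martingale with respect to \((\mathcal F_s)\), since \(n_k\) is \(\mathcal F_k\)-measurable and \(\mathbb E[\varepsilon_k\mid\mathcal F_k]=0\). Its conditional increments satisfy \(\mathbb E[\alpha_k^2\langle\varepsilon_k,n_k\rangle^2\mid\mathcal F_k]\leq\alpha_k^2\rho^2\sigma^2\). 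Doob's \(L^2\) maximal inequality, or the estimate in Appendix~\ref{sec:martingale_tail_estimates}, then gives \(\mathbb E\sup_{s\geq K}|M_s|^2\leq4\rho^2\sigma^2T_K\). Since \(\sum_{k=r}^{s-1}=M_s-M_r\), we have \(\sup_{s\geq r\geq K}|\cdot|\leq2\sup_s|M_s|\). Jensen's inequality then yields
\[
\mathbb E\Bigl(\sup_{s\geq r\geq K}\Bigl|\sum_{k=r}^{s-1}\alpha_k\langle\varepsilon_k,n_k\rangle\Bigr|\Bigr)^{1/2}\leq C\,T_K^{1/4},
\]
where \(C\) depends only on \(\varrho,\sigma\).

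\emph{Quadratic-noise term.} By the tower property, \(\mathbb E\sum_{k\geq K}\alpha_k^2(1+|\varepsilon_k|^2)\leq(1+\sigma^2)T_K\). Jensen's inequality bounds the expected square root by \(C T_K^{1/2}\).

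Choose \(K_j^{(1)}\) with \(T_{K_j^{(1)}}\leq 2^{-4j}\). Then the expectation of the sum over \(j\) of the first two summands is finite, so that sum is finite almost surely. This holds for every admissible \((K_j)\) dominating \(K_j^{(1)}\) and for each fixed \(\rho\).

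\emph{Function-value term.} This is the main obstacle, since \eqref{eq:stoch_value_convergence} gives convergence without any rate. Let \(\Omega_*\subset\Omega_{\mathrm{act}}\) be the event that \((x_k)\) is bounded and has \(x^*\) as an accumulation point. This event is measurable: boundedness is \(\bigcup_N\bigcap_k\{|x_k|\leq N\}\), and accumulation is a countable intersection of ``infinitely often'' events. On \(\Omega_*\), the quantity \(Z_K:=\sup_{k\geq K}|f(x_k)-f(x^*)|\) decreases to \(0\). Hence \(\mathbb P(\Omega_*\cap\{Z_K>2^{-j}\})\to0\) as \(K\to\infty\), by continuity of measure. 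Pick \(K_j^{(2)}\) so that this probability is at most \(2^{-j}\). By Borel--Cantelli, almost surely on \(\Omega_*\) we have \(Z_{K_j^{(2)}}\leq2^{-j}\) for all large \(j\), and the same holds for any larger \(K_j\) because \(Z_K\) is monotone in \(K\). Thus the third summand is summable. This threshold does not involve \(\rho\).

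Finally, set \(K_j:=\max\{K_j^{(1)},K_j^{(2)}\}+j\). For each fixed \(\rho\), intersect the two probability-one events obtained above. On the resulting event, every bounded realization with accumulation point \(x^*\) satisfies \eqref{eq:stoch_sparse_tails}.
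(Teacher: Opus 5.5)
Your proposal is correct and follows essentially the paper's argument. The paper also picks a deterministic $(K_j)$ from $\sum_{k\geq K}\alpha_k^2$ and from value convergence on the event $\mathcal A$, and bounds the martingale blocks by a maximal inequality (Burkholder there, Doob here) that uses only $\mathcal F_k$-measurability and $|n_k|\le\rho$; Jensen and Tonelli then handle the square roots, and your Borel--Cantelli step for the function-value tail is just a probability-threshold version of the paper's choice $\mathbb E[\mathbf 1_{\mathcal A}\min\{1,F_{K_j}\}]\le 2^{-j}$. One detail to tidy: adding $j$ to $\max\{K_j^{(1)},K_j^{(2)}\}$ gives strict monotonicity only if you take the minimal thresholds, which are automatically nondecreasing in $j$, or take running maxima.
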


The proof of Lemma~\ref{lemma:active_stochastic_sparse} is given in Appendix~\ref{sec:proof_active_stochastic_sparse}.
Fix a sequence \((K_j)\) supplied by
Lemma~\ref{lemma:active_stochastic_sparse}.  Henceforth,
\(p_j,q_j,\eta_k,n_k\) are formed using this sequence.

Whenever \(p_j,q_j<\infty\), their definition gives
\(x_{\llbracket p_j,q_j-1\rrbracket}\subset
\mathring B(x^*,\rho)\).  The exit iterate \(x_{q_j}\), however, may
overshoot the larger local neighborhood.  The following fact shows that this
can occur only finitely often on the realizations considered below.

\begin{fact}[Exit localization]
\label{fact:active_stochastic_exit_localization}
On \(\Omega_{\mathrm{act}}\), every bounded realization having \(x^*\) as an
accumulation point and leaving \(\mathring B(x^*,\rho)\) infinitely often has
all \(p_j,q_j\) finite and admits a finite \(j_0=j_0(\omega,\rho)\) such that
\begin{equation}
 x_{q_j}\in\mathring B(x^*,\varrho),
 \qquad\forall j\geq j_0.
 \label{eq:stoch_exit_endpoint_localization}
\end{equation}
For any such \(j_0\), we call
\(\llbracket p_j,q_j\rrbracket\), \(j\geq j_0\), the \emph{late exit
intervals}; their update indices are
\(\llbracket p_j,q_j-1\rrbracket\).
\end{fact}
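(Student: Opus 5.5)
The plan has two parts: first show that all the indices are finite, then show that the iterates' increments eventually become small, so that the step out of \(\mathring B(x^*,\rho)\) cannot overshoot far.

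\textbf{Finiteness of the indices.} Fix \(\omega\in\Omega_{\mathrm{act}}\) and a bounded realization that has \(x^*\) as an accumulation point and leaves \(\mathring B(x^*,\rho)\) infinitely often. We argue by induction on \(j\). Suppose \(q_{j-1}<\infty\); the base case \(q_0=-1\) is immediate. Since \(x^*\) is an accumulation point, infinitely many \(k\) satisfy \(x_k\in B(x^*,2^{-j-1}\rho)\). At least one of these exceeds \(\max\{K_j,q_{j-1}\}\), so \(p_j<\infty\). The sequence leaves \(\mathring B(x^*,\rho)\) infinitely often, so some \(k>p_j\) has \(x_k\notin\mathring B(x^*,\rho)\); hence \(q_j<\infty\). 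Moreover \(p_j\geq K_j\geq j-1\), so \(p_j\to\infty\).

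\textbf{Increments tend to zero.} Let \(K\) be the closure of the realization; it is compact. Local boundedness of the Clarke subdifferential \cite[Propositions~2.1.2 and~2.1.5]{clarke1990} gives \(L_\omega\) with \(|v_k|\leq L_\omega\) for all \(k\). Then
\[
 |x_{k+1}-x_k|\leq \alpha_k|v_k|+|\alpha_k\varepsilon_k|\leq L_\omega\alpha_k+|\alpha_k\varepsilon_k|.
\]
Both terms tend to zero: \(\alpha_k\to0\) by \eqref{eq:stoch_step_consequences}, and \(\alpha_k\varepsilon_k\to0\) on \(\Omega_{\mathrm{act}}\) because the series \eqref{eq:stoch_vector_martingale_convergence} converges.

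\textbf{Exit localization.} Choose \(k_\omega\) such that \(|x_{k+1}-x_k|<\varrho-\rho\) for every \(k\geq k_\omega\); this quantity is positive because \(\rho\leq\varrho/2\). Then choose \(j_0\) with \(p_{j_0}\geq k_\omega\), which exists since \(p_j\to\infty\). For \(j\geq j_0\) we have \(q_j-1\geq p_j\geq k_\omega\), and \(x_{q_j-1}\in\mathring B(x^*,\rho)\) because \(q_j-1\in\llbracket p_j,q_j-1\rrbracket\). Therefore
\[
 |x_{q_j}-x^*|\leq |x_{q_j-1}-x^*|+|x_{q_j}-x_{q_j-1}|<\rho+(\varrho-\rho)=\varrho,
\]
which is \eqref{eq:stoch_exit_endpoint_localization}.

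There is no real obstacle in this argument. The only point needing care is the use of \(\alpha_k\varepsilon_k\to0\). This comes from convergence of the martingale series, which holds almost surely; no pointwise bound on \(\varepsilon_k\) is available or needed.
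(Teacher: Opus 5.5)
Your proof is correct and follows the paper's argument. Finiteness of \(p_j,q_j\) comes by induction, and \(x_{q_j-1}\in\mathring B(x^*,\rho)\) combines with the vanishing increment bound \(|x_{q_j}-x_{q_j-1}|\leq L\alpha_{q_j-1}+|\alpha_{q_j-1}\varepsilon_{q_j-1}|\) and \(\varrho-\rho>0\). The only cosmetic difference is how you bound \(|v_k|\): you use a path-dependent constant from compactness of the realization's closure, whereas the paper uses the local constant \(L\) of Lemma~\ref{lemma:active_local_consequences} at \(x_{q_j-1}\).
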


Indeed, because \(x^*\) is an accumulation point and the realization leaves
\(\mathring B(x^*,\rho)\) infinitely often, induction shows that all
\(p_j,q_j\) are finite, while
\(x_{q_j-1}\in\mathring B(x^*,\rho)\),
\(q_j-1\geq p_j\geq K_j\to\infty\),
\eqref{eq:stoch_step_consequences},
\eqref{eq:stoch_vector_martingale_convergence},
\eqref{eq:active_local_uniform_constants}, and the update
\eqref{eq:inexact stoc} give
\( |x_{q_j}-x_{q_j-1}|
 \leq L\alpha_{q_j-1}+|\alpha_{q_j-1}\varepsilon_{q_j-1}|\to0\),
so eventually this increment is smaller than \(\varrho-\rho\), which yields
\eqref{eq:stoch_exit_endpoint_localization}.

On each late exit interval \(\llbracket p_j,q_j\rrbracket\), set
\(y_k:=P_A(x_k)\) and \(d_k:=|x_k-y_k|\), so \(y_k\in A\) and
\(d_k=d(x_k,A)\) for \(p_j\leq k\leq q_j\).  The next lemma controls these
distances and their weighted sum over the late exit intervals.

\begin{lemma}[Exit-distance bounds]
\label{lemma:active_stochastic_normal}
There is \(C>0\) such that the following holds almost surely.  Consider any
bounded realization having \(x^*\) as an accumulation point and leaving
\(\mathring B(x^*,\rho)\) infinitely often.  Let \(j_0\) and the late exit
intervals be as in
Fact~\ref{fact:active_stochastic_exit_localization}.  After increasing this
\(j_0\), if necessary, one has, for every \(j\geq j_0\),
\[
 \sup_{p_j\leq s\leq q_j}d_s^2
 +\mu\sum_{k=p_j}^{q_j-1}\alpha_kd_k
 \leq C\left(
 4^{-j-1}\rho^2
 +\sup_{s\geq r\geq K_j}\left|
  \sum_{k=r}^{s-1}
   \alpha_k\langle\varepsilon_k,n_k\rangle
 \right|
 +\sum_{k=K_j}^{\infty}\alpha_k^2(1+|\varepsilon_k|^2)
 \right),
\]
and consequently
\begin{equation}
 \sum_{j=j_0}^{\infty}\sup_{p_j\leq k\leq q_j}d_k<\infty,
 \qquad
 \sum_{j=j_0}^{\infty}\sum_{k=p_j}^{q_j-1}\alpha_kd_k<\infty.
\label{eq:stoch_normal_summability}
\end{equation}
\end{lemma}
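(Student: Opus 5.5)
The plan is to sum the quadratic distance estimate \eqref{eq:active_meta_normal} of Proposition~\ref{proposition:active_inexact_local_estimates} over each late exit interval. First we check its hypotheses. For \(j\geq j_0\), every update index \(i\in\llbracket p_j,q_j-1\rrbracket\) satisfies \(x_i\in\mathring B(x^*,\rho)\subset\mathring B(x^*,\varrho)\), and Fact~\ref{fact:active_stochastic_exit_localization} places \(x_{q_j}\in\mathring B(x^*,\varrho)\). We also have \(r_i=0\) and \(\alpha_i\leq\bar\alpha\) once \(j_0\) is increased, using \(p_j\geq K_j\to\infty\). For these indices \(\eta_i=1\), so \(n_i=x_i-y_i\) agrees with the indicator-weighted coefficient \eqref{eq:stoch_normal_coefficient}.

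Summing \eqref{eq:active_meta_normal} from \(i=p_j\) to \(s-1\), for any \(s\in\llbracket p_j,q_j\rrbracket\), gives
\[
 d_s^2+\mu\sum_{k=p_j}^{s-1}\alpha_kd_k\leq d_{p_j}^2-2\sum_{k=p_j}^{s-1}\alpha_k\langle\varepsilon_k,n_k\rangle+C\sum_{k=p_j}^{s-1}\alpha_k^2(1+|\varepsilon_k|^2).
\]
We bound the three right-hand terms separately. Since \(x_{p_j}\in B(x^*,2^{-j-1}\rho)\) and \(x^*\in A\), we get \(d_{p_j}\leq 2^{-j-1}\rho\), so \(d_{p_j}^2\leq4^{-j-1}\rho^2\). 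The martingale sum is at most \(\sup_{s\geq r\geq K_j}|\sum_{k=r}^{s-1}\alpha_k\langle\varepsilon_k,n_k\rangle|\) because \(p_j\geq K_j\). The quadratic remainder is at most the full tail from \(K_j\).

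Taking the supremum over \(s\) of the \(d_s^2\) bound, and using \(s=q_j\) for the weighted sum, we add the two resulting bounds. This gives the displayed estimate with a doubled constant.

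For \eqref{eq:stoch_normal_summability}, we take square roots and use \(\sqrt{a+b+c}\leq\sqrt a+\sqrt b+\sqrt c\), which bounds \(\sup_k d_k\) on the \(j\)th interval by \(C(2^{-j}\rho+\text{the two square-root terms})\). Each of these is summable in \(j\): the geometric term trivially, and the other two by \eqref{eq:stoch_sparse_tails} of Lemma~\ref{lemma:active_stochastic_sparse}.

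The weighted sums \(\sum_k\alpha_kd_k\) are controlled by the unrooted quantities. The martingale supremum and the quadratic tail both tend to zero on \(\Omega_{\mathrm{act}}\): the first because the series \eqref{eq:stoch_vector_martingale_convergence}-type sums converge, the second by \eqref{eq:stoch_quadratic_noise_summability}. Hence each is eventually at most \(1\), and for \(x\leq1\) we have \(x\leq x^{1/2}\). The square-root summability therefore implies summability of the unrooted terms.

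The main obstacle is showing that the scalar martingale \(\sum_k\alpha_k\langle\varepsilon_k,n_k\rangle\) converges, so that its tail supremum is small. This holds because \(n_k\) is \(\mathcal F_k\)-measurable with \(|n_k|\leq\rho\), so the increments have conditional variance at most \(\rho^2\sigma^2\alpha_k^2\), which is summable; \cite[Theorem~2.15]{hall1980martingale} then gives almost sure convergence, and on this event the tail supremum tends to zero. Since \(\rho\) is fixed, we intersect with this additional probability-one event and enlarge \(j_0\) accordingly.
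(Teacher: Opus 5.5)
Your proposal is correct and follows the paper's proof essentially step for step. You sum \eqref{eq:active_meta_normal} over each late exit interval, bound \(d_{p_j}\), the martingale block, and the quadratic tail using \(p_j\geq K_j\), and then deduce \eqref{eq:stoch_normal_summability} from Lemma~\ref{lemma:active_stochastic_sparse}.

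The only difference is that your ``main obstacle'' is unnecessary. Summability of the square-root tails in \eqref{eq:stoch_sparse_tails} already forces those tails to vanish, so they are eventually at most \(1\) and their squares are summable. This is exactly how the paper concludes, with no separate martingale-convergence argument for \(\sum_k\alpha_k\langle\varepsilon_k,n_k\rangle\); your extra argument is valid but redundant.
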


The proof is given in Appendix~\ref{sec:proof_active_stochastic_normal}.
Related stopped-neighborhood distance estimates appear in
\cite[Proposition~5.1]{davis2025active} and
\cite[Lemmas~4--5 and Proposition~5]{bianchi2023stochastic};
Lemma~\ref{lemma:active_stochastic_normal} additionally gives pathwise
summability over the disjoint exit intervals.

To state the corresponding projected estimates, use the same exit-interval
indicator and set
\begin{equation}
 t_k:=\eta_kDP_A(x_k)^*\nabla_A f(P_A(x_k)).
\label{eq:stoch_tangential_coefficient}
\end{equation}
As with \(n_k\), this expression is evaluated on \(\{\eta_k=1\}\) and is
set to zero otherwise.

To measure the largest changes, after index \(i\), in the martingale partial
sums with coefficients \(n_k\) and \(t_k\), set
\begin{equation}
 \mathcal T_i:=\sup_{s\geq r\geq i}\left|
       \sum_{k=r}^{s-1}\alpha_k\langle\varepsilon_k,n_k\rangle
      \right|
 +\sup_{s\geq r\geq i}\left|
       \sum_{k=r}^{s-1}\alpha_k\langle\varepsilon_k,t_k\rangle
      \right|\in[0,\infty].
\label{eq:stoch_combined_martingale_tail}
\end{equation}
Thus \(\mathcal T_i\) adds the largest absolute block sum after \(i\) from
each scalar martingale, and
\(\mathcal T_{i+1}\leq\mathcal T_i\).  For an index \(i\) in a late exit
interval, combine \(\mathcal T_i\) with the other errors in the distance
recursion \eqref{eq:active_meta_normal} by setting
\begin{equation}
 Q_i:=d_i^2+
 \sum_{k=i}^{\infty}\alpha_k^2(1+|\varepsilon_k|^2)+\mathcal T_i
 \in[0,\infty].
\label{eq:stoch_Q_definition}
\end{equation}
The quantity \(Q_i\) will control the correction needed before applying
Lemma~\ref{lemma:corrected_active_KL}.  The next lemma records the corresponding
projected-gradient and stochastic-tail estimates.

\begin{lemma}[Projected exit estimates]
\label{lemma:active_stochastic_budget}
There is a fixed \(C_{\rm T}>0\) such that the following holds almost surely.
Consider any bounded realization having \(x^*\) as an accumulation point and
leaving \(\mathring B(x^*,\rho)\) infinitely often.  Let \(j_0\) and the late
exit intervals be as in
Fact~\ref{fact:active_stochastic_exit_localization}.  After increasing this
\(j_0\), if necessary, the estimate below holds for every \(j\geq j_0\) and
\(p_j\leq k<q_j\):
\begin{align}
 f(y_{k+1})-f(y_k)
 &\leq-\frac12\alpha_k|\nabla_A f(y_k)|^2
      -\alpha_k\langle\varepsilon_k,t_k\rangle
      +C_{\rm T}\alpha_kd_k
      +C_{\rm T}\alpha_k^2(1+|\varepsilon_k|^2).
 \label{eq:stoch_projected_descent}
\end{align}
Moreover, the realization has the following properties:
\begin{enumerate}[label=\emph{(\roman*)},leftmargin=*]
\item \emph{Summability of gradients and coefficients.}
One has
\[
 \sum_{j=j_0}^{\infty}\sum_{k=p_j}^{q_j-1}
       \alpha_k|\nabla_A f(y_k)|^2<\infty,
 \qquad
 \sum_k\alpha_k|n_k|^2<\infty,
 \qquad
 \sum_k\alpha_k|t_k|^2<\infty.
\]

\item \emph{Martingale oscillations.}
The quantity \(\mathcal T_i\) in
\eqref{eq:stoch_combined_martingale_tail} satisfies
\(\mathcal T_i\to0\) and
\(\sum_i\alpha_i\mathcal T_i^\theta<\infty\).

\item \emph{Combined stochastic errors.}
The quantity \(Q_i\) in \eqref{eq:stoch_Q_definition} satisfies
\begin{equation}
 Q_{p_j}\to0,
 \qquad
 \sum_{j=j_0}^{\infty}\sum_{i=p_j}^{q_j-1}
 \alpha_i(Q_i^\theta+Q_{i+1}^\theta)<\infty.
\label{eq:stoch_Q_summability}
\end{equation}
\end{enumerate}
\end{lemma}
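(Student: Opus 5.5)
The plan is to derive \eqref{eq:stoch_projected_descent} from Proposition~\ref{proposition:active_inexact_local_estimates}. We then obtain (i) by summing it over the late exit intervals, and (ii)--(iii) from the summability in (i) together with martingale tail bounds.

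\textbf{The descent estimate.} Fix a realization in $\Omega_{\mathrm{act}}$ as in the statement, together with $j_0$ from Fact~\ref{fact:active_stochastic_exit_localization}. Increase $j_0$ so that $\alpha_k\leq\bar\alpha$ for $k\geq K_{j_0}$. On a late exit interval, $x_{\llbracket p_j,q_j\rrbracket}\subset\mathring B(x^*,\varrho)$, so Proposition~\ref{proposition:active_inexact_local_estimates} applies with $r_k=0$ and $u_k=v_k$. Since $\eta_k=1$ there, $t_k=DP_A(x_k)^*\nabla_A f(y_k)$, and \eqref{eq:active_meta_projected_descent} holds with remainder $C(\alpha_kd_k^2+\alpha_k^3+\alpha_k^2(|\varepsilon_k|+|\varepsilon_k|^2))$. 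This remainder is absorbed as follows:
\begin{itemize}
\item $d_k^2\leq\rho d_k$;
\item $\alpha_k^3\leq\alpha_k^2$;
\item $|\varepsilon_k|\leq1+|\varepsilon_k|^2$.
\end{itemize}
This gives \eqref{eq:stoch_projected_descent}.

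\textbf{Part (i).} Sum \eqref{eq:stoch_projected_descent} over $k\in\llbracket p_j,q_j-1\rrbracket$ and over $j_0\leq j\leq J$. The left side telescopes into endpoint differences, each bounded by
\[
|f(y_{p_j})-f(y_{q_j})|\leq|f(x_{p_j})-f(x^*)|+|f(x_{q_j})-f(x^*)|+L(d_{p_j}+d_{q_j}).
\]
Both pieces are summable in $j$: the function-value terms by Lemma~\ref{lemma:active_stochastic_sparse}, since $p_j,q_j\geq K_j$, and the distance terms by \eqref{eq:stoch_normal_summability}. The martingale terms are handled pathwise. The coefficients $t_k$ are $\mathcal F_k$-measurable and bounded, and vanish off the exit intervals. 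Hence the sum over $j\leq J$ of the block sums equals a partial sum of $\sum_k\alpha_k\langle\varepsilon_k,t_k\rangle$, which converges a.s. by \cite[Theorem~2.15]{hall1980martingale}; the same holds for $n_k$. Its partial sums are therefore bounded. We shrink $\Omega_{\mathrm{act}}$ by this null set. The remaining terms $\sum\alpha_kd_k$ and $\sum\alpha_k^2(1+|\varepsilon_k|^2)$ are finite by \eqref{eq:stoch_normal_summability} and \eqref{eq:stoch_quadratic_noise_summability}. This proves the gradient summability. Then $|t_k|\leq L^2|\nabla_A f(y_k)|$ gives $\sum_k\alpha_k|t_k|^2<\infty$. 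Also $|n_k|=d_k\leq\rho$ gives $\sum_k\alpha_k|n_k|^2\leq\rho\sum_k\alpha_kd_k<\infty$ (only finitely many early intervals remain, each finite).

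\textbf{Part (ii), the main obstacle.} Convergence of the two scalar series makes them Cauchy, so $\mathcal T_i\to0$. The hard part is $\sum_i\alpha_i\mathcal T_i^\theta<\infty$. A naive bound $\mathcal T_i\lesssim(\sum_{k\geq i}\alpha_k^2)^{1/2}$ only gives exponent $a+\theta(2a-1)/2$, which is too weak. Instead I would exploit (i) through the conditional variance of the tail:
\[
\sum_{k\geq i}\alpha_k^2\sigma^2(|n_k|^2+|t_k|^2)\leq\sigma^2\alpha_i\sup_{k\geq i}\frac{\alpha_k}{\alpha_i}\sum_{k\geq i}\alpha_k(|n_k|^2+|t_k|^2)=O(\alpha_i).
\]
The quantity $\sum_k\alpha_k(|n_k|^2+|t_k|^2)$ is random, so I would localize. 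Use stopping times $\tau_M$ at which the partial sums first exceed $M$, and replace $n_k,t_k$ by $n_k\mathbf1\{k<\tau_M\}$ and $t_k\mathbf1\{k<\tau_M\}$. Doob's maximal inequality (the appendix martingale tail estimate) then gives
\[
\mathbb E[\mathcal T_i^{(M)2}]\leq CM\alpha_i,
\]
and Jensen gives $\mathbb E[\alpha_i(\mathcal T^{(M)}_i)^\theta]\leq C\alpha_i^{1+\theta/2}$. This is summable because $a(1+\theta/2)>1$ by \eqref{eq:stoch_theta_choice}. Letting $M\to\infty$ over integers covers almost every realization for which (i) holds.

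\textbf{Part (iii).} First, $Q_{p_j}\to0$: we have $d_{p_j}\leq2^{-j-1}\rho$, the tail $\sum_{k\geq p_j}\alpha_k^2(1+|\varepsilon_k|^2)\to0$, and $\mathcal T_{p_j}\to0$ by (ii). For the sum, use subadditivity of $t\mapsto t^\theta$ and treat the three parts of $Q$ separately.
\begin{itemize}
\item \emph{Distance part.} Since $2\theta>1$ and $d_i\leq\rho$, we have $d_i^{2\theta}\leq Cd_i$, which is summable against $\alpha_i$ by \eqref{eq:stoch_normal_summability}. For $Q_{i+1}$, the Lipschitz bound on $P_A$ gives $d_{i+1}\leq d_i+C\alpha_i(1+|\varepsilon_i|)$. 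The extra term is $\alpha_i^{1+2\theta}(1+|\varepsilon_i|^{2\theta})$, which has summable expectation because $2\theta>1$ and $3a>1$.
\item \emph{Quadratic-noise tail.} Jensen gives $\mathbb E[(\sum_{k\geq i}\alpha_k^2(1+|\varepsilon_k|^2))^\theta]\leq C(i+1)^{\theta(1-2a)}$. Thus $\alpha_i$ times it is $O((i+1)^{-a-\theta(2a-1)})$, which is summable by the third inequality in \eqref{eq:stoch_theta_choice}. Hence the series is finite a.s.
\item \emph{Martingale part.} $\mathcal T_{i+1}\leq\mathcal T_i$, so this part follows from (ii).
\end{itemize}
Intersecting all the probability-one events used above completes the argument.
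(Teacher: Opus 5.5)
Your proposal is correct and follows essentially the same route as the paper. You derive the descent estimate from Proposition~\ref{proposition:active_inexact_local_estimates} with the same remainder absorption, obtain (i) by summing over the late exit intervals with the same endpoint and martingale controls, prove (ii) with the stopped-martingale maximal-inequality argument of Lemma~\ref{lemma:martingale_tail_summability}(1) using $a(1+\theta/2)>1$, and prove (iii) by splitting $Q_i$ into its distance, quadratic-noise, and martingale parts. The only differences are minor: you get $\mathcal T_i\to0$ from the Cauchy property of the two convergent scalar series instead of from monotonicity plus $\sum_i\alpha_i=\infty$, and you control the shifted distance term by a summable expectation rather than pathwise after enlarging $j_0$.
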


The proof is given in Appendix~\ref{sec:proof_active_stochastic_budget}.

\subsubsection{Proof of Theorem~\ref{theorem:active_stochastic}}
\label{sec:proof_active_stochastic}

\begin{proof}
\runinheading{Step~1: Fix the probability-one event and exit intervals}
Choose deterministic radii \(\rho_m:=\varrho/(m+1)\), \(m\geq1\).  For each
fixed \(m\), use the deterministic sequence \((K_j)\) supplied by
Lemma~\ref{lemma:active_stochastic_sparse} and apply
\eqref{eq:stoch_excursions}, \eqref{eq:stoch_excursion_indicator},
\eqref{eq:stoch_normal_coefficient}, and
\eqref{eq:stoch_tangential_coefficient} with
\(\rho=\rho_m\), adding a superscript \((m)\) to the resulting quantities.
Set
\(\Pi_k^{(m)}:=\eta_k^{(m)}DP_A(x_k)\), evaluating the product on
\(\{\eta_k^{(m)}=1\}\) and setting it to zero otherwise.
The conclusions of
Lemmas~\ref{lemma:active_stochastic_sparse},
\ref{lemma:active_stochastic_normal}, and
\ref{lemma:active_stochastic_budget} hold almost surely.  Moreover,
since \(p_j^{(m)}\) and \(q_j^{(m)}\) are stopping times,
\eqref{eq:stoch_excursion_indicator} and
\eqref{eq:active_local_uniform_constants} make \(\Pi_k^{(m)}\) uniformly
bounded and \(\mathcal F_k\)-measurable.  For each coordinate, the
corresponding martingale increments have summable conditional variances by
Assumption~\ref{assumption:stochastic_noise} and
\eqref{eq:stoch_step_consequences}.  Hence
\cite[Theorem~2.15]{hall1980martingale} gives almost-sure convergence of
\(\sum_{i=0}^{\infty}\alpha_i\Pi_i^{(m)}\varepsilon_i\).
Intersecting these events over \(m\geq1\) with \(\Omega_{\mathrm{act}}\)
gives a single probability-one event on which all the conclusions hold for
every \(\rho_m\).

Fix a bounded realization in this event having \(x^*\) as an accumulation
point.  Fix \(m\geq1\), set \(\rho:=\rho_m\), and suppress the superscript
\((m)\) on the corresponding interval quantities.  Suppose that the
realization leaves \(\mathring B(x^*,\rho)\) infinitely often.
Let \(j_0\) and the late exit intervals be as in
Fact~\ref{fact:active_stochastic_exit_localization}.

\runinheading{Step~2: Descent after adding the correction}
Increase this \(j_0\), if necessary, so that \(Q_{p_j}\leq1\), as permitted
by \eqref{eq:stoch_Q_summability}, and \(\alpha_k\leq\bar\alpha\) on every
late exit interval for \(j\geq j_0\).  Recall that
\(y_i=P_A(x_i)\) and \(d_i=|x_i-y_i|\) there.
For \(j\geq j_0\), write \(p=p_j\) and \(q=q_j\).  For
\(p\leq i\leq q\), define the nonnegative scalar correction
\begin{equation}
 b_i^{p,q}:=
 C_{\rm T}\sum_{k=i}^{q-1}
       \bigl(\alpha_kd_k+\alpha_k^2(1+|\varepsilon_k|^2)\bigr)
 +\max_{i\leq s\leq q}
       \left\{-\sum_{k=i}^{s-1}
                  \alpha_k\langle\varepsilon_k,t_k\rangle\right\}
 \in\mathbb R_+.
 \label{eq:stoch_correction}
\end{equation}
The correction vanishes at \(i=q\).  For \(p\leq i<q\), compare the maximum
in \eqref{eq:stoch_correction} for the consecutive starting indices \(i\)
and \(i+1\):
\[
\max_{i\leq s\leq q}
       \left\{-\sum_{k=i}^{s-1}
          \alpha_k\langle\varepsilon_k,t_k\rangle\right\}
 -\max_{i+1\leq s\leq q}
       \left\{-\sum_{k=i+1}^{s-1}
          \alpha_k\langle\varepsilon_k,t_k\rangle\right\}\geq-\alpha_i\langle\varepsilon_i,t_i\rangle.
\]
Combining this inequality with \eqref{eq:stoch_projected_descent} gives
\begin{equation}
 \bigl(f(y_i)-f(x^*)+b_i^{p,q}\bigr)
 -\bigl(f(y_{i+1})-f(x^*)+b_{i+1}^{p,q}\bigr)
 \geq\frac12\alpha_i|\nabla_A f(y_i)|^2.
\label{eq:stoch_corrected_descent}
\end{equation}
The definition \eqref{eq:stoch_excursions} of \(q_j\) and
\eqref{eq:stoch_exit_endpoint_localization} verify the local-segment
hypotheses.  Applying the distance estimate \eqref{eq:active_meta_normal} in
Proposition~\ref{proposition:active_inexact_local_estimates} with
\(u_k=v_k\) and \(r_k=0\), summing from \(i\) to \(q-1\), and using the
first martingale oscillation in
\eqref{eq:stoch_combined_martingale_tail} gives
\[
 \mu\sum_{k=i}^{q-1}\alpha_kd_k
 \leq d_i^2+2\mathcal T_i
      +C\sum_{k=i}^{q-1}\alpha_k^2(1+|\varepsilon_k|^2)
 \leq CQ_i.
\]
The running maximum in \eqref{eq:stoch_correction} is at most
\(\mathcal T_i\), while
\(\sum_{k=i}^{q-1}\alpha_k^2(1+|\varepsilon_k|^2)\leq Q_i\) by
\eqref{eq:stoch_Q_definition}.
Therefore
\begin{equation}
 0\leq b_i^{p,q}\leq CQ_i,
 \qquad\forall i\in\llbracket p,q\rrbracket.
 \label{eq:stoch_correction_by_Q}
\end{equation}

\runinheading{Step~3: KL length estimate}
Recall that
\(\varphi_\theta(t)=\operatorname{sgn}(t)|t|^{1-\theta}\).
For \(p\leq i<q\), the gradient-variation estimate
\eqref{eq:active_meta_gradient_variation} in
Proposition~\ref{proposition:active_inexact_local_estimates}, applied with
\(u_i=v_i\) and \(r_i=0\), gives
\begin{equation}
 |\nabla_A f(y_{i+1})|
 \leq|\nabla_A f(y_i)|+C\alpha_i(1+|\varepsilon_i|).
 \label{eq:stoch_gradient_variation}
\end{equation}
Apply Lemma~\ref{lemma:corrected_active_KL} with
\[
 F_i:=f(y_i)-f(x^*),\qquad
 s_i:=|\nabla_A f(y_i)|,\qquad
 c_i:=b_i^{p,q},\qquad
 \delta_i:=C\alpha_i(1+|\varepsilon_i|).
\]
The augmented descent estimate \eqref{eq:stoch_corrected_descent}, the local
KL inequality
\eqref{eq:local_active_KL}, and
\eqref{eq:stoch_gradient_variation} verify its hypotheses.  By
\eqref{eq:stoch_correction_by_Q}, its correction terms satisfy
\[
 (b_p^{p,q})^{1-\theta}\leq CQ_p^{1-\theta},
 \qquad
 (b_i^{p,q})^\theta+(b_{i+1}^{p,q})^\theta
 \leq C(Q_i^\theta+Q_{i+1}^\theta).
\]
The sum underlying the
\(\delta_i=C\alpha_i(1+|\varepsilon_i|)\) contribution in
Lemma~\ref{lemma:corrected_active_KL} satisfies
\[
 \sum_{i=p}^{q-1}\alpha_i^2(1+|\varepsilon_i|)
 \leq2\sum_{i=p}^{\infty}\alpha_i^2(1+|\varepsilon_i|^2)
 \leq2Q_p.
\]
Since \(Q_p\leq1\), one has \(Q_p\leq Q_p^{1-\theta}\).  Thus there exist fixed constants
\(C_{\rm KL}^{\rm S}>0\) and \(C>0\), independent of \(j,p,q\), such that
\begin{equation}
 \sum_{i=p}^{q-1}\alpha_i|\nabla_A f(y_i)|
 \leq C_{\rm KL}^{\rm S}
       \bigl(\varphi_\theta(f(y_p)-f(x^*))
             -\varphi_\theta(f(y_q)-f(x^*))\bigr)
 +C\left(
  Q_p^{1-\theta}
  +\sum_{i=p}^{q-1}\alpha_i(Q_i^\theta+Q_{i+1}^\theta)
 \right).
 \label{eq:stoch_excursion_KL}
\end{equation}

\runinheading{Step~4: Projected diameter estimate}
By construction of the probability-one event in Step~1, the series
\(\sum_{i=0}^{\infty}\alpha_i\Pi_i\varepsilon_i\)
converges almost surely, and hence its tails converge to zero.  Applying the
projected recursion \eqref{eq:active_meta_projected_recursion} from
Proposition~\ref{proposition:active_inexact_local_estimates} with
\(u_k=v_k\) and \(r_k=0\) gives, for \(p_j\leq k<q_j\),
\begin{equation}
 |y_{k+1}-y_k+\alpha_k\nabla_A f(y_k)
       +\alpha_kDP_A(x_k)\varepsilon_k|
 \leq C\alpha_kd_k+C\alpha_k^2(1+|\varepsilon_k|^2).
 \label{eq:stoch_projected_recursion}
\end{equation}
For \(p_j\leq k<q_j\) on a late exit interval,
\(\Pi_k=DP_A(x_k)\).  Summing
\eqref{eq:stoch_projected_recursion} therefore gives, for any
\(p_j\leq u<v\leq q_j\),
\begin{align*}
 |x_v-x_u|
 &\leq d_u+d_v+|y_v-y_u|\\
 &\leq2\sup_{p_j\leq k\leq q_j}d_k
       +2\sup_{r\geq p_j}\left|
          \sum_{i=r}^{\infty}
          \alpha_i\Pi_i\varepsilon_i
        \right|
       +\sum_{k=p_j}^{q_j-1}\alpha_k|\nabla_A f(y_k)|+C\sum_{k=p_j}^{q_j-1}\alpha_kd_k+CQ_{p_j}.
\end{align*}
Taking the supremum over \(u,v\) gives
\begin{equation}
\begin{aligned}
 \operatorname{diam}(x_{\llbracket p_j,q_j\rrbracket})
 &\leq\sum_{i=p_j}^{q_j-1}\alpha_i|\nabla_A f(y_i)|
 +C\sup_{p_j\leq k\leq q_j}d_k
 +C\sup_{r\geq p_j}\left|
      \sum_{i=r}^{\infty}
      \alpha_i\Pi_i\varepsilon_i
    \right|+C\sum_{k=p_j}^{q_j-1}\alpha_kd_k+CQ_{p_j}.
\end{aligned}
\label{eq:stoch_excursion_diameter}
\end{equation}

\runinheading{Step~5: Vanishing remainder and local diameter estimate}
On these intervals, \(x_k\in\mathring B(x^*,\varrho)\subset
\mathring B(x^*,2\varrho)\), and the projection containment in
Lemma~\ref{lemma:active_local_consequences}(i) gives
\(y_k=P_A(x_k)\in A\cap B(x^*,8\varrho)\).  The Lipschitz bound for \(f\) in
\eqref{eq:active_local_uniform_constants} and
the signed-power H\"older estimate
\cite[Fact~4.12]{lai2025diameter} give
\begin{equation}
 |\varphi_\theta(f(x_k)-f(x^*))-\varphi_\theta(f(y_k)-f(x^*))|
 \leq Cd_k^{1-\theta}
 \qquad\forall k\in\llbracket p_j,q_j\rrbracket.
 \label{eq:stoch_endpoint_projection_error}
\end{equation}
After increasing \(j_0\) once more so that
\(\sup_{p_j\leq k\leq q_j}d_k\leq1\), define the nonnegative combined
exit-interval remainder \(\mathfrak r_j^{\rm S}\in\mathbb R_+\) by
\begin{align*}
 \mathfrak r_j^{\rm S}&:=Q_{p_j}^{1-\theta}
 +\sum_{i=p_j}^{q_j-1}
      \alpha_i(Q_i^\theta+Q_{i+1}^\theta)+\left(\sup_{p_j\leq k\leq q_j}d_k\right)^{1-\theta}
 +\sup_{r\geq p_j}\left|
      \sum_{i=r}^{\infty}
      \alpha_i\Pi_i\varepsilon_i
    \right|
 +\sum_{k=p_j}^{q_j-1}\alpha_kd_k.
\end{align*}
The two terms involving \(Q_i\) vanish by
\eqref{eq:stoch_Q_summability}, and the supremum-distance term vanishes by
\eqref{eq:stoch_normal_summability}.
Convergence of \(\sum_i\alpha_i\Pi_i\varepsilon_i\) gives
\[
 \sup_{r\geq p_j}\left|\sum_{i=r}^{\infty}
 \alpha_i\Pi_i\varepsilon_i\right|\longrightarrow0.
\]
Finally, \eqref{eq:stoch_normal_summability} gives
\(\sum_{k=p_j}^{q_j-1}\alpha_kd_k\to0\).  Hence
\(\mathfrak r_j^{\rm S}\to0\).  Since
\(Q_{p_j}\leq Q_{p_j}^{1-\theta}\), equations
\eqref{eq:stoch_excursion_KL},
\eqref{eq:stoch_excursion_diameter}, and
\eqref{eq:stoch_endpoint_projection_error} give
\begin{equation}
 \operatorname{diam}(x_{\llbracket p_j,q_j\rrbracket})
 +C_{\rm KL}^{\rm S}
   \bigl(\varphi_\theta(f(x_{q_j})-f(x^*))
         -\varphi_\theta(f(x_{p_j})-f(x^*))\bigr)
 \leq C\mathfrak r_j^{\rm S}.
 \label{eq:stoch_excursion_final}
\end{equation}

Since \(p_j\geq K_j\to\infty\),
\eqref{eq:stoch_value_convergence},
\eqref{eq:stoch_excursion_final}, and
\(\mathfrak r_j^{\rm S}\to0\) give
\(\operatorname{diam}(x_{\llbracket p_j,q_j\rrbracket})\to0\).  This is
impossible because
\[
 |x_{q_j}-x_{p_j}|
 \geq \rho-2^{-j-1}\rho.
\]
This contradiction shows that the realization eventually remains in
\(\mathring B(x^*,\rho_m)\).  Since this holds for every \(m\geq1\) and
\(\rho_m\downarrow0\), we conclude that \(x_k\to x^*\).
\end{proof}

\tocgroup{Appendices}

\appendix

\section[Technical overlaps with and differences from our previous work]{Technical overlaps with and differences from \cite{lai2025diameter}}
\label{sec:technical_comparison}
The next two appendices separate the geometric diameter argument in
Appendix~\ref{sec:proof_diameter} from its verification for the inexact
subgradient method in Appendix~\ref{sec:proof_inexact}. This section compares
their proof mechanisms with the corresponding results in our previous work
\cite{lai2025diameter}.

\subsection{The abstract diameter mechanism}
Appendix~\ref{sec:proof_diameter} proves the abstract diameter theorem,
Theorem~\ref{thm:diameter}. Fact~\ref{fact:y_k^i} supplies the local
projected-length input corresponding to
\cite[Corollary~4.2(3)]{lai2025diameter}; the remaining results correspond to
the statements listed below from
\cite[Sections~4.2--4.4]{lai2025diameter}.
\begin{center}
\small
\begin{tabular}{@{}p{0.66\textwidth}l@{}}
\toprule
Result in Appendix~\ref{sec:proof_diameter}
& Corresponding result in \cite{lai2025diameter}\\
\midrule
Fact~\ref{fact:y_k^i} (shifted projection estimate)
& Corollary~4.2(3)\\
Fact~\ref{fact:IC_close} (crossing detection)
& Fact~4.3\\
Fact~\ref{fact:IC_away} (open-stratum persistence)
& Fact~4.4\\
Fact~\ref{fact:no_repeat} (no stratum return)
& Fact~4.6\\
Definition~\ref{def:RL} (relative length) & Definition~4.7\\
Identity~\eqref{eq:RL_additivity} (exact additivity and interval monotonicity)
& Remark~4.8\\
Lemma~\ref{lemma:RM_within_one} (one-block displacement)
& Lemma~4.9\\
Lemma~\ref{lemma:RM_multi} (multiblock displacement)
& Lemma~4.10\\
Lemma~\ref{lemma:diameter_by_RM} (relative-length diameter bound)
& Lemma~4.11\\
Lemma~\ref{lemma:endpoint_conversion} (endpoint conversion) & Lemma~4.13\\
Lemma~\ref{lemma:core_bridge} (one-block length bound)
& Lemma~4.14\\
Corollary~\ref{cor:lm->lm+1} (consecutive crossings)
& Corollary~4.15\\
Lemma~\ref{lemma:RM_or_half} (length-scale alternative)
& Lemma~4.16\\
Lemma~\ref{lemma:sum_alpha_p} (crossing-error bound) & Lemma~4.17\\
Lemma~\ref{lemma:RM_arbitrary_interval} (crossing length)
& Lemma~4.18\\
Lemma~\ref{lemma:RM_terminal_piece} (terminal length bound)
& Lemma~4.19\\
\bottomrule
\end{tabular}
\end{center}
The open-stratum bridge used in Lemma~\ref{lemma:core_bridge} follows here
from Fact~\ref{fact:IC_away} and corresponds to
\cite[Fact~4.5]{lai2025diameter}. Lemma~\ref{lemma:endpoint_conversion} uses
the signed-power estimate in \cite[Fact~4.12]{lai2025diameter}.

Compared with the subgradient diameter estimate in
\cite[Theorem~1.1]{lai2025diameter}, Theorem~\ref{thm:diameter} takes as an
input the blockwise projected-length inequality \eqref{eq:y_k^i} in
item~\ref{item:stratified_descent} of
Definition~\ref{def:stratified_descent}; Appendix~\ref{sec:proof_diameter}
uses its shifted form in Fact~\ref{fact:y_k^i}. Item~\ref{item:g} of the same
definition retains the error functions \(g_1,g_2\). Theorem~\ref{thm:diameter} also replaces
monotonicity of the step sizes by a uniform ratio bound. Accordingly, the
crossing radii in \cite[(4.26)]{lai2025diameter} become the enlarged radii
\(\widehat c_i\) in \eqref{eq:hat_c} and \eqref{eq:def_IC}. The same ratio
bound changes the alternative in \cite[Lemma~4.16]{lai2025diameter} to that in
Lemma~\ref{lemma:RM_or_half}: on a completed crossing block, either
\(\operatorname{RL}(x_{\llbracket l,s(l)\rrbracket})
\geq Q\alpha_l^{p_{G(l)}}\) or
\(\alpha_{s(l)+1}\leq\alpha_l/(2c_d)\).

Every fact, lemma, and corollary established below is followed by a proof or a proof sketch.
A proof sketch records only the hypotheses, notation, and constants needed for
the corresponding argument in \cite{lai2025diameter}; a full proof is given
when the error terms or the step-size ratio bound change the argument.
Elementary consequences of definitions are stated directly.

\subsection{Verification for the inexact subgradient method}
Appendix~\ref{sec:proof_inexact} verifies
items~\ref{item:small_step} and~\ref{item:stratified_descent} of
Definition~\ref{def:stratified_descent} for the inexact subgradient method and
then applies Theorem~\ref{thm:diameter} to prove
Theorem~\ref{thm:inexact_diameter}. Its principal components have the
following counterparts or inputs in \cite{lai2025diameter}.
\begin{center}
\small
\begin{tabular}{@{}p{0.58\textwidth}p{0.34\textwidth}@{}}
\toprule
Component of Appendix~\ref{sec:proof_inexact}
& Counterpart or input in \cite{lai2025diameter}\\
\midrule
Proof of Theorem~\ref{thm:inexact_diameter} (inexact diameter bound)
& Proof of Corollary~4.2\\
Lemma~\ref{lemma:inexact_projected_length} (inexact projected length)
& Lemma~4.1\\
Proposition~\ref{proposition:neighborhoods of cells}
(stratified neighborhoods)
& Proposition~3.5(i), (ii), and (iv)\\
The \(C^3\) stratification used in the proof
& Proposition~3.2\\
\bottomrule
\end{tabular}
\end{center}
Proposition~\ref{proposition:neighborhoods of cells} records the neighborhood
and quantitative estimates from
\cite[Proposition~3.5(i), (ii), and~(iv)]{lai2025diameter}, based on the
stratification supplied by \cite[Proposition~3.2]{lai2025diameter}. The
sets \(\mathcal N(i,\alpha)\) constructed in Step~3 of the proof of
Theorem~\ref{thm:inexact_diameter} correspond to
\cite[Corollary~4.2(2)]{lai2025diameter} and Steps~2--4 of its proof. The
uniform signed desingularizing power chosen in Step~1 of the proof of
Theorem~\ref{thm:inexact_diameter} corresponds to Step~1 in the proof of
\cite[Corollary~4.2]{lai2025diameter}. The exponent choice in Step~2 realizes
item~\ref{item:strata} of Definition~\ref{def:stratified_descent}; its
counterpart is \cite[Corollary~4.2(1)]{lai2025diameter} and Step~2 of that
proof, with the additional constraints involving \(\xi\) and \(\tau\).

Lemma~\ref{lemma:inexact_projected_length} contains the projected Taylor
calculation corresponding to \cite[Lemma~4.1, equations~(4.2)--(4.3)]{lai2025diameter}
and the signed-power telescoping calculation corresponding to
\cite[(4.5)--(4.7)]{lai2025diameter}. The additional ingredients are
\eqref{eq:separate_error}, which isolates the additive perturbation, and
\eqref{verdier control}, which applies
\cite[Proposition~3.5(iv)]{lai2025diameter} to the Goldstein-type enlargement
before the argument corresponding to
\cite[Lemma~4.1, equation~(4.4)]{lai2025diameter}. Finally, the tail functions
\eqref{eq:g1}--\eqref{eq:g2} extend the step-size tails in
\cite[(4.25)]{lai2025diameter} by the additive-error tails and supply the errors in
items~\ref{item:small_step} and~\ref{item:stratified_descent} of
Definition~\ref{def:stratified_descent}; Theorem~\ref{thm:diameter} then yields
Theorem~\ref{thm:inexact_diameter}.

\section{Proof of Theorem~\ref{thm:diameter}}\label{sec:proof_diameter}

\subsection{Setup and notation}
\label{sec:diameter_setup}

Fix a difference inclusion \eqref{eq:DI_with_noise} satisfying
Definition~\ref{def:stratified_descent}, together with all the data appearing
in that definition.  Let \(L_f>0\) be a Lipschitz constant for \(f\) on
\(\widetilde X\).  After relabeling, assume that
$M_1,\dots,M_T$ are non-open and
$M_{T+1},\dots,M_{\overline T}$ are open.

By Definition~\ref{def:stratified_descent}, the constants satisfy
\[
0<\beta_i<(1-\theta)\gamma_i<\tau(1-\theta)
\quad\text{and}\quad
M_j\subset\partial M_i \ \Longrightarrow\ \beta_i>\gamma_j .
\]
For \(i\in\llbracket1,T\rrbracket\), define
\(p_i:=\gamma_i(1-\theta)\) and
\(\underline\beta_i:=\min\bigl(\{1\}\cup
\{\beta_j:M_i\subset\partial M_j\}\bigr)\).
    We also set
    \[
    \underline\gamma:=\min_{i\in \llbracket 1,\overline{T}\rrbracket}\gamma_i,
    \qquad
    \underline\beta:=\min_{i\in \llbracket 1,\overline{T}\rrbracket}\beta_i,
    \qquad
    \underline p:=\min\bigl(\{1\}\cup
    \{p_i:i\in\llbracket1,T\rrbracket\}\bigr),
    \qquad
    \bar c:=\max_{i\in \llbracket 1,\overline{T}\rrbracket}c_i.
    \]
    If \(M_i\subset\partial M_j\), then \(\beta_j>\gamma_i>p_i\); if no
    such \(j\) exists, then \(\underline\beta_i=1>p_i\). Hence
    \begin{equation}\label{eq:relation_beta_p}
    \beta_i<p_i<\gamma_i < \tau \leq 1,
    \qquad
    M_j\subset \partial M_i \Longrightarrow \gamma_j<p_i,
    \qquad
    \underline\beta_i>p_i.      
    \end{equation}
For
$i\in \llbracket 1,\overline{T}\rrbracket$, set
\begin{equation}\label{eq:hat_c}
    \widehat c_i:=c_d^{\gamma_i}c_i.
\end{equation}

The conclusion of Theorem~\ref{thm:diameter} is immediate when \(K=0\).
Throughout this section, we therefore fix an integer \(K\geq1\) and a
realization $(x_k)$ of \eqref{eq:DI_with_noise}
such that
\begin{equation}\label{eq:running_x}
    x_{\llbracket 0,K\rrbracket}\subset X,\qquad \sup_{k\in \mathbb{N}}\alpha_k \leq \bar{\alpha},\qquad
\sup_{k\geq i}\frac{\alpha_k}{\alpha_i}\leq c_d
\quad\text{for every }i\in\mathbb N.
\end{equation}
Choose
$\bar{\alpha}\in (0,\min\{1,\hat{\alpha}\}]$ small enough that, for every
$\alpha\in(0,\bar{\alpha}]$ and $i\in \llbracket 1, \overline{T}\rrbracket$,
\begin{equation}\label{eq:standing_alpha_small_hat}
\iota \alpha^\tau \leq \widehat c_i\alpha^{\gamma_i},
\qquad
2\widehat c_i\alpha^{\gamma_i}<c_i\alpha^{\beta_i}.
\end{equation}
Any further smallness requirements on $\bar{\alpha}$ will be stated when
used.
For $i\in \llbracket 1,\overline{T}\rrbracket$ and $k\in \llbracket 0,K\rrbracket$, whenever $x_k\in \mathcal{N}(i,\alpha_k)$ we write \(y_k^i:=P_{M_i}(x_k)\) and \(z_k^i:=f(y_k^i)+g_{1,k}\),
where \(g_{\ell,k}:=g_\ell((\alpha_m)_{m\geq k},(e_m)_{m\geq k})\)
for \(\ell\in\{1,2\}\).
We also write
\begin{equation}\label{eq:z_k_def}
    z_k:=f(x_k)+g_{1,k},\qquad\forall k\in\llbracket0,K\rrbracket.
\end{equation}
Every later length bound uses Definition~\ref{def:stratified_descent} only
through the following consequence for shifted segments.

\begin{fact}[Shifted projection estimate]
    \label{fact:y_k^i}
Let $i\in \llbracket 1,\overline{T}\rrbracket$ and let $0\leq a<b\leq K$. If
\(x_k\in\mathcal{N}(i,\alpha_k)\) for all
\(k\in\llbracket a,b\rrbracket\),
then
\[
\sum_{k=a}^{b-1}|y_{k+1}^i-y_k^i|
\leq
\psi(z_a^i)-\psi(z_b^i)
+
\sum_{k=a}^{b-1} g_{2,k}
+
\iota c_d^{\tau} \alpha_a^\tau.
\]
\end{fact}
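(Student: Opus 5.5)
The plan is to obtain Fact~\ref{fact:y_k^i} directly from item~\ref{item:stratified_descent} of Definition~\ref{def:stratified_descent} by a time shift. The definition speaks only of blocks that start at index \(0\). Since it holds for \emph{every} realization of \eqref{eq:DI_with_noise} with step sizes bounded by \(\hat\alpha\), we may apply it to a shifted realization.

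\textbf{Shifted data.} Set \(x'_k:=x_{a+k}\), \(\alpha'_k:=\alpha_{a+k}\), and \(e'_k:=e_{a+k}\) for \(k\in\mathbb N\). Then \(x'_{k+1}\in x'_k+F(x'_k,\alpha'_k)+e'_k\), so \((x'_k)\) is a realization, and \(\sup_k\alpha'_k\leq\bar\alpha\leq\hat\alpha\) by \eqref{eq:running_x} and the choice of \(\bar\alpha\). The error functions depend only on the tails \(((\alpha_m)_{m\geq k},(e_m)_{m\geq k})\). Hence
\[
g_\ell\bigl((\alpha'_m)_{m\geq k},(e'_m)_{m\geq k}\bigr)=g_{\ell,a+k},
\]
so the shifted error quantities are exactly \(g'_{\ell,k}=g_{\ell,a+k}\). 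Membership conditions also transfer: \(x'_k\in\mathcal N(i,\alpha'_k)\) if and only if \(x_{a+k}\in\mathcal N(i,\alpha_{a+k})\). The projections transfer as well, \(P_{M_i}(x'_k)=y^i_{a+k}\), and these are single-valued by the same item.

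\textbf{Applying the block estimate.} Take \(K':=b-a\geq1\). The hypothesis of the fact says that \(x'_k\in\mathcal N(i,\alpha'_k)\) for all \(k\in\llbracket0,K'\rrbracket\), so \eqref{eq:y_k^i} applies to the shifted sequence. Reindexing the resulting inequality gives
\[
\sum_{k=a}^{b-1}|y^i_{k+1}-y^i_k|\leq\psi\bigl(f(y^i_a)+g_{1,a}\bigr)-\psi\bigl(f(y^i_b)+g_{1,b}\bigr)+\sum_{k=a}^{b-1}g_{2,k}+\iota\max_{k\in\llbracket a,b-1\rrbracket}\alpha_k^\tau .
\]
The first two terms are \(\psi(z^i_a)-\psi(z^i_b)\) by the definition of \(z^i_k\).

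\textbf{Remainder term.} It remains to bound the step-size remainder by \(\iota c_d^\tau\alpha_a^\tau\). The ratio condition in \eqref{eq:running_x} with index \(a\) gives \(\alpha_k\leq c_d\alpha_a\) for all \(k\geq a\). Since \(\tau>0\), this yields \(\alpha_k^\tau\leq c_d^\tau\alpha_a^\tau\), and the bound follows.

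There is no real obstacle here. The only point requiring care is checking that the definition is invariant under time shifts. That invariance holds because the requirement in item~2 of Definition~\ref{def:stratified_descent} quantifies over all realizations, and because \(g_1,g_2\) are functions of the tail sequences only. In particular, \(g_{\ell,k}\) does not depend on the absolute time index.
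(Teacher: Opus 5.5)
Your proposal is correct and matches the paper's proof, which also applies \eqref{eq:y_k^i} to the shifted segment $(x_{a+k})_{k\in\llbracket 0,b-a\rrbracket}$ and then bounds $\max_{k\in\llbracket a,b-1\rrbracket}\alpha_k\leq c_d\alpha_a$ using the ratio condition in \eqref{eq:running_x}. You also check explicitly that the tail-defined errors satisfy $g'_{\ell,k}=g_{\ell,a+k}$, which is the shift invariance the paper's one-line proof uses without stating it.
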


\begin{proof}
This is precisely \eqref{eq:y_k^i} of Definition~\ref{def:stratified_descent}, applied to the shifted segment
$(x_{a+k})_{k\in \llbracket 0,b-a\rrbracket}$, with $\max_{\llbracket a,b-1\rrbracket}\alpha_k\leq c_d \alpha_a$.
\end{proof}

\subsection{Relative length of realizations}
\label{sec:diameter_crossings}
\label{sec:diameter_relative_length}

To identify where the projected motion may change strata, mark every step
whose joining segment meets an enlarged neighborhood of a non-open stratum.
Thus, denoting the segment by $[x_k,x_{k+1}]$, define
\begin{equation}\label{eq:def_IC}
I_C:=
\left\{
k\in \llbracket 0,K-1\rrbracket:
[x_k,x_{k+1}]
\cap
\bigcup_{i=1}^{T}
B(M_i,\widehat c_i\alpha_k^{\gamma_i})
\neq \emptyset
\right\}.
\end{equation}

\begin{fact}[Crossing detection]\label{fact:IC_close}
For every \(k\in I_C\), the set
\[
\mathcal{A}_k
:=
\left\{
i\in \llbracket 1,T\rrbracket:
d(x_k,M_i)\leq 2 \widehat c_i\alpha_k^{\gamma_i}
\right\}
\]
is nonempty. Moreover, among the strata indexed by \(\mathcal{A}_k\), there is a unique one
of minimal dimension. Denoting its index by \(i_0\), we have $x_k\in \mathcal{N}(i_0,\alpha_k)$.
\end{fact}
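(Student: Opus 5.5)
Three things have to be shown: \(\mathcal A_k\) is nonempty, it has a unique element of minimal dimension, and \(x_k\in\mathcal N(i_0,\alpha_k)\).

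\emph{Nonemptiness.} Since \(k\in I_C\), some point \(w\in[x_k,x_{k+1}]\) satisfies \(d(w,M_i)\leq\widehat c_i\alpha_k^{\gamma_i}\) for some \(i\in\llbracket1,T\rrbracket\). Because \(x_k\in X\), item~\ref{item:small_step} of Definition~\ref{def:stratified_descent} gives \(|w-x_k|\leq|x_{k+1}-x_k|\leq\iota\alpha_k^\tau\). The first inequality in \eqref{eq:standing_alpha_small_hat} bounds this by \(\widehat c_i\alpha_k^{\gamma_i}\). Hence \(d(x_k,M_i)\leq2\widehat c_i\alpha_k^{\gamma_i}\), so \(i\in\mathcal A_k\).

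\emph{Choice of \(i_0\).} Take \(i_0\in\mathcal A_k\) of minimal dimension, and check membership in \(\mathcal N(i_0,\alpha_k)\) directly.
\begin{itemize}
\item \(x_k\in X\) by \eqref{eq:running_x}.
\item \(d(x_k,M_{i_0})\leq2\widehat c_{i_0}\alpha_k^{\gamma_{i_0}}<c_{i_0}\alpha_k^{\beta_{i_0}}\) by the second inequality in \eqref{eq:standing_alpha_small_hat}, so \(x_k\in B(M_{i_0},c_{i_0}\alpha_k^{\beta_{i_0}})\).
\item For the excluded sets, suppose \(M_\ell\subset\partial M_{i_0}\) and \(x_k\in B(M_\ell,c_\ell\alpha_k^{\gamma_\ell})\). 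Then \(M_\ell\) is non-open; an open stratum cannot lie in the frontier, since \(\dim M_\ell<\dim M_{i_0}\leq n\). Moreover \(c_\ell\leq\widehat c_\ell\) because \(c_d\geq1\), so \(\ell\in\mathcal A_k\). But the frontier condition gives \(\dim M_\ell<\dim M_{i_0}\), contradicting minimality.
\end{itemize}
This shows \(x_k\in\mathcal N(i_0,\alpha_k)\) for every minimal-dimensional \(i_0\in\mathcal A_k\).

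\emph{Uniqueness.} This is the main obstacle. Suppose \(i_0\neq i_1\) are both minimal-dimensional in \(\mathcal A_k\), so \(\dim M_{i_0}=\dim M_{i_1}\). By the previous step, \(x_k\in\mathcal N(i_0,\alpha_k)\cap\mathcal N(i_1,\alpha_k)\) with \(\alpha_k\leq\bar\alpha\leq\min\{1,\hat\alpha\}\). The separation clause of Definition~\ref{def:stratified_descent} then gives the contradiction, provided \(\overline{M_{i_0}}\cap M_{i_1}=M_{i_0}\cap\overline{M_{i_1}}=\emptyset\). This proviso does hold: if, say, \(\overline{M_{i_0}}\cap M_{i_1}\neq\emptyset\), the frontier condition forces \(\dim M_{i_1}<\dim M_{i_0}\), which is impossible at equal dimension. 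The symmetric case is identical.
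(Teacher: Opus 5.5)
Your proof is correct and follows the same route as the paper. Nonemptiness comes from the triangle inequality combined with the step bound and the first inequality in \eqref{eq:standing_alpha_small_hat}. Membership in \(\mathcal N(i_0,\alpha_k)\) comes from minimality of dimension, which excludes the frontier neighborhoods, and uniqueness from the separation clause, since equal dimensions make two minimal candidates unrelated. You are slightly more explicit than the paper in checking that frontier strata are non-open, which is what justifies passing from \(h\notin\mathcal A_k\) to the lower bound on \(d(x_k,M_h)\).
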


\begin{proof}
The proof of \cite[Fact~4.3]{lai2025diameter} applies with its step bound
replaced by \(\iota\alpha_k^\tau\) and its crossing radius replaced by
\(\widehat c_i\alpha_k^{\gamma_i}\). Indeed, \eqref{eq:def_IC} gives a
non-open stratum \(M_i\) and \(w\in[x_k,x_{k+1}]\) such that
\[
d(x_k,M_i)\leq |x_k-w|+d(w,M_i)
\leq\iota\alpha_k^\tau+\widehat c_i\alpha_k^{\gamma_i}
\leq2\widehat c_i\alpha_k^{\gamma_i}.
\]
Thus \(\mathcal A_k\neq\emptyset\). If \(i_0\in\mathcal A_k\) has minimal
dimension and \(M_h\subset\partial M_{i_0}\), strict dimension drop gives
\(h\notin\mathcal A_k\), and hence
\(d(x_k,M_h)>2\widehat c_h\alpha_k^{\gamma_h}
\geq c_h\alpha_k^{\gamma_h}\). Together with
\eqref{eq:standing_alpha_small_hat}, this proves
\(x_k\in\mathcal N(i_0,\alpha_k)\). Applying the same argument to two
minimal-dimensional candidates would place \(x_k\) in both neighborhoods.
Their equal dimensions and strict dimension drop make the strata unrelated,
contradicting the separation condition. Thus \(i_0\) is unique.
\end{proof}

\begin{fact}[Open-stratum persistence]\label{fact:IC_away}
Let $k_1,k_2\in \mathbb{N}$ satisfy $0\leq k_1<k_2\leq K$ and $\llbracket k_1,k_2-1\rrbracket\cap I_C=\emptyset$. Then there exists $i\in \llbracket T+1,\overline{T}\rrbracket$ such that
\(x_k\in M_i\cap\mathcal{N}(i,\alpha_k)\) for all
\(k\in\llbracket k_1,k_2\rrbracket\).
\end{fact}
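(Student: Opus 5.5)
The plan is to show that a block of iterates whose joining segments avoid every enlarged non-open neighborhood stays inside a single open stratum, and then to check membership in \(\mathcal N(i,\alpha_k)\). First, for each \(k\in\llbracket k_1,k_2-1\rrbracket\), since \(k\notin I_C\), the segment \([x_k,x_{k+1}]\) misses \(\bigcup_{i\leq T}B(M_i,\widehat c_i\alpha_k^{\gamma_i})\). In particular it misses every non-open stratum \(M_1,\dots,M_T\). The strata partition \(\widetilde X\). Also \(x_k\in X\subset\widetilde X\) for \(k\leq K\), and we need the whole segment to lie in \(\widetilde X\); if \(\widetilde X\) is not convex, we will only use the endpoints. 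Hence \(x_k\) and \(x_{k+1}\) lie in open strata.

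The next step is to show that the open stratum is the same along the segment. The open strata are pairwise disjoint open sets. If the segment lies in \(\widetilde X\), it is connected and avoids the union of the non-open strata, so it lies in the union of the open strata and hence in a single one. This gives a common index \(i\in\llbracket T+1,\overline T\rrbracket\) for \(x_k\) and \(x_{k+1}\). Chaining over \(k\) gives a single \(i\) for all of \(x_{\llbracket k_1,k_2\rrbracket}\). If \(\widetilde X\) is, say, an open neighborhood of \(X\) and the segments are short (\(|x_{k+1}-x_k|\leq\iota\alpha_k^\tau\)), then the segment stays in \(\widetilde X\). Otherwise I would argue through the frontier condition: the closures of the open strata meet only along lower-dimensional strata. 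Switching open strata would require passing through the frontier \(\partial M_i\), which is a union of non-open strata, and the segment avoids those.

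Finally, I would verify \(x_k\in\mathcal N(i,\alpha_k)\) for \(k\in\llbracket k_1,k_2\rrbracket\). We have \(x_k\in M_i\subset B(M_i,c_i\alpha_k^{\beta_i})\). For every \(M_\ell\subset\partial M_i\), the stratum \(M_\ell\) is non-open. For \(k<k_2\), we have \(d(x_k,M_\ell)>\widehat c_\ell\alpha_k^{\gamma_\ell}\geq c_\ell\alpha_k^{\gamma_\ell}\), since \(c_d\geq1\). For the endpoint \(k=k_2\), I would use the segment \([x_{k_2-1},x_{k_2}]\), which avoids \(B(M_\ell,\widehat c_\ell\alpha_{k_2-1}^{\gamma_\ell})\). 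Then \(\alpha_{k_2}\leq c_d\alpha_{k_2-1}\) gives \(c_\ell\alpha_{k_2}^{\gamma_\ell}\leq c_d^{\gamma_\ell}c_\ell\alpha_{k_2-1}^{\gamma_\ell}=\widehat c_\ell\alpha_{k_2-1}^{\gamma_\ell}\). This is exactly why the enlarged radii \(\widehat c_i\) were introduced. Hence \(x_{k_2}\notin B(M_\ell,c_\ell\alpha_{k_2}^{\gamma_\ell})\).

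I expect the main obstacle to be the connectivity step: ensuring that the segment lies in \(\widetilde X\), or replacing that with the frontier argument. The endpoint check is routine given the ratio bound.
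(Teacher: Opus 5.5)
Your verification that each iterate lies in \(\mathcal N(i,\alpha_k)\) matches the paper, including the use of \(\widehat c_\ell=c_d^{\gamma_\ell}c_\ell\) at the endpoint \(k=k_2\). The gap is the step showing that \(x_k\) and \(x_{k+1}\) lie in the same open stratum.

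In Definition~\ref{def:stratified_descent}, \(\widetilde X\) is an arbitrary superset of \(X\). It need not be open or closed, and it need not contain the segments \([x_k,x_{k+1}]\). Neither of your two routes is therefore available.
\begin{itemize}
\item The connectivity route fails because the segment may leave \(\widetilde X\).
\item The frontier route fails because \(\partial M_i\) is covered by (non-open) strata only when \(\partial M_i\subset\widetilde X\), for example when \(\widetilde X\) is closed.
\end{itemize}
For a concrete case, take \(\widetilde X=X=(0,1)\cup(1,2)\subset\mathbb R\), stratified into the two open intervals. Then \(T=0\) and \(I_C=\emptyset\), so every step trivially avoids all non-open strata. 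Yet a short step from \(0.99\) to \(1.01\) switches strata, crossing \(\partial M_1\) at the point \(1\notin\widetilde X\). This configuration is excluded precisely by the separation condition (item~1 of Definition~\ref{def:stratified_descent}), since \(\mathcal N(1,\alpha)\) and \(\mathcal N(2,\alpha)\) overlap near \(1\). Your argument never invokes that condition, so it cannot close this step in the stated generality.

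The paper uses separation directly. Suppose \(x_k\in M_i\) and \(x_{k+1}\in M_j\), with both strata open. The small-step bound and \eqref{eq:standing_alpha_small_hat} give
\[
d(x_k,M_j)\leq|x_{k+1}-x_k|\leq\iota\alpha_k^\tau\leq\widehat c_j\alpha_k^{\gamma_j}<c_j\alpha_k^{\beta_j}.
\]
The segment also avoids \(B(M_h,c_h\alpha_k^{\gamma_h})\) for every \(M_h\subset\partial M_j\), all of which are non-open. Hence \(x_k\in\mathcal N(i,\alpha_k)\cap\mathcal N(j,\alpha_k)\). Two distinct open strata have equal dimension, so the frontier condition forces \(\overline{M_i}\cap M_j=M_i\cap\overline{M_j}=\emptyset\), and separation then yields \(i=j\). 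In the concrete application of Appendix~\ref{sec:proof_inexact}, \(\widetilde X\) is compact and contains \(B(X,2)\), so your connectivity argument would go through there. The Fact, however, is stated for the abstract framework.
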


\begin{proof}
Fix \(k\in\llbracket k_1,k_2-1\rrbracket\), and let \(M_i\) and \(M_j\)
be the strata containing \(x_k\) and \(x_{k+1}\), respectively. Since
\(k\notin I_C\), the segment \([x_k,x_{k+1}]\) is disjoint from
\(B(M_h,\widehat c_h\alpha_k^{\gamma_h})\) for every
\(h\in\llbracket1,T\rrbracket\). In particular, \(M_i\) and \(M_j\) are
open strata. Every stratum in the frontier of an open stratum is non-open,
so the same disjointness and \(\widehat c_h\geq c_h\) give
\(x_k\in\mathcal N(i,\alpha_k)\). Since
\(\alpha_{k+1}\leq c_d\alpha_k\) and
\(c_h\alpha_{k+1}^{\gamma_h}\leq
 c_h c_d^{\gamma_h}\alpha_k^{\gamma_h}
=\widehat c_h\alpha_k^{\gamma_h}\), they also give
\(x_{k+1}\in\mathcal N(j,\alpha_{k+1})\).

It remains to show that \(i=j\). Since \(x_{k+1}\in M_j\), item
\ref{item:small_step} of Definition~\ref{def:stratified_descent} and
\eqref{eq:standing_alpha_small_hat} yield
\[
d(x_k,M_j)\leq|x_{k+1}-x_k|
\leq\iota\alpha_k^\tau
\leq\widehat c_j\alpha_k^{\gamma_j}
<c_j\alpha_k^{\beta_j}.
\]
The segment also avoids
\(B(M_h,c_h\alpha_k^{\gamma_h})\) for every
\(M_h\subset\partial M_j\). Hence \(x_k\in\mathcal N(j,\alpha_k)\). If
\(i\neq j\), the two open strata are unrelated by the frontier condition,
and the separation condition in Definition~\ref{def:stratified_descent}
forbids \(x_k\in\mathcal N(i,\alpha_k)\cap\mathcal N(j,\alpha_k)\).
Therefore \(i=j\). Iterating this one-step conclusion from \(k_1\) to
\(k_2-1\) proves the claim.
\end{proof}

Fact~\ref{fact:IC_close} allows us to associate to each index in $I_C$ a unique nearby non-open
stratum of minimal dimension. We therefore define the assignment $G\colon I_C\to \llbracket 1, T\rrbracket$
by
\begin{equation}\label{eq:def_G}
G(k):= \operatorname*{argmin}\Big\{\dim(M_i): d(x_k,M_i)\leq 2\widehat c_i\alpha_k^{\gamma_i},\
i\in \llbracket 1, T\rrbracket\Big\}.
\end{equation}
By Fact \ref{fact:IC_close}, $x_k\in \mathcal{N}(G(k),\alpha_k)$ for all $k\in I_C$.

Assume for the moment that $I_C\neq \emptyset$. We recursively define
\begin{subequations}\label{eq:def_lsq}
\begin{align}
l_0&:=\min I_C, \label{eq:def_l0}\\
s(l_m)&:=\max\Big\{k\in \llbracket l_m,K\rrbracket:
x_j\in \mathcal{N}(G(l_m),\alpha_j), \forall j\in \llbracket l_m,k\rrbracket\Big\}, \label{eq:def_s}\\
q(l_m)&:=\max\Big\{k\in \llbracket l_m,s(l_m)\rrbracket:
d\!\left(x_k,M_{G(l_m)}\right)\leq 2\widehat c_{G(l_m)}\alpha_k^{\gamma_{G(l_m)}}\Big\}, \label{eq:def_q}\\
l_{m+1}&:=\inf\Big\{k\in (q(l_m),\infty): k\in I_C\Big\},
\label{eq:def_lnext}
\end{align}
\end{subequations}
Starting from \(l_0\), for each constructed \(l_m\), first define \(s(l_m)\)
and \(q(l_m)\). If the set in \eqref{eq:def_lnext} is nonempty, define
\(l_{m+1}\) and continue; otherwise stop. Let \(\overline m\) be the index of
the last constructed \(l_m\), set
\(\mathcal{L}:=\{l_0,\dots,l_{\overline m}\}\), and, only as a terminal
endpoint, set \(l_{\overline m+1}:=K\). This terminal endpoint does not belong
to \(\mathcal L\). Thus \(s\) and \(q\) are defined on all of \(\mathcal L\).

The block beginning at $l_m$ remains in
$\mathcal N(G(l_m),\alpha_k)$ through $s(l_m)$; within that block, $q(l_m)$
is the last index satisfying the stronger distance bound in
\eqref{eq:def_q}.  The next block begins at the first marked index after
$q(l_m)$.  The following fact rules out selecting the same non-open stratum
again before the current block ends.

\begin{fact}[No stratum return]\label{fact:no_repeat}
Let $l_m\in \mathcal{L}$. Then \(G(l_m)\neq G(k)\) for all
\(k\in(q(l_m),s(l_m)]\cap\mathcal{L}\).
Consequently, if $l,\tilde l\in \mathcal{L}$ satisfy $\tilde l>l$ and $G(\tilde l)=G(l)$, then $\tilde l>s(l)$.
\end{fact}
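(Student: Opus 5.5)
The plan is to argue by contradiction on the definitions \eqref{eq:def_s}--\eqref{eq:def_lnext}. Fix \(l_m\in\mathcal L\) and suppose \(k\in(q(l_m),s(l_m)]\cap\mathcal L\) satisfies \(G(k)=G(l_m)=:i\). We aim to show that this forces \(k\leq q(l_m)\).

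Since \(k\in\mathcal L\subset I_C\), Fact~\ref{fact:IC_close} and the definition \eqref{eq:def_G} of \(G\) give
\[
d(x_k,M_i)\leq 2\widehat c_i\alpha_k^{\gamma_i}.
\]
Moreover \(k\leq s(l_m)\), so \(k\in\llbracket l_m,s(l_m)\rrbracket\). Hence \(k\) belongs to the set whose maximum defines \(q(l_m)\) in \eqref{eq:def_q}. This gives \(k\leq q(l_m)\), contradicting \(k>q(l_m)\). So the first claim is a direct unwinding of the definitions; the only care needed is that the distance condition in \eqref{eq:def_G} matches exactly the one in \eqref{eq:def_q} for the same stratum index. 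This holds because \(G(k)\) is selected from the set \(\mathcal A_k\) of Fact~\ref{fact:IC_close}, which uses the identical radius \(2\widehat c_i\alpha_k^{\gamma_i}\).

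For the consequence, let \(l<\tilde l\) in \(\mathcal L\) with \(G(\tilde l)=G(l)\). We check that the construction \eqref{eq:def_lnext} produces an increasing sequence with \(l_{m+1}>q(l_m)\). Thus if \(l=l_m\) and \(\tilde l=l_{m'}\) with \(m'>m\), then \(\tilde l\geq l_{m+1}>q(l)\). If \(\tilde l\leq s(l)\), then \(\tilde l\in(q(l),s(l)]\cap\mathcal L\) with \(G(\tilde l)=G(l)\), which contradicts the first part. Therefore \(\tilde l>s(l)\).

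The main point to verify, though routine, is the monotonicity \(q(l_m)\geq l_m\) and \(l_{m+1}>q(l_m)\). The first inequality holds because \(l_m\) itself lies in the set defining \(q(l_m)\): we have \(d(x_{l_m},M_{G(l_m)})\leq2\widehat c\,\alpha_{l_m}^{\gamma}\) by \eqref{eq:def_G}, and \(l_m\leq s(l_m)\) since \(x_{l_m}\in\mathcal N(G(l_m),\alpha_{l_m})\) by Fact~\ref{fact:IC_close}. The second inequality is built into the infimum over \((q(l_m),\infty)\). No dynamics or error functions enter the argument.
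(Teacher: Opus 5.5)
Your proposal is correct and follows the paper's argument. Both prove the first claim by noting that such a \(k\) would be admissible in the maximum defining \(q(l_m)\), and both derive the consequence from the ordering \(l_{m+1}>q(l_m)\geq l_m\). Your explicit checks that \(l_m\) lies in the set defining \(q(l_m)\) and that \(s(l_m)\geq l_m\) are details the paper leaves implicit.
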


\begin{proof}
Write \(l:=l_m\).
If \(k\in(q(l),s(l)]\cap\mathcal L\) and \(G(k)=G(l)\), then \(k\) is
admissible in the maximum defining \(q(l)\), a contradiction. Combining this
with the fact that every later element of \(\mathcal L\) exceeds \(q(l)\)
proves the second claim. This is the argument of
\cite[Fact~4.6]{lai2025diameter}.
\end{proof}
The crossing decomposition determines which increments are counted.  Before
$q(l_m)$ we use increments projected onto $M_{G(l_m)}$; after $q(l_m)$ we use
the original increments.  The following definition concatenates these
contributions over arbitrary intervals.
\begin{definition}[Relative length]\label{def:RL}
Let \(0\leq k_1<k_2\leq K\). We define the relative length of
\(x_{\llbracket k_1,k_2\rrbracket}\), denoted by
\(\operatorname{RL}(x_{\llbracket k_1,k_2\rrbracket})\), in the following cases.
We also set
\(\operatorname{RL}(x_{\llbracket k,k\rrbracket})=0\) for every
\(k\in\llbracket0,K\rrbracket\). If \(I_C=\emptyset\), set
\[
\operatorname{RL}(x_{\llbracket k_1,k_2\rrbracket})
:=\sum_{k=k_1}^{k_2-1}|x_{k+1}-x_k|.
\]
In the remainder of the definition, assume that \(I_C\neq\emptyset\).

\runinheading{Case~1}
If $k_1<k_2 \leq l_0$, then
\[\operatorname{RL}(x_{\llbracket k_1,k_2\rrbracket})
:=
\sum_{k=k_1}^{k_2-1}|x_{k+1}-x_k|.\]

\runinheading{Case~2}
Assume that there exists \(m\in \llbracket 0,\overline m\rrbracket\) such that
\(l_m\leq k_1<k_2\leq l_{m+1}\),
and write \(i:=G(l_m)\) and \(q:=q(l_m)\).
\begin{enumerate}
    \item If \(q\leq k_1<k_2\), set
\[
\operatorname{RL}(x_{\llbracket k_1,k_2\rrbracket})
:=
\sum_{k=k_1}^{k_2-1}|x_{k+1}-x_k|.
\]
\item If \(l_m<k_1<q\), set
\[
\operatorname{RL}(x_{\llbracket k_1,k_2\rrbracket})
:=
\begin{cases}
\displaystyle
\sum_{k=k_1}^{k_2-1}|y_{k+1}^{i}-y_k^{i}|,
& \text{if } k_2<q,\\[1.2em]
\displaystyle
\sum_{k=k_1}^{q-1}|y_{k+1}^{i}-y_k^{i}|
+|x_q-y_q^{i}|
+\sum_{k=q}^{k_2-1}|x_{k+1}-x_k|,
& \text{if } k_1<q\leq k_2.
\end{cases}
\]
\item If \(k_1=l_m<q\), set
\[
\operatorname{RL}(x_{\llbracket l_m,k_2\rrbracket})
:=
\begin{cases}
\displaystyle
|x_{l_m}-y_{l_m}^{i}|
+\sum_{k=l_m}^{k_2-1}|y_{k+1}^{i}-y_k^{i}|,
&\text{if }k_2<q,\\[1.2em]
\displaystyle
|x_{l_m}-y_{l_m}^{i}|
+\sum_{k=l_m}^{q-1}|y_{k+1}^{i}-y_k^{i}|
+|x_q-y_q^i|
+\sum_{k=q}^{k_2-1}|x_{k+1}-x_k|,
&\text{if }q\leq k_2.
\end{cases}
\]
\end{enumerate}

\runinheading{Case~3}
Assume that
\(
(k_1,k_2)\cap \mathcal{L}\neq \emptyset.
\)
Let \(m_1:=\min\{m\in\llbracket0,\overline m\rrbracket:l_m>k_1\}\)
and \(m_2:=\max\{m\in\llbracket0,\overline m\rrbracket:l_m<k_2\}\).
Then
\begin{equation*}
\operatorname{RL}(x_{\llbracket k_1,k_2\rrbracket})
:=
\operatorname{RL}(x_{\llbracket k_1,l_{m_1}\rrbracket})
+\sum_{m=m_1}^{m_2-1}\operatorname{RL}(x_{\llbracket l_m,l_{m+1}\rrbracket})
+\operatorname{RL}(x_{\llbracket l_{m_2},k_2\rrbracket}).
\end{equation*}
\end{definition}

By inspection of the three cases in Definition~\ref{def:RL}, the terminal
projection bridge at \(q(l_m)\) belongs to the interval ending there, while
the initial projection bridge at \(l_m\) belongs to the interval beginning
there. Hence, for \(0\leq a\leq b\leq c\leq K\),
\begin{equation}\label{eq:RL_additivity}
\operatorname{RL}(x_{\llbracket a,c\rrbracket})
=\operatorname{RL}(x_{\llbracket a,b\rrbracket})
+\operatorname{RL}(x_{\llbracket b,c\rrbracket}).
\end{equation}
All remaining terms split as ordinary sums. Since all summands are
nonnegative, relative length is also nondecreasing under interval inclusion.
This is the observation in
\cite[Remark~4.8]{lai2025diameter}.

The next three lemmas show that relative length controls displacement
within one block, across several blocks, and hence on an arbitrary interval.
\begin{lemma}[One-block displacement]\label{lemma:RM_within_one}
For any \(m\in\llbracket0,\overline m\rrbracket\), it holds that
\begin{enumerate}
\item If \(l_m\leq k_2<q(l_m)\), then $|x_{l_m} - x_{k_2}| \leq \operatorname{RL}(x_{\llbracket l_m,k_2\rrbracket})+c_{G(l_m)}\alpha_{k_2}^{\beta_{G(l_m)}}$;
    \item If \(q(l_m)\leq k_2\leq l_{m+1}\), then $|x_{l_m} - x_{k_2}| \leq \operatorname{RL}(x_{\llbracket l_m,k_2\rrbracket})$.
\end{enumerate} 
\end{lemma}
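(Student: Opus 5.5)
The plan is a triangle inequality through the projections onto \(M_i\), \(i:=G(l_m)\), followed by a direct comparison with the cases of Definition~\ref{def:RL}. Write \(q:=q(l_m)\) and \(s:=s(l_m)\). By Fact~\ref{fact:IC_close}, \(x_{l_m}\in\mathcal N(i,\alpha_{l_m})\). By \eqref{eq:def_s}, \(x_k\in\mathcal N(i,\alpha_k)\) for every \(k\in\llbracket l_m,s\rrbracket\), so the projections \(y_k^i=P_{M_i}(x_k)\) are single-valued there by item~\ref{item:stratified_descent} of Definition~\ref{def:stratified_descent}. Since \(q\leq s\) by \eqref{eq:def_q}, all projections up to index \(q\) are defined. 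Membership in \(\mathcal N(i,\alpha_k)\) also gives \(|x_k-y_k^i|=d(x_k,M_i)\leq c_i\alpha_k^{\beta_i}\).

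For item~1, let \(l_m\leq k_2<q\). If \(k_2=l_m\), the claim is trivial. Otherwise, I will write
\[
|x_{l_m}-x_{k_2}|\leq|x_{l_m}-y_{l_m}^i|+\sum_{k=l_m}^{k_2-1}|y_{k+1}^i-y_k^i|+|y_{k_2}^i-x_{k_2}|.
\]
The first two terms are exactly \(\operatorname{RL}(x_{\llbracket l_m,k_2\rrbracket})\) in Case~2.3 with \(k_2<q\). This case applies because \(k_2<q\leq l_{m+1}\), using \(q<l_{m+1}\) from \eqref{eq:def_lnext}. The last term is at most \(c_i\alpha_{k_2}^{\beta_i}\), since \(k_2\leq s\).

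For item~2, let \(q\leq k_2\leq l_{m+1}\). If \(q=l_m\), Case~2.1 applies with \(k_1=l_m=q\), so RL is the plain sum of increments and the triangle inequality suffices. If \(l_m<q\), I will split the displacement as
\[
|x_{l_m}-x_{k_2}|\leq|x_{l_m}-y_{l_m}^i|+\sum_{k=l_m}^{q-1}|y^i_{k+1}-y^i_k|+|y_q^i-x_q|+\sum_{k=q}^{k_2-1}|x_{k+1}-x_k|.
\]
This is precisely the second formula of Case~2.3, so no residual term is left over. One point to check is that the terminal endpoint \(l_{\overline m+1}=K\) is allowed, and that \(k_2\leq l_{m+1}\) keeps us in Case~2, so Case~3 is never triggered.

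The only potential obstacle is bookkeeping. I need to confirm that \(q(l_m)<l_{m+1}\) always holds, that \(q(l_m)\leq K\), and that the projections are defined on \(\llbracket l_m,q\rrbracket\). All three follow directly from \eqref{eq:def_lsq} and Fact~\ref{fact:IC_close}, so no estimate from Definition~\ref{def:stratified_descent} beyond the neighborhood bound is needed.
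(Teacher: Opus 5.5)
Your proposal is correct and follows the paper's argument: expand the broken path of Definition~\ref{def:RL} with the triangle inequality. In item~1 the only leftover is the terminal projection error, bounded by $c_{G(l_m)}\alpha_{k_2}^{\beta_{G(l_m)}}$ because $k_2\leq s(l_m)$; one small correction is that $q(l_m)<l_{m+1}$ can fail for the terminal block $m=\overline m$ (where $l_{\overline m+1}:=K$ and possibly $q(l_{\overline m})=K$), but you only need $q(l_m)\leq l_{m+1}$, which always holds.
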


\begin{proof}
Expand the broken path in Definition~\ref{def:RL} and apply the triangle
inequality, as in \cite[Lemma~4.9]{lai2025diameter}. Before \(q(l_m)\), only
the terminal projection error remains, and it is at most
\(c_{G(l_m)}\alpha_{k_2}^{\beta_{G(l_m)}}\) because \(k_2\leq s(l_m)\). At and after \(q(l_m)\), relative
length contains the full path, including the terminal projection bridge when
\(q(l_m)>l_m\); the singleton case is immediate.
\end{proof}

\begin{lemma}[Multiblock displacement]\label{lemma:RM_multi}
Let \(l\in\mathcal L\) and
\(k_2\in\llbracket q(l),s(l)\rrbracket\) satisfy \(k_2>l\), and set
\(m_2:=\max\{m\in\llbracket0,\overline m\rrbracket:l_m<k_2\}\).
Then either $M_{G(l_{m_2})}\subset \partial M_{G(l)}$ or
\[
|x_l-x_{k_2}|
\leq
\operatorname{RL}(x_{\llbracket l,k_2\rrbracket})
+\bar c c_d\alpha_l^{\underline\beta_{G(l)}}.
\]
\end{lemma}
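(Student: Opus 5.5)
We split according to whether the open interval \((l,k_2)\) contains an element of \(\mathcal L\). Write \(l=l_m\) and \(i:=G(l)\).

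\emph{No intermediate block.} If \((l,k_2)\cap\mathcal L=\emptyset\), then \(m_2=m\) and \(l\leq k_2\leq l_{m+1}\). Since \(k_2\geq q(l)\), Lemma~\ref{lemma:RM_within_one}(2) gives \(|x_l-x_{k_2}|\leq\operatorname{RL}(x_{\llbracket l,k_2\rrbracket})\), which is stronger than the claimed bound.

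\emph{Intermediate blocks.} Otherwise \(m_2>m\), so \(l_{m_2}\in(q(l),s(l)]\cap\mathcal L\) (because \(l_{m_2}<k_2\leq s(l)\)). Assume \(M_{G(l_{m_2})}\not\subset\partial M_i\); we must then prove the displacement bound. Set \(j:=G(l_{m_2})\). Fact~\ref{fact:no_repeat} gives \(j\neq i\).

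Two memberships hold at \(l_{m_2}\). Since \(l_{m_2}\leq s(l)\), we have \(x_{l_{m_2}}\in\mathcal N(i,\alpha_{l_{m_2}})\). By the definition of \(G\) and Fact~\ref{fact:IC_close}, we also have \(x_{l_{m_2}}\in\mathcal N(j,\alpha_{l_{m_2}})\) with \(d(x_{l_{m_2}},M_j)\leq2\widehat c_j\alpha_{l_{m_2}}^{\gamma_j}\).

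Since \(M_j\not\subset\partial M_i\), the separation condition in Definition~\ref{def:stratified_descent} rules out \(M_i,M_j\) being unrelated. The frontier condition therefore forces \(M_i\subset\partial M_j\). Then \(\beta_j\geq\underline\beta_i\), and the dimension inequality gives \(\dim M_i<\dim M_j\).

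\emph{Where this leads.} The target bound then follows by a triangle inequality through \(l_{m_2}\):
\[
|x_l-x_{k_2}|\leq|x_l-x_{l_{m_2}}|+|x_{l_{m_2}}-x_{k_2}|.
\]
\begin{itemize}
\item \emph{Second term.} Apply Lemma~\ref{lemma:RM_within_one} to block \(m_2\), where \(k_2\leq l_{m_2+1}\). This gives \(\operatorname{RL}(x_{\llbracket l_{m_2},k_2\rrbracket})+c_j\alpha_{k_2}^{\beta_j}\), and the error term is at most \(\bar c\,(c_d\alpha_l)^{\beta_j}\leq\bar c c_d\alpha_l^{\underline\beta_i}\), using \(\alpha_{k_2}\leq c_d\alpha_l\leq 1\).
\item \emph{First term.} Bound it by \(\operatorname{RL}(x_{\llbracket l,l_{m_2}\rrbracket})\) via additivity \eqref{eq:RL_additivity}, chaining the intermediate blocks with Lemma~\ref{lemma:RM_within_one}(2) applied to each full block \(\llbracket l_{m'},l_{m'+1}\rrbracket\).
\end{itemize}

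\emph{Main obstacle: the chaining step.} Lemma~\ref{lemma:RM_within_one}(2) applies to a block \(m'\) only if \(q(l_{m'})\leq l_{m'+1}\). That is automatic from \eqref{eq:def_lnext}, since \(l_{m'+1}>q(l_{m'})\).

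A single \(\bar c c_d\alpha_l^{\underline\beta_i}\) error must survive, rather than one error per block. This works because only the last block ends at an interior point \(k_2\) that may precede its \(q\); every intermediate block ends at the next \(l\), hence after its \(q\), and carries no projection error.

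I should double-check that the case \(k_2<q(l_{m_2})\) is the only one producing an error, and that this error involves \(\beta_{G(l_{m_2})}\) rather than \(\beta_i\). That is exactly why the alternative \(M_{G(l_{m_2})}\subset\partial M_i\) must be excluded: it is the case in which \(\beta_j\) could fall below \(\underline\beta_i\).
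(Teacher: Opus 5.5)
Your proposal is correct and follows the paper's proof sketch essentially step for step. You chain the complete blocks with Lemma~\ref{lemma:RM_within_one}(ii), treat the last block with item (i) or (ii), and use Fact~\ref{fact:no_repeat} together with the separation and frontier conditions at \(x_{l_{m_2}}\) to force \(M_{G(l)}\subset\partial M_{G(l_{m_2})}\), hence \(\beta_{G(l_{m_2})}\geq\underline\beta_{G(l)}\).

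One small correction: \(c_d\alpha_l\leq1\) is not guaranteed by the hypotheses, but you do not need it. The bound \((c_d\alpha_l)^{\beta_j}\leq c_d\alpha_l^{\beta_j}\leq c_d\alpha_l^{\underline\beta_i}\) already follows from \(c_d\geq1\), \(\beta_j<1\), \(\alpha_l\leq1\), and \(\beta_j\geq\underline\beta_i\), which is how the paper argues.
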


\begin{proof}[Proof sketch]
Write \(l=l_{m_1}\). Since \(k_2\geq q(l)\),
Lemma~\ref{lemma:RM_within_one}(ii) proves the claim without a remainder when
\(k_2\leq l_{m_1+1}\). Suppose that \(k_2>l_{m_1+1}\), and set
\(h:=G(l_{m_2})\). For every \(m_1\leq m<m_2\), one has
\(q(l_m)<l_{m+1}\), so Lemma~\ref{lemma:RM_within_one}(ii) applies to the
complete block. Applying item~(i) or item~(ii) of the same lemma to the last
block, according as \(k_2<q(l_{m_2})\) or \(k_2\geq q(l_{m_2})\), and using
\eqref{eq:RL_additivity} gives
\[
\begin{aligned}
|x_l-x_{k_2}|
&\leq
\sum_{m=m_1}^{m_2-1}|x_{l_m}-x_{l_{m+1}}|
   +|x_{l_{m_2}}-x_{k_2}|\leq
\operatorname{RL}(x_{\llbracket l,k_2\rrbracket})
   +c_h\alpha_{k_2}^{\beta_h}.
\end{aligned}
\]
Here \(l_{m_2}\in(q(l),s(l)]\cap\mathcal L\). Hence
Fact~\ref{fact:no_repeat} gives \(h\neq G(l)\), while the definition of
\(s(l)\) and Fact~\ref{fact:IC_close} place \(x_{l_{m_2}}\) in both
\(\mathcal N(G(l),\alpha_{l_{m_2}})\) and
\(\mathcal N(h,\alpha_{l_{m_2}})\). The separation and frontier conditions
therefore imply that either \(M_h\subset\partial M_{G(l)}\), which is the
first alternative in the statement, or \(M_{G(l)}\subset\partial M_h\). In
the latter case, \(\beta_h\geq\underline\beta_{G(l)}\), and
\[
c_h\alpha_{k_2}^{\beta_h}
\leq\bar c(c_d\alpha_l)^{\beta_h}
\leq\bar c c_d\alpha_l^{\underline\beta_{G(l)}},
\]
where we used \eqref{eq:running_x}, \(\alpha_l\leq1\), \(\beta_h<1\), and
\(c_d\geq1\). This proves the second alternative.
\end{proof}

\begin{lemma}[Relative-length diameter bound]\label{lemma:diameter_by_RM}
For any $0\leq k_1<k_2\leq K$, one has
\begin{equation}\label{eq:endpoint_by_RM}
    |x_{k_1}-x_{k_2}|
    \leq
    \operatorname{RL}(x_{\llbracket k_1,k_2\rrbracket})
    +4\bar cc_d\alpha_{k_1}^{\underline{\beta}}.
\end{equation}
Consequently,
\begin{equation}\label{eq:diameter_by_RM}
    \operatorname{diam}(x_{\llbracket k_1,k_2\rrbracket})
    \leq
    \operatorname{RL}(x_{\llbracket k_1,k_2\rrbracket})
    +4\bar cc_d \alpha_{k_1}^{\underline{\beta}}.
\end{equation}
\end{lemma}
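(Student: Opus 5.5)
We bound \(|x_{k_1}-x_{k_2}|\) by splitting \([k_1,k_2]\) at the crossing indices in \(\mathcal L\), and we deduce \eqref{eq:diameter_by_RM} at the end. Since relative length is monotone under interval inclusion, applying \eqref{eq:endpoint_by_RM} to every pair \(k_1\leq a<b\leq k_2\) gives \(|x_a-x_b|\leq\operatorname{RL}(x_{\llbracket k_1,k_2\rrbracket})+4\bar cc_d\alpha_a^{\underline\beta}\). The ratio bound gives \(\alpha_a^{\underline\beta}\leq c_d^{\underline\beta}\alpha_{k_1}^{\underline\beta}\). That factor \(c_d^{\underline\beta}\) is not present in the stated constant. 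So I would rather prove the endpoint estimate in the slightly stronger form \(|x_a-x_b|\leq\operatorname{RL}(x_{\llbracket a,b\rrbracket})+4\bar c\,\max_{k\geq a}\alpha_k^{\underline\beta}\), in which the remainder only involves step sizes after the left endpoint. This makes the diameter bound immediate, since \(\max_{k\geq a}\alpha_k\leq\max_{k\geq k_1}\alpha_k\leq c_d\alpha_{k_1}\) and \(c_d^{\underline\beta}\leq c_d\).

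If \(I_C=\emptyset\), or if \((k_1,k_2)\) contains no point of \(\mathcal L\) and \(k_1<l_0\), then \(\operatorname{RL}\) is the ordinary path length, and the estimate is the triangle inequality. Otherwise we decompose \([k_1,k_2]\) at the points of \(\mathcal L\) and use additivity \eqref{eq:RL_additivity}. There are three kinds of piece.

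\emph{Initial piece \([k_1,l_{m_1}]\).} This piece lies inside one block \([l_m,l_{m+1}]\). If \(k_1\geq q(l_m)\), it is an ordinary path. If \(l_m<k_1<q(l_m)\), we expand Definition~\ref{def:RL}, Case~2, as a broken path. Each endpoint then contributes at most one projection bridge \(|x_k-y^i_k|\leq c_i\alpha_k^{\beta_i}\), which holds because \(k\leq s(l_m)\), so that \(x_k\in\mathcal N(i,\alpha_k)\). This costs at most \(2\bar c\max_{k\geq k_1}\alpha_k^{\underline\beta}\).

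\emph{Full blocks \([l_m,l_{m+1}]\).} For these, Lemma~\ref{lemma:RM_within_one}(ii) gives the displacement with no remainder, because \(q(l_m)<l_{m+1}\).

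\emph{Terminal piece \([l_{m_2},k_2]\).} Here Lemma~\ref{lemma:RM_within_one} gives a remainder of at most \(c_{G(l_{m_2})}\alpha_{k_2}^{\beta_{G(l_{m_2})}}\). Summing the three kinds of piece yields a total remainder of at most \(3\bar c\max_{k\geq k_1}\alpha_k^{\underline\beta}\), using \(\alpha\leq1\). This is within the target bound.

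I expect the main obstacle to be the case where \(k_1\) or \(k_2\) lies strictly before the corresponding \(q\)-index. In that case the relative length measures projected motion on \(M_{G(\cdot)}\), and one must check that no bridge term is counted twice or omitted at a block boundary. This is exactly the convention fixed right before \eqref{eq:RL_additivity}: the bridge at \(q(l_m)\) is charged to the interval ending there, and the bridge at \(l_m\) to the interval beginning there. I would write out the broken path \(x_{k_1}\to y^i_{k_1}\to\cdots\to y^i_{q}\to x_q\to\cdots\) explicitly, and confirm that only the bridge \(x_{k_1}\to y_{k_1}^i\) at the left endpoint and one bridge at the right endpoint are uncounted. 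Lemma~\ref{lemma:RM_multi} is not needed in this approach, because I telescope block by block and pay the remainder only at the two ends. The constant \(4\) leaves room for the bridge at \(k_1\), the bridge at \(k_2\), and the extra ratio factor.
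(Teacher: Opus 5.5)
Your proposal is correct and is essentially the paper's proof. Both split \(\llbracket k_1,k_2\rrbracket\) at the points of \(\mathcal L\cap(k_1,k_2)\), note that complete blocks carry no endpoint error while the partial blocks carry at most two projection bridges each, and obtain the diameter bound from the fact that every bridge sits at an index \(r\geq k_1\), so \(\alpha_r\leq c_d\alpha_{k_1}\); this is exactly your strengthened remainder in terms of \(\max_{k\geq a}\alpha_k^{\underline\beta}\). Your two loose points are harmless: you omit the ordinary initial piece \(\llbracket k_1,l_0\rrbracket\) when \(k_1<l_0\), and you over-count the initial-piece bridges, since its right bridge at \(q(l_m)<l_{m+1}\) is already counted in \(\operatorname{RL}\).
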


\begin{proof}
If \(I_C=\emptyset\), relative length is ordinary length, and both conclusions
follow from the triangle inequality. Assume that \(I_C\neq\emptyset\). First
consider \(a<b\) contained in one block
\(\llbracket l_m,l_{m+1}\rrbracket\), and write
\(i:=G(l_m)\) and \(q:=q(l_m)\). Expanding the broken path in
Definition~\ref{def:RL} gives
\[
|x_a-x_b|
\leq\operatorname{RL}(x_{\llbracket a,b\rrbracket})
+\mathbf1_{\{l_m<a<q\}}c_i\alpha_a^{\beta_i}
+\mathbf1_{\{b<q\}}c_i\alpha_b^{\beta_i}.
\]
Indeed, a projection error at the left endpoint occurs only when the interval
starts strictly inside the projected part, while the error at the right
endpoint occurs only when the interval ends before \(q\). The initial and
terminal projection bridges in Definition~\ref{def:RL} cover the other two
endpoints. An interval contained in \(\llbracket0,l_0\rrbracket\) is ordinary
and has no projection error.

Now split \(\llbracket k_1,k_2\rrbracket\) at every point of
\(\mathcal L\cap(k_1,k_2)\). The complete intervening blocks have no endpoint
error, and each of the initial and terminal partial blocks has at most two.
The triangle inequality and \eqref{eq:RL_additivity} therefore leave at most
four errors of the form \(c_i\alpha_r^{\beta_i}\), where \(r\geq k_1\). By
\eqref{eq:running_x}, \(\alpha_r\leq c_d\alpha_{k_1}\); hence
\[
c_i\alpha_r^{\beta_i}
\leq\bar c(c_d\alpha_{k_1})^{\beta_i}
\leq\bar c c_d\alpha_{k_1}^{\underline\beta},
\]
because \(\alpha_{k_1}\leq1\),
\(\underline\beta\leq\beta_i<1\), and \(c_d\geq1\). This proves
\eqref{eq:endpoint_by_RM}.

Finally, fix \(k_1\leq p<q\leq k_2\) and repeat the same decomposition for
\(\llbracket p,q\rrbracket\). Every projection error still occurs at an index
\(r\geq k_1\), so the preceding bound applies directly, while interval
monotonicity gives
\(\operatorname{RL}(x_{\llbracket p,q\rrbracket})
\leq\operatorname{RL}(x_{\llbracket k_1,k_2\rrbracket})\).
Taking the supremum over \(p,q\) proves \eqref{eq:diameter_by_RM}.
\end{proof}

\subsection{Bounding the relative length}
\label{sec:diameter_local_bounds}
\label{sec:diameter_halving}
\label{sec:diameter_global_bounds}

The remaining task is to bound the projected increments in
$\operatorname{RL}$.  Fact~\ref{fact:y_k^i} does so on one stratum using the
decrease of $\psi(z_k^i)$ and the accumulated $g_{2,k}$.  At a block junction,
however, the adjacent pieces may use $z_k^i$ and $z_k$, so we first compare
these endpoint quantities.
\begin{lemma}[Endpoint conversion]\label{lemma:endpoint_conversion}
There exists \(C_{\mathrm{end}}>0\) such that the following holds.

If \(x_k\in \mathcal{N}(i,\alpha_k)\) for some
\(i\in \llbracket 1,\overline{T}\rrbracket\), then
\(|\psi(z_k^i)-\psi(z_k)|\leq C_{\mathrm{end}}\alpha_k^{\beta_i(1-\theta)}\).
If, in addition, \(d(x_k,M_i)\leq 2\widehat c_i\alpha_k^{\gamma_i}\), then
\(|\psi(z_k^i)-\psi(z_k)|\leq2\psi(|z_k^i-z_k|)
\leq C_{\mathrm{end}}\alpha_k^{p_i}\).
\end{lemma}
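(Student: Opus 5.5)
Since $z_k^i-z_k=f(y_k^i)-f(x_k)$, the $g_{1,k}$ terms cancel and everything reduces to Lipschitz continuity of $f$ together with a Hölder-type estimate for the signed power $\psi(t)=c\operatorname{sgn}(t)|t|^{1-\theta}$. The argument has three steps.

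\emph{Step 1: the Hölder bound for $\psi$.} I will use the signed-power Hölder estimate \cite[Fact~4.12]{lai2025diameter}, namely
\[
|\varphi_\theta(s)-\varphi_\theta(t)|\leq 2|s-t|^{1-\theta}.
\]
This can also be checked directly: for same-sign arguments, subadditivity of $u\mapsto u^{1-\theta}$ gives the bound with constant $1$. For opposite signs, $|s|^{1-\theta}+|t|^{1-\theta}\leq 2\bigl((|s|+|t|)/2\bigr)^{1-\theta}\leq 2|s-t|^{1-\theta}$ by concavity. Multiplying by $c$ gives
\[
|\psi(z_k^i)-\psi(z_k)|\leq 2c|z_k^i-z_k|^{1-\theta}=2\psi(|z_k^i-z_k|).
\]
This is the first inequality of the second claim, and it also drives the first claim.

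\emph{Step 2: bound the distance to the stratum.} Item~\ref{item:stratified_descent} of Definition~\ref{def:stratified_descent} makes $y_k^i=P_{M_i}(x_k)$ single-valued when $x_k\in\mathcal N(i,\alpha_k)$. Since $M_i\subset\widetilde X$ and $f$ is $L_f$-Lipschitz on $\widetilde X$,
\[
|z_k^i-z_k|=|f(y_k^i)-f(x_k)|\leq L_f|y_k^i-x_k|=L_f\,d(x_k,M_i).
\]
Membership in $\mathcal N(i,\alpha_k)$ gives $d(x_k,M_i)\leq c_i\alpha_k^{\beta_i}$. Under the extra hypothesis we have instead $d(x_k,M_i)\leq 2\widehat c_i\alpha_k^{\gamma_i}$.

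\emph{Step 3: combine.} In the first case Step 1 gives a bound $2c(L_fc_i)^{1-\theta}\alpha_k^{\beta_i(1-\theta)}$. In the second case it gives $2c(2L_f\widehat c_i)^{1-\theta}\alpha_k^{\gamma_i(1-\theta)}=C\alpha_k^{p_i}$, recalling $p_i=\gamma_i(1-\theta)$. We take $C_{\mathrm{end}}$ to be the maximum of these constants over the finitely many $i$. For open strata the second case never arises, since there $p_i$ is only defined for $i\leq T$; the first case still holds for them, with $d(x_k,M_i)=0$ trivially giving the bound.

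The only mildly delicate point is Step 1, the opposite-sign case of the signed power. Citing \cite[Fact~4.12]{lai2025diameter} settles it, so there is no real obstacle.
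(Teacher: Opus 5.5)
Your proposal is correct and matches the paper's proof. The \(g_{1,k}\) terms cancel, so \(|z_k^i-z_k|\leq L_f\,d(x_k,M_i)\); the signed-power H\"older estimate \cite[Fact~4.12]{lai2025diameter} multiplied by \(c\) gives \(2\psi(|z_k^i-z_k|)\); and the two distance bounds, with finitely many strata, yield one \(C_{\mathrm{end}}\). Your aside on open strata is harmless but unnecessary, since \(d(x_k,M_i)\leq c_i\alpha_k^{\beta_i}\) already covers every \(i\).
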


\begin{proof}
This is the calculation of \cite[Lemma~4.13]{lai2025diameter}. The correction
\(g_{1,k}\) cancels, so \(|z_k^i-z_k|\leq L_f d(x_k,M_i)\). Multiply
\cite[Fact~4.12]{lai2025diameter} by \(c\), then use respectively the two
distance bounds in the statement. Since \(p_i=\gamma_i(1-\theta)\) and the
stratification is finite, one constant \(C_{\mathrm{end}}\) suffices.
\end{proof}

Fact~\ref{fact:y_k^i} controls the portion through $q(l)$; if the interval
continues, Facts~\ref{fact:IC_away} and~\ref{fact:y_k^i} control the subsequent
open-stratum portion.  Lemma~\ref{lemma:endpoint_conversion} compares the
scalar values at the junction.  Combining these inputs gives the following
one-block estimate.
\begin{lemma}[One-block length bound]\label{lemma:core_bridge}
There exists \(\hat C>0\) such that the following holds.

Let \(l\in \mathcal{L}\) and \(k_2\in \llbracket l,K\rrbracket\) satisfy either
\(k_2\in \llbracket l,q(l)\rrbracket\) or no integer \(k\) with
\(q(l)<k<k_2\) belongs to \(I_C\).
Define
\[
\widetilde z_{k_2}^{\,l}:=
\begin{cases}
z_{k_2}^{G(l)}, & k_2\leq q(l),\\[0.3em]
z_{k_2}, & k_2>q(l).
\end{cases}
\]
Then
\begin{equation}\label{eq:core_bridge}
\psi(z_l^{G(l)})-\psi(\widetilde z_{k_2}^{\,l})
\geq
\operatorname{RL}(x_{\llbracket l,k_2\rrbracket})
-\sum_{k=l}^{k_2-1}g_{2,k}
-\hat C\,\alpha_l^{p_{G(l)}}.
\end{equation}
\end{lemma}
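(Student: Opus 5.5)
We split according to whether \(k_2\leq q(l)\) or \(k_2>q(l)\). Throughout we write \(i:=G(l)\) and \(q:=q(l)\). Recall that \(x_k\in\mathcal N(i,\alpha_k)\) for every \(k\in\llbracket l,s(l)\rrbracket\supset\llbracket l,q\rrbracket\), by \eqref{eq:def_s} and Fact~\ref{fact:IC_close}.

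\emph{Case \(k_2\leq q\).} Here \(\widetilde z_{k_2}^{\,l}=z_{k_2}^i\). By Definition~\ref{def:RL}, Case~2(iii), when \(k_2<q\) the relative length \(\operatorname{RL}(x_{\llbracket l,k_2\rrbracket})\) is the projected length \(\sum_{k=l}^{k_2-1}|y_{k+1}^i-y_k^i|\) plus the initial bridge \(|x_l-y_l^i|\). When \(k_2=q\) there is in addition the terminal bridge \(|x_q-y_q^i|\). Since \(l\in I_C\), we have \(d(x_l,M_i)\leq2\widehat c_i\alpha_l^{\gamma_i}\). Since \(q\) satisfies the bound in \eqref{eq:def_q} and \(\alpha_q\leq c_d\alpha_l\), we also have \(|x_q-y_q^i|\leq2\widehat c_ic_d^{\gamma_i}\alpha_l^{\gamma_i}\). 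Both bridges are therefore \(O(\alpha_l^{\gamma_i})\), which is at most \(O(\alpha_l^{p_i})\) because \(p_i<\gamma_i\) by \eqref{eq:relation_beta_p} and \(\alpha_l\leq1\).

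Applying Fact~\ref{fact:y_k^i} on \(\llbracket l,k_2\rrbracket\) then gives
\[
\psi(z_l^i)-\psi(z_{k_2}^i)\geq\sum_{k=l}^{k_2-1}|y_{k+1}^i-y_k^i|-\sum_{k=l}^{k_2-1}g_{2,k}-\iota c_d^\tau\alpha_l^\tau.
\]
The remainder \(\iota c_d^\tau\alpha_l^\tau\) is also \(O(\alpha_l^{p_i})\), since \(\tau>p_i\). Collecting these bounds yields \eqref{eq:core_bridge}. The degenerate case \(k_2=l\) is trivial, because then both sides reduce to bridge terms.

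\emph{Case \(k_2>q\).} Here \(\widetilde z_{k_2}^{\,l}=z_{k_2}\), and we decompose at \(q\). By additivity \eqref{eq:RL_additivity},
\[
\operatorname{RL}(x_{\llbracket l,k_2\rrbracket})=\operatorname{RL}(x_{\llbracket l,q\rrbracket})+\sum_{k=q}^{k_2-1}|x_{k+1}-x_k|,
\]
where the first term carries the terminal bridge at \(q\). The first case bounds \(\operatorname{RL}(x_{\llbracket l,q\rrbracket})\) by \(\psi(z_l^i)-\psi(z_q^i)+\sum_{l}^{q-1}g_{2,k}+O(\alpha_l^{p_i})\). By the second part of Lemma~\ref{lemma:endpoint_conversion}, which applies because \(d(x_q,M_i)\leq2\widehat c_i\alpha_q^{\gamma_i}\), we may replace \(\psi(z_q^i)\) by \(\psi(z_q)\) at a cost of \(C_{\mathrm{end}}\alpha_q^{p_i}\leq C_{\mathrm{end}}c_d\alpha_l^{p_i}\).

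For the tail \(\llbracket q,k_2\rrbracket\), the definition of \(q\) gives \(q\notin I_C\) unless \(q=l\). I expect this to be the \emph{main obstacle}. If \(q=l\) and \(l\in I_C\), the first step \([x_q,x_{q+1}]\) lies near a non-open stratum and Fact~\ref{fact:IC_away} does not apply to it. I would treat that single step separately: its length is at most \(\iota\alpha_l^\tau\) by item~\ref{item:small_step} of Definition~\ref{def:stratified_descent}, and the change \(|\psi(z_{q+1})-\psi(z_q)|\) is controlled by Hölder continuity of \(\psi\) together with the Lipschitz bounds on \(f\) and on \(g_1\), giving \(O(\alpha_l^{\tau(1-\theta)})\). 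This is not obviously \(O(\alpha_l^{p_i})\), since \(\tau(1-\theta)\) may fall below \(p_i\). If that comparison fails, I would instead use that \(x_{q+1}\) is within \(\iota\alpha_l^\tau+2\widehat c_i\alpha_l^{\gamma_i}=O(\alpha_l^{\gamma_i})\) of \(M_i\) and compare through \(z_{q+1}^i\), so that Lemma~\ref{lemma:endpoint_conversion} yields \(O(\alpha_l^{p_i})\).

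Having handled that step, the hypothesis that no index in \((q,k_2)\) belongs to \(I_C\) lets Fact~\ref{fact:IC_away} apply on the remaining range. It provides an open stratum \(M_j\) with \(x_k\in M_j\cap\mathcal N(j,\alpha_k)\) throughout. On an open stratum we have \(y_k^j=x_k\) and \(z_k^j=z_k\), so Fact~\ref{fact:y_k^i} bounds \(\sum|x_{k+1}-x_k|\) by \(\psi(z_{\cdot})-\psi(z_{k_2})+\sum g_{2,k}+\iota c_d^{\tau+1}\alpha_l^\tau\). Summing the pieces telescopes the \(\psi\)-terms and the \(g_2\)-sums. Every remainder is bounded by a constant multiple of \(\alpha_l^{p_i}\), using \(\tau,\gamma_i>p_i\), \(\alpha\leq1\), and \(\alpha_k\leq c_d\alpha_l\). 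Since the stratification is finite, we can take \(\hat C\) to be the maximum of these constants over all strata.
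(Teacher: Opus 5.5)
Your decomposition matches the paper's: Fact~\ref{fact:y_k^i} along \(M_i\) on \(\llbracket l,q\rrbracket\), Lemma~\ref{lemma:endpoint_conversion} at \(q\), then Facts~\ref{fact:IC_away} and~\ref{fact:y_k^i} on an open stratum after \(q\). However, the tail step rests on a false claim.

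\textbf{The claim \(q\notin I_C\) for \(q>l\) is false.} The definition of \(q\) does not give this. The recursion \eqref{eq:def_lnext} only makes \(l_{m+1}\) the first element of \(I_C\) \emph{after} \(q\). Elements of \(I_C\) in \(\llbracket l+1,q\rrbracket\) are skipped by the recursion, not excluded from \(I_C\). For example, suppose \(x_q\in M_i\), which \eqref{eq:def_q} certainly allows, and \(q<K\). Then \([x_q,x_{q+1}]\) meets \(B(M_i,\widehat c_i\alpha_q^{\gamma_i})\), so \(q\in I_C\) by \eqref{eq:def_IC}.

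In that situation the hypothesis \(\llbracket q,k_2-1\rrbracket\cap I_C=\emptyset\) of Fact~\ref{fact:IC_away} fails. Your application of it on \(\llbracket q,k_2\rrbracket\) is therefore unjustified. This is exactly why the lemma only assumes that no \(k\) with \(q<k<k_2\) lies in \(I_C\).

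\textbf{The fix.} Isolate the junction step \(q\to q+1\) whenever \(k_2>q\), not only when \(q=l\). Apply Facts~\ref{fact:IC_away} and~\ref{fact:y_k^i} on \(\llbracket q+1,k_2\rrbracket\); there is nothing to do if \(k_2=q+1\). Pay for the junction step with \(|x_{q+1}-x_q|\leq\iota\alpha_q^\tau\) and \(\psi(z_q)-\psi(z_{q+1})\geq-2\psi(|z_q-z_{q+1}|)\). The extra \(-g_{2,q}\leq0\) only weakens the bound, so it can be inserted into the sum. This is what the paper does.

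\textbf{The exponent comparison holds, so no fallback is needed.} Definition~\ref{def:stratified_descent} requires \(\gamma_i(1-\theta)<\tau(1-\theta)\), that is, \(p_i<\tau(1-\theta)\); equivalently \(\gamma_i<\tau\) in \eqref{eq:relation_beta_p}. Item~\ref{item:small_step} of Definition~\ref{def:stratified_descent} gives \(|z_q-z_{q+1}|\leq(L_f+1)\iota\alpha_q^\tau\). Using also \(\alpha_q\leq c_d\alpha_l\) and \(\alpha_l\leq1\), the H\"older estimate yields
\(2\psi(|z_q-z_{q+1}|)\leq 2c\bigl((L_f+1)\iota\bigr)^{1-\theta}c_d^{\tau(1-\theta)}\alpha_l^{p_i}\).
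So your primary argument already closes this step.

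Your detour through \(z_{q+1}^i\) would not be justified as stated anyway. Lemma~\ref{lemma:endpoint_conversion} and Fact~\ref{fact:y_k^i} at \(q+1\) need \(x_{q+1}\in\mathcal N(i,\alpha_{q+1})\), which fails when \(q=s(l)\). A direct comparison of \(z_q^i\) with \(z_{q+1}^i\) still contains the increment \(g_{1,q}-g_{1,q+1}\), of order \(\alpha_q^\tau\).

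With the junction step handled this way in every case \(k_2>q\), your argument coincides with the paper's proof.
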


\begin{proof}
Let \(i=G(l)\) and \(q=q(l)\).

\runinheading{Case~1: \(k_2\leq q\)}
If \(k_2=l\), the left-hand side and relative length are zero, so
\eqref{eq:core_bridge} is immediate. Assume \(l<k_2\leq q\). Then
\(x_k\in\mathcal N(i,\alpha_k)\) for all
\(k\in\llbracket l,k_2\rrbracket\). Applying Fact~\ref{fact:y_k^i} gives
\[
\psi(z_l^i)-\psi(z_{k_2}^i)
\geq
\sum_{k=l}^{k_2-1}|y_{k+1}^i-y_k^i|
-\sum_{k=l}^{k_2-1}g_{2,k}
-\iota c_d^\tau\alpha_l^\tau.
\]
By the definition of \(\operatorname{RL}(x_{\llbracket l,k_2\rrbracket})\),
\[
\operatorname{RL}(x_{\llbracket l,k_2\rrbracket})
=
|x_l-y_l^i|+\sum_{k=l}^{k_2-1}|y_{k+1}^i-y_k^i|
+\mathbf 1_{\{k_2=q\}}|x_q-y_q^i|.
\]
Hence
\[
\psi(z_l^i)-\psi(z_{k_2}^i)
\geq \operatorname{RL}(x_{\llbracket l,k_2\rrbracket})
-\sum_{k=l}^{k_2-1}g_{2,k}
-\iota c_d^\tau\alpha_l^\tau-
|x_l-y_l^i|
-\mathbf 1_{\{k_2=q\}}|x_q-y_q^i|.
\]
Since \(l\in\mathcal L\subset I_C\) and \(i=G(l)\),
\[
|x_l-y_l^i|\leq2\widehat c_i\alpha_l^{\gamma_i},
\qquad
|x_q-y_q^i|\leq2\widehat c_i\alpha_q^{\gamma_i}
\leq2\widehat c_i c_d^{\gamma_i}\alpha_l^{\gamma_i}.
\]
By \eqref{eq:relation_beta_p}, \(p_i<\min\{\gamma_i,\tau\}\). Since
\(\alpha_l\leq1\), both bridge terms and the step-size remainder are bounded
by a fixed multiple of \(\alpha_l^{p_i}\). This proves
\eqref{eq:core_bridge} when \(k_2\leq q\).

\runinheading{Case~2: \(k_2>q\)}
If \(q>l\), applying Fact~\ref{fact:y_k^i} on
\(\llbracket l,q\rrbracket\) along \(M_i\) gives
\[
\psi(z_l^i)-\psi(z_q^i)
\geq
\sum_{k=l}^{q-1}|y_{k+1}^i-y_k^i|
-\sum_{k=l}^{q-1}g_{2,k}
-\iota c_d^\tau\alpha_l^\tau,
\]
while this inequality is trivial when \(q=l\), because its left-hand side and
both sums vanish.

If \(k_2\geq q+2\), then no integer \(k\) with \(q<k<k_2\) belongs to
\(I_C\). Fact~\ref{fact:IC_away} yields an open stratum \(M_{i_0}\) such that
\(x_k\in\mathcal N(i_0,\alpha_k)\) for
\(k\in\llbracket q+1,k_2\rrbracket\). Applying
Fact~\ref{fact:y_k^i} on this interval and using
\(y_k^{i_0}=x_k\) and \(z_k^{i_0}=z_k\) gives
\[
\psi(z_{q+1})-\psi(z_{k_2})
\geq
\sum_{k=q+1}^{k_2-1}|x_{k+1}-x_k|
-\sum_{k=q+1}^{k_2-1}g_{2,k}
-\iota c_d^\tau\alpha_{q+1}^\tau.
\]
When \(k_2=q+1\), the same displayed inequality is the trivial estimate
\(0\geq-\iota c_d^\tau\alpha_{q+1}^\tau\).
Since \(\alpha_{q+1}\leq c_d\alpha_l\), summing the two inequalities and
inserting the bridge terms at \(q\) gives the estimate below. We also insert
the omitted term \(-g_{2,q}\); this only weakens the lower bound because
\(g_{2,q}\geq0\).
\begin{align*}
\psi(z_l^i)-\psi(z_{k_2})
&\geq\sum_{k=l}^{q-1}|y_{k+1}^i-y_k^i|
+\sum_{k=q+1}^{k_2-1}|x_{k+1}-x_k|
-\sum_{k=l}^{k_2-1}g_{2,k}
-\iota(c_d^\tau+c_d^{2\tau})\alpha_l^\tau\\
&\quad+\bigl(\psi(z_q^i)-\psi(z_q)\bigr)
+\bigl(\psi(z_q)-\psi(z_{q+1})\bigr)\\
&\geq\sum_{k=l}^{q-1}|y_{k+1}^i-y_k^i|
+\sum_{k=q+1}^{k_2-1}|x_{k+1}-x_k|
-\sum_{k=l}^{k_2-1}g_{2,k}
-\iota(c_d^\tau+c_d^{2\tau})\alpha_l^\tau\\
&\quad-2\psi(|z_q^i-z_q|)
-2\psi(|z_q-z_{q+1}|),
\end{align*}
where we used the signed-power H\"older estimate
\cite[Fact~4.12]{lai2025diameter}.
Now
\[
\operatorname{RL}(x_{\llbracket l,k_2\rrbracket})
\leq
|x_l-y_l^i|
+\sum_{k=l}^{q-1}|y_{k+1}^i-y_k^i|
+|x_q-y_q^i|
+\sum_{k=q}^{k_2-1}|x_{k+1}-x_k|.
\]
Substituting this inequality yields
\begin{align}
\psi(z_l^i)-\psi(z_{k_2})
&\geq\operatorname{RL}(x_{\llbracket l,k_2\rrbracket})
-\sum_{k=l}^{k_2-1}g_{2,k} \notag\\
&\quad-\Bigl(|x_l-y_l^i|+|x_q-y_q^i|+|x_q-x_{q+1}|\Bigr)
-\iota(c_d^\tau+c_d^{2\tau})\alpha_l^\tau \label{eq:error_1}\\
&\quad-2\psi(|z_q^i-z_q|)-2\psi(|z_q-z_{q+1}|).\label{eq:error_2}
\end{align}
It remains in Case~2 to absorb the junction errors.  
By the definitions of $l$ and $q$ in \eqref{eq:def_lsq},
\[
d(x_l,M_i)\leq 2\widehat c_i\alpha_l^{\gamma_i},\qquad d(x_q,M_i)\leq 2\widehat c_i\alpha_q^{\gamma_i}\leq 2\widehat c_ic_d^{\gamma_i}\alpha_l^{\gamma_i}.
\]
Therefore,
\[
|x_l-y_l^i|\leq 2\widehat c_i\alpha_l^{\gamma_i},
\qquad
|x_q-y_q^i|\leq 2\widehat c_ic_d^{\gamma_i}\alpha_l^{\gamma_i}.
\]
By \eqref{eq:relation_beta_p}, we have \(p_i:= \gamma_i(1-\theta)<\gamma_i\) and \(p_i<\tau\). As \(\alpha_l\leq 1\) and $|x_q-x_{q+1}|\leq \iota \alpha_q^\tau$, it follows that
\[
\Bigl(|x_l-y_l^i|+|x_q-y_q^i|+|x_q-x_{q+1}|\Bigr)+\iota(c_d^\tau+c_d^{2\tau}) \alpha_l^\tau
\leq \Bigl(2\widehat c_i\alpha_l^{\gamma_i}+2\widehat c_ic_d^{\gamma_i}\alpha_l^{\gamma_i}+\iota c_d^\tau\alpha_l^\tau\Bigr)+\iota(c_d^\tau+c_d^{2\tau})\alpha_l^\tau \leq C_1\alpha_l^{p_i}
\]
for some \(C_1>0\).

To bound \eqref{eq:error_2}, Lemma~\ref{lemma:endpoint_conversion}, the
definition of \(q\) in \eqref{eq:def_q}, and
\(\alpha_q\leq c_d\alpha_l\) give
\[
\psi(|z_q^i-z_q|)
\leq \frac{C_{\mathrm{end}}}{2}\alpha_q^{p_i}
\leq \frac{C_{\mathrm{end}}c_d}{2}\alpha_l^{p_i}.
\]

Finally, by the definition of $z_k$ in \eqref{eq:z_k_def},
\[
|z_q-z_{q+1}|
\leq |f(x_q)-f(x_{q+1})|+|g_{1,q}-g_{1,q+1}|
\leq L_f\iota \alpha_q^\tau+\iota\alpha_q^{\tau},
\]
where we used item \ref{item:small_step} in Definition \ref{def:stratified_descent} and Lipschitz continuity of $f$. Hence,
\[
\psi(|z_q-z_{q+1}|) \leq c(L_f\iota +\iota)^{1-\theta}\alpha_q^{\tau (1-\theta)}
\leq c(L_f\iota +\iota)^{1-\theta} c_d^{\tau (1-\theta)}\alpha_l^{p_i},
\]
as $p_i =  \gamma_i (1-\theta)<\tau(1-\theta)$ from \eqref{eq:relation_beta_p}.

The preceding bounds show that every term in
\eqref{eq:error_1}--\eqref{eq:error_2} is at most a fixed multiple of
$\alpha_l^{p_i}$.  Hence there exists $\hat C>0$ such that
\[
\psi(z_l^i)-\psi(z_{k_2})
\geq
\operatorname{RL}(x_{\llbracket l,k_2\rrbracket})
-\sum_{k=l}^{k_2-1}g_{2,k}
-\hat C\,\alpha_l^{p_i},
\]
which proves \eqref{eq:core_bridge} when \(k_2>q\).  Enlarging $\hat C$ if
necessary also covers the preceding case $k_2\leq q$.
\end{proof}

We shall sum one estimate for each adjacent pair $l_m,l_{m+1}$. The required
two-endpoint form is the following corollary.
\begin{corollary}[Consecutive crossings]\label{cor:lm->lm+1}
There exists \(\overline C>0\) such that for any \(m=0,\ldots,\overline m-1\),
\[
\psi\left(z^{G(l_m)}_{l_m}\right)-\psi\left(z^{G(l_{m+1})}_{l_{m+1}}\right)
\geq \operatorname{RL}(x_{\llbracket l_m,l_{m+1}\rrbracket})
-\sum_{k = l_m}^{l_{m+1}-1}g_{2,k}
-\overline{C} \left(\alpha_{l_m}^{p_{G(l_m)}} +\alpha_{l_{m+1}}^{p_{G(l_{m+1})}}\right).
\]
\end{corollary}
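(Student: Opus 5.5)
Apply Lemma~\ref{lemma:core_bridge} with \(l:=l_m\) and \(k_2:=l_{m+1}\), then convert the terminal value \(\widetilde z^{\,l_m}_{l_{m+1}}\) into \(z^{G(l_{m+1})}_{l_{m+1}}\) using Lemma~\ref{lemma:endpoint_conversion}. Write \(i:=G(l_m)\), \(q:=q(l_m)\), \(h:=G(l_{m+1})\).

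First we check the hypothesis of Lemma~\ref{lemma:core_bridge}. By \eqref{eq:def_lnext}, \(l_{m+1}\) is the first element of \(I_C\) strictly after \(q\). Hence no integer \(k\) with \(q<k<l_{m+1}\) lies in \(I_C\). Also \(l_{m+1}>q\), so we are in the second alternative and \(\widetilde z^{\,l_m}_{l_{m+1}}=z_{l_{m+1}}\). The lemma then gives
\[
\psi(z_{l_m}^{i})-\psi(z_{l_{m+1}})\geq \operatorname{RL}(x_{\llbracket l_m,l_{m+1}\rrbracket})-\sum_{k=l_m}^{l_{m+1}-1}g_{2,k}-\hat C\alpha_{l_m}^{p_i}.
\]

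Next we compare the endpoint values. Since \(l_{m+1}\in I_C\), Fact~\ref{fact:IC_close} and \eqref{eq:def_G} give \(x_{l_{m+1}}\in\mathcal N(h,\alpha_{l_{m+1}})\) and \(d(x_{l_{m+1}},M_h)\leq 2\widehat c_h\alpha_{l_{m+1}}^{\gamma_h}\). The second part of Lemma~\ref{lemma:endpoint_conversion} therefore yields
\[
|\psi(z^{h}_{l_{m+1}})-\psi(z_{l_{m+1}})|\leq C_{\mathrm{end}}\alpha_{l_{m+1}}^{p_h}.
\]
Adding this to the previous display gives the claim with \(\overline C:=\max\{\hat C,C_{\mathrm{end}}\}\).

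The only step that needs care is confirming that the no-\(I_C\) condition and the strict inequality \(l_{m+1}>q\) hold, so that the correct case of Lemma~\ref{lemma:core_bridge} applies. Both follow directly from the recursive definition \eqref{eq:def_lsq}.
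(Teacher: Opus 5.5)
Your proposal is correct and follows the paper's proof. Both use $l_{m+1}>q(l_m)$ and $(q(l_m),l_{m+1})\cap I_C=\emptyset$ from the construction, apply Lemma~\ref{lemma:core_bridge} with $k_2=l_{m+1}$, and convert the endpoint value at $l_{m+1}$ via Fact~\ref{fact:IC_close} and the second part of Lemma~\ref{lemma:endpoint_conversion}, taking $\overline C:=\max\{\hat C,C_{\mathrm{end}}\}$.
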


\begin{proof}
By construction, \(l_{m+1}>q(l_m)\) and
\((q(l_m),l_{m+1})\cap I_C=\emptyset\). Apply
Lemma~\ref{lemma:core_bridge} with \(k_2=l_{m+1}\), and then use
Fact~\ref{fact:IC_close} and Lemma~\ref{lemma:endpoint_conversion} at
\(l_{m+1}\). This gives the stated estimate with
\(\overline C:=\max\{\hat C,C_{\mathrm{end}}\}\), as in
\cite[Corollary~4.15]{lai2025diameter}.
\end{proof}

The endpoint powers in Corollary~\ref{cor:lm->lm+1} must not accumulate freely
under repeated visits to a stratum.  On the completed block
$\llbracket l,s(l)\rrbracket$, the ratio bound gives the decisive alternative:
either relative length dominates $\alpha_l^{p_{G(l)}}$, or
$\alpha_{s(l)+1}\leq\alpha_l/(2c_d)$. The factor \(1/(2c_d)\) ensures that the
ratio bound yields a geometric decrease between successive visits.

\begin{lemma}[Length-scale alternative]\label{lemma:RM_or_half}
For any $Q>0$, there exists $\bar{\alpha}_Q\in (0,\hat{\alpha}]$ such that, for every $\alpha\in (0,\bar{\alpha}_Q]$,
\begin{equation}\label{eq:alphaQ_case1_new}
c_i(2c_d)^{-\beta_i}\alpha^{\beta_i-p_i}-2\widehat c_i\alpha^{\gamma_i-p_i}-\iota c_d^\tau\alpha^{\tau-p_i}\geq Q+\bar c c_d,
\qquad \forall i\in \llbracket 1,T\rrbracket,
\end{equation}
and
\begin{equation}\label{eq:alphaQ_case2_new}
\widehat c_j\alpha^{\gamma_j-p_i}-\iota c_d^\tau\alpha^{\tau-p_i}\geq Q+\bar c c_d,
\qquad \forall i,j\in \llbracket 1,T\rrbracket \text{ with } M_j\subset \partial M_i.
\end{equation}
If $\bar{\alpha}\leq \bar{\alpha}_Q$, then for any $l\in \mathcal{L}$ such that $s(l)<K$, it holds that either
\[
\operatorname{RL}(x_{\llbracket l,s(l)\rrbracket})\geq Q\alpha_l^{p_{G(l)}}
\qquad\text{or}\qquad
\alpha_{s(l)+1}\leq \alpha_l/(2c_d).
\]
\end{lemma}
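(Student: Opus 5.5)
The plan has two parts: the existence of \(\bar\alpha_Q\), which is elementary, and the alternative, which comes from analyzing why the block stops at \(s(l)\).

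\textbf{Existence of \(\bar\alpha_Q\).} By \eqref{eq:relation_beta_p}, \(\beta_i-p_i<0\), while \(\gamma_i-p_i>0\) and \(\tau-p_i>0\). Hence the left side of \eqref{eq:alphaQ_case1_new} tends to \(+\infty\) as \(\alpha\downarrow0\). Similarly, \(M_j\subset\partial M_i\) gives \(\gamma_j<p_i\) by \eqref{eq:relation_beta_p}. So in \eqref{eq:alphaQ_case2_new} the first term blows up and the second vanishes. Since there are finitely many strata, one threshold \(\bar\alpha_Q\le\hat\alpha\) works for all indices.

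\textbf{The alternative.} Write \(i:=G(l)\) and assume \(\alpha_{s(l)+1}>\alpha_l/(2c_d)\); the goal is then \(\operatorname{RL}(x_{\llbracket l,s(l)\rrbracket})\ge Q\alpha_l^{p_i}\). Since \(s(l)<K\), maximality in \eqref{eq:def_s} gives \(x_{s+1}\notin\mathcal N(i,\alpha_{s+1})\), where \(s:=s(l)\). There are two ways this can happen.

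In case (a), \(d(x_{s+1},M_i)>c_i\alpha_{s+1}^{\beta_i}\ge c_i(2c_d)^{-\beta_i}\alpha_l^{\beta_i}\). I would lower-bound the displacement from \(x_l\). First, \(d(x_l,M_i)\le2\widehat c_i\alpha_l^{\gamma_i}\) because \(l\in I_C\). Second, \(|x_{s+1}-x_s|\le\iota\alpha_s^\tau\le\iota c_d^\tau\alpha_l^\tau\). Together these give \(|x_l-x_s|\ge c_i(2c_d)^{-\beta_i}\alpha_l^{\beta_i}-2\widehat c_i\alpha_l^{\gamma_i}-\iota c_d^\tau\alpha_l^\tau\). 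Dividing by \(\alpha_l^{p_i}\) and invoking \eqref{eq:alphaQ_case1_new} shows \(|x_l-x_s|\ge(Q+\bar cc_d)\alpha_l^{p_i}\).

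In case (b), \(d(x_{s+1},M_j)<c_j\alpha_{s+1}^{\gamma_j}\le\widehat c_j\alpha_l^{\gamma_j}\) for some \(M_j\subset\partial M_i\), using the ratio bound \(\alpha_{s+1}\le c_d\alpha_l\). Meanwhile \(x_l\in\mathcal N(i,\alpha_l)\) gives \(d(x_l,M_j)>c_j\alpha_l^{\gamma_j}\). This is not enough by itself, because it compares \(c_j\) with \(\widehat c_j\). Since \(l\in I_C\) and \(i=G(l)\) has minimal dimension among strata within \(2\widehat c\)-distance, and \(\dim M_j<\dim M_i\), we get \(j\notin\mathcal A_l\), i.e.\ \(d(x_l,M_j)>2\widehat c_j\alpha_l^{\gamma_j}\). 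Therefore \(|x_l-x_s|\ge\widehat c_j\alpha_l^{\gamma_j}-\iota c_d^\tau\alpha_l^\tau\ge(Q+\bar cc_d)\alpha_l^{p_i}\) by \eqref{eq:alphaQ_case2_new}.

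\textbf{Converting displacement to relative length.} This is the main obstacle. Apply Lemma~\ref{lemma:RM_multi} with \(k_2=s\) when \(s\ge q(l)\). This yields \(\operatorname{RL}\ge|x_l-x_s|-\bar cc_d\alpha_l^{\underline\beta_i}\ge|x_l-x_s|-\bar cc_d\alpha_l^{p_i}\), using \(\underline\beta_i>p_i\) and \(\alpha_l\le1\), which leaves \(Q\alpha_l^{p_i}\).

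The bad alternative of Lemma~\ref{lemma:RM_multi} is \(M_{G(l_{m_2})}\subset\partial M_i\). I would rule it out by noting that \(x_{l_{m_2}}\) lies in both neighborhoods. Alternatively, I would replace \(x_s\) by \(x_{l_{m_2}}\) and rerun the case (b) estimate at \(l_{m_2}\le s\), observing that the bound only needs some point with the small-distance property.

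When \(s<q(l)\), Lemma~\ref{lemma:RM_within_one}(i) gives an error \(c_i\alpha_s^{\beta_i}\), which is too large. Instead I would use that \(d(x_s,M_i)\le2\widehat c_i\alpha_s^{\gamma_i}\) fails to exceed the bound only when \(s=q\). More carefully, I would bound \(|y^i_l-y^i_s|\) by \(\operatorname{RL}\) directly and control \(d(x_s,M_i)\) in each case. I expect this endpoint bookkeeping to need the most care.
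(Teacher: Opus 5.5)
Your construction of $\bar\alpha_Q$ matches the paper. So do the two exit-case displacement bounds, including your use of minimality of $G(l)$ to upgrade $c_j$ to $2\widehat c_j$, and the conversion through Lemma~\ref{lemma:RM_multi} with $k_2=s$ when its first alternative fails.

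Your last paragraph addresses a case that never occurs. By \eqref{eq:def_q}, $q(l)$ is chosen inside $\llbracket l,s(l)\rrbracket$, so $q(l)\le s(l)$ always. The case $s=l$ is excluded by your displacement bound itself, since it would give $0\ge(Q+\bar c c_d)\alpha_l^{p_i}$. The large projection error you fear can arise, but only in the \emph{last} block $\llbracket l_{m_2},s\rrbracket$. There it equals $c_{i_2}\alpha_s^{\beta_{i_2}}$, where $i_2:=G(l_{m_2})$ and $\beta_{i_2}<\gamma_{i_2}<p_i$. This is exactly why Lemma~\ref{lemma:RM_multi} has its frontier alternative, and your proposal does not handle that alternative.

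The gap is the case $M_{i_2}\subset\partial M_i$, and neither of your fixes closes it.
\begin{itemize}
\item \emph{Ruling it out via both neighborhoods fails.} Membership of $x_{l_{m_2}}$ in both neighborhoods only forces the two strata to be related; that is precisely how the frontier alternative is produced in the proof of Lemma~\ref{lemma:RM_multi}. The constraints $c_{i_2}\alpha_{l_{m_2}}^{\gamma_{i_2}}<d(x_{l_{m_2}},M_{i_2})\le 2\widehat c_{i_2}\alpha_{l_{m_2}}^{\gamma_{i_2}}$ are compatible.
\item \emph{Rerunning case (b) at $l_{m_2}$ fails.} The only available bounds are $d(x_{l_{m_2}},M_{i_2})\le 2\widehat c_{i_2}\alpha_{l_{m_2}}^{\gamma_{i_2}}$ and $d(x_l,M_{i_2})>2\widehat c_{i_2}\alpha_l^{\gamma_{i_2}}$. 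These carry the same constant, and the ratio bound only gives $\alpha_{l_{m_2}}\le c_d\alpha_l$. So the triangle inequality yields no positive lower bound on $|x_l-x_{l_{m_2}}|$. In case (b) you had the exit radius $c_j\alpha_{s+1}^{\gamma_j}\le\widehat c_j\alpha_l^{\gamma_j}$, a factor two smaller, and that is what made it work.
\end{itemize}

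The missing idea is to use the projection bridge built into $\operatorname{RL}$ instead of a displacement, splitting on whether $l_{m_2}=q(l_{m_2})$.
\begin{itemize}
\item If $l_{m_2}=q(l_{m_2})$, the last block is ordinary length. Lemma~\ref{lemma:RM_within_one}(ii) and additivity then give $\operatorname{RL}(x_{\llbracket l,s\rrbracket})\ge|x_l-x_{l_{m_2}}|+|x_{l_{m_2}}-x_s|\ge|x_l-x_s|$, and your displacement bound finishes.
\item If $l_{m_2}<q(l_{m_2})$, Definition~\ref{def:RL} opens the last block with the bridge $|x_{l_{m_2}}-y^{i_2}_{l_{m_2}}|$ to a point of $M_{i_2}$. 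Hence
\[
\operatorname{RL}(x_{\llbracket l,s\rrbracket})\ge|x_l-x_{l_{m_2}}|+|x_{l_{m_2}}-y^{i_2}_{l_{m_2}}|\ge d(x_l,M_{i_2})>2\widehat c_{i_2}\alpha_l^{\gamma_{i_2}}\ge Q\alpha_l^{p_i},
\]
by minimality of $G(l)$ and \eqref{eq:alphaQ_case2_new}. No information at $s$ is needed. The paper uses $y^{i_2}_{l_{m_2}+1}$ here, to the same effect.
\end{itemize}
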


\begin{proof}
The existence of \(\bar{\alpha}_Q\) satisfying
\eqref{eq:alphaQ_case1_new}--\eqref{eq:alphaQ_case2_new}
follows from \eqref{eq:relation_beta_p}. Fix \(l\in\mathcal{L}\) with
\(s(l)<K\), and write \(i:=G(l)\), \(p:=p_i\), and \(s:=s(l)\).

\runinheading{Step~1: Handle a block with no interior step}
Assume \(s=l\). It suffices to prove
\(\alpha_{l+1}\leq \alpha_l/(2c_d)\). Suppose otherwise that
\(\alpha_{l+1}>\alpha_l/(2c_d)\). Since
\(x_l\in\mathcal{N}(i,\alpha_l)\) and
\(x_{l+1}\notin\mathcal{N}(i,\alpha_{l+1})\), either \(d(x_{l+1},M_i)>c_i\alpha_{l+1}^{\beta_i}\),
or there exists \(j\) with \(M_j\subset\partial M_i\) and
\(d(x_{l+1},M_j)\leq c_j\alpha_{l+1}^{\gamma_j}\).

In the first case, the definition of \(G\) gives
\(d(x_l,M_i)\leq2\widehat c_i\alpha_l^{\gamma_i}\). Thus
\[
\iota\alpha_l^\tau
\geq |x_{l+1}-x_l|
> c_i\alpha_{l+1}^{\beta_i}-2\widehat c_i\alpha_l^{\gamma_i}
> c_i(2c_d)^{-\beta_i}\alpha_l^{\beta_i}
  -2\widehat c_i\alpha_l^{\gamma_i},
\]
contradicting \eqref{eq:alphaQ_case1_new}. In the second case, the definition
of \(G\) yields \(d(x_l,M_j)>2\widehat c_j\alpha_l^{\gamma_j}\). Hence, using
\(\alpha_{l+1}\leq c_d\alpha_l\) and
\(\widehat c_j=c_d^{\gamma_j}c_j\),
\[
\iota\alpha_l^\tau
\geq |x_{l+1}-x_l|
>2\widehat c_j\alpha_l^{\gamma_j}
  -c_j\alpha_{l+1}^{\gamma_j}
\geq \widehat c_j\alpha_l^{\gamma_j},
\]
contradicting \eqref{eq:alphaQ_case2_new}. Therefore
\(\alpha_{l+1}\leq \alpha_l/(2c_d)\).

\runinheading{Step~2: Reduce the remaining case to endpoint separation}
We now assume \(s\geq l+1\) and \(\alpha_{s+1}>\alpha_l/(2c_d)\). We show that it
is enough to prove
\begin{equation}\label{eq:displacement_bound}
    |x_l-x_s|\geq (Q+\bar c c_d)\alpha_l^p .
\end{equation}
Let \(m_2:=\max\{m\in\llbracket0,\overline m\rrbracket:l_m<s\}\)
and \(i_2:=G(l_{m_2})\).
By Lemma~\ref{lemma:RM_multi}, either \(M_{i_2}\subset\partial M_i\), or
\[
|x_l-x_s|
\leq
\operatorname{RL}(x_{\llbracket l,s\rrbracket})
+\bar c c_d\,\alpha_l^{\underline\beta_i}
\leq
\operatorname{RL}(x_{\llbracket l,s\rrbracket})
+\bar c c_d\,\alpha_l^p,
\]
where we used \(\underline\beta_i>p\) and \(\alpha_l\leq1\). Thus, in this latter
case, \eqref{eq:displacement_bound} implies
\(\operatorname{RL}(x_{\llbracket l,s\rrbracket})\geq Q\alpha_l^p\).

It remains to handle the case \(M_{i_2}\subset\partial M_i\). Write
\(l=l_{m_1}\). By Definition~\ref{def:RL} and Lemma~\ref{lemma:RM_within_one},
\begin{equation}\label{eq:RL_split_last_block}
\operatorname{RL}(x_{\llbracket l,s\rrbracket})
\geq
|x_l-x_{l_{m_2}}|
+
\operatorname{RL}(x_{\llbracket l_{m_2},s\rrbracket}).
\end{equation}
If \(l_{m_2}=q(l_{m_2})\), then Lemma~\ref{lemma:RM_within_one} gives
\[
\operatorname{RL}(x_{\llbracket l,s\rrbracket})
\geq |x_l-x_{l_{m_2}}|+|x_{l_{m_2}}-x_s|
\geq |x_l-x_s|,
\]
so \eqref{eq:displacement_bound} again gives the desired bound.

Suppose instead that \(l_{m_2}<q(l_{m_2})\). Then \(l_{m_2}+1\leq s\), and
the explicit formula in Definition~\ref{def:RL} gives
\[
\operatorname{RL}(x_{\llbracket l_{m_2},s\rrbracket})
\geq
|x_{l_{m_2}}-y^{i_2}_{l_{m_2}}|
+
|y^{i_2}_{l_{m_2}+1}-y^{i_2}_{l_{m_2}}|
\geq
|x_{l_{m_2}}-y^{i_2}_{l_{m_2}+1}|.
\]
Combining this with \eqref{eq:RL_split_last_block}, we obtain
\[
\operatorname{RL}(x_{\llbracket l,s\rrbracket})
\geq |x_l-y^{i_2}_{l_{m_2}+1}|
\geq d(x_l,M_{i_2}).
\]
Since \(M_{i_2}\subset\partial M_i\), the definition of \(i=G(l)\) gives
\(d(x_l,M_{i_2})>2\widehat c_{i_2}\alpha_l^{\gamma_{i_2}}\). Hence
\[
\operatorname{RL}(x_{\llbracket l,s\rrbracket})
>2\widehat c_{i_2}\alpha_l^{\gamma_{i_2}}
\geq Q\alpha_l^p
\]
by \eqref{eq:alphaQ_case2_new}. Thus, except for the already settled cases, it
remains only to prove \eqref{eq:displacement_bound}.

\runinheading{Step~3: Prove separation at the exit}
By the definition of \(s=s(l)\), we have
\(x_k\in\mathcal{N}(i,\alpha_k)\) for \(k\in\llbracket l,s\rrbracket\), while
\(x_{s+1}\notin\mathcal{N}(i,\alpha_{s+1})\). Hence either
\(d(x_{s+1},M_i)>c_i\alpha_{s+1}^{\beta_i}\), or there exists
\(j\) with \(M_j\subset\partial M_i\) and
\(d(x_{s+1},M_j)\leq c_j\alpha_{s+1}^{\gamma_j}\).

In the first case, using \(d(x_l,M_i)\leq2\widehat c_i\alpha_l^{\gamma_i}\),
\(\alpha_{s+1}>\alpha_l/(2c_d)\), and \(\alpha_s\leq c_d\alpha_l\), we get
\[
\begin{aligned}
|x_s-x_l|
&\geq d(x_{s+1},M_i)-d(x_l,M_i)-\iota\alpha_s^\tau  \\
&>
c_i(2c_d)^{-\beta_i}\alpha_l^{\beta_i}
-2\widehat c_i\alpha_l^{\gamma_i}
-\iota c_d^\tau\alpha_l^\tau
\geq (Q+\bar c c_d)\alpha_l^p,
\end{aligned}
\]
where the last inequality follows from \eqref{eq:alphaQ_case1_new}.

In the second case, the definition of \(G\) gives
\(d(x_l,M_j)>2\widehat c_j\alpha_l^{\gamma_j}\). Since
\(\alpha_{s+1}\leq c_d\alpha_l\), \(\alpha_s\leq c_d\alpha_l\), and
\(\widehat c_j=c_d^{\gamma_j}c_j\), we obtain
\[
\begin{aligned}
|x_s-x_l|
&\geq d(x_l,M_j)-d(x_{s+1},M_j)-\iota\alpha_s^\tau\\
&>
2\widehat c_j\alpha_l^{\gamma_j}
-c_j\alpha_{s+1}^{\gamma_j}
-\iota\alpha_s^\tau\\
&\geq
\widehat c_j\alpha_l^{\gamma_j}
-\iota c_d^\tau\alpha_l^\tau
\geq (Q+\bar c c_d)\alpha_l^p,
\end{aligned}
\]
by \eqref{eq:alphaQ_case2_new}.  Thus \eqref{eq:displacement_bound} holds.
Lemma~\ref{lemma:RM_multi} gives the claimed relative-length bound when
$M_{i_2}\not\subset\partial M_i$,
Lemma~\ref{lemma:RM_within_one} gives it when
$M_{i_2}\subset\partial M_i$ and $l_{m_2}=q(l_{m_2})$, and the remaining
frontier case already satisfies the bound by \eqref{eq:alphaQ_case2_new}.
The proof is complete.
\end{proof}

We next group the endpoint powers in Corollary~\ref{cor:lm->lm+1} by the
selected stratum.  For a completed block,
Lemma~\ref{lemma:RM_or_half} either bounds
$\alpha_\ell^{p_{G(\ell)}}$ by a disjoint portion of relative length or,
together with \eqref{eq:running_x}, makes the step size at
the next occurrence at most half as large.  Fact~\ref{fact:no_repeat} allows
at most one unfinished block per stratum.  These observations give a
geometric-series bound.  For a finite set $A$, write
$\operatorname{card}(A)$ for its cardinality.
\begin{lemma}[Crossing-error bound]\label{lemma:sum_alpha_p}
Given \(0\leq k_1<k_2\leq K\), let
\(
 u:=\operatorname{card}(G(\llbracket k_1,k_2\rrbracket\cap I_C))
\).
For any \(Q>0\), let \(\bar{\alpha}_Q\) be given by Lemma~\ref{lemma:RM_or_half}. If
\(\bar{\alpha}\leq \bar{\alpha}_Q\), then
\begin{equation}\label{eq:sum_alpha_p_final}
\sum_{\ell\in \mathcal{L}\cap \llbracket k_1,k_2\rrbracket}\alpha_{\ell}^{p_{G(\ell)}}
\leq
\frac{u}{Q}\,\operatorname{RL}(x_{\llbracket k_1,k_2\rrbracket})
+u C_{\mathrm{geo}}\alpha_{k_1}^{\underline p},
\end{equation}
where
\(C_{\mathrm{geo}}:=c_d\left(1+\frac{1}{1-2^{-\underline p}}\right)>0\).
\end{lemma}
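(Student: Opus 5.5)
We group the sum by the selected stratum and then apply Lemma~\ref{lemma:RM_or_half} block by block.
Write \(\mathcal L':=\mathcal L\cap\llbracket k_1,k_2\rrbracket\), and for each \(i\in G(\llbracket k_1,k_2\rrbracket\cap I_C)\) set \(\mathcal L'_i:=\{\ell\in\mathcal L':G(\ell)=i\}\). There are at most \(u\) nonempty classes, so it suffices to prove, for each fixed \(i\),
\[
\sum_{\ell\in\mathcal L'_i}\alpha_\ell^{p_i}\leq \frac1Q\operatorname{RL}(x_{\llbracket k_1,k_2\rrbracket})+C_{\mathrm{geo}}\alpha_{k_1}^{\underline p}.
\]

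List \(\mathcal L'_i=\{\ell^1<\ell^2<\dots<\ell^N\}\). By Fact~\ref{fact:no_repeat}, \(\ell^{r+1}>s(\ell^r)\), so the blocks \(\llbracket\ell^r,s(\ell^r)\rrbracket\) for \(r<N\) are pairwise disjoint. Each such block ends before \(\ell^{r+1}\leq k_2\leq K\), hence \(s(\ell^r)<K\) and lies inside \(\llbracket k_1,k_2\rrbracket\). Only the last block \(\ell^N\) may be unfinished. For each \(r<N\), Lemma~\ref{lemma:RM_or_half} (with \(\bar\alpha\leq\bar\alpha_Q\)) gives one of two alternatives. Either (a) \(\alpha_{\ell^r}^{p_i}\leq Q^{-1}\operatorname{RL}(x_{\llbracket\ell^r,s(\ell^r)\rrbracket})\), or (b) \(\alpha_{s(\ell^r)+1}\leq\alpha_{\ell^r}/(2c_d)\). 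In case (b), the ratio bound \eqref{eq:running_x} gives \(\alpha_{\ell^{r+1}}\leq c_d\alpha_{s(\ell^r)+1}\leq\alpha_{\ell^r}/2\).

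The type-(a) terms sum to at most \(Q^{-1}\sum_r\operatorname{RL}(x_{\llbracket\ell^r,s(\ell^r)\rrbracket})\). By disjointness, additivity \eqref{eq:RL_additivity} and monotonicity, this is at most \(Q^{-1}\operatorname{RL}(x_{\llbracket k_1,k_2\rrbracket})\).

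The main obstacle is the remaining terms, namely the type-(b) terms together with the last index \(\ell^N\). Here a naive geometric chain fails, because halving happens only across type-(b) steps, and type-(a) indices interrupt the chain. We therefore bound the type-(b) and terminal terms separately. Let \(r_1<r_2<\dots\) enumerate the indices of type (b), and append \(N\) at the end.
\begin{itemize}
\item For consecutive type-(b) indices \(r_t<r_{t+1}\), we have \(\alpha_{\ell^{r_{t+1}}}\leq c_d\alpha_{\ell^{r_t+1}}\) by the ratio bound, since \(\ell^{r_{t+1}}\geq\ell^{r_t+1}\). Combined with (b) at \(r_t\), this gives \(\alpha_{\ell^{r_{t+1}}}\leq c_d\cdot\alpha_{\ell^{r_t}}/(2c_d)=\alpha_{\ell^{r_t}}/2\).
\item The first type-(b) index satisfies \(\alpha_{\ell^{r_1}}\leq c_d\alpha_{k_1}\).
\item Hence \(\alpha_{\ell^{r_t}}\leq 2^{-(t-1)}c_d\alpha_{k_1}\). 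Using \(p_i\geq\underline p\) and \(\alpha\leq1\), the type-(b) terms sum to at most \(c_d\alpha_{k_1}^{\underline p}/(1-2^{-\underline p})\), since \((c_d\alpha_{k_1})^{p_i}\leq c_d\alpha_{k_1}^{\underline p}\).
\item The terminal term obeys \(\alpha_{\ell^N}^{p_i}\leq c_d\alpha_{k_1}^{\underline p}\).
\end{itemize}
These two contributions add up to \(C_{\mathrm{geo}}\alpha_{k_1}^{\underline p}\). Summing over the at most \(u\) strata then yields \eqref{eq:sum_alpha_p_final}.
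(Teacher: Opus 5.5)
Your proof is correct and follows the paper's argument: group by selected stratum, use Fact~\ref{fact:no_repeat} to get disjoint completed blocks, apply Lemma~\ref{lemma:RM_or_half} to them, chain directly between consecutive halving indices, and pay $c_d\alpha_{k_1}^{\underline p}$ for one leftover index per stratum. The only organizational difference is that you always reserve the last element of each class, whereas the paper reserves the at most one block with $s(\ell)\geq k_2$; both give the same constant $C_{\mathrm{geo}}$.

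There is one slip to fix in the chain step: the intermediate index must be $s(\ell^{r_t})+1$, not $\ell^{r_t+1}$. Since $\ell^{r_{t+1}}\geq\ell^{r_t+1}>s(\ell^{r_t})$, the ratio bound gives $\alpha_{\ell^{r_{t+1}}}\leq c_d\alpha_{s(\ell^{r_t})+1}\leq\alpha_{\ell^{r_t}}/2$ directly. Alternative (b) does not bound $\alpha_{\ell^{r_t+1}}$ by $\alpha_{\ell^{r_t}}/(2c_d)$, so routing through that term only yields a factor $c_d/2$ per link, which need not contract.
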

\begin{proof}
\runinheading{Step~1: Split completed and unfinished blocks}
Separate the crossing indices according to whether their blocks end before
$k_2$:
\begin{equation}\label{eq:alpha_split}
 \sum_{\ell\in \mathcal{L}\cap \llbracket k_1,k_2\rrbracket}\alpha_{\ell}^{p_{G(\ell)}}
=
\sum_{\substack{\ell\in \mathcal{L}\\ k_1 \leq \ell \leq k_2,\ s(\ell)<k_2}}\alpha_{\ell}^{p_{G(\ell)}}
+
\sum_{\substack{\ell\in \mathcal{L}\\ k_1 \leq \ell \leq k_2\leq s(\ell)}}\alpha_{\ell}^{p_{G(\ell)}}.   
\end{equation}

\runinheading{Step~2: Estimate the completed blocks}
For each \(i\in \llbracket 1,T\rrbracket\), define
\[
\mathcal{L}_i:= \{\ell\in \mathcal{L} \cap \llbracket k_1,k_2\rrbracket: \ s(\ell)<k_2,\ G(\ell) = i\}.
\]
Then \(\{\mathcal{L}_i\}_{i\in \llbracket 1,T\rrbracket}\) forms a partition of
\(\{\ell\in \mathcal{L}\cap \llbracket k_1,k_2\rrbracket: s(\ell)<k_2\}\). Moreover, if \(\mathcal{L}_i\neq \emptyset\), then
\(
G(\mathcal{L}_i)=\{i\}\subset G(\llbracket k_1,k_2\rrbracket\cap I_C),
\)
so this can happen for at most \(u\) different values of \(i\).

Fix \(i\in \llbracket 1,T\rrbracket\) such that \(\mathcal{L}_i\neq \emptyset\), and write \(\mathcal{L}_i=\{\ell_1<\ell_2<\cdots<\ell_r\}\).
By Fact~\ref{fact:no_repeat},
\begin{equation}\label{eq:ell_sep_revised}
\ell_{t+1}>s(\ell_t)\qquad \forall t\in \llbracket 1,r-1\rrbracket.
\end{equation}
Since \(s(\ell_t)<k_2\leq K\), Lemma~\ref{lemma:RM_or_half} applies to each \(\ell_t\), and yields
\[
\operatorname{RL}(x_{\llbracket \ell_t,s(\ell_t)\rrbracket})
\geq Q\bigl(1-I_t\bigr)\alpha_{\ell_t}^{p_i},
\]
where
\[
I_t:= \left\{\begin{array}{ll}
1 &\text{if }\alpha_{s(\ell_t)+1}\leq \alpha_{\ell_t}/(2c_d),\\
0 &\text{if }\alpha_{s(\ell_t)+1}> \alpha_{\ell_t}/(2c_d).
\end{array}\right.
\]
Hence
\[
\alpha_{\ell_t}^{p_i}
\leq \frac1Q\,\operatorname{RL}(x_{\llbracket \ell_t,s(\ell_t)\rrbracket})
+I_t\alpha_{\ell_t}^{p_i}.
\]
Summing over \(t=1,\dots,r\), we obtain
\begin{equation}\label{eq:sum_fixed_i_revised}
\sum_{\ell\in \mathcal{L}_i}\alpha_\ell^{p_i}
\leq \frac1Q\sum_{t=1}^r\operatorname{RL}(x_{\llbracket \ell_t,s(\ell_t)\rrbracket})
+\sum_{t=1}^r I_t\alpha_{\ell_t}^{p_i}.
\end{equation}

By \eqref{eq:ell_sep_revised}, the intervals \(\llbracket \ell_t,s(\ell_t)\rrbracket\), \(t=1,\dots,r\),
are pairwise disjoint and all lie in \(\llbracket k_1,k_2\rrbracket\). By Definition~\ref{def:RL}, each
\(\operatorname{RL}(x_{\llbracket \ell_t,s(\ell_t)\rrbracket})\) is a sum of some of the nonnegative terms appearing in
\(\operatorname{RL}(x_{\llbracket k_1,k_2\rrbracket})\). Since the corresponding intervals are disjoint, these groups of terms are disjoint as well. Therefore,
\[
\sum_{t=1}^r\operatorname{RL}(x_{\llbracket \ell_t,s(\ell_t)\rrbracket})
\leq \operatorname{RL}(x_{\llbracket k_1,k_2\rrbracket}).
\]

It remains to estimate \(\sum_{t=1}^r I_t\alpha_{\ell_t}^{p_i}\). Let \(J:=\{t\in \llbracket 1,r\rrbracket:I_t=1\}=\{t_1<\cdots<t_B\}\).
If \(J=\emptyset\), then this sum is zero. Otherwise, since \(I_{t_b}=1\), we have
\(\alpha_{s(\ell_{t_b})+1}\leq \alpha_{\ell_{t_b}}/(2c_d)\).
If \(b<B\), then by \eqref{eq:ell_sep_revised}, we have
\[
\alpha_{\ell_{t_{b+1}}}\leq c_d\alpha_{s(\ell_{t_b})+1}\leq \alpha_{\ell_{t_b}}/2.
\]
It follows inductively that
\[
\alpha_{\ell_{t_b}}
\leq 2^{-(b-1)}\alpha_{\ell_{t_1}}
\leq 2^{-(b-1)}c_d\alpha_{k_1},
\qquad \forall b\in\llbracket1,B\rrbracket.
\]
Consequently,
\[
\sum_{t=1}^r I_t\alpha_{\ell_t}^{p_i}
=\sum_{b=1}^{B}\alpha_{\ell_{t_b}}^{p_i}
\leq c_d^{p_i}\alpha_{k_1}^{p_i}\sum_{b=0}^{B-1}2^{-bp_i}
\leq \frac{c_d^{p_i}}{1-2^{-p_i}}\alpha_{k_1}^{p_i}.
\]
Since \(p_i\geq \underline p\), \(p_i<1\), \(c_d\geq 1\), and \(\alpha_{k_1}\leq 1\), we have
\[
\frac{c_d^{p_i}}{1-2^{-p_i}}\alpha_{k_1}^{p_i}
\leq
\frac{c_d}{1-2^{-\underline p}}\alpha_{k_1}^{\underline p}.
\]
Substituting this into \eqref{eq:sum_fixed_i_revised} gives
\[
\sum_{\ell\in \mathcal{L}_i}\alpha_\ell^{p_i}
\leq \frac1Q\,\operatorname{RL}(x_{\llbracket k_1,k_2\rrbracket})
+\frac{c_d}{1-2^{-\underline p}}\alpha_{k_1}^{\underline p}.
\]
Summing this inequality over all \(i\in \llbracket 1,T\rrbracket\) such that \(\mathcal{L}_i\neq \emptyset\), and using that there are at most \(u\) such indices, yields
\begin{equation}\label{eq:first_sum_bound_final}
\sum_{\substack{\ell\in \mathcal{L}\\ k_1 \leq \ell \leq k_2,\ s(\ell)<k_2}}\alpha_{\ell}^{p_{G(\ell)}}
\leq \frac{u}{Q}\,\operatorname{RL}(x_{\llbracket k_1,k_2\rrbracket})
+\frac{u c_d}{1-2^{-\underline p}}\alpha_{k_1}^{\underline p}.
\end{equation}

\runinheading{Step~3: Estimate the unfinished blocks}
For each \(i\in \llbracket 1,T\rrbracket\), define
\[
\mathcal{J}_i:=\{\ell\in \mathcal{L}:k_1\leq \ell\leq k_2\leq s(\ell),\ G(\ell)=i\}.
\]
Again, \(\{\mathcal{J}_i\}_{i\in \llbracket 1,T\rrbracket}\) forms a partition of
\(\{\ell\in \mathcal{L}:k_1\leq \ell\leq k_2\leq s(\ell)\}\), and \(\mathcal{J}_i\neq \emptyset\) for at most \(u\) different values of \(i\). Also, \(\operatorname{card}(\mathcal{J}_i)\leq 1\) for every \(i\). Indeed, if \(\bar\ell<\tilde\ell\) both belong to \(\mathcal{J}_i\), then Fact~\ref{fact:no_repeat} gives
\(\tilde\ell>s(\bar\ell)\geq k_2\),
contradicting \(\tilde\ell\leq k_2\). Hence
\begin{equation}\label{eq:second_sum_bound_final}
\sum_{\substack{\ell\in \mathcal{L}\\ k_1 \leq \ell \leq k_2\leq s(\ell)}}\alpha_{\ell}^{p_{G(\ell)}}
= \sum_{i=1}^T\sum_{\ell\in \mathcal{J}_i}\alpha_\ell^{p_i} 
\leq  \sum_{i\in \llbracket 1,T\rrbracket: \mathcal{J}_i \neq \emptyset}c_d^{p_i}\alpha_{k_1}^{p_i}
\leq u c_d\alpha_{k_1}^{\underline p}.
\end{equation}

Combining \eqref{eq:first_sum_bound_final} and \eqref{eq:second_sum_bound_final} gives
\[
\sum_{\ell\in \mathcal{L}\cap \llbracket k_1,k_2\rrbracket}\alpha_{\ell}^{p_{G(\ell)}}
\leq
\frac{u}{Q}\,\operatorname{RL}(x_{\llbracket k_1,k_2\rrbracket})
+u c_d\left(1+\frac{1}{1-2^{-\underline p}}\right)\alpha_{k_1}^{\underline p},
\]
which is \eqref{eq:sum_alpha_p_final}.
\end{proof}

Summing Corollary~\ref{cor:lm->lm+1} between two crossing indices counts each
endpoint power at most twice.  Lemma~\ref{lemma:sum_alpha_p} bounds their total
by $(u/Q)\operatorname{RL}$ and a geometric remainder.  The choice
$Q>2\overline C u$ absorbs the relative-length term.

\begin{lemma}[Crossing length]\label{lemma:RM_arbitrary_interval}
Let \(k_1,k_2\in \mathcal{L}\) satisfy \(k_1<k_2\), and let \(u:=\operatorname{card}\bigl(G(\llbracket k_1,k_2\rrbracket\cap I_C)\bigr)\).
Let \(\overline C>0\) be given by Corollary~\ref{cor:lm->lm+1}. Fix
\(Q>2\overline C u\), and let \(\bar{\alpha}_Q,C_{\mathrm{geo}}>0\) be given
by Lemma~\ref{lemma:sum_alpha_p}. If \(\bar{\alpha}\leq \bar{\alpha}_Q\), then
\[\operatorname{RL}(x_{\llbracket k_1,k_2\rrbracket})
\leq
\frac{Q}{Q-2\overline C u}
\Bigg(
\psi\bigl(z_{k_1}^{G(k_1)}\bigr)
-\psi\bigl(z_{k_2}^{G(k_2)}\bigr)
+\sum_{k=k_1}^{k_2-1}g_{2,k}+2\overline C\,u C_{\mathrm{geo}}\alpha_{k_1}^{\underline p}
\Bigg).\]
\end{lemma}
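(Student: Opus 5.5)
Since \(k_1,k_2\in\mathcal L\), write \(k_1=l_{m_1}\) and \(k_2=l_{m_2}\) with \(m_1<m_2\). Case~3 of Definition~\ref{def:RL}, together with the additivity identity \eqref{eq:RL_additivity}, gives
\[
\operatorname{RL}(x_{\llbracket k_1,k_2\rrbracket})=\sum_{m=m_1}^{m_2-1}\operatorname{RL}(x_{\llbracket l_m,l_{m+1}\rrbracket}).
\]
We apply Corollary~\ref{cor:lm->lm+1} to each consecutive pair and sum over \(m\in\llbracket m_1,m_2-1\rrbracket\). The \(\psi\)-terms telescope to \(\psi(z_{k_1}^{G(k_1)})-\psi(z_{k_2}^{G(k_2)})\). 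The \(g_{2}\)-sums concatenate to \(\sum_{k=k_1}^{k_2-1}g_{2,k}\). This yields
\[
\operatorname{RL}(x_{\llbracket k_1,k_2\rrbracket})\leq \psi\bigl(z_{k_1}^{G(k_1)}\bigr)-\psi\bigl(z_{k_2}^{G(k_2)}\bigr)+\sum_{k=k_1}^{k_2-1}g_{2,k}+\overline C\sum_{m=m_1}^{m_2-1}\Bigl(\alpha_{l_m}^{p_{G(l_m)}}+\alpha_{l_{m+1}}^{p_{G(l_{m+1})}}\Bigr).
\]

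Next we handle the endpoint powers. Each \(l_m\) with \(m_1\leq m\leq m_2\) occurs at most twice in the last sum. That sum is therefore at most
\[
2\sum_{\ell\in\mathcal L\cap\llbracket k_1,k_2\rrbracket}\alpha_\ell^{p_{G(\ell)}}.
\]
We bound this quantity by Lemma~\ref{lemma:sum_alpha_p} on the interval \(\llbracket k_1,k_2\rrbracket\), using the same \(u\) and \(Q\). This is admissible since \(\bar\alpha\leq\bar\alpha_Q\), where \(\bar\alpha_Q\) comes from Lemma~\ref{lemma:RM_or_half}. The result is the bound
\[
2\overline C\Bigl(\tfrac{u}{Q}\operatorname{RL}(x_{\llbracket k_1,k_2\rrbracket})+uC_{\mathrm{geo}}\alpha_{k_1}^{\underline p}\Bigr).
\]

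Finally we absorb the relative-length term. Moving \(\tfrac{2\overline C u}{Q}\operatorname{RL}(x_{\llbracket k_1,k_2\rrbracket})\) to the left-hand side leaves the factor \(1-2\overline Cu/Q\) in front of \(\operatorname{RL}\). This factor is positive because \(Q>2\overline Cu\). Multiplying through by \(Q/(Q-2\overline Cu)\) gives exactly the stated inequality.

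The only delicate points are bookkeeping. First, \(\operatorname{RL}\) must be finite, so that the absorption is legitimate; this holds because it is a finite sum. Second, the doubled counting of endpoints must match the factor \(2\overline C\); we check this directly, noting that the interior indices \(l_{m_1+1},\dots,l_{m_2-1}\) each appear twice and the two endpoints \(k_1,k_2\) each appear once. We expect no substantive obstacle beyond confirming that Corollary~\ref{cor:lm->lm+1} applies to every pair \(m<\overline m\). It does, since \(k_2=l_{m_2}\in\mathcal L\) forces \(m_2\leq\overline m\).
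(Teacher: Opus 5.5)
Your proposal is correct and follows the paper's proof essentially step for step. You sum Corollary~\ref{cor:lm->lm+1} over consecutive crossings, telescope the \(\psi\)-terms, bound the at-most-twice-counted endpoint powers by Lemma~\ref{lemma:sum_alpha_p}, and absorb the \(\tfrac{2\overline C u}{Q}\operatorname{RL}\) term using \(Q>2\overline C u\).
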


\begin{proof}
Write \(k_1=l_a\) and \(k_2=l_b\), where \(0\leq a<b\leq \overline m\).
Summing Corollary~\ref{cor:lm->lm+1} for \(m=a,\ldots,b-1\), using
\eqref{eq:RL_additivity}, and counting each crossing-index power at
most twice give
\[
\psi\bigl(z_{k_1}^{G(k_1)}\bigr)-\psi\bigl(z_{k_2}^{G(k_2)}\bigr)
\geq\operatorname{RL}(x_{\llbracket k_1,k_2\rrbracket})
-\sum_{k=k_1}^{k_2-1}g_{2,k}-2\overline C
\sum_{\ell\in\mathcal{L}\cap\llbracket k_1,k_2\rrbracket}
\alpha_\ell^{p_{G(\ell)}}.
\]
Lemma~\ref{lemma:sum_alpha_p} bounds the last sum by
\((u/Q)\operatorname{RL}(x_{\llbracket k_1,k_2\rrbracket})
+uC_{\mathrm{geo}}\alpha_{k_1}^{\underline p}\). Hence the coefficient of
relative length is \(1-2\overline C u/Q>0\), and rearranging gives the
stated estimate.
\end{proof}
Lemma~\ref{lemma:RM_arbitrary_interval} requires both endpoints to lie in
$\mathcal L$.  For the segment from the last crossing index to $K$, apply the
one-block estimate directly and use Lemma~\ref{lemma:endpoint_conversion} only
if the terminal value is still projected.

\begin{lemma}[Terminal length bound]\label{lemma:RM_terminal_piece}
There exists \(C_{\mathrm{tail}}>0\) such that the following holds.

Assume that \(\mathcal{L}\neq\emptyset\) and let \(l:=\max_{\ell\in\mathcal{L}}\ell\). Then
\begin{equation}\label{eq:terminal_piece_RM}
\operatorname{RL}(x_{\llbracket l,K\rrbracket})
\leq
\psi(z_l^{G(l)})-\psi(z_K)
+\sum_{k=l}^{K-1}g_{2,k}
+C_{\mathrm{tail}}\alpha_l^{\underline\beta(1-\theta)}.
\end{equation}
\end{lemma}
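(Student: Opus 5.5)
The plan is to split according to the position of \(K\) relative to the last block \(\llbracket l,q(l)\rrbracket\) and to apply Lemma~\ref{lemma:core_bridge} directly with \(k_2=K\). Its hypothesis holds because \(l\) is the last element of \(\mathcal L\). If \(K\le q(l)\), the first alternative applies. If \(K>q(l)\), then no index of \(I_C\) lies in \((q(l),K)\); otherwise the set in \eqref{eq:def_lnext} would be nonempty and would produce a later crossing index \(l_{\overline m+1}\in\mathcal L\), contradicting the maximality of \(l\). In both cases Lemma~\ref{lemma:core_bridge} gives
\[
\operatorname{RL}(x_{\llbracket l,K\rrbracket})\le \psi(z_l^{G(l)})-\psi(\widetilde z_K^{\,l})+\sum_{k=l}^{K-1}g_{2,k}+\hat C\alpha_l^{p_{G(l)}}.
\]

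Next I replace \(\widetilde z_K^{\,l}\) by \(z_K\). When \(K>q(l)\) we already have \(\widetilde z_K^{\,l}=z_K\), so there is nothing to do. When \(K\le q(l)\), we have \(\widetilde z_K^{\,l}=z_K^{G(l)}\), and \(K\le q(l)\le s(l)\) gives \(x_K\in\mathcal N(G(l),\alpha_K)\). The first bound of Lemma~\ref{lemma:endpoint_conversion} then yields \(|\psi(z_K^{G(l)})-\psi(z_K)|\le C_{\mathrm{end}}\alpha_K^{\beta_{G(l)}(1-\theta)}\). Here I use only the weaker \(\beta\)-bound, since \(x_K\) need not satisfy the \(2\widehat c\alpha^{\gamma}\) proximity. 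By \eqref{eq:running_x}, \(\alpha_K\le c_d\alpha_l\), and with \(\alpha_l\le1\) and \(c_d\ge1\) this gives a bound \(C_{\mathrm{end}}c_d\,\alpha_l^{\underline\beta(1-\theta)}\).

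The remaining task is to compare exponents. From \eqref{eq:relation_beta_p} we have \(p_{G(l)}=\gamma_{G(l)}(1-\theta)>\beta_{G(l)}\ge\underline\beta>\underline\beta(1-\theta)\). Since \(\alpha_l\le1\), it follows that \(\alpha_l^{p_{G(l)}}\le\alpha_l^{\underline\beta(1-\theta)}\). Setting \(C_{\mathrm{tail}}:=\hat C+C_{\mathrm{end}}c_d\) then gives \eqref{eq:terminal_piece_RM}. The only point that needs care is the case \(K\le q(l)\): the terminal value is still projected there, and only the \(\beta\)-scale estimate is available. This is exactly why the remainder carries the exponent \(\underline\beta(1-\theta)\) rather than \(\underline p\).
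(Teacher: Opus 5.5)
Your proposal is correct and follows essentially the same argument as the paper. Both split on whether $K>q(l)$, use the maximality of $l$ in $\mathcal L$ to invoke Lemma~\ref{lemma:core_bridge} with $k_2=K$, and in the projected case $K\le q(l)\le s(l)$ pass from $z_K^{G(l)}$ to $z_K$ using the $\beta$-scale bound of Lemma~\ref{lemma:endpoint_conversion}, the ratio bound, and $p_{G(l)}>\underline\beta(1-\theta)$. The only cosmetic difference is that the paper treats $K=l$ separately, whereas you absorb it into the case $K\le q(l)$ through the trivial $k_2=l$ instance of Lemma~\ref{lemma:core_bridge}.
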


\begin{proof}
Write \(i:=G(l)\), \(q:=q(l)\), and
\(r:=\underline\beta(1-\theta)\). If \(K=l\), then relative length is zero
and Lemma~\ref{lemma:endpoint_conversion} gives
\(\psi(z_l^i)-\psi(z_l)\geq-C_{\mathrm{end}}\alpha_l^r\), which proves the
claim in this case. Hence assume that \(K>l\).

Suppose that \(K>q\). Since \(l\) is the last element of \(\mathcal L\),
\((q,K)\cap I_C=\emptyset\). Lemma~\ref{lemma:core_bridge}, applied with
\(k_2=K\), therefore gives
\[
\operatorname{RL}(x_{\llbracket l,K\rrbracket})
\leq\psi(z_l^i)-\psi(z_K)
+\sum_{k=l}^{K-1}g_{2,k}+\hat C\alpha_l^{p_i}.
\]
Since \(p_i>r\) and \(\alpha_l\leq1\), this has the required form.

It remains to consider \(l<K\leq q\). Lemma~\ref{lemma:core_bridge} gives
the preceding display with \(z_K^i\) in place of \(z_K\). Since
\(K\leq q\leq s(l)\), Lemma~\ref{lemma:endpoint_conversion} and the ratio
bound yield
\[
|\psi(z_K^i)-\psi(z_K)|
\leq C_{\mathrm{end}}\alpha_K^{\beta_i(1-\theta)}
\leq C_{\mathrm{end}}c_d^{\beta_i(1-\theta)}
\alpha_l^{\beta_i(1-\theta)}
\leq C\alpha_l^{\underline\beta(1-\theta)},
\]
where the last inequality uses \(\beta_i(1-\theta)\geq r\) and
\(\alpha_l\leq1\). Also \(\alpha_l^{p_i}\leq\alpha_l^r\). Choosing
\(C_{\mathrm{tail}}\) larger than the finitely many coefficients in these
three cases proves the result.
\end{proof}

\subsection{Final diameter estimate}\label{sec:final_proof}
Split the realization at its first and last crossing indices.
Fact~\ref{fact:y_k^i} controls the initial open-stratum piece,
Lemma~\ref{lemma:RM_arbitrary_interval} controls the crossings between them,
and Lemma~\ref{lemma:RM_terminal_piece} controls the final block.  Adding these
estimates telescopes all shifted values except one projection mismatch at the
first crossing.

\begin{proof}
Let \(\overline C>0\) be the constant that appears in
Corollary~\ref{cor:lm->lm+1} and Lemma~\ref{lemma:RM_arbitrary_interval}, and
set \(Q:=4\overline C\max\{1,T\}\).
Let \(\bar{\alpha}_Q,C_{\mathrm{geo}}>0\) be given by
Lemma~\ref{lemma:sum_alpha_p}. Fix a realization
\((x_k)\) of \eqref{eq:DI_with_noise} satisfying
\eqref{eq:running_x} with \(\bar{\alpha}\leq \min\{1,\bar{\alpha}_Q,\hat{\alpha}\}\),
in addition to the smallness requirements in \eqref{eq:standing_alpha_small_hat}.
For any such sequence, we may define its relative length following
Definition~\ref{def:RL}, and the arguments in
Sections~\ref{sec:diameter_setup}--\ref{sec:diameter_global_bounds} apply.

We first record the estimate used for every open-stratum interval. If
\(0\leq a<b\leq K\) and
\(\llbracket a,b-1\rrbracket\cap I_C=\emptyset\), then
Fact~\ref{fact:IC_away} places the iterates in one open-stratum neighborhood.
Applying Fact~\ref{fact:y_k^i} there gives
\begin{equation}\label{eq:open_interval_length}
\sum_{k=a}^{b-1}|x_{k+1}-x_k|
\leq\psi(z_a)-\psi(z_b)
+\sum_{k=a}^{b-1}g_{2,k}+\iota c_d^\tau\alpha_a^\tau.
\end{equation}

\runinheading{Case~1: No non-open crossing, \(I_C=\emptyset\)}
If \(I_C=\emptyset\), then \eqref{eq:open_interval_length} and the triangle
inequality give
\[
\operatorname{diam}(x_{\llbracket0,K\rrbracket})
\leq
\sum_{k=0}^{K-1}|x_{k+1}-x_k|
\leq
\psi(z_0)-\psi(z_K)
+\sum_{k=0}^{K-1}g_{2,k}
+\iota c_d^\tau\alpha_0^\tau .
\]
Since \(z_k=f(x_k)+g_{1,k}\), the signed-power H\"older estimate
\cite[Fact~4.12]{lai2025diameter} gives
\[
\psi(z_0)-\psi(z_K)
\leq
\psi(f(x_0))-\psi(f(x_K))
+2\psi(g_{1,0})+2\psi(g_{1,K}).
\]
Substituting this bound, doubling the resulting nonnegative right-hand side,
and using
\(\underline\beta(1-\theta)<\tau\) and \(\alpha_0\leq1\) give
\eqref{eq:diam_formula} after enlarging its constant.

\runinheading{Case~2: At least one non-open crossing, \(I_C\neq\emptyset\)}
Set \(l^-:=l_0=\min I_C\) and
\(l^+:=l_{\overline m}=\max \mathcal{L}\).
We treat this case in five steps.

\runinheading{Step~1: Control the portion before the first crossing}
We claim that
\begin{equation} \label{eq:RL_initial}
 \operatorname{RL}(x_{\llbracket 0,l^-\rrbracket})
 =\sum_{k=0}^{l^--1}|x_{k+1}-x_k|\leq \psi(z_0)-\psi(z_{l^-})
 +\sum_{k=0}^{l^--1}g_{2,k}+\iota c_d^\tau\alpha_0^\tau.
\end{equation}
If \(l^-=0\), the claim is immediate because its left-hand side is zero.
If \(l^->0\), the ordinary-length branch of Definition~\ref{def:RL} and
\eqref{eq:open_interval_length}, applied with \(a=0\) and \(b=l^-\), prove
\eqref{eq:RL_initial}.

\runinheading{Step~2: Absorb the errors between crossings}
If \(l^-=l^+\), then
\(
\operatorname{RL}(x_{\llbracket l^-,l^+\rrbracket})=0.
\)
Assume now that \(l^-<l^+\). Since
\(u:=\operatorname{card}(G(\llbracket l^-,l^+\rrbracket\cap I_C))
\leq\operatorname{card}(\llbracket1,T\rrbracket)=T\),
this case has \(T\geq1\). Hence \(Q=4\overline C T\), and
Lemma~\ref{lemma:RM_arbitrary_interval} gives
\begin{equation}\label{eq:RL_middle}
\operatorname{RL}(x_{\llbracket l^-,l^+\rrbracket})
\leq
2\Biggl(
\psi\bigl(z_{l^-}^{G(l^-)}\bigr)-\psi\bigl(z_{l^+}^{G(l^+)}\bigr)
+\sum_{k=l^-}^{l^+-1}g_{2,k}
+2\overline C T C_{\mathrm{geo}}\,\alpha_{l^-}^{\underline p}
\Biggr).
\end{equation}
The same inequality is trivial when $l^-=l^+$, because its left-hand side is
zero.

\runinheading{Step~3: Control the portion after the last crossing}
Lemma~\ref{lemma:RM_terminal_piece} yields
\begin{equation}\label{eq:RL_terminal}
\operatorname{RL}(x_{\llbracket l^+,K\rrbracket})
\leq \psi\bigl(z_{l^+}^{G(l^+)}\bigr)-\psi(z_K)
+\sum_{k=l^+}^{K-1}g_{2,k}+C_{\mathrm{tail}}\alpha_{l^+}^{\underline\beta(1-\theta)}
,
\end{equation}
for some $C_{\mathrm{tail}}>0$; Lemma~\ref{lemma:RM_terminal_piece} also covers
$l^+=K$.

\runinheading{Step~4: Telescope the shifted values}
Equation~\eqref{eq:RL_additivity} gives
\[
\begin{aligned}
 \operatorname{RL}(x_{\llbracket 0,K\rrbracket})
 &=\operatorname{RL}(x_{\llbracket 0,l^-\rrbracket})
 +\operatorname{RL}(x_{\llbracket l^-,l^+\rrbracket})
 +\operatorname{RL}(x_{\llbracket l^+,K\rrbracket}).
\end{aligned}
\]
Enlarging the bounds in \eqref{eq:RL_initial} and \eqref{eq:RL_terminal} by a
factor of \(2\), and then summing them with \eqref{eq:RL_middle}, we obtain
\begin{align*}
\operatorname{RL}(x_{\llbracket 0,K\rrbracket})
&\leq2\Bigl(
\psi(z_0)-\psi(z_{l^-})
+\psi\bigl(z_{l^-}^{G(l^-)}\bigr)-\psi\bigl(z_{l^+}^{G(l^+)}\bigr)
+\psi\bigl(z_{l^+}^{G(l^+)}\bigr)-\psi(z_K)
\Bigr)\\
&+2\sum_{k=0}^{K-1}g_{2,k}
+2\Bigl(
\iota c_d^\tau\alpha_0^\tau
+C_{\mathrm{tail}}\alpha_{l^+}^{\underline\beta(1-\theta)}
+2\overline{C}T C_{\mathrm{geo}}\alpha_{l^-}^{\underline p}
\Bigr).
\end{align*}
The $l^+$-terms cancel.  At $l^-$, the projected and unprojected values leave
the single mismatch
\[
\begin{aligned}
 &\psi(z_0)-\psi(z_{l^-})
 +\psi\bigl(z_{l^-}^{G(l^-)}\bigr)-\psi\bigl(z_{l^+}^{G(l^+)}\bigr)+\psi\bigl(z_{l^+}^{G(l^+)}\bigr)-\psi(z_K)\\
 =&\psi(z_0)-\psi(z_K)
 +\psi\bigl(z_{l^-}^{G(l^-)}\bigr)-\psi(z_{l^-}).
\end{aligned}
\]
Since \(l^-\in I_C\), combining Fact \ref{fact:IC_close} and Lemma~\ref{lemma:endpoint_conversion} gives
\[
\bigl|\psi\bigl(z_{l^-}^{G(l^-)}\bigr)-\psi(z_{l^-})\bigr|
\leq
C_{\mathrm{end}}\alpha_{l^-}^{p_{G(l^-)}} \leq C_{\mathrm{end}}\alpha_{l^-}^{\underline p}.
\]
Therefore,
\begin{align}
\operatorname{RL}(x_{\llbracket 0,K\rrbracket})
&\leq2\bigl(\psi(z_0)-\psi(z_K)\bigr)
+2\sum_{k=0}^{K-1}g_{2,k} \notag\\
&+2\Bigl(
\iota c_d^\tau\alpha_0^\tau
+C_{\mathrm{tail}}\alpha_{l^+}^{\underline\beta(1-\theta)}
+2\overline{C}T C_{\mathrm{geo}}\alpha_{l^-}^{\underline p}+C_{\mathrm{end}}\alpha_{l^-}^{\underline{p}}
\Bigr).\label{eq:RL_total_pre}
\end{align}

\runinheading{Step~5: Recover the diameter and original endpoint values}
By \eqref{eq:running_x},
$\alpha_{l^-},\alpha_{l^+}\leq c_d\alpha_0$; moreover,
$\alpha_0^{\underline\beta}\leq
\alpha_0^{\underline\beta(1-\theta)}$.  Hence
Lemma~\ref{lemma:diameter_by_RM} and \eqref{eq:RL_total_pre}, after enlarging
the fixed coefficients, give
\begin{align*}
\operatorname{diam}(x_{\llbracket 0,K\rrbracket})
&\leq2\bigl(\psi(z_0)-\psi(z_K)\bigr)
+2\sum_{k=0}^{K-1}g_{2,k} \notag\\
&+2\Bigl(
\iota c_d^\tau\alpha_0^\tau
+C_{\mathrm{tail}}c_d\alpha_{0}^{\underline\beta(1-\theta)}
+2c_d\overline{C}TC_{\mathrm{geo}}\alpha_{0}^{\underline p}
+C_{\mathrm{end}}c_d\alpha_{0}^{\underline{p}}
\Bigr).
\end{align*}
Because $z_k=f(x_k)+g_{1,k}$,
the signed-power H\"older estimate
\cite[Fact~4.12]{lai2025diameter} separates the additive corrections at
the two endpoints:
\[
 \psi(z_0)-\psi(z_K)
 \leq \psi(f(x_0))-\psi(f(x_K))+2\psi(g_{1,0})+2\psi(g_{1,K}).
\]
Finally, $\underline\beta(1-\theta)<\min\{\underline p,\tau\}$ by
\eqref{eq:relation_beta_p}, and $\alpha_0\leq1$.  Thus all remaining
step-size powers are bounded by a fixed multiple of
$\alpha_0^{\underline\beta(1-\theta)}$.  Therefore, for some $C>0$,
\[\operatorname{diam}(x_{\llbracket 0,K\rrbracket}) \leq 2(\psi(f(x_0))-\psi(f(x_K))) + 4(\psi(g_{1,0})+ \psi(g_{1,K})) + 2\sum_{k=0}^{K-1}g_{2,k} + C\alpha_0^{\underline\beta(1-\theta)}.\]
\end{proof}

\section{Proof of Theorem \ref{thm:inexact_diameter}}\label{sec:proof_inexact}
The proof of Theorem~\ref{thm:inexact_diameter} has two parts. We first
establish a local projected-length estimate for the inexact subgradient
method. We then construct scale-dependent stratified neighborhoods and verify
the stratified descent condition in Definition~\ref{def:stratified_descent},
and apply Theorem~\ref{thm:diameter}.

\subsection{The local projected-length estimate}

Let \(M\subset\mathbb{R}^n\) be a \(C^2\) manifold and let
\(g\colon M\to\mathbb{R}\) be \(C^1\). A power function
\(\psi(t):=c\operatorname{sgn}(t)|t|^{1-\theta}\), with \(c>0\) and
\(\theta\in(0,1)\), is a \emph{desingularizing function} of \(g\) on
\([|g|\leq\varepsilon]\) if
\(1/\psi'(g(x))\leq|\nabla_M g(x)|\) whenever
\(0<|g(x)|\leq\varepsilon\), with \(1/\psi'(0)=0\). If \(M\) is bounded
and definable and \(g\) is definable, applying the signed \L{}ojasiewicz
gradient inequality to \(g\) and \(-g\), and taking common constants, gives
such a function, with
\(1/\psi'(g(x))=|g(x)|^\theta/(c(1-\theta))\)
\cite{lojasiewicz1958,kurdyka1994wf}; see also
\cite[Corollary~12 and Remark~7]{bolte2007clarke}. The following lemma gives the local projected-length estimate needed below.
\begin{lemma}[Inexact projected length]\label{lemma:inexact_projected_length}
Let \(f\colon\mathbb{R}^n\to\mathbb{R}\) be locally Lipschitz, let \(M\subset \mathbb{R}^n\) be a \(C^3\) manifold, and assume that \(f|_M\) is \(C^2\). Fix \(c_b,\xi>0\). Assume that there exist \(L>1\), a set \(U\subset\mathbb{R}^n\), and \(\varepsilon>0\) such that
\begin{itemize}
    \item \(P_M\) is single-valued and \(L\)-Lipschitz continuous on \(U\), and is \(C^2\) on an open neighborhood of \(U\),
    \item \(f\) is \(L\)-Lipschitz continuous on \(U\), and \(f|_M\) is \(L\)-Lipschitz continuous on \(P_M(U)\),
    \item \(\psi\colon\mathbb{R}\to\mathbb{R}\) is a desingularizing function of \(f|_M\) on \([|f|\leq\varepsilon]\).
\end{itemize}

Let \(K\geq1\), and let \(x_{\llbracket 0,K\rrbracket}\subset U\), \(\alpha_{\llbracket 0,K-1\rrbracket}\subset(0,1]\), and \(e_{\llbracket 0,K-1\rrbracket}\subset\mathbb{R}^n\) satisfy
\[
x_{k+1}=x_k-\alpha_ku_k+e_k,
\qquad
u_k\in\partial f(x_k;c_b\alpha_k^\xi),
\qquad |u_k|\leq L,
\qquad
\forall k\in \llbracket 0,K-1\rrbracket.
\]
Set \(y_k:=P_M(x_k)\) and \(d_k:=d(x_k,M)\) for
\(k\in\llbracket0,K\rrbracket\). Assume that
\(y_k\in[|f|\leq\varepsilon]\) for all
\(k\in\llbracket0,K\rrbracket\) and, for every
\(k\in\llbracket0,K-1\rrbracket\),
\(B(x_k,\alpha_kL)\cup B(x_k,c_b\alpha_k^\xi)\subset U\).
Assume moreover that there exist constants \(L_{f,k},L_{V,k},L_{P,k}\geq L\) for
\(k\in \llbracket0,K-1\rrbracket\) such that, for each such \(k\),
\begin{enumerate}
    \item \( |\nabla_M f(x)-\nabla_M f(x')|
    \leq L_{f,k}|x-x'|\) for all
    \(x,x'\in B(y_k,\alpha_kL^2+L|e_k|)\cap M\),
    \item \( |P_{T_M(y_k)}v-\nabla_M f(y_k)|
    \leq L_{V,k}|x-y_k|\) for all \(x\in B(x_k,c_b\alpha_k^\xi)\) and \(v\in\partial f(x)\),
    \item \( |DP_M(x)-DP_M(x')|
    \leq L_{P,k}|x-x'|\) for all \(x,x'\in B(x_k,\alpha_kL)\cup\{y_k\}\).
\end{enumerate}

Let \(g_0,\ldots,g_K\) be any nonnegative scalars such that
\[
g_k-g_{k+1}
\geq
\alpha_kL_{V,k}^2(d_k^2+c_b^2\alpha_k^{2\xi})
+L^2\alpha_kL_{P,k}d_k
+L^2|e_k|
+\frac{L^4\alpha_k^2}{2}(L_{f,k}+L_{P,k})
\]
for every \(k\in\llbracket0,K-1\rrbracket\). Define \(z_k:=f(y_k)+g_k\) for \(k\in\llbracket0,K\rrbracket\). Then $z_0\geq z_1\geq\cdots\geq z_K$ and
\begin{align}
\sum_{k=0}^{K-1}|y_{k+1}-y_k|
&\leq2L\left(\psi(z_0)-\psi(z_K)\right) \notag\\
&+\sum_{k=0}^{K-1}
\left(
L^3L_{f,k}\alpha_k^2
+L^2L_{f,k}\alpha_k|e_k|
+L\alpha_kL_{V,k}(d_k+c_b\alpha_k^\xi)
+\frac{L\alpha_k}{\psi'(g_k)}
+L|e_k|
\right)
\notag\\
&+L^2\max_{k\in\llbracket0,K-1\rrbracket}\alpha_k .
\label{eq:y_k-error}
\end{align}
\end{lemma}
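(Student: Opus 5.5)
We follow the pattern of \cite[Lemma~4.1]{lai2025diameter}. We first derive, for each $k$, a projected Taylor step, then an approximate descent inequality for $f(y_k)$, and finally telescope a signed-power estimate.

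\emph{Step 1 (projected step).} Since $P_M$ is $C^2$ near $U$ and $DP_M(y_k)=P_{T_M(y_k)}$, we expand $P_M$ along the segment from $x_k$ to $x_k-\alpha_ku_k$. The displacement $\alpha_k|u_k|\le\alpha_kL$ keeps this segment inside $B(x_k,\alpha_kL)\subset U$. We then treat $e_k$ separately, using the $L$-Lipschitz property of $P_M$. Writing $DP_M(x_k)=P_{T_M(y_k)}+(DP_M(x_k)-DP_M(y_k))$ and applying item~3 together with $|x_k-y_k|=d_k$ gives the separated form
\[
y_{k+1}-y_k=-\alpha_kP_{T_M(y_k)}u_k+R_k,\qquad |R_k|\le L\alpha_kL_{P,k}d_k+\tfrac{L_{P,k}}{2}L^2\alpha_k^2+L|e_k|.
\]
Next, $u_k$ is a convex combination of $v\in\partial f(x)$ with $x\in B(x_k,c_b\alpha_k^\xi)$. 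Item~2 gives $|P_{T_M(y_k)}u_k-\nabla_Mf(y_k)|\le L_{V,k}(d_k+c_b\alpha_k^\xi)$, since $|x-y_k|\le d_k+c_b\alpha_k^\xi$. Combining these,
\[
|y_{k+1}-y_k+\alpha_k\nabla_Mf(y_k)|\le \alpha_kL_{V,k}(d_k+c_b\alpha_k^\xi)+|R_k|,
\]
and in particular $|y_{k+1}-y_k|\le L^2\alpha_k+L|e_k|$, so $y_{k+1}\in B(y_k,\alpha_kL^2+L|e_k|)$.

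\emph{Step 2 (descent).} Because $\nabla_Mf$ is $L_{f,k}$-Lipschitz on that ball (item~1), a Taylor expansion of $f|_M$ along a curve in $M$ gives
\[
f(y_{k+1})\le f(y_k)+\langle\nabla_Mf(y_k),y_{k+1}-y_k\rangle+C\,L_{f,k}|y_{k+1}-y_k|^2.
\]
We take the tangential component of $y_{k+1}-y_k$ and control the normal part by second-order terms. Inserting Step~1, and applying Young's inequality $ab\le\frac14a^2+b^2$ to $\alpha_k|\nabla_Mf(y_k)|L_{V,k}(d_k+c_b\alpha_k^\xi)$, we obtain
\[
f(y_{k+1})-f(y_k)\le-\tfrac12\alpha_k|\nabla_Mf(y_k)|^2+\text{(the terms in the hypothesis on }g_k-g_{k+1}).
\]
The term $L^2|e_k|$ arises from $|\nabla_Mf|\le L$ times $L|e_k|$. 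This yields $z_k-z_{k+1}\ge\frac12\alpha_k|\nabla_Mf(y_k)|^2\ge0$, which is the claimed monotonicity.

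\emph{Step 3 (KL telescoping).} This is the main obstacle. The difficulty is that $\psi'$ has to be evaluated at $z_k$ rather than at $f(y_k)$, so both the shift $g_k$ and sign changes of $z_k$ must be handled. By concavity of $\psi$ on each half-line, together with the signed secant estimate \eqref{eq:signed_power_secant} when crossing zero, we have
\[
\psi(z_k)-\psi(z_{k+1})\ge\psi'(\max\{|z_k|,|z_{k+1}|\})\,(z_k-z_{k+1}),
\]
up to factor $2$. The desingularizing inequality gives $1/\psi'(f(y_k))\le|\nabla_Mf(y_k)|$. Subadditivity of $t\mapsto t^\theta$ then gives
\[
\frac{1}{\psi'(z_k)}\le\frac{1}{\psi'(f(y_k))}+\frac{1}{\psi'(g_k)}\le|\nabla_Mf(y_k)|+\frac{1}{\psi'(g_k)}.
\]
The shift from $z_k$ to $z_{k+1}$ is handled through $|\nabla_Mf(y_{k+1})|\le|\nabla_Mf(y_k)|+L_{f,k}(L^2\alpha_k+L|e_k|)$, exactly as in Lemma~\ref{lemma:corrected_active_KL}. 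Applying $u\le u^2/(u+v)+v$ with $u=\alpha_k|\nabla_Mf(y_k)|$ then gives
\[
\alpha_k|\nabla_Mf(y_k)|\le 2L\,(\psi(z_k)-\psi(z_{k+1}))+L\alpha_k/\psi'(g_k)+L^2L_{f,k}(\alpha_k^2L+\alpha_k|e_k|).
\]
Finally, we sum $|y_{k+1}-y_k|\le\alpha_k|\nabla_Mf(y_k)|+\alpha_kL_{V,k}(d_k+c_b\alpha_k^\xi)+|R_k|$. The residual $L\alpha_kL_{P,k}d_k$ part of $|R_k|$ is absorbed through the $g$-decrease. The last step of the last block, or the first step when $z$ is zero, contributes the single $L^2\max_k\alpha_k$ term; this term is obtained by bounding one increment crudely by $L^2\alpha_k$. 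Summing these bounds gives \eqref{eq:y_k-error}.
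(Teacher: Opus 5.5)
Your Step~3 is sound, and it is actually simpler than a sign-split. The bound $\psi(z_k)-\psi(z_{k+1})\geq (z_k-z_{k+1})\,\psi'(\max\{|z_k|,|z_{k+1}|\})$ holds exactly across zero. Combined with subadditivity, the desingularizing inequality at $y_k$ and $y_{k+1}$, and the gradient-variation bound, it gives, for each $k$, $\alpha_k|\nabla_Mf(y_k)|\leq 2(\psi(z_k)-\psi(z_{k+1}))+L_{f,k}(L^2\alpha_k^2+L\alpha_k|e_k|)+\alpha_k/\psi'(g_k)$. The main gap is in the final summation. Your increment bound $|y_{k+1}-y_k|\leq\alpha_k|\nabla_Mf(y_k)|+\alpha_kL_{V,k}(d_k+c_b\alpha_k^\xi)+|R_k|$ carries the curvature terms $L\alpha_kL_{P,k}d_k+\tfrac12L^2L_{P,k}\alpha_k^2$, and no $L_{P,k}$ appears anywhere on the right of \eqref{eq:y_k-error}. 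These terms cannot be ``absorbed through the $g$-decrease.'' The hypothesis on $g_k-g_{k+1}$ is used up in Step~2 to produce $z_k-z_{k+1}\geq\tfrac12\alpha_k|\nabla_Mf(y_k)|^2$. Even if you summed it separately, you would get $(g_0-g_K)/L$, which does not occur in \eqref{eq:y_k-error}. Nor can you dominate $L_{P,k}\alpha_k^2$ by $L^3L_{f,k}\alpha_k^2$, or $L_{P,k}d_k$ by $L_{V,k}d_k$, because the constants are unrelated. The missing idea is to avoid Taylor-expanding $P_M$ in the length bound. The point $\widetilde x_k:=x_k-\alpha_kP_{N_M(y_k)}u_k$ lies in $B(x_k,\alpha_kL)\subset U$ and on the normal fiber $y_k+N_M(y_k)$. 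Since $P_M$ is $C^2$ near this segment, normal-fiber invariance gives $P_M(\widetilde x_k)=y_k$. Hence $|y_{k+1}-y_k|=|P_M(x_{k+1})-P_M(\widetilde x_k)|\leq L|e_k-\alpha_kP_{T_M(y_k)}u_k|\leq L|e_k|+L\alpha_kL_{V,k}(d_k+c_b\alpha_k^\xi)+L\alpha_k|\nabla_Mf(y_k)|$. This is a purely first-order bound, and it is the source of the factor $L$ in every summand of \eqref{eq:y_k-error}.

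Step~2 also does not yield a decrease with exactly the hypothesized terms. Expanding $f|_M$ from $y_k$ to $y_{k+1}$ leaves a remainder of order $L_{f,k}|y_{k+1}-y_k|^2\leq L_{f,k}(L^2\alpha_k+L|e_k|)^2$. This contains $L_{f,k}L^3\alpha_k|e_k|$ and $L_{f,k}L^2|e_k|^2$. The hypothesis supplies only $L^2|e_k|$ for the perturbation, and no relation among $L_{f,k}$, $\alpha_k$, and $|e_k|$ is assumed. The curve is also unjustified. Only $B(x_k,\alpha_kL)\subset U$ is assumed, while $|x_{k+1}-x_k|$ may reach $\alpha_kL+|e_k|$. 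So $P_M$ of the segment $[x_k,x_{k+1}]$ need not give a single-valued, Lipschitz curve in the region where item~1 applies. The fix is to remove $e_k$ at the level of function values first. Since $h:=f\circ P_M$ is $L^2$-Lipschitz on $U$, you get $f(y_{k+1})-f(y_k)\leq L^2|e_k|+h(x_k-\alpha_ku_k)-h(x_k)$. Then Taylor-expand $h$ only along $[x_k,x_k-\alpha_ku_k]\subset B(x_k,\alpha_kL)$. There $\nabla h(x)=DP_M(x)^*\nabla_Mf(P_M(x))$ is $L^2(L_{f,k}+L_{P,k})$-Lipschitz, which gives exactly the remainders $\alpha_kL^2L_{P,k}d_k+\tfrac{L^4}{2}(L_{f,k}+L_{P,k})\alpha_k^2$. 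Finally, your Young inequality with weights $\tfrac14$ and $1$ yields $2\alpha_kL_{V,k}^2(d_k^2+c_b^2\alpha_k^{2\xi})$, which is twice what is hypothesized. Use $ab\leq\tfrac12a^2+\tfrac12b^2$ instead, or equivalently polarization of $\langle u_k,\nabla_Mf(y_k)\rangle$. That leaves exactly $-\tfrac12\alpha_k|\nabla_Mf(y_k)|^2$, and it is what gives the monotonicity $z_0\geq\cdots\geq z_K$.
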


\begin{proof}[Proof of Lemma~\ref{lemma:inexact_projected_length}]
Fix \(k\in\llbracket0,K-1\rrbracket\), and write
\(r_k:=c_b\alpha_k^\xi\), \(h:=f\circ P_M\), and
\(x_k^+:=x_k-\alpha_ku_k\).
The assumed bound \(|u_k|\leq L\) gives
\(x_k^+\in B(x_k,\alpha_kL)\subset U\).  The chain-rule calculation in
\cite[(4.2)]{lai2025diameter} applies to \(h\) and shows that its gradient is
\(L^2(L_{f,k}+L_{P,k})\)-Lipschitz on \(B(x_k,\alpha_kL)\).

\runinheading{Step~1: Separate the additive error}
The map \(h\) is \(L^2\)-Lipschitz on \(U\).  Since
\(x_{k+1}=x_k^++e_k\) and \(x_{k+1},x_k^+\in U\),
\begin{equation}
 f(y_{k+1})-f(y_k)
 \leq L^2|e_k|+h(x_k^+)-h(x_k).
\label{eq:separate_error}
\end{equation}
Applying the Taylor estimate \cite[(4.3)]{lai2025diameter} to the exact
displacement \(x_k^+-x_k=-\alpha_ku_k\), and using \(|u_k|\leq L\), gives
\[
 f(y_{k+1})-f(y_k)
 \leq L^2|e_k|
 -\alpha_k\langle u_k,\nabla_M f(y_k)\rangle
 +\alpha_kL^2L_{P,k}d_k
 +\frac{L^4(L_{f,k}+L_{P,k})}{2}\alpha_k^2.
\]

\runinheading{Step~2: Control the Goldstein--Verdier gap}
Write \(u_k=\sum_\ell\lambda_\ell v_\ell\), where
\(w_\ell\in B(x_k,r_k)\),
\(v_\ell\in\partial f(w_\ell)\), \(\lambda_\ell\geq0\), and
\(\sum_\ell\lambda_\ell=1\).  Item~2 in the statement of
Lemma~\ref{lemma:inexact_projected_length} gives
\[
 |P_{T_M(y_k)}v_\ell-\nabla_M f(y_k)|
 \leq L_{V,k}|w_\ell-y_k|
 \leq L_{V,k}(d_k+r_k).
\]
Taking the convex combination and using the linearity of
\(P_{T_M(y_k)}\) yields
\begin{equation}\label{verdier control}
\bigl|P_{T_M(y_k)}u_k-\nabla_M f(y_k)\bigr|
\leq L_{V,k}(d_k+r_k).
\end{equation}
Since \(\nabla_M f(y_k)\in T_M(y_k)\), polarization and
\eqref{verdier control} give
\[
\begin{aligned}
-2\langle u_k,\nabla_M f(y_k)\rangle
&=|P_{T_M(y_k)}u_k-\nabla_M f(y_k)|^2
  -|P_{T_M(y_k)}u_k|^2-|\nabla_M f(y_k)|^2\\
&\leq2L_{V,k}^2(d_k^2+r_k^2)-|\nabla_M f(y_k)|^2,
\end{aligned}
\]
where \((d_k+r_k)^2\leq2(d_k^2+r_k^2)\). Multiplying the Taylor bound by
two and substituting this estimate yields
\[
\begin{aligned}
2\bigl(f(y_{k+1})-f(y_k)\bigr)
&\leq-\alpha_k|\nabla_M f(y_k)|^2
 +2\alpha_kL_{V,k}^2(d_k^2+r_k^2)\\
&\quad+2L^2\alpha_kL_{P,k}d_k+2L^2|e_k|
 +L^4\alpha_k^2(L_{f,k}+L_{P,k})\\
&\leq2(g_k-g_{k+1})-\alpha_k|\nabla_M f(y_k)|^2.
\end{aligned}
\]
The last inequality uses the assumed lower bound on \(g_k-g_{k+1}\) and
\(r_k^2=c_b^2\alpha_k^{2\xi}\). Since \(z_k=f(y_k)+g_k\), rearranging gives
\[
 \frac12\alpha_k|\nabla_M f(y_k)|^2\leq z_k-z_{k+1},
\]
so \(z_0\geq z_1\geq\cdots\geq z_K\).
Whenever \(z_k=z_{k+1}=0\), this inequality gives
\(\nabla_Mf(y_k)=0\); such updates contribute nothing to the telescoping
argument below.

\runinheading{Step~3: Complete the projected-length estimate}
The additive perturbation also changes one input to the signed-power
telescoping.  Indeed, the Lipschitz continuity of \(P_M\) gives
\[
 |y_{k+1}-y_k|
 \leq L|x_{k+1}-x_k|
 \leq L^2\alpha_k+L|e_k|,
\]
so item~1 yields
\[
 |\nabla_M f(y_{k+1})|
 \leq |\nabla_M f(y_k)|
      +L_{f,k}(L^2\alpha_k+L|e_k|).
\]
The one-sign calculations in
\cite[proof of Lemma~4.1, equations~(4.5)--(4.7)]{lai2025diameter} give the
following estimate in the present notation. If
\(0\leq a\leq b\leq K\) and \(z_a,\ldots,z_b\) have the same weak sign, then
\begin{equation}
\begin{aligned}
\sum_{k=a}^{b-1}\alpha_k|\nabla_M f(y_k)|
&\leq2\bigl(\psi(z_a)-\psi(z_b)\bigr)\\
&\quad+\sum_{k=a}^{b-1}\left(
L^2L_{f,k}\alpha_k^2+LL_{f,k}\alpha_k|e_k|
+\frac{\alpha_k}{\psi'(g_k)}\right).
\end{aligned}
\label{eq:inexact_one_sign}
\end{equation}
For a nonpositive block, the two Lipschitz terms arise from the preceding
gradient-variation estimate; for a nonnegative block, they may be retained
because they are nonnegative. The zero-to-zero updates vanish by the descent
inequality.

If \(z_K\geq0\) or \(z_0\leq0\), applying
\eqref{eq:inexact_one_sign} to \(\llbracket0,K\rrbracket\) gives the desired
bound. Suppose instead that \(z_0>0>z_K\), and set
\(j:=\max\{k\in\llbracket0,K-1\rrbracket:z_k>0\}\). If
\(z_{j+1}<0\), set \(r:=j+1\); otherwise, set
\(r:=\max\{k\in\llbracket j+1,K-1\rrbracket:z_k=0\}\). In the latter case,
\(\nabla_Mf(y_k)=0\) for \(j+1\leq k<r\). Thus the full sum consists of the
two one-sign blocks \(\llbracket0,j\rrbracket\) and
\(\llbracket r,K\rrbracket\), the single update at \(j\), and possibly
zero-to-zero updates. Apply \eqref{eq:inexact_one_sign} to the two blocks and
use
\(\alpha_j|\nabla_Mf(y_j)|\leq
L\max_{k\in\llbracket0,K-1\rrbracket}\alpha_k\).
Since \(\psi(z_r)=\psi(z_{j+1})\) and
\(\psi(z_j)\geq\psi(z_{j+1})\), the two potential drops are bounded by
\(\psi(z_0)-\psi(z_K)\). Consequently,
\begin{align*}
\sum_{k=0}^{K-1}\alpha_k|\nabla_M f(y_k)|
&\leq
2\left(\psi(z_0)-\psi(z_K)\right)
+\sum_{k=0}^{K-1}
\left(
 L^2L_{f,k}\alpha_k^2
 +L L_{f,k}\alpha_k|e_k|
 +\frac{\alpha_k}{\psi'(g_k)}
\right)
+L\max_{k\in\llbracket0,K-1\rrbracket}\alpha_k .
\end{align*}

Finally, set \(\widetilde x_k:=x_k-\alpha_kP_{N_M(y_k)}u_k\). The segment
\([x_k,\widetilde x_k]\) lies in \(B(x_k,\alpha_kL)\subset U\) and in the
affine normal space \(y_k+N_M(y_k)\). Since \(P_M\) is smooth on an open
neighborhood of this segment, the normal-fiber consequence of
\cite[Theorem~(3.13)(b)]{dudek1994nonlinear} gives
\(P_M(\widetilde x_k)=P_M(x_k)=y_k\). Therefore, the Lipschitz continuity
of \(P_M\), the orthogonal decomposition of \(u_k\), and
\eqref{verdier control} give
\[
\begin{aligned}
|y_{k+1}-y_k|
&=|P_M(x_{k+1})-P_M(\widetilde x_k)|\\
&\leq L\bigl|e_k-\alpha_kP_{T_M(y_k)}u_k\bigr|\\
&\leq L|e_k|+L\alpha_kL_{V,k}(d_k+r_k)
 +L\alpha_k|\nabla_M f(y_k)|.
\end{aligned}
\]
Summing this inequality and inserting the preceding bound for
\(\sum_k\alpha_k|\nabla_M f(y_k)|\) proves
\eqref{eq:y_k-error}.
\end{proof}

\subsection{Scale-dependent stratified neighborhoods and descent verification}

We record the conclusions of
\cite[Proposition~3.5(i), (ii), and (iv)]{lai2025diameter} used below; their
proofs are not repeated.
\begin{proposition}[Stratified neighborhoods]\label{proposition:neighborhoods of cells}
Let \( f\colon \mathbb{R}^n \to \mathbb{R} \) be locally Lipschitz definable, let
$X\subset \mathbb{R}^n$ be definable compact, let $L>1$, and let
$\{M_1,\ldots,M_{\overline{T}}\}$ be a stratification of $X$ supplied by
\cite[Proposition~3.2]{lai2025diameter} for some $m\geq3$. Then there exist
$\eta_i\geq1$ for $i\in\llbracket1,\overline{T}\rrbracket$ such that for any
$\beta_i,\gamma_i\geq0$ satisfying
\begin{equation*}
\forall i,j\in \llbracket 1,\overline{T}\rrbracket,\quad    M_j\subset \partial M_i \implies 0\leq \eta_{i}\gamma_j \leq \beta_i,
\end{equation*}
there exist $c_i,c_{1,i},c_{2,i},c_{3,i}>0$ and $\omega_{1,i},\omega_{2,i},\omega_{3,i}\in[0,\beta_i/4]$ for $i\in \llbracket 1,\overline{T}\rrbracket$ such that for any $\alpha\in (0,1]$, the following holds:
\begin{enumerate}
\item $P_{M_i}$ is $L$-Lipschitz continuous on
$\bigcup_{\alpha\in(0,1]}\mathcal{N}_0(i,\alpha)$ and is $C^{2}$ on an open
neighborhood of this set, where
\begin{equation}\label{eq:partition}
    \mathcal{N}_0(i,\alpha):= X\cap   B\left(M_i \setminus \bigcup_{j:M_j \subset \partial M_i} B(M_j, c_j \alpha^{\gamma_j}/2),2c_i\alpha^{\beta_i}\right).
\end{equation}
 \item For any $j\in\llbracket 1,\overline{T}\rrbracket$ such that $\overline{M_i}\cap M_j=M_i\cap\overline{M_j}=\emptyset$, we have
\[\bigcup _{\alpha\in(0,1]}\mathcal{N}_0(i,\alpha)\cap\bigcup_{\alpha\in(0,1]}\mathcal{N}_0(j,\alpha)=\emptyset.\]
\item It holds that   
\begin{align}
|\nabla_{M_i} f(x)-\nabla_{M_i} f(y)|
&\leq \frac{c_{1,i}}{\alpha ^{\omega_{1,i}}} |x - y|,
\quad \forall x,y\in \mathcal{N}_0(i,\alpha)\cap M_i,
\label{eq:Lip_1}\\
|P_{T_{M_i}(y)}(v)-\nabla_{M_i}f(y)|
&\leq \frac{c_{2,i}}{\alpha^{\omega_{2,i}}}|x-y|,
\quad \forall x\in \mathcal{N}_0(i,\alpha),\ y\in \mathcal{N}_0(i,\alpha)\cap M_i,\ v\in\partial f(x),
\label{eq:Lip_2}\\
|DP_{M_i}(x)-DP_{M_i}(y)|
&\leq \frac{c_{3,i}}{\alpha^{\omega_{3,i}}}|x-y|,
\quad \forall x,y\in \mathcal{N}_0(i,\alpha).
\label{eq:Lip_3}
\end{align}
\end{enumerate}
\end{proposition}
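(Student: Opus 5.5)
The paper explicitly takes Proposition~\ref{proposition:neighborhoods of cells} from \cite[Proposition~3.5(i), (ii), and (iv)]{lai2025diameter}. The plan is therefore to reconstruct that argument from the stratification of \cite[Proposition~3.2]{lai2025diameter} together with standard tubular-neighborhood estimates for definable \(C^m\) manifolds, \(m\geq3\).

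\textbf{Step 1: exponents \(\eta_i\).} I would first choose the exponents \(\eta_i\). For each non-open stratum \(M_j\subset\partial M_i\), the reach of \(M_i\) at a point \(y\in M_i\) degenerates polynomially in \(d(y,M_j)\). By definability and polynomial boundedness, the Łojasiewicz inequality and the growth dichotomy \cite[Growth Dichotomy~4.12]{van1996geometric} give constants \(c,\eta\) such that
\[
\operatorname{reach}(M_i,y)\geq c\,d\Bigl(y,\textstyle\bigcup_{j:M_j\subset\partial M_i}M_j\Bigr)^{\eta}
\]
on the compact \(X\). The same type of bound holds for the \(C^2\) norms of the local graph parametrizations of \(M_i\), and for the Lipschitz constants of \(\nabla_{M_i}f\) and of the Verdier-type defect. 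The second family of estimates uses the Verdier/(w)-regularity built into the stratification of \cite[Proposition~3.2]{lai2025diameter}, relative to \(\operatorname{gph} f\). Taking \(\eta_i\) to be the largest exponent occurring among these degenerations, for all frontier strata, fixes \(\eta_i\geq1\).

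\textbf{Step 2: item (i).} Given \(\beta_i\geq\eta_i\gamma_j\), points of \(M_i\) outside \(B(M_j,c_j\alpha^{\gamma_j}/2)\) have reach at least \(c'\alpha^{\eta_i\gamma_j}\geq c'\alpha^{\beta_i}\). So if \(c_i\) is chosen small relative to \(c'\) and the \(c_j\), the tube of radius \(2c_i\alpha^{\beta_i}\) around this part of \(M_i\) lies within half the reach. On such a tube, \(P_{M_i}\) is single-valued and \(C^{m-1}\supset C^2\) on an open neighborhood. It is also \(L\)-Lipschitz, because at distance at most half the reach the Lipschitz constant of the projection is at most \(1/(1-d/\mathrm{reach})\leq 2\); shrinking the tube fraction lowers this to any prescribed \(L>1\). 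I would take the \(c_i\) inductively, starting from the lowest-dimensional strata.

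\textbf{Step 3: item (ii).} For unrelated strata \(i,j\), the closures \(\overline{M_i}\) and \(\overline{M_j}\) can only meet inside lower-dimensional strata \(M_h\) lying in both frontiers. Near such an \(M_h\), both neighborhoods exclude \(B(M_h,c_h\alpha^{\gamma_h}/2)\). Away from it, the two sets are separated by a positive distance that, again by the Łojasiewicz inequality, is bounded below polynomially in \(\alpha\). Decreasing \(c_i\) and \(c_j\) makes the tubes thinner than this separation, uniformly in \(\alpha\). This step is where the union over all \(\alpha\) must be handled carefully: a small-\(\alpha\) tube around one stratum could reach a large-\(\alpha\) tube around another. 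I expect this to be the main obstacle. I would handle it by comparing both tubes with \(d(\cdot,M_h)\) directly, using \(\beta_i>\gamma_h\).

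\textbf{Step 4: item (iii).} The three Lipschitz bounds follow from the polynomial blow-up in Step~1. On \(\mathcal N_0(i,\alpha)\), the distance to \(\bigcup_{j:M_j\subset\partial M_i} M_j\) is at least of order \(\alpha^{\gamma_j}\). Each constant therefore blows up at most like \(\alpha^{-\kappa\gamma_j}\) for some exponent \(\kappa\) determined by the stratification data. Enlarging \(\eta_i\) in Step~1 beforehand ensures \(\kappa\gamma_j\leq\beta_i/4\), which yields \(\omega_{\ell,i}\in[0,\beta_i/4]\).
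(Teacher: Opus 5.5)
Your architecture is the natural reconstruction of \cite[Proposition~3.5]{lai2025diameter}, which the paper cites rather than reproves. It consists of: polynomial degeneration of local reach, curvature, $\nabla_{M_i}f$ and the Verdier defect via \L{}ojasiewicz and polynomial boundedness; tubes of radius a small fraction of the local reach; constants $c_i$ chosen upward in dimension; and $\eta_i$ large enough that $\beta_i\geq\eta_i\gamma_j$ absorbs every blow-up exponent, with the factor $4$ giving $\omega_{\ell,i}\leq\beta_i/4$. Steps~1, 2 and~4 are acceptable at sketch level.

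\textbf{The gap is in Step~3.} Item~(ii) does not follow from $\beta_i>\gamma_h$ plus shrinking $c_i,c_j$. For $y\in\overline{M_i}$, \L{}ojasiewicz only gives $d(y,\overline{M_j})\geq c\,d(y,Z)^N$ with $Z:=\overline{M_i}\cap\overline{M_j}$, and $N$ may exceed $1$. A mismatch between the exponents of $c_i\alpha^{\beta_i}$ and $\alpha^{N\gamma_h}$ cannot be repaired by decreasing constants.

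\emph{Counterexample.} Let $X\subset\mathbb R^2$ be the union of the arcs $\{(t,0)\}$ and $\{(t,t^2)\}$, $0\leq t\leq1$, and let $f\equiv0$. Every stratification of $X$ contains initial open sub-arcs $M_i,M_j$ of the two branches as unrelated strata whose frontiers contain the origin $M_h$. Every exponent listed in your Step~1 is trivial here, so your recipe allows $\eta_i=\eta_j=1$. Then $\gamma=1$ on point strata and $\beta=1$ on arcs satisfy the hypothesis. For small $t$ set $\alpha:=t/c_h$. The point $(t,0)$ lies in the core of $M_i$ at scale $\alpha$, and $|(t,t^2)-(t,0)|=t^2\leq2c_i\alpha$. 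Hence $(t,t^2)\in X\cap\mathcal N_0(i,\alpha)$, while also $(t,t^2)\in\mathcal N_0(j,\alpha')$ for small $\alpha'$. So (ii) fails for every choice of constants, and this $X$ forces $\eta_i\geq2$.

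\textbf{Repair.} Include $N$ in $\eta_i$, both for unrelated pairs and for pairs $M_i,M_k$ with $M_i\not\subset\overline{M_k}$. Then compare scales through $d(\cdot,Z)$ rather than through $\alpha$. Suppose $x\in\mathcal N_0(i,\alpha)\cap\mathcal N_0(j,\alpha')$ with core witnesses $y_i,y_j$. For every $M_h\subset Z$ you have $d(y_i,M_h)>c_h\alpha^{\gamma_h}/2$ and $\beta_i\geq\eta_i\gamma_h$. These give $\alpha^{\beta_i}\leq K_i\,d(y_i,Z)^N$, with $K_i$ depending only on the previously chosen $c_h$; similarly for $j$. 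Then $|y_i-y_j|\leq2c_i\alpha^{\beta_i}+2c_j\alpha'^{\beta_j}$ contradicts $|y_i-y_j|\geq c\max\{d(y_i,Z),d(y_j,Z)\}^N$ once $c_i<c/(4K_i)$ and $c_j<c/(4K_j)$, uniformly in $\alpha,\alpha'$.

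The same exponent is what keeps $\mathcal N_0(i,\alpha)$ away from strata whose closure does not contain $M_i$. Without it, your Verdier/projection-formula argument for \eqref{eq:Lip_2} gives no control at points $x\in\mathcal N_0(i,\alpha)$ lying in such strata.

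\textbf{A smaller point for Step~2.} The set $\bigcup_\alpha\mathcal N_0(i,\alpha)$ is not convex. The $L$-Lipschitz bound should therefore come from Federer's two-point normal-ball inequality at both foot points, which holds with local reach, rather than from a pointwise bound on $DP_{M_i}$.
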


\begin{proof}[Proof of Theorem~\ref{thm:inexact_diameter}]
The case \(K=0\) is immediate, so assume that \(K\geq1\).
Choose a compact definable set
\(\widetilde X\subset\mathbb{R}^n\) such that $B(X,2)\subset \widetilde X$ and set $\tau_0 := \min\{1,\tau\}$.

\runinheading{Step~1: Uniform desingularizing function}
Apply \cite[Proposition~3.2]{lai2025diameter} with $m=3$ to choose a $C^3$
stratification \(\{M_1,\ldots,M_{\overline{T}}\}\) of \(\widetilde X\) such
that each restriction \(f|_{M_i}\) is $C^3$.  Proposition~\ref{proposition:neighborhoods of cells},
applied to this stratification with \(L=2\), gives the associated constants
\(\eta_i\geq1\).

Since there are finitely many strata, the signed \L{}ojasiewicz gradient
inequality applied to \(f|_{M_i}\) yields a number \(\bar\theta\in(0,1)\)
such that, for every fixed \(\theta\in(\bar\theta,1)\), there are
\(a_\theta,\varepsilon>0\) for which the signed power function
\(\psi:=a_\theta\varphi_\theta\)
is a desingularizing function of \(f|_{M_i}\) on \([|f|\leq 2\varepsilon]\)
for every \(i\in\llbracket1,\overline{T}\rrbracket\). Fix such a \(\theta\) for the rest
of the proof. We will verify the stratified descent condition (Definition \ref{def:stratified_descent}) in $X \cap[|f|\leq\varepsilon]$.

\runinheading{Step~2: Choice of exponents}
We will choose constants
\(\gamma_i,\beta_i>0\) so that, for all \(i,j\in\llbracket1,\overline{T}\rrbracket\),
\(0<\beta_i<\gamma_i(1-\theta)<\tau_0(1-\theta)\) and \(\beta_i<\xi\), and, whenever \(M_j\subset\partial M_i\), \(\gamma_j<\beta_i\) and \(\eta_i\gamma_j\leq\beta_i\).

Set \(d_*:=\max_i\dim M_i\) and \(\eta_*:=\max_i\eta_i\), and choose
\[
    \delta\in
    \left(0,
    \frac12\min\left\{
        \tau_0,\frac{\xi}{1-\theta},\frac{1-\theta}{\eta_*}
    \right\}
    \right).
\]
For each \(i\), define
\[
    \gamma_i:=\delta^{d_*-\dim M_i+1},
    \qquad
    a_i:=\max\bigl(\{0\}\cup
    \{\eta_i\gamma_j:M_j\subset\partial M_i\}\bigr),
    \qquad
    \beta_i:=\frac12\left(a_i+\gamma_i(1-\theta)\right).
\]
Since \(\delta \leq (1-\theta)/(2\eta_*)<1\), each \(\gamma_i>0\) and
\(\gamma_i\in(0,\delta]\). Hence \(\gamma_i(1-\theta)\leq \delta(1-\theta)< \min\{\tau_0(1-\theta),\xi\}\).
If \(M_j\subset\partial M_i\), then \(\dim M_j<\dim M_i\), and therefore $\gamma_j\leq\delta\gamma_i$. Consequently,
\(\eta_i\gamma_j\leq \eta_*\delta\gamma_i<\frac12\gamma_i(1-\theta)\).
Thus \(0\leq a_i<\frac12\gamma_i(1-\theta)\), and the definition of \(\beta_i\)
gives \(0<\beta_i<\gamma_i(1-\theta)<\xi\), and, whenever \(M_j\subset\partial M_i\), \(\gamma_j\leq\eta_i\gamma_j\leq a_i<\beta_i\).

With these exponents fixed, Proposition~\ref{proposition:neighborhoods of cells}
gives constants
\[
    c_i,c_{1,i},c_{2,i},c_{3,i}>0,
    \qquad
    \omega_{1,i},\omega_{2,i},\omega_{3,i}\in[0,\beta_i/4],
\]
and the corresponding sets
\[
    \mathcal{N}_0(i,\alpha):=
     \widetilde X\cap
    B\left(
        M_i\setminus
        \bigcup_{j:M_j\subset\partial M_i}
        B(M_j,c_j\alpha^{\gamma_j}/2),
        2c_i\alpha^{\beta_i}
    \right),
    \qquad \forall \alpha\in(0,1].
\]
Let
\[
    \omega_i:=\max\{\omega_{1,i},\omega_{2,i},\omega_{3,i}\},
    \qquad
    A:=\max\Bigl\{2,\max_i(c_i+c_{1,i}+c_{2,i}+c_{3,i})\Bigr\}.
\]
Increase \(A\) so that \(f\) is \(A\)-Lipschitz on \(\widetilde X\), and each
\(f|_{M_i}\) is \(A\)-Lipschitz on \(M_i\). Then
\eqref{eq:Lip_1}--\eqref{eq:Lip_3} hold with \(L_{f,k}=L_{V,k}=L_{P,k}:=A\alpha_k^{-\omega_i}\)
whenever the corresponding points lie in \(\mathcal{N}_0(i,\alpha_k)\).

Define
\[
    \beta_0:=\min_i\{\beta_i-\omega_i,1-2\omega_i\}>0,
    \qquad
    \underline\beta:=\min_i\beta_i,
    \qquad
    \beta:=\min\{\beta_0,\underline\beta(1-\theta)\}.
\]

\runinheading{Step~3: Local scale and neighborhoods}
For \(i\in\llbracket1,\overline{T}\rrbracket\), define
\[
    \mathcal{N}(i,\alpha):=
    (X\cap [|f|\leq\varepsilon])\cap
    \left(
        B(M_i,c_i\alpha^{\beta_i})
        \setminus
        \bigcup_{j:M_j\subset\partial M_i}
        B(M_j,c_j\alpha^{\gamma_j})
    \right).
\]
This is exactly the neighborhood form required by
Definition~\ref{def:stratified_descent}, with the ambient set \(X\cap [|f|\leq\varepsilon]\).

Since \(\beta_i<\xi\), \(\beta_i<1\), and
\(\beta_i>\gamma_j\) whenever \(M_j\subset\partial M_i\), choose
\(\hat{\alpha}_{\mathrm{loc}}\in(0,1]\) such that for all
\(\alpha\in(0,\hat{\alpha}_{\mathrm{loc}}]\),
\[
    A\alpha+c_b\alpha^\xi\leq \frac12 c_i\alpha^{\beta_i},
\]
and
\[
    2c_i\alpha^{\beta_i}+A^2\alpha+A\alpha^\tau+c_b\alpha^\xi
    \leq \frac12 c_j\alpha^{\gamma_j}
\]
for all relevant \(i,j\) with \(M_j\subset\partial M_i\). We also impose
\[
    A c_i\alpha^{\beta_i}\leq\varepsilon,
    \qquad
    A\alpha+c_b\alpha^\xi\leq 1.
\]

We record the resulting containments. Let
\(\alpha\in(0,\hat{\alpha}_{\mathrm{loc}}]\) and \(x\in\mathcal{N}(i,\alpha)\).
Choose \(y\in M_i\) with \(|x-y|\leq c_i\alpha^{\beta_i}\). For every
\(j\) with \(M_j\subset\partial M_i\), since
\(x\notin B(M_j,c_j\alpha^{\gamma_j})\), we have
\[
    d(y,M_j)
    \geq d(x,M_j)-|x-y|
    > c_j\alpha^{\gamma_j}-c_i\alpha^{\beta_i}
    \geq \frac34 c_j\alpha^{\gamma_j}
    > \frac12 c_j\alpha^{\gamma_j}.
\]
Thus \(y\in M_i\setminus
\bigcup_{j:M_j\subset\partial M_i}B(M_j,c_j\alpha^{\gamma_j}/2)\),
and \(x\) lies within \(c_i\alpha^{\beta_i}\leq2c_i\alpha^{\beta_i}\) of this
set. Since \(x\in X\subset\widetilde X\), it follows that
\(\mathcal{N}(i,\alpha)\subset \mathcal{N}_0(i,\alpha)\).

Next suppose \(x_k\in\mathcal{N}(i,\alpha_k)\) with
\(\alpha_k\leq\hat{\alpha}_{\mathrm{loc}}\), and set \(\alpha=\alpha_k\). The above
choice of \(y\in M_i\) also satisfies \(d(y,M_j)>c_j\alpha^{\gamma_j}/2\)
for every \(j\) with \(M_j\subset\partial M_i\). If
\(z\in B(x_k,A\alpha)\cup B(x_k,c_b\alpha^\xi)\),
then
\[
    |z-y|
    \leq c_i\alpha^{\beta_i}+\max\{A\alpha,c_b\alpha^\xi\}
    \leq \frac32c_i\alpha^{\beta_i}
    \leq 2c_i\alpha^{\beta_i}.
\]
Moreover, since \(x_k\in X\) and \(|z-x_k|\leq A\alpha+c_b\alpha^\xi\leq1\),
we have $z\in B(X,1)\subset\widetilde X$. Consequently,
\[
    B(x_k,A\alpha_k)\cup B(x_k,c_b\alpha_k^\xi)
    \subset \mathcal{N}_0(i,\alpha_k)
    \subset \bigcup_{\alpha\in(0,1]}\mathcal{N}_0(i,\alpha).
\]

Finally, since \(x_k\in\mathcal{N}_0(i,\alpha_k)\), the projection
\(P_{M_i}(x_k)\) is well-defined. For
\(w\in B(P_{M_i}(x_k),A^2\alpha_k+A|e_k|)\cap M_i\), we have
\begin{align*}
    d(w,M_j)
    &\geq d(x_k,M_j)-|x_k-P_{M_i}(x_k)|-|w-P_{M_i}(x_k)|\\
    &> c_j\alpha_k^{\gamma_j}-c_i\alpha_k^{\beta_i}
       -A^2\alpha_k-A|e_k|\\
    &\geq c_j\alpha_k^{\gamma_j}-c_i\alpha_k^{\beta_i}
       -A^2\alpha_k-A\alpha_k^\tau\\
    &\geq
    \frac12 c_j\alpha_k^{\gamma_j}
    +c_i\alpha_k^{\beta_i}
    +c_b\alpha_k^\xi
    >\frac12 c_j\alpha_k^{\gamma_j}
\end{align*}
for every \(j\) with \(M_j\subset\partial M_i\). Hence \(w\) belongs to the
central stratum set appearing in the definition of \(\mathcal{N}_0(i,\alpha_k)\),
and therefore
\[
    B(P_{M_i}(x_k),A^2\alpha_k+A|e_k|)\cap M_i
    \subset \mathcal{N}_0(i,\alpha_k)\cap M_i.
\]

Since \(\hat\alpha_{\mathrm{loc}}\leq1\), the inclusion
\(\mathcal{N}(i,\alpha)\subset \mathcal{N}_0(i,\alpha)\) for
\(0<\alpha\leq\hat\alpha_{\mathrm{loc}}\), together with
Proposition~\ref{proposition:neighborhoods of cells}, gives the separation
condition in Definition~\ref{def:stratified_descent} with threshold
\(\hat\alpha_{\mathrm{loc}}\).

\runinheading{Step~4: Error functions and small steps}
The estimates in this step apply to any realization
on the block with \(x_{\llbracket0,K\rrbracket}\subset
X\cap[|f|\leq\varepsilon]\) and \(\alpha_k\leq\hat{\alpha}_{\mathrm{loc}}\) for
\(k\in\llbracket0,K-1\rrbracket\). The final upper step-size bound
\(\hat{\alpha}\) in the theorem statement will be fixed in Step~6.

For a constant \(Q>0\) to be fixed below, and for
\(a=(a_m)_{m\in\mathbb N}\in(\mathbb R_+)^{\mathbb N}\) and
\(e=(e_m)_{m\in\mathbb N}\in(\mathbb R^n)^{\mathbb N}\), define
\[
\mathfrak g_1(a,e):=Q\sum_{m=0}^{\infty}(a_m^{1+\beta}+|e_m|),
\qquad
\mathfrak g_2(a,e):=Q\bigl(a_0^{1+\beta}
+a_0\mathfrak g_1(a,e)^\theta+|e_0|\bigr).
\]
For the present finite block, extend
\(\alpha_{\llbracket0,K-1\rrbracket}\) and
\(e_{\llbracket0,K-1\rrbracket}\) by zero at every index \(m\geq K\), and
let \(g_{1,k}\) and \(g_{2,k}\) be the corresponding tail evaluations of
\(\mathfrak g_1\) and \(\mathfrak g_2\). Writing
\(S_k:=\sum_{j=k}^{K-1}\alpha_j^{1+\beta}\) and
\(E_k:=\sum_{j=k}^{K-1}|e_j|\) gives
\begin{align}
g_{1,k}&=Q(S_k+E_k),
&&k\in\llbracket0,K\rrbracket,
\label{eq:g1}\\
g_{2,k}&=Q\bigl(\alpha_k^{1+\beta}
+\alpha_k g_{1,k}^{\theta}+|e_k|\bigr),
&&k\in\llbracket0,K-1\rrbracket.
\label{eq:g2}
\end{align}
Also set \(\widetilde\psi:=2A\psi\), the power function used in Definition~\ref{def:stratified_descent}.

If \(x_k\in X\cap[|f|\leq\varepsilon]\) and
\(\alpha_k\leq\hat{\alpha}_{\mathrm{loc}}\), then
\(c_b\alpha_k^\xi\leq1\) by the choice of \(\hat{\alpha}_{\mathrm{loc}}\), and hence
\[
    B(x_k,c_b\alpha_k^\xi)
    \subset B(X,1)
    \subset \mathring{\widetilde X} .
\]
Since $f$ is $A$-Lipschitz on $\widetilde X$, the Clarke subgradient bound
\cite[Proposition~2.1.2]{clarke1990} gives
\begin{equation}
    \partial f(x_k;c_b\alpha_k^\xi)\subset B(0,A).
\label{eq:inexact_uniform_goldstein_bound}
\end{equation}
Consequently,
\[
    |x_{k+1}-x_k|
    \leq A\alpha_k+|e_k|
    \leq A\alpha_k+\alpha_k^\tau
    \leq (A+1)\alpha_k^{\tau_0}.
\]
Moreover, for \(k\in\llbracket0,K-1\rrbracket\),
\[
    g_{1,k}-g_{1,k+1}
    =
    Q(\alpha_k^{1+\beta}+|e_k|)
    \leq Q(\alpha_k^{1+\beta}+\alpha_k^\tau)
    \leq 2Q\alpha_k^{\tau_0},
\]
because \(\hat{\alpha}_{\mathrm{loc}}\leq1\) and
\(\tau_0=\min\{1,\tau\}\) give
\(\alpha_k^{1+\beta}\leq\alpha_k^{\tau_0}\) and
\(\alpha_k^\tau\leq\alpha_k^{\tau_0}\).
Thus item~(i) of Definition~\ref{def:stratified_descent} holds, after
increasing \(\iota\), for the zero-extended tails defined in Step~4.

\runinheading{Step~5: Projected descent on one stratum}
Fix \(i\in\llbracket1,\overline{T}\rrbracket\), and suppose
\(x_k\in\mathcal{N}(i,\alpha_k)\) for
\(k\in\llbracket0,K\rrbracket\).
Let \(y_k^i:=P_{M_i}(x_k)\) and \(d_k^i:=d(x_k,M_i)\).
Then
\[
    d_k^i\leq c_i\alpha_k^{\beta_i},
    \qquad
    |f(y_k^i)|
    \leq |f(x_k)|+A d_k^i
    \leq 2\varepsilon.
\]
The containments from Step~3 and
\eqref{eq:inexact_uniform_goldstein_bound} verify all assumptions of
Lemma~\ref{lemma:inexact_projected_length} with \(M=M_i\), \(L=A\), with the
relevant neighborhoods \(\mathcal{N}_0(i,\alpha_k)\), and with the
desingularizing function \(\psi\) on \([|f|\leq2\varepsilon]\).

We next check the \(g\)-condition in
Lemma~\ref{lemma:inexact_projected_length}. Using \(L_{f,k}=L_{V,k}=L_{P,k}=A\alpha_k^{-\omega_i}\) and \(d_k^i\leq c_i\alpha_k^{\beta_i}\),
the required lower bound is controlled by a constant multiple of
\[
    \alpha_k^{1+2\beta_i-2\omega_i}
    +\alpha_k^{1+2\xi-2\omega_i}
    +\alpha_k^{1+\beta_i-\omega_i}
    +\alpha_k^{2-2\omega_i}
    +|e_k|.
\]
By the definition of \(\beta\), this is bounded by $ C(\alpha_k^{1+\beta}+|e_k|)$. Hence \(g_k:=g_{1,k}\) satisfies the lemma's \(g\)-condition once \(Q\) is
chosen large enough.

The additional summand in \eqref{eq:y_k-error} caused by the perturbation is
$A^3\alpha_k^{1-\omega_i}|e_k|\leq A^3|e_k|$, because
$\omega_i<1$ and $\alpha_k\leq1$.  Thus it is absorbed by the
$|e_k|$ summand in \eqref{eq:y_k-error}.

Applying Lemma~\ref{lemma:inexact_projected_length} gives, with $ z_k^i:=f(y_k^i)+g_{1,k}$, that
\begin{align*}
\sum_{k=0}^{K-1}|y_{k+1}^i-y_k^i|
&\leq\widetilde\psi(z_0^i)-\widetilde\psi(z_K^i)+
C\sum_{k=0}^{K-1}
\left(
    \alpha_k^{1+\beta}
    +\alpha_k g_{1,k}^{\theta}
    +|e_k|
\right)
+
C\max_{k\in\llbracket0,K-1\rrbracket}\alpha_k .
\end{align*}
Since \(\alpha_k\leq1\) and \(\tau_0\leq1\), the last maximum is bounded by \(C\max_{k\in\llbracket0,K-1\rrbracket}\alpha_k^{\tau_0}\).
Enlarging \(Q\) and \(\iota\), and using \eqref{eq:g2}, we obtain
\[
\sum_{k=0}^{K-1}|y_{k+1}^i-y_k^i|
\leq
\widetilde\psi(z_0^i)-\widetilde\psi(z_K^i)
+\sum_{k=0}^{K-1}g_{2,k}
+\iota\max_{k\in\llbracket0,K-1\rrbracket}\alpha_k^{\tau_0}.
\]
This is precisely item~(ii) of Definition~\ref{def:stratified_descent} for
the zero-extended tails defined in Step~4.

\runinheading{Step~6: Apply Theorem~\ref{thm:diameter}}
Steps~3--5 verify the stratified descent condition needed by
Theorem~\ref{thm:diameter}, with ambient set \(X\cap [|f|\leq\varepsilon]\), neighborhoods
\(\mathcal{N}(i,\alpha)\), power function
\(\widetilde\psi=2A\psi=B_\theta\varphi_\theta\), where
\(B_\theta:=2Aa_\theta>0\),
small-step exponent \(\tau_0\), and the error functions
\(\mathfrak g_1,\mathfrak g_2\) evaluated on the zero extension of the data
after \(K\). Let \(\hat{\alpha}_{\mathrm{diam}},C_{\mathrm{diam}}>0\) be the upper
step-size bound and the constant supplied by Theorem~\ref{thm:diameter} for
these choices. Set \(\hat{\alpha}:=\min\{\hat{\alpha}_{\mathrm{loc}},\hat{\alpha}_{\mathrm{diam}}\}\).
Applying Theorem~\ref{thm:diameter} to
\(x_{\llbracket0,K\rrbracket}\subset X\cap [|f|\leq\varepsilon]\), with the
zero-extended tails defined in Step~4, gives
\begin{align*}
\operatorname{diam}(x_{\llbracket0,K\rrbracket})
&\leq2B_\theta\Big(
\varphi_\theta(f(x_0))-\varphi_\theta(f(x_K))
\Big)  \\
&+
4B_\theta\left(g_{1,0}^{1-\theta}+g_{1,K}^{1-\theta}\right)
+2\sum_{k=0}^{K-1}g_{2,k}
+C_{\mathrm{diam}}\alpha_0^{\underline\beta(1-\theta)} .
\end{align*}
The zero extension gives \(g_{1,K}=0\) and \(g_{1,0}=Q(S_0+E_0)\). By
the subadditivity of \(t\mapsto t^q\) for \(q\in(0,1)\),
\begin{align*}
g_{1,0}^{1-\theta}+g_{1,K}^{1-\theta}
&\leq Q^{1-\theta}\bigl(S_0^{1-\theta}+E_0^{1-\theta}\bigr),\\
\sum_{k=0}^{K-1}g_{2,k}
&\leq Q(S_0+E_0)
+Q^{1+\theta}\sum_{k=0}^{K-1}\alpha_k
\bigl(S_k^\theta+E_k^\theta\bigr).
\end{align*}
Moreover, \(\beta\leq\underline\beta(1-\theta)\) and \(\alpha_0\leq1\)
give \(\alpha_0^{\underline\beta(1-\theta)}\leq\alpha_0^\beta\).
Substituting these estimates above and absorbing the fixed coefficients into
positive constants \(\varsigma_1:=2B_\theta\) and \(\varsigma_2\) yield
\begin{align*}
\operatorname{diam}(x_{\llbracket0,K\rrbracket})
&\leq\varsigma_1\bigl(
\varphi_\theta(f(x_0))-\varphi_\theta(f(x_K))\bigr)\\
&\quad+\varsigma_2\left(
\alpha_0^\beta
+S_0+S_0^{1-\theta}
+\sum_{k=0}^{K-1}\alpha_k S_k^\theta
\right)\\
&\quad+\varsigma_2\left(
E_0+E_0^{1-\theta}
+\sum_{k=0}^{K-1}\alpha_k E_k^\theta
\right).
\end{align*}
The constants are independent of the realization and \(K\), so this is the
claimed estimate.
\end{proof}

\section{Auxiliary estimates for the stochastic subgradient method}
\label{sec:stochastic_auxiliary_estimates}

This appendix collects the auxiliary arguments used in the stochastic
subgradient analyses of Sections~\ref{sec:stochastic} and
\ref{sec:active_stochastic}.  We first prove the polynomial-window estimate
used in the proof of Theorem~\ref{theorem:1overk}.  We then state and prove
the martingale and quadratic-noise tail estimates used in the active analysis
before turning to the sparse-index and exit-interval bounds.

\subsection{Proof of Fact~\ref{fact:st}}
\label{sec:proof_fact_st}
Choose constants \(0<c_1\leq c_2\) such that, for all large \(k\),
\(c_1(k+1)^{-a}\leq \alpha_k\leq c_2(k+1)^{-a}\).
By the definition of \(s_{t+1}\) in \eqref{eq:def_st}, we have
\begin{equation}\label{eq:st-window-size}
    \sum_{i=s_t}^{s_{t+1}-1}\alpha_i
    \leq \alpha_{s_t}^{\zeta}
    <
    \sum_{i=s_t}^{s_{t+1}}\alpha_i .
\end{equation}
\runinheading{Step~1: Window length}
Set \(h_t:=s_{t+1}-s_t\). We first prove
\(h_t=\Theta((s_t+1)^{a(1-\zeta)})\).

For the lower bound, the right inequality in
\eqref{eq:st-window-size} gives
\(\alpha_{s_t}^{\zeta}<\sum_{i=s_t}^{s_{t+1}}\alpha_i
\leq c_2(h_t+1)(s_t+1)^{-a}\). Since
\(\alpha_{s_t}\geq c_1(s_t+1)^{-a}\), it follows that
\(h_t+1\geq (c_1^\zeta/c_2)(s_t+1)^{a(1-\zeta)}\). Thus
\(h_t\geq (c_1^\zeta/(2c_2))(s_t+1)^{a(1-\zeta)}\) for all sufficiently
large \(t\).

For the upper bound, we first show that \(h_t\leq s_t+1\) for all large
\(t\). Suppose otherwise that \(h_t>s_t+1\) along an infinite subsequence.
Then
\(\sum_{i=s_t}^{s_t+s_t}\alpha_i\leq
\sum_{i=s_t}^{s_{t+1}-1}\alpha_i\leq\alpha_{s_t}^{\zeta}\).
However, the left-hand side is bounded from below by
\(c_1 2^{-a}(s_t+1)^{1-a}\), while
\(\alpha_{s_t}^{\zeta}=O((s_t+1)^{-a\zeta})\). This is impossible for large
\(t\), because \(1-a+a\zeta=1-a(1-\zeta)>0\).
Hence \(h_t\leq s_t+1\) eventually. On this eventual tail, for every
\(i\in\llbracket s_t,s_{t+1}-1\rrbracket\), we have
\((i+1)^{-a}\geq 2^{-a}(s_t+1)^{-a}\).
Therefore the left inequality in \eqref{eq:st-window-size} yields
\[
    c_1 2^{-a}h_t(s_t+1)^{-a}
    \leq
    \sum_{i=s_t}^{s_{t+1}-1}\alpha_i
    \leq
    \alpha_{s_t}^{\zeta}
    \leq
    c_2^\zeta(s_t+1)^{-a\zeta}.
\]
Thus \(h_t=O((s_t+1)^{a(1-\zeta)})\). Combining the two bounds proves the
claimed estimate. In particular, since \(a(1-\zeta)<1\), we have
\(h_t/(s_t+1)\to0\).

\runinheading{Step~2: Window endpoints}
Set \(\lambda:=1-a(1-\zeta)>0\).
By the mean value theorem applied to \(r\mapsto r^\lambda\),
\((s_{t+1}+1)^\lambda-(s_t+1)^\lambda=\lambda r_t^{\lambda-1}h_t\)
for some \(r_t\in(s_t+1,s_{t+1}+1)\). Since
\(h_t/(s_t+1)\to0\), we have \(r_t=\Theta(s_t+1)\). Using
\(h_t=\Theta((s_t+1)^{a(1-\zeta)})\), we get
\((s_{t+1}+1)^\lambda-(s_t+1)^\lambda
=\Theta((s_t+1)^{\lambda-1+a(1-\zeta)})=\Theta(1)\). Summing over \(t\)
gives \((s_t+1)^\lambda=\Theta(t+1)\), and hence
\(s_t=\Theta((t+1)^{1/\lambda})\).

\runinheading{Step~3: Aggregated step sizes}
It remains to estimate \(a_t\). By \eqref{eq:def_at} and
\eqref{eq:st-window-size},
\(a_t\leq \alpha_{s_t}^{\zeta}\) and
\(a_t>\alpha_{s_t}^{\zeta}-\alpha_{s_{t+1}}\). Since
\(s_{t+1}/s_t\to1\), we have
\(\alpha_{s_{t+1}}=\Theta(\alpha_{s_t})\).
Because \(\zeta<1\) and \(\alpha_{s_t}\to0\),
\(\alpha_{s_{t+1}}=O(\alpha_{s_t})=o(\alpha_{s_t}^{\zeta})\). Therefore
\(a_t=\Theta(\alpha_{s_t}^{\zeta})\). Combining this with
\(\alpha_{s_t}=\Theta((s_t+1)^{-a})\) and
\(s_t=\Theta((t+1)^{1/\lambda})\), we obtain
\(a_t=\Theta((t+1)^{-a\zeta/\lambda})\). Equivalently, if
\(b:=a\zeta/(1-a(1-\zeta))\), then \(a_t=\Theta((t+1)^{-b})\).
This proves Fact~\ref{fact:st}. \qed

\subsection{Martingale and quadratic-noise tail estimates}
\label{sec:martingale_tail_estimates}

The next lemma isolates the two probabilistic estimates needed for the exit
intervals in Section~\ref{sec:active_stochastic_exit_estimates}.  Its first
conclusion controls martingale tails on the event
\(\sum_k\alpha_k|w_k|^2<\infty\); its second controls the quadratic-noise
tail.

\begin{lemma}[Noise-tail estimates]
\label{lemma:martingale_tail_summability}
Let
\((\Omega,\mathcal{F},(\mathcal{F}_k)_{k\in\mathbb{N}},\mathbb{P})\)
be a filtered probability space.  Let \(\varepsilon_k\) be
\(\mathcal{F}_{k+1}\)-measurable and suppose that, for some \(\sigma>0\),
\(\mathbb{E}[\varepsilon_k\mid\mathcal{F}_k]=0\) and
\(\mathbb{E}[|\varepsilon_k|^2\mid\mathcal{F}_k]\leq \sigma^2\) almost
surely.  Let \((\alpha_k)\) be a deterministic sequence of positive numbers,
and let \((w_k)\) be an \(\mathcal{F}_k\)-measurable sequence with values in
the same Euclidean space as \(\varepsilon_k\).  Define
\(R_k^w:=\sup_{r\geq k}
\left|\sum_{i=k}^{r}\alpha_i\langle w_i,\varepsilon_i\rangle\right|\).
Then the following statements hold.
\begin{enumerate}
    \item Fix \(\theta\in(0,1)\).  If
    \(\alpha_k=\Theta((k+1)^{-a})\) and
    \(a(1+\theta/2)>1\), then
    \begin{equation}
    \mathbb P\left(
      \left\{\sum_{k=0}^{\infty}\alpha_k|w_k|^2<\infty\right\}
      \cap
      \left\{\sum_{k=0}^{\infty}\alpha_k(R_k^w)^\theta=\infty\right\}
    \right)=0.
    \label{eq:martingale_tail_localized}
    \end{equation}
    In particular, if
    \(\sum_k\alpha_k|w_k|^2<\infty\) almost surely, then
    \(\sum_k\alpha_k(R_k^w)^\theta<\infty\) almost surely.

    \item Fix \(\theta\in(0,1)\), suppose that
    \(\alpha_k=\Theta((k+1)^{-a})\) with \(a>1/2\), and define
    \begin{equation}
        D_k:=\sum_{i=k}^{\infty}
        \alpha_i^2(1+|\varepsilon_i|^2).
        \label{eq:quadratic_noise_tail}
    \end{equation}
    Then \(D_0<\infty\) almost surely and \(D_k\downarrow0\) almost surely.
    If, in addition, \(a+\theta(2a-1)>1\), then
    \begin{equation}
        \sum_{k=0}^{\infty}\alpha_kD_k^\theta<\infty
        \qquad\text{almost surely}.
        \label{eq:quadratic_noise_tail_summability}
    \end{equation}
\end{enumerate}
\end{lemma}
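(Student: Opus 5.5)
Part~1 is proved by localizing on the quadratic-variation event and estimating the sup of martingale tails in expectation; Part~2 uses only the first moment and a deterministic tail computation.

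\textbf{Part 1.} Fix \(M>0\) and introduce the truncated coefficients \(w_k^M:=w_k\mathbf 1\{\sum_{i<k}\alpha_i|w_i|^2\leq M\}\). These are \(\mathcal F_k\)-measurable, because the indicator depends only on \(w_0,\dots,w_{k-1}\). On the event \(\{\sum_k\alpha_k|w_k|^2\leq M\}\) they agree with \(w_k\) for every \(k\). It therefore suffices to show \(\sum_k\alpha_k(R_k^{w^M})^\theta<\infty\) almost surely for each \(M\), and then take the union over \(M\in\mathbb N\).

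For the truncated sequence, Doob's maximal \(L^2\) inequality applied to the martingale \(r\mapsto\sum_{i=k}^r\alpha_i\langle w_i^M,\varepsilon_i\rangle\) gives
\[
\mathbb E[(R_k^{w^M})^2]\leq 4\sigma^2\,\mathbb E\Bigl[\sum_{i\geq k}\alpha_i^2|w_i^M|^2\Bigr]\leq 4\sigma^2\Bigl(\sup_{i\geq k}\alpha_i\Bigr)\,\mathbb E\Bigl[\sum_{i\geq k}\alpha_i|w_i^M|^2\Bigr]\leq C\alpha_k M,
\]
using the conditional variance bound and \(\sup_{i\geq k}\alpha_i=O(\alpha_k)\). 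The last sum is at most \(M+\alpha_k\sup|w_k^M|^2\); the single boundary term should be handled by defining the truncation with \(\leq M\) applied at index \(k\) itself, or by noting that \(\alpha_k|w_k^M|^2\leq M\) as well, since the indicator uses the sum through \(k-1\) and that term can be absorbed with \(2M\).

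Jensen's inequality then gives \(\mathbb E[(R_k)^\theta]\leq (C M\alpha_k)^{\theta/2}\). Consequently \(\mathbb E\sum_k\alpha_k R_k^\theta\lesssim\sum_k(k+1)^{-a(1+\theta/2)}<\infty\), precisely because \(a(1+\theta/2)>1\). Hence the sum is finite almost surely, which proves \eqref{eq:martingale_tail_localized}. The "in particular" clause is immediate.

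\textbf{Part 2.} The tower property gives \(\mathbb E D_k\leq(1+\sigma^2)\sum_{i\geq k}\alpha_i^2=O((k+1)^{1-2a})\). So \(D_0<\infty\) almost surely, and \(D_k\) is a nonincreasing tail of a convergent series, hence tends to \(0\). For the summability claim, Jensen's inequality gives \(\mathbb E D_k^\theta\leq(\mathbb E D_k)^\theta=O((k+1)^{\theta(1-2a)})\). By Tonelli,
\[
\mathbb E\sum_k\alpha_kD_k^\theta\lesssim\sum_k(k+1)^{-a-\theta(2a-1)}<\infty,
\]
exactly under the hypothesis \(a+\theta(2a-1)>1\).

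The main obstacle is the localization in Part~1. The condition \(\sum\alpha_k|w_k|^2<\infty\) is only pathwise, not in expectation, so the martingale estimate cannot be applied to \(w_k\) directly. This is why the predictable truncation is needed: it preserves adaptedness and makes the conditional-variance sum bounded by a deterministic constant, with the off-by-one term handled carefully as described above.
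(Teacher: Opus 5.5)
Your argument follows the paper's proof: a predictable truncation of $w_k$, a maximal $L^2$ martingale bound giving $\mathbb E[R_k^2]\lesssim M\alpha_k$ (you use Doob's inequality, the paper cites Burkholder--Davis--Gundy), then Jensen and Tonelli; Part~2 matches the paper line for line. The one thing to fix is the truncation, which should be $w_k^M:=w_k\mathbf 1\{\sum_{i\le k}\alpha_i|w_i|^2\le M\}$; this is still $\mathcal F_k$-measurable and is exactly the paper's $w_i\mathbf 1_{\{i<\tau_m\}}$. Your alternative fix fails, because the term $\alpha_k|w_k|^2$ admitted at the index where the running sum first exceeds $M$ is not bounded by $M$ (indeed $w_k$ need not even be integrable), so it cannot be absorbed into $2M$.
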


\begin{proof}
\runinheading{Item~(1): Martingale tails}
Fix \(m\geq1\), set
\(V_i:=\sum_{j=0}^{i}\alpha_j|w_j|^2\), and stop the coefficient at
\(\tau_m:=\inf\{i:V_i>m\}\) by defining
\(w_i^{(m)}:=w_i\mathbf1_{\{i<\tau_m\}}\).  Then \(w_i^{(m)}\) is
\(\mathcal F_i\)-measurable and
\(\sum_i\alpha_i|w_i^{(m)}|^2\leq m\) almost surely.

For \(k\geq0\), let
\(R_{k,m}:=\sup_{r\geq k}|\sum_{i=k}^{r}
\alpha_i\langle w_i^{(m)},\varepsilon_i\rangle|\).  The martingale
maximal inequality \cite[Theorem~1.1]{burkholder1972integral}, applied with
the convex function \(t\mapsto t^2\), gives a fixed \(C_{\rm M}>0\) such that
\[
 \mathbb E[R_{k,m}^2]
 \leq C_{\rm M}\sigma^2\mathbb E\!\left[
       \sum_{i=k}^{\infty}\alpha_i^2|w_i^{(m)}|^2\right].
\]
The polynomial bounds give a fixed \(C_\alpha\geq1\) such that
\(\alpha_i\leq C_\alpha\alpha_k\) whenever \(i\geq k\), and hence
\(\mathbb E[R_{k,m}^2]\leq C_{\rm M}\sigma^2C_\alpha m\alpha_k\).
Since \(t\mapsto t^{\theta/2}\) is concave, Jensen's inequality gives
\(\mathbb E[R_{k,m}^{\theta}]
\leq\bigl(\mathbb E[R_{k,m}^2]\bigr)^{\theta/2}\).  Tonelli's theorem
\cite[Theorem~2.37]{folland1999real} therefore yields
\[
 \mathbb E\!\left[\sum_{k=0}^{\infty}
       \alpha_kR_{k,m}^{\theta}\right]
 \leq(C_{\rm M}\sigma^2C_\alpha m)^{\theta/2}
       \sum_{k=0}^{\infty}\alpha_k^{1+\theta/2}<\infty.
\]
Here the final series converges because \(a(1+\theta/2)>1\).

Let \(A_m:=\{\sum_i\alpha_i|w_i|^2\leq m\}\).  On \(A_m\), the stopping
time is infinite and \(R_{k,m}=R_k^w\) for every \(k\).  Thus
\[
 \mathbb P\!\left(A_m\cap
 \left\{\sum_{k=0}^{\infty}\alpha_k(R_k^w)^\theta=\infty\right\}
 \right)=0.
\]
Taking the union over \(m\geq1\) proves
\eqref{eq:martingale_tail_localized} and its stated consequence.

\runinheading{Item~(2): Quadratic-noise tails}
The tower property and Tonelli's theorem
\cite[Theorem~2.37]{folland1999real} give
\[
 \mathbb E[D_k]\leq(1+\sigma^2)\sum_{i=k}^{\infty}\alpha_i^2
 \leq C_{\rm Q}(k+1)^{1-2a}
\]
for a fixed \(C_{\rm Q}\geq1\).
In particular, \(D_0<\infty\) almost surely and \(D_k\downarrow0\).
Jensen's inequality and Tonelli's theorem now give
\[
 \mathbb E\!\left[\sum_{k=0}^{\infty}\alpha_kD_k^\theta\right]
 \leq C_{\rm Q}\sum_{k=0}^{\infty}
 (k+1)^{-a-\theta(2a-1)}<\infty,
\]
where the last inequality is equivalent to
\(a+\theta(2a-1)>1\).  This proves
\eqref{eq:quadratic_noise_tail_summability}.
\end{proof}

\subsection{Proof of Lemma~\ref{lemma:active_stochastic_sparse}}
\label{sec:proof_active_stochastic_sparse}

\begin{proof}
\runinheading{Step~1: Select the deterministic indices}
Let
\[
 \mathcal A:=\left\{\sup_k|x_k|<\infty\right\}
 \cap\left\{\liminf_{k\to\infty}|x_k-x^*|=0\right\}
\]
be the event in \(\mathcal F\) that the realization is bounded and has
\(x^*\) as an accumulation point.  For \(N\in\mathbb N\), set
\[
 F_N:=\sup_{k\geq N}|f(x_k)-f(x^*)|,
 \qquad
 E_N:=\sum_{k=N}^{\infty}\alpha_k^2.
\]
Equation~\eqref{eq:stoch_value_convergence} gives \(F_N\to0\) almost surely
on \(\mathcal A\).  Dominated convergence
\cite[Theorem~2.24]{folland1999real} gives
\(\mathbb E[\mathbf1_{\mathcal A}\min\{1,F_N\}]\to0\), while
\eqref{eq:stoch_step_consequences} gives \(E_N\downarrow0\).  We may
therefore choose a deterministic sequence \(K_1<K_2<\cdots\) such that
\[
 \mathbb E\!\left[
   \mathbf1_{\mathcal A}\min\{1,F_{K_j}\}\right]
   +E_{K_j}^{1/4}\leq2^{-j},
 \qquad j\geq1.
\]
This choice uses neither \(\rho\) nor the exit intervals.

\runinheading{Step~2: Control the stochastic tails}
Using the sequence selected in Step~1, define \(p_j,q_j\) by
\eqref{eq:stoch_excursions}, \(\eta_k\) by
\eqref{eq:stoch_excursion_indicator}, and \(n_k\) by
\eqref{eq:stoch_normal_coefficient}.
Since \(n_k\) is \(\mathcal F_k\)-measurable and \(|n_k|\leq\rho\),
Assumption~\ref{assumption:stochastic_noise} gives
\[
 \mathbb E[\alpha_k\langle\varepsilon_k,n_k\rangle
   \mid\mathcal F_k]=0,
 \qquad
 \mathbb E[\alpha_k^2\langle\varepsilon_k,n_k\rangle^2
   \mid\mathcal F_k]
 \leq\sigma^2\rho^2\alpha_k^2.
\]
The martingale maximal inequality
\cite[Theorem~1.1]{burkholder1972integral} and monotone convergence
\cite[Theorem~2.14]{folland1999real} give a fixed \(C_{\rm S}>0\) such that
\[
 \mathbb E\!\left[
   \sup_{m\geq N}\left|
     \sum_{k=N}^{m-1}\alpha_k
       \langle\varepsilon_k,n_k\rangle
   \right|^2\right]
 \leq C_{\rm S}\sigma^2\rho^2E_N.
\]
The triangle inequality and Jensen's inequality for the concave function
\(t\mapsto t^{1/4}\) give
\[
\begin{aligned}
\mathbb E\!\left[\left(
   \sup_{s\geq r\geq N}\left|
     \sum_{k=r}^{s-1}\alpha_k
       \langle\varepsilon_k,n_k\rangle
   \right|\right)^{1/2}\right]
&=\mathbb E\!\left[\left(
   \sup_{s\geq r\geq N}\left|
     \sum_{k=N}^{s-1}\alpha_k\langle\varepsilon_k,n_k\rangle
     -\sum_{k=N}^{r-1}\alpha_k\langle\varepsilon_k,n_k\rangle
   \right|\right)^{1/2}\right]\\
&\leq\sqrt{2}\,\mathbb E\!\left[\left(
   \sup_{m\geq N}\left|
     \sum_{k=N}^{m-1}\alpha_k\langle\varepsilon_k,n_k\rangle
   \right|^2\right)^{1/4}\right]\\
&\leq\sqrt{2}\left(\mathbb E\!\left[
   \sup_{m\geq N}\left|
     \sum_{k=N}^{m-1}\alpha_k\langle\varepsilon_k,n_k\rangle
   \right|^2\right]\right)^{1/4}\\
&\leq \sqrt{2}\,C_{\rm S}^{1/4}(\sigma\rho)^{1/2}E_N^{1/4}.
\end{aligned}
\]
This estimate is uniform over every deterministic choice of
\((K_j)\).

The tower property and Jensen's inequality give
\[
 \mathbb E\!\left[\left(
   \sum_{k=N}^{\infty}\alpha_k^2
     (1+|\varepsilon_k|^2)\right)^{1/2}\right]
 \leq(1+\sigma^2)^{1/2}E_N^{1/2}.
\]
The choice of \((K_j)\) gives
\(E_{K_j}^{1/4}\leq2^{-j}\leq1\), and hence
\(E_{K_j}^{1/2}\leq E_{K_j}^{1/4}\leq2^{-j}\).  Taking \(N=K_j\) in
the preceding two estimates therefore gives
\[
\begin{aligned}
\mathbb E\!\left[\left(
   \sup_{s\geq r\geq K_j}\left|
     \sum_{k=r}^{s-1}\alpha_k
       \langle\varepsilon_k,n_k\rangle
   \right|\right)^{1/2} +\left(
   \sum_{k=K_j}^{\infty}\alpha_k^2
 (1+|\varepsilon_k|^2)\right)^{1/2}\right]
 &\leq \sqrt{2}\,C_{\rm S}^{1/4}(\sigma\rho)^{1/2}E_{K_j}^{1/4} +(1+\sigma^2)^{1/2}E_{K_j}^{1/2}\\
&\leq\bigl(\sqrt{2}\,C_{\rm S}^{1/4}(\sigma\rho)^{1/2}
 +(1+\sigma^2)^{1/2}\bigr)2^{-j}.
\end{aligned}
\]
Summing this estimate and applying Tonelli's theorem
\cite[Theorem~2.37]{folland1999real} to the nonnegative summands shows that
the expectation of the following series is finite.  A nonnegative random
variable with finite expectation is finite almost surely; hence
\[
 \sum_{j=1}^{\infty}\left[\left(
   \sup_{s\geq r\geq K_j}\left|
     \sum_{k=r}^{s-1}\alpha_k
       \langle\varepsilon_k,n_k\rangle
   \right|\right)^{1/2}
   +\left(\sum_{k=K_j}^{\infty}\alpha_k^2
       (1+|\varepsilon_k|^2)\right)^{1/2}\right]<\infty
 \qquad\text{almost surely}.
\]

\runinheading{Step~3: Control the function-value tail}
Likewise,
\[
 \mathbb E\!\left[
  \mathbf1_{\mathcal A}
  \sum_{j=1}^{\infty}\min\{1,F_{K_j}\}\right]
 =\sum_{j=1}^{\infty}\mathbb E\!\left[
   \mathbf1_{\mathcal A}\min\{1,F_{K_j}\}\right]<\infty.
\]
Thus \(\sum_j\min\{1,F_{K_j}\}<\infty\) almost surely on
\(\mathcal A\).  Since \(F_{K_j}\to0\) there, the truncation is eventually
inactive, and \(\sum_jF_{K_j}<\infty\).  Combining the three conclusions
proves \eqref{eq:stoch_sparse_tails}.

\end{proof}

\subsection{Proof of Lemma~\ref{lemma:active_stochastic_normal}}
\label{sec:proof_active_stochastic_normal}

\begin{proof}
Fix a bounded realization in \(\Omega_{\mathrm{act}}\) for which the
conclusion of Lemma~\ref{lemma:active_stochastic_sparse} holds, having
\(x^*\) as an accumulation point and leaving
\(\mathring B(x^*,\rho)\) infinitely often.
Let \(j_0\) and the late exit intervals be as in
Fact~\ref{fact:active_stochastic_exit_localization}.

Increase this \(j_0\), if necessary, so that the late exit intervals satisfy
\(\alpha_k\leq\bar\alpha\), where \(\bar\alpha\) is supplied by
Proposition~\ref{proposition:active_inexact_local_estimates}.  The definition
of \(q_j\) and \eqref{eq:stoch_exit_endpoint_localization} give
\(x_{\llbracket p_j,q_j\rrbracket}\subset
\mathring B(x^*,\varrho)\), as required by the proposition.  Applying
\eqref{eq:active_meta_normal} with \(u_k=v_k\) and \(r_k=0\) therefore gives
\[
 d_{k+1}^2+\mu\alpha_kd_k
 \leq d_k^2-2\alpha_k\langle\varepsilon_k,x_k-y_k\rangle
       +C\alpha_k^2(1+|\varepsilon_k|^2)
\]
for every update index in a late exit interval.  Because
\(n_k=x_k-y_k\) on these intervals, summing this estimate from \(p_j\) to
\(s-1\), where \(p_j\leq s\leq q_j\), yields
\begin{align}
 d_s^2+\mu\sum_{k=p_j}^{s-1}\alpha_kd_k
 &\leq d_{p_j}^2
 -2\sum_{k=p_j}^{s-1}
       \alpha_k\langle\varepsilon_k,n_k\rangle
 +C\sum_{k=p_j}^{s-1}\alpha_k^2(1+|\varepsilon_k|^2) \notag\\
 &\leq C\left(
 4^{-j-1}\rho^2
 +\sup_{s\geq r\geq K_j}\left|
  \sum_{k=r}^{s-1}
   \alpha_k\langle\varepsilon_k,n_k\rangle
 \right|
 +\sum_{k=K_j}^{\infty}\alpha_k^2(1+|\varepsilon_k|^2)
 \right).
\end{align}
Here \eqref{eq:stoch_excursions} gives \(p_j\geq K_j\), while
\eqref{eq:stoch_excursions} and
\eqref{eq:active_projection_distance_bound} give
\(d_{p_j}\leq|x_{p_j}-x^*|\leq2^{-j-1}\rho\).  The two tail terms
bound the scalar martingale block and the quadratic-noise sum, respectively.
Since the right-hand side is independent of \(s\), dropping the nonnegative
sum and taking the supremum bounds
\(\sup_{p_j\leq s\leq q_j}d_s^2\).  Setting \(s=q_j\) and discarding the
nonnegative term \(d_{q_j}^2\) separately bounds the weighted sum.  Adding
these two bounds and enlarging \(C\) gives
\begin{equation}
 \sup_{p_j\leq s\leq q_j}d_s^2
 +\mu\sum_{k=p_j}^{q_j-1}\alpha_kd_k
 \leq C\left(
 4^{-j-1}\rho^2
 +\sup_{s\geq r\geq K_j}\left|
  \sum_{k=r}^{s-1}
   \alpha_k\langle\varepsilon_k,n_k\rangle
 \right|
 +\sum_{k=K_j}^{\infty}\alpha_k^2(1+|\varepsilon_k|^2)
 \right).
\label{eq:stoch_uniform_normal}
\end{equation}

Taking square roots in the bound for
\(\sup_{p_j\leq s\leq q_j}d_s^2\) and summing over \(j\) gives the first
conclusion in \eqref{eq:stoch_normal_summability}.  Retaining the bound for
\(\sum_{k=p_j}^{q_j-1}\alpha_kd_k\) and summing gives the second.  Both
follow from Lemma~\ref{lemma:active_stochastic_sparse}, since the summable
square-root tails in \eqref{eq:stoch_sparse_tails} also have summable
squares.
\end{proof}

\subsection{Proof of Lemma~\ref{lemma:active_stochastic_budget}}
\label{sec:proof_active_stochastic_budget}

\begin{proof}
Fix a bounded realization in \(\Omega_{\mathrm{act}}\) for which the
conclusions of Lemmas~\ref{lemma:active_stochastic_sparse} and
\ref{lemma:active_stochastic_normal} hold, having \(x^*\) as an accumulation
point and leaving \(\mathring B(x^*,\rho)\) infinitely often.
Let \(j_0\) and the late exit intervals be as in
Fact~\ref{fact:active_stochastic_exit_localization}.

\runinheading{Step~1: Projected descent and squared-gradient summability}
Since \(p_j\) and \(q_j\) are stopping times, \(\eta_k\) is
\(\mathcal F_k\)-measurable.  Equations
\eqref{eq:stoch_normal_coefficient} and
\eqref{eq:stoch_tangential_coefficient}, together with
\eqref{eq:active_local_uniform_constants}, show that \(n_k\) and \(t_k\)
are uniformly bounded and \(\mathcal F_k\)-measurable.  At every update index
of a late exit interval, \(n_k=x_k-y_k\) and
\(t_k=DP_A(x_k)^*\nabla_A f(y_k)\).

Increase this \(j_0\), if necessary, so that \(\alpha_k\leq\bar\alpha\) on every
late exit interval.  The definition of \(q_j\) and
\eqref{eq:stoch_exit_endpoint_localization} verify the local-segment hypotheses of
Proposition~\ref{proposition:active_inexact_local_estimates}.  Applying
\eqref{eq:active_meta_projected_descent} with \(u_k=v_k\) and \(r_k=0\), and
denoting its remainder constant by \(C_{\rm D}\), gives
\begin{align*}
 f(y_{k+1})-f(y_k)
 &\leq-\frac12\alpha_k|\nabla_A f(y_k)|^2
      -\alpha_k\langle\varepsilon_k,t_k\rangle
      +C_{\rm D}\bigl(\alpha_kd_k^2+\alpha_k^3
      +\alpha_k^2(|\varepsilon_k|+|\varepsilon_k|^2)\bigr).
\end{align*}
Since \(d_k^2\leq\varrho d_k\),
\(\alpha_k\leq\bar\alpha\leq1\), and
\(|\varepsilon_k|\leq1+|\varepsilon_k|^2\), the remainder is at most
\[
 C_{\rm D}\varrho\alpha_kd_k
 +3C_{\rm D}\alpha_k^2(1+|\varepsilon_k|^2).
\]
Thus \eqref{eq:stoch_projected_descent} holds with the fixed choice
\(C_{\rm T}:=C_{\rm D}\max\{\varrho,3\}\).

For \(J\geq j_0\), summing
\eqref{eq:stoch_projected_descent} over the late exit intervals gives
\begin{align*}
 \frac12\sum_{j=j_0}^J\sum_{k=p_j}^{q_j-1}
       \alpha_k|\nabla_A f(y_k)|^2
 &\leq\sum_{j=j_0}^J
       [f(y_{p_j})-f(y_{q_j})]
       -\sum_{j=j_0}^J\sum_{k=p_j}^{q_j-1}
       \alpha_k\langle\varepsilon_k,t_k\rangle\\
 &\quad+C_{\rm T}\sum_{j=j_0}^J\sum_{k=p_j}^{q_j-1}\alpha_kd_k
       +C_{\rm T}\sum_{j=j_0}^J\sum_{k=p_j}^{q_j-1}
       \alpha_k^2(1+|\varepsilon_k|^2).
\end{align*}

We now control the four terms on the right in their displayed order.  First,
the Lipschitz bound in \eqref{eq:active_local_uniform_constants} gives
\begin{align}
 |f(y_{p_j})-f(y_{q_j})|
 &\leq |f(x_{p_j})-f(x^*)|+|f(x_{q_j})-f(x^*)|
       +L(d_{p_j}+d_{q_j}) \notag\\
 &\leq2\sup_{k\geq K_j}|f(x_k)-f(x^*)|+L(d_{p_j}+d_{q_j}).
 \label{eq:stoch_endpoint_value_bound}
\end{align}
The right-hand side of
\eqref{eq:stoch_endpoint_value_bound} is summable by
Lemmas~\ref{lemma:active_stochastic_sparse} and
\ref{lemma:active_stochastic_normal}; hence the endpoint series is absolutely
convergent.

Second, \eqref{eq:active_local_uniform_constants} gives \(|t_k|\leq L^2\).
Since \(t_k\) is \(\mathcal F_k\)-measurable, the conditional variances satisfy
\[
 \mathbb E\!\left[
  \alpha_k^2\langle\varepsilon_k,t_k\rangle^2
  \mid\mathcal F_k\right]
 \leq\sigma^2L^4\alpha_k^2.
\]
The martingale convergence theorem
\cite[Theorem~2.15]{hall1980martingale} therefore shows that
\(\sum_k\alpha_k\langle\varepsilon_k,t_k\rangle\) converges almost surely.
Moreover, \(t_k=0\) between consecutive exit intervals, so
\[
 \sum_{j=j_0}^{J}\sum_{k=p_j}^{q_j-1}
 \alpha_k\langle\varepsilon_k,t_k\rangle
 =\sum_{k=p_{j_0}}^{q_J-1}
  \alpha_k\langle\varepsilon_k,t_k\rangle.
\]
Thus the martingale term in the summed inequality converges as
\(J\to\infty\).

Finally, \eqref{eq:stoch_normal_summability} makes the third term uniformly
bounded in \(J\), while
\eqref{eq:stoch_quadratic_noise_summability} does the same for the fourth.
The right-hand side is therefore uniformly bounded, and letting
\(J\to\infty\) gives
\begin{equation}
 \sum_{j=j_0}^{\infty}\sum_{k=p_j}^{q_j-1}
 \alpha_k|\nabla_A f(y_k)|^2<\infty.
\label{eq:stoch_projected_gradient_summability}
\end{equation}

\runinheading{Step~2: Coefficient summability and martingale tails}
At every update index of a late exit interval,
\(|n_k|^2=d_k^2\leq\rho d_k\) and
\(|t_k|^2\leq L^2|\nabla_A f(y_k)|^2\).  Therefore
\eqref{eq:stoch_normal_summability} and
\eqref{eq:stoch_projected_gradient_summability} show that both weighted
square sums are finite over all late exit intervals.  The intervals with
\(j<j_0\) contain only finitely
many indices on the fixed sample path, and \(n_k=t_k=0\) outside the exit
intervals.  Consequently, \(\sum_k\alpha_k|n_k|^2<\infty\) and
\(\sum_k\alpha_k|t_k|^2<\infty\).

For either \(w_k=n_k\) or \(w_k=t_k\), recall that
Lemma~\ref{lemma:martingale_tail_summability} defines
\[
 R_i^w:=\sup_{\ell\geq i}\left|
   \sum_{k=i}^{\ell}\alpha_k\langle\varepsilon_k,w_k\rangle
 \right|.
\]
Every block sum beginning after \(i\) is the difference of two partial sums
beginning at \(i\).  Hence
\[
 \sup_{s\geq r\geq i}\left|
   \sum_{k=r}^{s-1}\alpha_k\langle\varepsilon_k,w_k\rangle
 \right|\leq2R_i^w.
\]
On the event
\[
 \left\{\sum_k\alpha_k|n_k|^2<\infty,\quad
        \sum_k\alpha_k|t_k|^2<\infty\right\},
\]
the inequality \(a(1+\theta/2)>1\) in
\eqref{eq:stoch_theta_choice} allows
Lemma~\ref{lemma:martingale_tail_summability}(1) to be applied once with
\(w_k=n_k\) and once with \(w_k=t_k\).  The preceding bound and
\((u+v)^\theta\leq u^\theta+v^\theta\) for \(u,v\geq0\) then give, for
\(\mathcal T_i\) defined in
\eqref{eq:stoch_combined_martingale_tail},
\(\sum_i\alpha_i\mathcal T_i^\theta<\infty\) almost surely.  Since
\((\mathcal T_i)\) is nonincreasing and
\(\sum_i\alpha_i=\infty\) by \eqref{eq:stoch_step_consequences}, this
summability also forces \(\mathcal T_i\to0\).  Thus
\begin{equation}
 \mathcal T_i\to0,
 \qquad
 \sum_{i=0}^{\infty}\alpha_i\mathcal T_i^\theta<\infty.
 \label{eq:stoch_T_summability}
\end{equation}
The inequality \(a+\theta(2a-1)>1\) in
\eqref{eq:stoch_theta_choice} and
Lemma~\ref{lemma:martingale_tail_summability}(2) show that the
quadratic-noise tail satisfies
\begin{equation}
 \sum_{i=0}^{\infty}\alpha_i
 \left(\sum_{k=i}^{\infty}\alpha_k^2(1+|\varepsilon_k|^2)\right)^\theta
 <\infty
 \qquad\text{almost surely}.
\label{eq:stoch_quadratic_tail_summability}
\end{equation}

\runinheading{Step~3: Summability of the combined errors}
It remains to prove the two conclusions in
\eqref{eq:stoch_Q_summability}.  For the summability conclusion, set
\(D_i:=\sum_{k=i}^{\infty}
\alpha_k^2(1+|\varepsilon_k|^2)\).  By
\eqref{eq:stoch_Q_definition} and the subadditivity of
\(u\mapsto u^\theta\), its left-hand side satisfies
\begin{align*}
 \sum_{j=j_0}^{\infty}\sum_{i=p_j}^{q_j-1}
   \alpha_i\bigl(Q_i^\theta+Q_{i+1}^\theta\bigr)
 &\leq \sum_{j=j_0}^{\infty}\sum_{i=p_j}^{q_j-1}
   \alpha_i\bigl(d_i^{2\theta}+d_{i+1}^{2\theta}\bigr)\\
 &\quad+\sum_{j=j_0}^{\infty}\sum_{i=p_j}^{q_j-1}
   \alpha_i\bigl(D_i^\theta+D_{i+1}^\theta\bigr)\\
 &\quad+\sum_{j=j_0}^{\infty}\sum_{i=p_j}^{q_j-1}
   \alpha_i\bigl(\mathcal T_i^\theta+\mathcal T_{i+1}^\theta\bigr).
\end{align*}
We verify that the distance, quadratic-noise, and martingale
contributions on the right are finite, in the order displayed.

First, \(2\theta>1\) and \(d_i<\rho\) at every update index of a late
exit interval.  Hence
\[
 \sum_{j=j_0}^{\infty}\sum_{i=p_j}^{q_j-1}\alpha_i d_i^{2\theta}
 \leq \rho^{2\theta-1}
 \sum_{j=j_0}^{\infty}\sum_{i=p_j}^{q_j-1}\alpha_i d_i<\infty
\]
by \eqref{eq:stoch_normal_summability}.  To treat the shifted distance,
use that the distance function is \(1\)-Lipschitz and,
by \eqref{eq:active_local_uniform_constants}, the subgradient norms \(|v_i|\leq L\) on every late
exit interval.  Thus the update
gives
\[
 d_{i+1}\leq d_i+|x_{i+1}-x_i|
 \leq d_i+L\alpha_i(1+|\varepsilon_i|)
\]
at every update index of a late exit interval.  Increase \(j_0\) once more,
if necessary, so that
\(\alpha_i(1+|\varepsilon_i|)\leq1\) on all late exit intervals.  This is
possible because \(\alpha_i\to0\) by \eqref{eq:stoch_step_consequences} and
\(\alpha_i\varepsilon_i\to0\) by
\eqref{eq:stoch_vector_martingale_convergence}.  Since \(2\theta>1\), the
distances \(d_i\) are bounded there, and \eqref{eq:stoch_excursions} shows
that the update-index sets of the exit intervals are disjoint.  In particular,
\(\alpha_i[\alpha_i(1+|\varepsilon_i|)]^{2\theta}
\leq\alpha_i^2(1+|\varepsilon_i|)\), and hence
\begin{align*}
 \sum_{j=j_0}^{\infty}\sum_{i=p_j}^{q_j-1}
       \alpha_id_{i+1}^{2\theta}
 &\leq 2^{2\theta-1}\rho^{2\theta-1}
       \sum_{j=j_0}^{\infty}
       \sum_{i=p_j}^{q_j-1}\alpha_i d_i
       +2^{2\theta-1}L^{2\theta}\sum_{i=0}^{\infty}
          \alpha_i^2(1+|\varepsilon_i|)<\infty.
\end{align*}
because \(1+|\varepsilon_i|\leq2(1+|\varepsilon_i|^2)\),
\eqref{eq:stoch_normal_summability}, and
\eqref{eq:stoch_quadratic_noise_summability}.  Thus the distance
contribution is finite.

Second, \((D_i)\) is nonincreasing, and the update-index sets of the exit
intervals are disjoint.  Therefore
\[
 \sum_{j=j_0}^{\infty}\sum_{i=p_j}^{q_j-1}
   \alpha_i\bigl(D_i^\theta+D_{i+1}^\theta\bigr)
 \leq 2\sum_{i=0}^{\infty}\alpha_iD_i^\theta<\infty
\]
by \eqref{eq:stoch_quadratic_tail_summability}.  Thus the quadratic-noise
contribution is finite.  Finally,
\((\mathcal T_i)\) is nonincreasing by
\eqref{eq:stoch_combined_martingale_tail}, so the same disjointness gives
\[
 \sum_{j=j_0}^{\infty}\sum_{i=p_j}^{q_j-1}
   \alpha_i\bigl(\mathcal T_i^\theta+\mathcal T_{i+1}^\theta\bigr)
 \leq 2\sum_{i=0}^{\infty}\alpha_i\mathcal T_i^\theta<\infty
\]
by \eqref{eq:stoch_T_summability}.  Thus the martingale contribution is
finite, and the expanded bound proves the required summability.

It remains to consider the entrance indices.  By
\eqref{eq:stoch_Q_definition},
\[
 Q_{p_j}=d_{p_j}^2+D_{p_j}+\mathcal T_{p_j}.
\]
By \eqref{eq:stoch_excursions} and
\eqref{eq:active_projection_distance_bound},
\(d_{p_j}\leq2^{-j-1}\rho\to0\) and \(p_j\geq K_j\to\infty\).
Moreover, \(D_{p_j}\to0\) by
\eqref{eq:stoch_quadratic_noise_summability}, while
\(\mathcal T_{p_j}\to0\) by \eqref{eq:stoch_T_summability}.  Consequently,
\begin{equation}
 Q_{p_j}\to0,\qquad
 \sum_{j=j_0}^{\infty}\sum_{i=p_j}^{q_j-1}
 \alpha_i(Q_i^\theta+Q_{i+1}^\theta)<\infty.
\end{equation}
\end{proof}

\bibliographystyle{alpha}
\bibliography{mybib}

@inproceedings{GowerSebbouhLoizou2021,
  title={Sgd for structured nonconvex functions: Learning rates, minibatching and interpolation},
  author={Gower, Robert and Sebbouh, Othmane and Loizou, Nicolas},
  booktitle={International Conference on Artificial Intelligence and Statistics},
  pages={1315--1323},
  year={2021},
  organization={PMLR}
}

@inproceedings{GowerLoizouQianEtAl2019,
  title={SGD: General analysis and improved rates},
  author={Gower, Robert Mansel and Loizou, Nicolas and Qian, Xun and Sailanbayev, Alibek and Shulgin, Egor and Richt{\'a}rik, Peter},
  booktitle={International conference on machine learning},
  pages={5200--5209},
  year={2019},
  organization={PMLR}
}

@article{BottouCurtisNocedal2018,
  title={Optimization methods for large-scale machine learning},
  author={Bottou, L{\'e}on and Curtis, Frank E and Nocedal, Jorge},
  journal={SIAM review},
  volume={60},
  number={2},
  pages={223--311},
  year={2018},
  publisher={SIAM}
}

@article{Ochs2016,
  title={Local convergence of the heavy-ball method and iPiano for non-convex optimization},
  author={Ochs, Peter},
  journal={Journal of Optimization Theory and Applications},
  volume={177},
  number={1},
  pages={153--180},
  year={2018},
  publisher={Springer}
}

@incollection{combettes2008fejer,
  title={Fej{\'e}r monotonicity in convex optimization},
  author={Combettes, Patrick L},
  booktitle={Encyclopedia of optimization},
  pages={1016--1024},
  year={2008},
  publisher={Springer}
}

@article{bauschke2001weak,
  title={A weak-to-strong convergence principle for Fej{\'e}r-monotone methods in Hilbert spaces},
  author={Bauschke, Heinz H and Combettes, Patrick L},
  journal={Mathematics of operations research},
  volume={26},
  number={2},
  pages={248--264},
  year={2001},
  publisher={INFORMS}
}

@incollection{fejer1922lage,
  title={{\"U}ber die Lage der Nullstellen von Polynomen, die aus Minimumforderungen gewisser Art entspringen},
  author={Fej{\'e}r, Leopold},
  booktitle={Festschrift David Hilbert zu Seinem Sechzigsten Geburtstag am 23. Januar 1922},
  pages={41--48},
  year={1922},
  publisher={Springer}
}

@article{xiao2023stochastic,
  title={Stochastic subgradient methods with guaranteed global stability in nonsmooth nonconvex optimization},
  author={Xiao, Nachuan and Hu, Xiaoyin and Toh, Kim-Chuan},
  journal={Mathematical Programming},
  year={2026},
  month=aug,
  doi={10.1007/s10107-026-02409-2},
  url={https://doi.org/10.1007/s10107-026-02409-2}
}

@article{beck2009fast,
  title={A fast iterative shrinkage-thresholding algorithm for linear inverse problems},
  author={Beck, Amir and Teboulle, Marc},
  journal={SIAM journal on imaging sciences},
  volume={2},
  number={1},
  pages={183--202},
  year={2009},
  publisher={SIAM}
}

@article{tadic2015convergence,
  title={Convergence and convergence rate of stochastic gradient search in the case of multiple and non-isolated extrema},
  author={Tadi{\'c}, Vladislav B},
  journal={Stochastic Processes and their Applications},
  volume={125},
  number={5},
  pages={1715--1755},
  year={2015},
  publisher={Elsevier},
  doi={10.1016/j.spa.2014.11.001},
  url={https://doi.org/10.1016/j.spa.2014.11.001}
}

@article{goldstein1977optimization,
  title={Optimization of Lipschitz continuous functions},
  author={Goldstein, Allen A},
  journal={Mathematical Programming},
  volume={13},
  number={1},
  pages={14--22},
  year={1977},
  publisher={Springer}
}

@article{milzarek2023convergence,
  title={A Normal Map-Based Proximal Stochastic Gradient Method: Convergence and Identification Properties},
  author={Qiu, Junwen and Jiang, Li and Milzarek, Andre},
  journal={SIAM Journal on Optimization},
  volume={36},
  number={3},
  pages={1773--1804},
  year={2026},
  publisher={SIAM},
  doi={10.1137/25M1757332},
  url={https://doi.org/10.1137/25M1757332}
}

@book{borkar2008stochastic,
  title={Stochastic approximation: a dynamical systems viewpoint},
  author={Borkar, Vivek S},
  volume={100},
  year={2008},
  publisher={Springer}
}

@article{qiu2024convergence,
  title={Convergence of {SGD} with Momentum in the Nonconvex Case: A Time Window-Based Analysis},
  author={Qiu, Junwen and Ma, Bohao and Milzarek, Andre},
  journal={arXiv preprint arXiv:2405.16954},
  year={2024},
  note={arXiv:2405.16954v3},
  url={https://arxiv.org/abs/2405.16954}
}

@article{dereich2024convergence,
  title={Convergence of Stochastic Gradient Descent Schemes for {\L}ojasiewicz-Landscapes},
  author={Dereich, Steffen and Kassing, Sebastian},
  journal={Journal of Machine Learning},
  volume={3},
  number={3},
  pages={245--281},
  year={2024},
  doi={10.4208/jml.240109},
  url={https://doi.org/10.4208/jml.240109}
}

@article{trotman2020stratification,
  title={Stratification theory},
  author={Trotman, David},
  journal={Handbook of geometry and topology of singularities I},
  pages={243--273},
  year={2020},
  publisher={Springer}
}

@article{lojasiewicz1963propriete,
  title={Une propri{\'e}t{\'e} topologique des sous-ensembles analytiques r{\'e}els},
  author={\L{}ojasiewicz, Stanislaw},
  journal={Les {\'e}quations aux d{\'e}riv{\'e}es partielles},
  volume={117},
  number={87-89},
  year={1963}
}

@article{parusinski1994lipschitz,
  title={Lipschitz stratification of subanalytic sets},
  author={Parusi{\'n}ski, Adam},
  journal={Annales scientifiques de l'\'{E}cole normale sup{\'e}rieure},
  volume={27},
  number={6},
  pages={661--696},
  year={1994}
}

@article{davis2025active,
  title={Active manifolds, stratifications, and convergence to local minima in nonsmooth optimization},
  author={Davis, Damek and Drusvyatskiy, Dmitriy and Jiang, Liwei},
  journal={Foundations of Computational Mathematics},
  volume={26},
  pages={779--861},
  year={2026},
  publisher={Springer},
  doi={10.1007/s10208-025-09691-0}
}

@book{coste2000introduction,
  title={An introduction to o-minimal geometry},
  author={Coste, Michel},
  year={2000},
  publisher={Istituti editoriali e poligrafici internazionali Pisa}
}

@article{fischer2007minimal,
  title={{O-minimal $\Lambda^m$-regular stratification}},
  author={Fischer, Andreas},
  journal={Annals of Pure and Applied Logic},
  volume={147},
  number={1-2},
  pages={101--112},
  year={2007},
  publisher={Elsevier}
}

@article{nguyen2016lipschitz,
  title={Lipschitz stratifications in o-minimal structures},
  author={Nguyen, Nhan and Valette, Guillaume},
  journal={Annales Scientifiques de l'{\'E}cole Normale Sup{\'e}rieure},
  volume={49},
  number={2},
  pages={399--421},
  year={2016}
}

@article{alber1998projected,
  title={On the projected subgradient method for nonsmooth convex optimization in a Hilbert space},
  author={Alber, Ya I and Iusem, Alfredo N and Solodov, Mikhail V},
  journal={Mathematical Programming},
  volume={81},
  pages={23--35},
  year={1998},
  publisher={Springer}
}

@article{kushner1977general,
  title={General convergence results for stochastic approximations via weak convergence theory},
  author={Kushner, Harold J},
  journal={Journal of mathematical analysis and applications},
  volume={61},
  number={2},
  pages={490--503},
  year={1977},
  publisher={Elsevier}
}

@article{rios2022examples,
  title={Examples of pathological dynamics of the subgradient method for Lipschitz path-differentiable functions},
  author={Rios-Zertuche, Rodolfo},
  journal={Mathematics of Operations Research},
  volume={47},
  number={4},
  pages={3184--3206},
  year={2022},
  publisher={INFORMS}
}

@article{lewis2002active,
  title={Active sets, nonsmoothness, and sensitivity},
  author={Lewis, Adrian S},
  journal={SIAM Journal on Optimization},
  volume={13},
  number={3},
  pages={702--725},
  year={2002},
  publisher={SIAM},
  doi={10.1137/S1052623401387623},
  url={https://doi.org/10.1137/S1052623401387623}
}

@article{vaiter2017degrees,
  title={The Degrees of Freedom of Partly Smooth Regularizers},
  author={Vaiter, Samuel and Deledalle, Charles-Alban and Fadili, Jalal M. and
          Peyr{\'e}, Gabriel and Dossal, Charles},
  journal={Annals of the Institute of Statistical Mathematics},
  volume={69},
  number={4},
  pages={791--832},
  year={2017},
  doi={10.1007/s10463-016-0563-z},
  url={https://doi.org/10.1007/s10463-016-0563-z}
}

@article{bianchi2023stochastic,
  title={Stochastic subgradient descent escapes active strict saddles on weakly convex functions},
  author={Bianchi, Pascal and Hachem, Walid and Schechtman, Sholom},
  journal={Mathematics of Operations Research},
  volume={49},
  number={3},
  pages={1761--1790},
  year={2024},
  publisher={INFORMS},
  doi={10.1287/moor.2021.0194},
  url={https://doi.org/10.1287/moor.2021.0194}
}

@article{josz2023convergence,
  title={Convergence of the momentum method for semialgebraic functions with locally Lipschitz gradients},
  author={Josz, C{\'e}dric and Lai, Lexiao and Li, Xiaopeng},
  journal={SIAM Journal on Optimization},
  volume={33},
  number={4},
  pages={3012--3037},
  year={2023},
  publisher={SIAM},
  doi={10.1137/23M1545720},
  url={https://doi.org/10.1137/23M1545720}
}

@article{bolte2025inexact,
  title={Inexact subgradient methods for semialgebraic functions},
  author={Bolte, J{\'e}r{\^o}me and Le, Tam and Moulines, Eric and Pauwels, Edouard},
  journal={Mathematical Programming},
  pages={1--27},
  year={2025},
  publisher={Springer}
}

@article{ochs2014ipiano,
  title={{iPiano: Inertial proximal algorithm for nonconvex optimization}},
  author={Ochs, Peter and Chen, Yunjin and Brox, Thomas and Pock, Thomas},
  journal={SIAM Journal on Imaging Sciences},
  volume={7},
  number={2},
  pages={1388--1419},
  year={2014},
  publisher={SIAM}
}

@article{zavriev1993heavy,
  title={Heavy-ball method in nonconvex optimization problems},
  author={Zavriev, SK and Kostyuk, FV},
  journal={Computational Mathematics and Modeling},
  volume={4},
  number={4},
  pages={336--341},
  year={1993},
  publisher={Springer}
}

@article{attouch2009convergence,
  title={On the convergence of the proximal algorithm for nonsmooth functions involving analytic features},
  author={Attouch, Hedy and Bolte, J{\'e}r{\^o}me},
  journal={Mathematical Programming},
  volume={116},
  pages={5--16},
  year={2009},
  publisher={Springer}
}

@article{josz2023global,
  title={Global convergence of the gradient method for functions definable in o-minimal structures},
  author={Josz, C{\'e}dric},
  journal={Mathematical Programming},
  volume={202},
  number={1--2},
  pages={355--383},
  year={2023},
  publisher={Springer},
  doi={10.1007/s10107-023-01937-5},
  url={https://doi.org/10.1007/s10107-023-01937-5}
}

@inproceedings{sutskever2013importance,
  title={On the importance of initialization and momentum in deep learning},
  author={Sutskever, Ilya and Martens, James and Dahl, George and Hinton, Geoffrey},
  booktitle={International conference on machine learning},
  pages={1139--1147},
  year={2013},
  organization={PMLR}
}

@book{bertsekas2009convex,
  title={Convex optimization theory},
  author={Bertsekas, Dimitri P.},
  year={2009},
  publisher={Athena Scientific}
}

@inproceedings{mai2020convergence,
  title={Convergence of a stochastic gradient method with momentum for non-smooth non-convex optimization},
  author={Mai, Vien and Johansson, Mikael},
  booktitle={International Conference on Machine Learning},
  pages={6630--6639},
  year={2020},
  organization={PMLR}
}

@book{van1998tame,
  title={Tame topology and o-minimal structures},
  author={Van den Dries, Lou},
  volume={248},
  year={1998},
  publisher={Cambridge university press}
}

@book{bochnak2013real,
  title={Real algebraic geometry},
  author={Bochnak, Jacek and Coste, Michel and Roy, Marie-Fran{\c{c}}oise},
  volume={36},
  year={2013},
  publisher={Springer Science \& Business Media}
}

@article{van1996geometric,
  title={Geometric categories and o-minimal structures},
  author={Van den Dries, Lou and Miller, Chris},
  journal={Duke Mathematical Journal},
  volume={84},
  number={2},
  pages={497--540},
  year={1996},
  publisher={Duke University Press},
  doi={10.1215/S0012-7094-96-08416-1}
}

@article{attouch2013convergence,
  title={{Convergence of descent methods for semi-algebraic and tame problems: proximal algorithms, forward--backward splitting, and regularized Gauss--Seidel methods}},
  author={Attouch, Hedy and Bolte, J{\'e}r{\^o}me and Svaiter, Benar Fux},
  journal={Mathematical Programming},
  volume={137},
  number={1},
  pages={91--129},
  year={2013},
  publisher={Springer}
}

@article{li2023convergence,
  title={Convergence of random reshuffling under the kurdyka--{\l}ojasiewicz inequality},
  author={Li, Xiao and Milzarek, Andre and Qiu, Junwen},
  journal={SIAM Journal on Optimization},
  volume={33},
  number={2},
  pages={1092--1120},
  year={2023},
  publisher={SIAM}
}

@inproceedings{liapounoff1907probleme,
  title={Probl{\`e}me g{\'e}n{\'e}ral de la stabilit{\'e} du mouvement},
  author={Liapounoff, Alexandre},
  booktitle={Annales de la Facult{\'e} des sciences de Toulouse: Math{\'e}matiques},
  volume={9},
  pages={203--474},
  year={1907}
}

@article{ljung1977analysis,
  title={Analysis of recursive stochastic algorithms},
  author={Ljung, Lennart},
  journal={IEEE transactions on automatic control},
  volume={22},
  number={4},
  pages={551--575},
  year={1977},
  publisher={IEEE}
}

@article{benaim2005stochastic,
  title={Stochastic approximations and differential inclusions},
  author={Bena{\"\i}m, Michel and Hofbauer, Josef and Sorin, Sylvain},
  journal={SIAM Journal on Control and Optimization},
  volume={44},
  number={1},
  pages={328--348},
  year={2005},
  publisher={SIAM}
}

@article{kurdyka1994wf,
  title={wf-stratification of subanalytic functions and the \L{}ojasiewicz inequality},
  author={Kurdyka, Krzysztof and Parusinski, Adam},
  journal={Comptes rendus de l'Acad{\'e}mie des sciences. S{\'e}rie 1, Math{\'e}matique},
  volume={318},
  number={2},
  pages={129--133},
  year={1994}
}

@article{shor1962application,
  title={Application of the Gradient Method for the Solution of Network Transportation Problems. fJotes, Scientific seminar on theory and applications of cybernetics and operations research},
  author={Shor, NZ},
  journal={Kiev: Academy of Sciences USSR},
  year={1962}
}

@article{lojasiewicz1958,
  title={Division d'une distribution par une fonction analytique de variables réelles},
  author={S. \L{}ojasiewicz},
  journal={Comptes rendus hebdomadaires des séances de l'Académie des sciences. Paris},
  issue={246},
  pages={683-686},
  year={1958}
}

@article{bolte2020conservative,
  title={Conservative set valued fields, automatic differentiation, stochastic gradient methods and deep learning},
  author={Bolte, J{\'e}r{\^o}me and Pauwels, Edouard},
  journal={Mathematical Programming},
  pages={1--33},
  year={2020},
  publisher={Springer}
}

@article{dontchev1989error,
  title={Error estimates for discretized differential inclusions},
  author={Dontchev, Asen L and Farkhi, Elza M},
  journal={Computing},
  volume={41},
  number={4},
  pages={349--358},
  year={1989},
  publisher={Springer}
}

@article{davis2019active,
  title={Active strict saddles in nonsmooth optimization},
  author={Davis, Damek and Drusvyatskiy, Dmitriy},
  journal={arXiv preprint arXiv:1912.07146},
  year={2019}
}

@book{aubin1984differential,
  title={Differential inclusions: set-valued maps and viability theory},
  author={Aubin, J-P and Cellina, Arrigo},
  volume={264},
  year={1984},
  publisher={Springer-Verlag}
}

@article{bolte2007clarke,
  title={Clarke subgradients of stratifiable functions},
  author={Bolte, J{\'e}r{\^o}me and Daniilidis, Aris and Lewis, Adrian and Shiota, Masahiro},
  journal={SIAM Journal on Optimization},
  volume={18},
  number={2},
  pages={556--572},
  year={2007},
  publisher={SIAM},
  doi={10.1137/060670080}
}

@article{wang2005subdifferentiability,
  title={Subdifferentiability of real functions},
  author={Wang, Xianfu},
  journal={Real Analysis Exchange},
  volume={30},
  number={1},
  pages={137--171},
  year={2005},
  publisher={JSTOR}
}

@article{daniilidis2020pathological,
  title={Pathological subgradient dynamics},
  author={Daniilidis, Aris and Drusvyatskiy, Dmitriy},
  journal={SIAM Journal on Optimization},
  volume={30},
  number={2},
  pages={1327--1338},
  year={2020},
  publisher={SIAM}
}

@article{drusvyatskiy2015curves,
  title={Curves of descent},
  author={Drusvyatskiy, Dmitriy and Ioffe, Alexander D and Lewis, Adrian S},
  journal={SIAM Journal on Control and Optimization},
  volume={53},
  number={1},
  pages={114--138},
  year={2015},
  publisher={SIAM}
}

@article{absil2005convergence,
  title={Convergence of the iterates of descent methods for analytic cost functions},
  author={Absil, Pierre-Antoine and Mahony, Robert and Andrews, Benjamin},
  journal={SIAM Journal on Optimization},
  volume={16},
  number={2},
  pages={531--547},
  year={2005},
  publisher={SIAM}
}

@article{dontchev1992difference,
  title={Difference methods for differential inclusions: A survey},
  author={Dontchev, Asen and Lempio, Frank},
  journal={SIAM review},
  volume={34},
  number={2},
  pages={263--294},
  year={1992},
  publisher={SIAM}
}

@book{tarski1951decision,
  title={A decision method for elementary algebra and geometry: Prepared for publication with the assistance of JCC McKinsey},
  author={Tarski, Alfred},
  year={1951},
  publisher={Rand Corporation}
}

@article{seidenberg1954new,
  title={A new decision method for elementary algebra},
  author={Seidenberg, Abraham},
  journal={Annals of Mathematics},
  pages={365--374},
  year={1954},
  publisher={JSTOR}
}

@article{polyak1964some,
  title={Some methods of speeding up the convergence of iteration methods},
  author={Polyak, Boris T},
  journal={USSR Computational Mathematics and Mathematical Physics},
  volume={4},
  number={5},
  pages={1--17},
  year={1964},
  publisher={Elsevier}
}

@inproceedings{nesterov1983method,
  title={{A method for solving the convex programming problem with convergence rate $O(1/k^2)$}},
  author={Nesterov, Yurii E},
  booktitle={Dokl. akad. nauk Sssr},
  volume={269},
  pages={543--547},
  year={1983}
}

@article{robbins1951stochastic,
  title={A stochastic approximation method},
  author={Robbins, Herbert and Monro, Sutton},
  journal={The annals of mathematical statistics},
  pages={400--407},
  year={1951},
  publisher={JSTOR}
}

@book{rockafellar1998variational,
  title={Variational Analysis},
  author={Rockafellar, R. Tyrrell and Wets, Roger J.-B.},
  series={Grundlehren der mathematischen Wissenschaften},
  volume={317},
  year={1998},
  publisher={Springer},
  address={Berlin},
  doi={10.1007/978-3-642-02431-3},
  url={https://doi.org/10.1007/978-3-642-02431-3}
}

@article{burke2005robust,
  title={A robust gradient sampling algorithm for nonsmooth, nonconvex optimization},
  author={Burke, James V and Lewis, Adrian S and Overton, Michael L},
  journal={SIAM Journal on Optimization},
  volume={15},
  number={3},
  pages={751--779},
  year={2005},
  publisher={SIAM}
}

@article{duchi2018stochastic,
  title={Stochastic methods for composite and weakly convex optimization problems},
  author={Duchi, John C and Ruan, Feng},
  journal={SIAM Journal on Optimization},
  volume={28},
  number={4},
  pages={3229--3259},
  year={2018},
  publisher={SIAM}
}

@article{davis2020stochastic,
  title={Stochastic subgradient method converges on tame functions},
  author={Davis, Damek and Drusvyatskiy, Dmitriy and Kakade, Sham and Lee, Jason D},
  journal={Foundations of Computational Mathematics},
  volume={20},
  number={1},
  pages={119--154},
  year={2020},
  publisher={Springer},
  doi={10.1007/s10208-018-09409-5}
}

@article{candes2010power,
  title={The power of convex relaxation: Near-optimal matrix completion},
  author={Cand{\`e}s, Emmanuel J and Tao, Terence},
  journal={IEEE Transactions on Information Theory},
  volume={56},
  number={5},
  pages={2053--2080},
  year={2010},
  publisher={IEEE}
}

@article{recht2011simpler,
  title={A simpler approach to matrix completion},
  author={Recht, Benjamin},
  journal={Journal of Machine Learning Research},
  volume={12},
  number={Dec},
  pages={3413--3430},
  year={2011}
}

@book{clarke1990,
author={F. H. Clarke},
title={Optimization and Nonsmooth Analysis},
series={Classics in Applied Mathematics},
volume={5},
publisher={Society for Industrial and Applied Mathematics},
year={1990},
doi={10.1137/1.9781611971309}
}

@article{clarke1975,
author = {F. H. Clarke},
title = {{Generalized gradients and applications}},
journal = {Transactions of the American Mathematical Society},
year = {1975}
}

@article{dudek1994nonlinear,
  title={Nonlinear orthogonal projection},
  author={Dudek, Ewa and Holly, Konstanty},
  journal={Annales Polonici Mathematici},
  volume={59},
  number={1},
  pages={1--31},
  year={1994},
  doi={10.4064/ap-59-1-1-31}
}

@article{bolte2022long,
  title={Long term dynamics of the subgradient method for Lipschitz path differentiable functions},
  author={Bolte, J{\'e}r{\^o}me and Pauwels, Edouard and R{\'\i}os-Zertuche, Rodolfo},
  journal={Journal of the European Mathematical Society},
  year={2022}
}

@article{bierstone1988semianalytic,
  title={Semianalytic and subanalytic sets},
  author={Bierstone, Edward and Milman, Pierre D},
  journal={Publications Math{\'e}matiques de l'IH{\'E}S},
  volume={67},
  pages={5--42},
  year={1988}
}

@book{folland1999real,
  title={Real Analysis: Modern Techniques and Their Applications},
  author={Folland, Gerald B.},
  edition={2},
  year={1999},
  publisher={John Wiley \& Sons}
}

@article{lai2025diameter,
  title={On the diameter of subgradient sequences in o-minimal structures},
  author={Lai, Lexiao and Song, Mingzhi},
  journal={Submitted},
  note={arXiv:2511.06868v3},
  year={2025}
}

@inproceedings{deng2021minibatch,
  author={Deng, Qi and Gao, Wenzhi},
  title={Minibatch and Momentum Model-Based Methods for Stochastic Weakly Convex Optimization},
  booktitle={Advances in Neural Information Processing Systems},
  volume={34},
  pages={23115--23127},
  year={2021},
  url={https://proceedings.neurips.cc/paper_files/paper/2021/hash/c2c701fe341a7756ca7fd4eaa83ff63f-Abstract.html}
}

@book{hall1980martingale,
  author={Hall, Peter and Heyde, C. C.},
  title={Martingale Limit Theory and Its Application},
  year={1980},
  publisher={Academic Press},
  address={New York},
  series={Probability and Mathematical Statistics},
  isbn={9780123193506},
  doi={10.1016/C2013-0-10818-5}
}

@inproceedings{burkholder1972integral,
  author={Burkholder, Donald L. and Davis, Burgess J. and Gundy, Richard F.},
  title={Integral Inequalities for Convex Functions of Operators on Martingales},
  booktitle={Proceedings of the Sixth Berkeley Symposium on Mathematical Statistics and Probability},
  volume={2},
  pages={223--240},
  year={1972},
  publisher={University of California Press},
  url={https://digicoll.lib.berkeley.edu/record/113227}
}

@article{chzhen2026convergence,
  title={On convergence rates of subgradient descent on semialgebraic functions},
  author={Chzhen, Evgenii and Schechtman, Sholom},
  journal={arXiv preprint arXiv:2604.17060},
  year={2026}
}

@article{le2024nonsmooth,
  author={Le, Tam},
  title={Nonsmooth Nonconvex Stochastic Heavy Ball},
  journal={Journal of Optimization Theory and Applications},
  volume={201},
  number={2},
  pages={699--719},
  year={2024},
  publisher={Springer},
  doi={10.1007/s10957-024-02408-3},
  url={https://doi.org/10.1007/s10957-024-02408-3}
}

@inproceedings{sebbouh2021almost,
  author={Sebbouh, Othmane and Gower, Robert M. and Defazio, Aaron},
  title={Almost Sure Convergence Rates for Stochastic Gradient Descent and
         Stochastic Heavy Ball},
  booktitle={Proceedings of the Thirty-Fourth Conference on Learning Theory},
  editor={Belkin, Mikhail and Kpotufe, Samory},
  series={Proceedings of Machine Learning Research},
  volume={134},
  pages={3935--3971},
  year={2021},
  publisher={PMLR},
  url={https://proceedings.mlr.press/v134/sebbouh21a.html}
}

@inproceedings{simsekli2019tail,
  author={Simsekli, Umut and Sagun, Levent and Gurbuzbalaban, Mert},
  title={A Tail-Index Analysis of Stochastic Gradient Noise in Deep Neural
         Networks},
  booktitle={Proceedings of the 36th International Conference on Machine
             Learning},
  editor={Chaudhuri, Kamalika and Salakhutdinov, Ruslan},
  series={Proceedings of Machine Learning Research},
  volume={97},
  pages={5827--5837},
  year={2019},
  publisher={PMLR},
  url={https://proceedings.mlr.press/v97/simsekli19a.html}
}

@inproceedings{garg2021proximal,
  author={Garg, Saurabh and Zhanson, Joshua and Parisotto, Emilio and Prasad,
          Adarsh and Kolter, J. Zico and Lipton, Zachary C. and Balakrishnan,
          Sivaraman and Salakhutdinov, Ruslan and Ravikumar, Pradeep},
  title={On Proximal Policy Optimization's Heavy-Tailed Gradients},
  booktitle={Proceedings of the 38th International Conference on Machine
             Learning},
  editor={Meila, Marina and Zhang, Tong},
  series={Proceedings of Machine Learning Research},
  volume={139},
  pages={3610--3619},
  year={2021},
  publisher={PMLR},
  url={https://proceedings.mlr.press/v139/garg21b.html}
}

@inproceedings{zhang2020adaptive,
  author={Zhang, Jingzhao and Karimireddy, Sai Praneeth and Veit, Andreas and
          Kim, Seungyeon and Reddi, Sashank and Kumar, Sanjiv and Sra, Suvrit},
  title={Why Are Adaptive Methods Good for Attention Models?},
  booktitle={Advances in Neural Information Processing Systems},
  editor={Larochelle, Hugo and Ranzato, Marc'Aurelio and Hadsell, Raia and
          Balcan, Maria-Florina and Lin, Hsuan-Tien},
  volume={33},
  pages={15383--15393},
  year={2020},
  publisher={Curran Associates, Inc.},
  url={https://proceedings.neurips.cc/paper_files/paper/2020/file/b05b57f6add810d3b7490866d74c0053-Paper.pdf}
}

@inproceedings{hubler2024gradient,
  author={H{\"u}bler, Florian and Fatkhullin, Ilyas and He, Niao},
  title={From Gradient Clipping to Normalization for Heavy-Tailed {SGD}},
  booktitle={Proceedings of the 28th International Conference on Artificial
             Intelligence and Statistics},
  editor={Li, Yingzhen and Mandt, Stephan and Agrawal, Shipra and Khan, Emtiyaz},
  series={Proceedings of Machine Learning Research},
  volume={258},
  pages={2413--2421},
  year={2025},
  publisher={PMLR},
  url={https://proceedings.mlr.press/v258/hubler25a.html}
}

@article{yuan2006model,
  author={Yuan, Ming and Lin, Yi},
  title={Model Selection and Estimation in Regression with Grouped Variables},
  journal={Journal of the Royal Statistical Society: Series B (Statistical
           Methodology)},
  volume={68},
  number={1},
  pages={49--67},
  year={2006},
  doi={10.1111/j.1467-9868.2005.00532.x},
  url={https://doi.org/10.1111/j.1467-9868.2005.00532.x}
}

@article{tibshirani1996regression,
  author={Tibshirani, Robert},
  title={Regression Shrinkage and Selection Via the Lasso},
  journal={Journal of the Royal Statistical Society: Series B
           (Methodological)},
  volume={58},
  number={1},
  pages={267--288},
  year={1996},
  doi={10.1111/j.2517-6161.1996.tb02080.x},
  url={https://doi.org/10.1111/j.2517-6161.1996.tb02080.x}
}

@article{burke1991exact,
  author={Burke, James V.},
  title={An Exact Penalization Viewpoint of Constrained Optimization},
  journal={SIAM Journal on Control and Optimization},
  volume={29},
  number={4},
  pages={968--998},
  year={1991},
  doi={10.1137/0329054},
  url={https://doi.org/10.1137/0329054}
}

@article{meier2008group,
  author={Meier, Lukas and {van de Geer}, Sara and B{\"u}hlmann, Peter},
  title={The Group Lasso for Logistic Regression},
  journal={Journal of the Royal Statistical Society: Series B (Statistical
           Methodology)},
  volume={70},
  number={1},
  pages={53--71},
  year={2008},
  doi={10.1111/j.1467-9868.2007.00627.x},
  url={https://doi.org/10.1111/j.1467-9868.2007.00627.x}
}

@article{han1979exact,
  author={Han, S.-P. and Mangasarian, O. L.},
  title={Exact Penalty Functions in Nonlinear Programming},
  journal={Mathematical Programming},
  volume={17},
  pages={251--269},
  year={1979},
  doi={10.1007/BF01588250},
  url={https://doi.org/10.1007/BF01588250}
}
\end{document}